\documentclass[a4paper, 11pt]{amsart}
\usepackage[margin= 1in]{geometry}
\usepackage{graphicx}
\usepackage{amsfonts,amsmath,amssymb,amsthm, mathtools}

\usepackage[dvipsnames]{xcolor}
\usepackage{hyperref}
\usepackage{booktabs,tabularx}
\hypersetup{colorlinks=true,linkcolor=blue,citecolor=ForestGreen,urlcolor=blue}
\providecommand{\rank}{\operatorname{rank}}
\providecommand{\spn}{\operatorname{span}}

\providecommand{\R}{\mathbb R}
\providecommand{\C}{\mathbb C}
\providecommand{\m}{\mathfrak m}
\providecommand{\n}{\mathfrak n}
\providecommand{\h}{\mathfrak h}
\providecommand{\hh}{\widehat h}
\providecommand{\cA}{\widehat{\mathcal A}}
\providecommand{\cN}{\mathcal N}
\providecommand{\codim}{\operatorname{codim}}
\providecommand{\Id}{\operatorname{Id}}
\providecommand{\dd}{\mathrm d}
\mathtoolsset{showonlyrefs}
\newcommand{\rnk}{\mathrm{rank}\text{ }}

\newcommand{\ad}{\mathrm{ad}}
\newcommand{\rt}{\rho^i_}
\newcommand{\imad}{\mathrm{Im} \text{ } \mathrm{ad}}
\newtheorem{theorem}{Theorem}[section]
\newtheorem{Coro}[theorem]{Corollary}
\newtheorem{Lemma}[theorem]{Lemma}

\newtheorem{Prop}[theorem]{Proposition}
\newtheorem{Conj}[theorem] {Conjecture}
\theoremstyle{definition}
\newtheorem{Def}[theorem]{Definition}

\theoremstyle{remark}
\newtheorem{rk}[theorem]{Remark}
\newtheorem{example}[theorem]{Example}
\newtheorem{lemma}[theorem]{Lemma}

\theoremstyle{definition}

\theoremstyle{remark}

\numberwithin{equation}{section}
\title[Projectively Equivalent Sub-Riemannian Metrics on Step 2 Distributions]{Levi-Civita's type theorem for projectively equivalent sub-Riemannian metrics on step 2 distributions}
\date{\today}
\thanks{ I.\ Zelenko is partly supported by NSF grant DMS 2105528 and Simons Foundation Collaboration Grant for Mathematicians 524213.}
\author{Christopher Sinkule}
\address{Christopher Sinkule\\
	Department of Mathematics\\
	Texas A\&M University\\
	College Station\\
	Texas \ 77843\\
	USA}
\email{cpsinkule@exchange.tamu.edu}
\author{Igor Zelenko}
\address{Igor Zelenko\\
         Department of Mathematics\\
         Texas A\&M University\\
         College Station\\
         Texas \ 77843\\
         USA}
\email{zelenko@math.tamu.edu}
\urladdr{\url{http://www.math.tamu.edu/~zelenko}}
\date{}
\begin{document}
	\subjclass[2020]{53C17, 58A30, 58E10, 53A15, 37J39, 35N10, 17B70}
	\keywords{Sub-Riemannian geometry, Riemannian geometry,  sub-Riemannian Geodesics, separation of variables, nilpotent approximation, Tanaka symbol, orbital equivalence, overdetermined PDEs, graded nilpotent Lie algebras}
\begin{abstract}
This paper extends the classical results of \emph{Eisenhart (1923)} and \emph{Levi-Civita (1896)} in Riemannian geometry concerning the classification of pairs of germs of Riemannian metrics with the same geodesics either as parametrized curves (\emph{affine equivalence}) or as unparametrized curves (\emph{projective equivalence}), to the setting of \emph{sub-Riemannian metrics} on arbitrary \emph{step 2 distributions}: at every stable point in the projective case, and at every point in the affine case. In particular, this classification extends both the results of Zelenko (2004)\cite{Z} for contact and even-contact distributions and the affine result of Lin and Zelenko (2024) \cite{Zaifeng} for step 2 distributions whose Tanaka symbols satisfy the more restrictive condition of ad-surjectivity. Moreover, our projective statement covers the ad-surjective case as well.
In the affine case, we prove that if a sub-Riemannian metric from the considered class admits another metric sharing the same geodesics, but not constantly proportional to it, then locally it decomposes as a \emph{product of two sub-Riemannian metrics}.
In the projective case, we establish the \emph{2019 conjecture of Jean–Maslovskaya–Zelenko} \cite{jmz2019} on Levi-Civita pairs, showing in particular that the underlying distribution admits a \emph{product structure}, that the \emph{squares of each component} are \emph{involutive}, and that the restrictions of each sub-Riemannian metric in the pair to each leaf of the corresponding foliation are \emph{conformal} to each other under the appropriate pullback. The proof is based on a new \emph{graph-theoretic formalism}, in which appropriately compatible moving frames are represented by the \emph{vertices and edges} of certain graphs. As a byproduct, we introduce new \emph{discrete invariants} of step 2 nilpotent graded Lie algebras, called the \emph{characteristic tuple}. For left-invariant metrics on step 2 Carnot groups, this tuple is the growth of the osculating flags of the \emph{Jacobi curves} \cite{jac1} of generic normal extremals, i.e., the sequence of the lengths of the columns of their Young diagrams \cite{li2} starting from the second column; for other sub-Riemannian metrics with the same Tanaka symbol, this growth can be faster, but not slower. The compatible graphs are used on the dense open set where this tuple is locally constant. The bracket relations obtained there are identities between smooth tensors, so they extend by continuity across the points where the tuple jumps, and the normal form is then obtained near every stable point. Consequently, our classification holds without any assumption of local constancy of the characteristic tuple.
 \end{abstract}
\maketitle
\section{Introduction}
\label{intro_sec}
This paper is devoted to the classification of pairs of germs of sub-Riemannian metrics on step $2$ distributions which share the same (variational) geodesics, either  as unparametrized curves (\emph{projective equivalence}) or as parametrized curves (\emph{affine equivalence})  (see Theorems \ref{mainthm} and \ref{mainthm_eisen}  below). These results extend the classical theorems of \emph{Eisenhart (1923)} and \emph{Levi-Civita (1896)} for Riemannian metrics, which can be viewed as a special case of sub-Riemannian metrics where the underlying distribution coincides with the tangent bundle. In particular, our results confirm the phenomenon of direct product (or separation of variables), previously observed in the Riemannian setting: at the level of sub-Riemannian metrics in the affine case and at the level of the underlying distributions in the projective case, as stated in Corollaries \ref{direct_prod_affine_cor} and \ref{direct_prod_proj_cor} below.
Although step $2$ distributions still form a rather restrictive subclass among all distributions, the results of this paper represent substantial generalizations of the known classification results previously established for contact and even-contact distributions in \cite{Z}, as well as for the so-called ad-surjective step $2$ distributions in \cite{Zaifeng}. Moreover, the graph-theoretic approach developed in this work both significantly simplifies and clarifies the proof in the particular ad-surjective case given in \cite{Zaifeng}, and suggests that the method may be further extended to distributions of higher step through the use of \emph{hypergraphs}.
\subsection{Projective and affine equivalence of sub-Riemannian metrics}
In more detail, recall that a distribution $D$ on a manifold $M$ is a subbundle of the tangent bundle $TM$.
A sub-Riemannian manifold/structure is a triple $(M, D,g)$, where $M$ is a smooth manifold, $D$ is a  bracket-generating distribution, and for any $q$, $g(q)$ is an inner product on $D(q)$ which depends smoothly on $q$. We say that $g$ is a sub-Riemannian metric on $(M, D)$. As already mentioned, a Riemannian manifold/structure/metric appears as the particular case where $D=TM$.
In the sequel, we will assume that the distribution $D$ is bracket-generating, i.e. for every point $q\in M$ the iterative Lie brackets of vector fields tangent to a distribution $D$ (i.e., of sections of $D$) span the tangent space $T_qM$. In more detail,
one can define a filtration
\begin{equation}
\label{filtr}
D=D^{1}\subset D^{2}\subset\ldots D^j\subset \ldots
\end{equation}
of the tangent bundle, called a \emph{weak derived flag}, as follows:  set $D=D^{1}$ and define recursively
\begin{equation}
\label{Dpower1}
D^{j}=D^{j-1}+[D,D^{j-1}], \quad  j>1.
\end{equation}
If $X_1,\ldots, X_m$ are $m$ vector fields constituting a local basis of a distribution $D$,
then $D^{j}(q)$ is the linear span of all iterated Lie brackets of these vector fields, of length not greater than  $j$,  evaluated at a point $q$,
\begin{equation}
\label{Dpower2}
D^{j}(q)=  {\rm span}\{\left[X_{i_1},\ldots [X_{i_{s-1}},X_{i_s}]\ldots\right](q):  (i_1,\ldots,i_s)\in [1:m]^s, s\in[1:j] \}
\end{equation}
(here given a positive integer $n$ we denote by $[1:n]$ the set $\{1, \ldots, n\}$).
A distribution $D$ is called \emph{bracket-generating} (or \emph{completely nonholonomic}) if for any $q$ there exists $\mu\in\mathbb N$ such that $D^{\mu}(q)=T_q M$. The smallest such integer, denoted by $\mu(q)$, is called the \emph{degree of nonholonomy} of $D$ at the point $q$. If $D$ is equiregular, i.e., the dimensions of the spaces $D^j(q)$ do not depend on $q$, and its degree of nonholonomy is equal to $\mu$ at every point, one says that $D$ is a \emph{step $\mu$ distribution}. Note that a distribution whose degree of nonholonomy is equal to $2$ at every point is automatically equiregular.
Since we work locally, the assumption of bracket generation is not too restrictive: if a distribution is not bracket-generating then in a neighborhood $U$ of a generic point there exists a positive integer $\mu$ such that $D^{\mu+1}=D^{\mu}\subsetneqq TM$. So,  $D^\mu$ is a proper involutive subbundle of $TU$ and the distribution $D$ is bracket-generating on each integral submanifold of  $D^\mu$ in $U$. So, we can restrict ourselves to these integral submanifolds instead of $U$.
In the present paper, we consider geodesics defined via the variational approach, which is natural from the optimal-control perspective. For a comparison with the differential-geometric viewpoint, i.e., ``shortest'' versus ``straightest'' curves, see \cite{Alekseevsky2020}; note that, if ``straightest'' refers to all geodesics of an affine connection on $TM$, without a horizontality restriction, then both notions coincide only in the Riemannian case.
A horizontal curve $\gamma:[a, b]\rightarrow  M$ is an absolutely continuous curve tangent to $D$, i.e., $\gamma'(t) \in D \left(\gamma(t)\right)$. In the sequel, the manifold $M$ is assumed to be connected. By the Rashevskii-Chow theorem the assumption that  $D$ is bracket-generating guarantees that the space of horizontal curves connecting two given points $q_0$ and $q_1$ is not empty.  Consider the following energy-minimizing problem:
\begin{equation}
\label{energy}
\begin{split}
~& E(\gamma) = \displaystyle\int_a^b g\left(\gamma'(t),\gamma'(t)\right)\, dt\rightarrow \textnormal{min},\\
~&\gamma'(t)\in D \left(\gamma(t)\right) \quad \textnormal{ a.e. } t,\\
~& \gamma(a)=q_0, \quad \gamma(b)=q_1.
\end{split}
\end{equation}
Necessary optimality conditions for this problem are supplied by the Pontryagin Maximum Principle (\cite{AgrSach, Pont}) of optimal control theory, which defines special curves in the cotangent bundle $T^*M$, called the \emph{Pontryagin extremals}, so that a minimizer of the optimal control problem \eqref{energy} is a projection from $T^*M$ to $M$ of some \emph{Pontryagin extremal}.
In general, there are two	types of  Pontryagin  extremals for optimal control problems, \emph{normal} and \emph{abnormal} (\cite{ABB, AgrSach}): for the former, the Lagrange multiplier near the functional is non-zero, and, for the latter, it is zero.  In particular, abnormal extremals, as unparametrized curves,  depend only on the distribution $D$ and not on a metric $g$ defined thereon. This indicates that only normal extremals are essential for the considered problems of affine/projective equivalence (see Proposition  \ref{th:orb.to.proj}
for the precise formulation). Therefore, we give an explicit description of normal extremals only.  They  are the  integral curves of the Hamiltonian vector field  $\vec h$ on $T^*M$ corresponding to the Hamiltonian
\begin{equation}
\label{Hamiltonian}
h(p,q)=\frac12||\, p|_{D(q)}||^2, \quad q\in M, p\in T^*_qM,
\end{equation}
and lying on a nonzero level set of $h$. Here $||\, p|_{D(q)}||$ denotes the operator norm of the functional  $p|_{D(q)}$, i.e.,
$$||\, p|_{D(q)}||=\max \{p(v): v\in D(q), g(q)(v,v)=1\}.$$
The Hamiltonian $h$ defined by \eqref{Hamiltonian} is
called the \emph{Hamiltonian, associated with the metric $g$} or shortly the \emph{sub-Riemannian Hamiltonian}.
\begin{Def}
\label{geod}
The \emph{(variational) sub-Riemannian geodesics} are projections of the Pontryagin extremals of the optimal control problem \eqref{energy}.
\end{Def}
Note that in the Riemannian case the geodesics given by Definition \ref{geod} coincide with the usual Riemannian geodesics.
\begin{Def}
Let $M$ be a manifold and $D$ be a bracket-generating distribution on $M$. Two sub-Riemannian metrics $g_1$ and $g_2$ on $(M, D)$ are called \emph{projectively equivalent} at $q_0\in M$ if they have the same geodesics, up to a reparametrization, in a neighborhood of $q_0$, i.e., for every geodesic $\gamma(t)$ of $g_1$ there exists a reparametrization $t=\varphi(\tau)$ such that $\gamma\bigl(\varphi(\tau)\bigr)$ is a geodesic of $g_2$, and conversely, with $g_1$ and $g_2$ interchanged. Here a reparametrization is a diffeomorphism between intervals of the real line. The metrics $g_1$ and $g_2$ are called \emph{affinely equivalent} at $q_0$ if they are projectively equivalent at $q_0$ and the reparametrizations $\varphi(\tau)$ above are affine functions, i.e., they are of the form $\varphi(\tau)=a\tau+b$.
\end{Def}
We will write   $g_1\stackrel{p}{\sim} g_2$ and $g_1\stackrel{a}{\sim} g_2$ in the case of projective and affine equivalence, respectively.
Obviously, for a sub-Riemannian metric $g$ on $(M, D)$  and a positive constant $c$  the metrics $c g$ and $g$ are affinely equivalent. The metric $c g$ will be said a \emph{constantly proportional metric} to the metric $g$.
	\subsection{Review of classical results in Riemannian case} The problem of projective and affine equivalence of Riemannian metrics is classical, and the former was solved completely already in the nineteenth century: all pairs of projectively equivalent Riemannian metrics satisfying certain natural regularity assumptions were described by Levi-Civita in \cite{LeviCivita1896}, generalizing the earlier result of Dini \cite{Dini1870} in the $2$-dimensional case.
    All pairs of locally affinely equivalent Riemannian metrics were described by Eisenhart in \cite{Eisenhart1923} about a quarter of a century later.
    We formulate these classical theorems both for completeness and as preparation for the proper sub-Riemannian case. We begin by introducing some terminology that will also be used in the sub-Riemannian setting.
\begin{Def}
\label{transition_def}
    Given two distinct sub-Riemannian metrics $g_1$ and $g_2$ on $D$, we define the \emph{transition operator} to be the linear map $S(q):D(q)\to D(q)$ such that $g_2(X,Y)=g_1(S(q)X,Y)$ for every $X, Y \in D(q)$. A point $q_0\in M$ is called \emph{ stable w.r.t. the pair $(g_1, g_2)$} if  $S_q$ has the same number of distinct eigenvalues in a neighborhood of $q_0$ (or, equivalently, the tuple of multiplicities of the eigenvalues is constant in a neighborhood of $q_0$).
\end{Def}
\begin{Def}
\label{net_def}
A collection of foliations $\{\mathcal F_i\}_{i=1}^k$ on a manifold  $M$ forms a \emph{net of foliations} if, for every point $q$, the tangent spaces at $q$ to the leaves $\mathcal F_i(q)$ passing through $q$ define a splitting of $T_qM$, and there is a coordinate system in which the foliations $\mathcal F_i$ are parallel to the corresponding coordinate subspaces.  In this case, for every point $q$ the natural projection $\pi_i^{q}: U\to \mathcal F_i(q)\cap U$ is well defined on a sufficiently small product neighborhood $U$ of $q$. In what follows, all such projections are understood in this local sense.
\end{Def}
 Near a stable point $q_0$, decompose $TM$ into distributions $D_i$ of eigenspaces corresponding to the eigenvalues  $\lambda_i$, $1\leq i\leq k$, of the transition operator.
    \medskip
	\begin{theorem}[Levi-Civita, 1896, \cite{LeviCivita1896}]
Two Riemannian metrics $g_1$ and $g_2$ are projectively equivalent in a neighborhood of a stable point $q_0$ if and only if the following two conditions hold:
\begin{enumerate}
    \item The distributions $D_i$ are involutive, and their integral foliations $\mathcal F_i$ form a net near $q_0$.
    \item For each $i$, there exist a Riemannian metric $b_i$ and a positive smooth function $\mu_i$ on the leaf $\mathcal F_i(q_0)$ such that $\mu_i$ is constant if $\dim D_i>1$ and $\mu_i(q_0)\neq\mu_j(q_0)$ for all $i\neq j$, and, setting $\pi_i:=\pi_i^{q_0}$,
    \[
    g_1=\sum_{i=1}^k \gamma_i\,\pi_i^*b_i,
    \qquad
    g_2=\sum_{i=1}^k \lambda_i\,\gamma_i\,\pi_i^*b_i,
    \]
    where
    \[
    \lambda_i=\frac{1}{\pi_i^*\mu_i}\prod_{l=1}^k\frac{1}{\pi_l^*\mu_l},
    \qquad
    \gamma_i=\prod_{l\neq i}\left|\pi_l^*\mu_l-\pi_i^*\mu_i\right|.
    \]
\end{enumerate}
\end{theorem}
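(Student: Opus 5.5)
The plan is to encode projective equivalence as an overdetermined PDE on the transition operator, read off the eigenstructure from that PDE, and handle the converse by direct verification.

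\textbf{Step 1: the Sinjukov-type equation.} Let $\nabla$ be the Levi-Civita connection of $g_1$. Geodesic equivalence means the two Christoffel symbols differ by $\Gamma^2-\Gamma^1=\phi\otimes\Id+\Id\otimes\phi$ for a closed 1-form $\phi$. Introduce the $g_1$-self-adjoint operator
\[
A=\left(\frac{\det g_2}{\det g_1}\right)^{1/(n+1)} S^{-1}.
\]
It is a polynomial function of $S$, so its eigenspaces are exactly the $D_i$. Translating the Christoffel relation yields the linear equation
\[
\nabla_X A=\tfrac12\bigl(X\otimes d\operatorname{tr}A+(d\operatorname{tr}A)^\sharp\otimes X^\flat\bigr),
\]
where $\flat$ and $\sharp$ are taken with respect to $g_1$. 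Conversely, any nondegenerate $g_1$-self-adjoint solution $A$ defines, by inverting the formula, a metric projectively equivalent to $g_1$.

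\textbf{Step 2: structure near a stable point.} Near $q_0$ the eigenvalues $\rho_i$ of $A$ are smooth and the eigendistributions $D_i$ are smooth.

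\emph{Involutivity.} Take $X,Y\in D_i$ and $Z\in D_j$ with $j\neq i$. Evaluating the equation on $g_1((\nabla_XA)Y,Z)$ and using self-adjointness gives
\[
(\rho_i-\rho_j)\,g_1(\nabla_XY,Z)=\tfrac12\bigl(g_1(X,Y)\,d\operatorname{tr}A(Z)\bigr).
\]
The right-hand side is symmetric in $X,Y$, so $g_1([X,Y],Z)=0$. Hence each $D_i$ is involutive. A similar computation with $X\in D_i$, $Y\in D_j$, $Z\in D_l$ for distinct indices shows that every sum $\bigoplus_{i\in I}D_i$ is involutive as well. By Frobenius this yields simultaneous coordinates adapted to all the $D_i$, so the integral foliations form a net.

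\emph{Eigenvalues.} Differentiating $A|_{D_i}=\rho_i\Id$ along the equation gives $d\rho_i|_{D_j}=0$ for $j\neq i$, so $\rho_i$ is constant along the other leaves and $\rho_i=\pi_i^*\mu_i$. If $\dim D_i\ge2$, a trace argument on $D_i$ shows $d\rho_i|_{D_i}=0$ as well, so $\mu_i$ is constant. The condition $\mu_i(q_0)\neq\mu_j(q_0)$ for $i\neq j$ holds because these are distinct eigenvalues at $q_0$.

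\textbf{Step 3: the metric in product form.} Write $g_1=\sum_i g_1|_{D_i}$, an orthogonal sum. The Codazzi-type components of the equation ($X,Y\in D_i$, differentiation along $D_j$) give
\[
L_Z\bigl(g_1|_{D_i}\bigr)=\frac{d\rho_j(Z)}{\rho_j-\rho_i}\, g_1|_{D_i},\qquad Z\in D_j .
\]
Integrating this ODE along the net coordinates shows that $g_1|_{D_i}\cdot\prod_{l\neq i}|\rho_l-\rho_i|^{-1}$ is the pullback $\pi_i^*b_i$ of a metric $b_i$ on the leaf $\mathcal F_i(q_0)$. This gives $\gamma_i=\prod_{l\neq i}|\pi_l^*\mu_l-\pi_i^*\mu_i|$. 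Recovering $g_2$ from $A$ produces $\lambda_i=\rho_i^{-1}\prod_l\rho_l^{-1}$, which is the stated formula.

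\textbf{Converse and main obstacle.} For the converse, one checks directly that metrics of the stated form satisfy the equation of Step 1 with $A=\operatorname{diag}(\pi_i^*\mu_i)$; this computation is done blockwise in the net coordinates. I expect the main obstacle to be the integration in Step 3, namely showing the normalized block metric is independent of all the other variables. This requires the mixed components of the equation together with the Frobenius compatibility from Step 2, and it has to be done carefully where blocks have dimension greater than $1$.
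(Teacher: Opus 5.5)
The paper does not prove this statement. It quotes Levi-Civita's theorem only as background, and the Riemannian case $D=TM$ is not covered by its main theorems, which concern step $2$ distributions. So there is no proof in the paper to match.

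Judged on its own, your route through Sinjukov's linear equation is the standard modern proof, and the outline is sound. I checked the identity $(\rho_i-\rho_j)g_1(\nabla_XY,Z)=\tfrac12 g_1(X,Y)\,d\operatorname{tr}A(Z)$, its three-index analogue, the relation $d\rho_i|_{D_j}=0$, and the block equation for $L_Z(g_1|_{D_i})$; all of them are correct. The converse is a routine Koszul-formula computation in net coordinates.

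The counterpart of your Step 3 in the paper is the rotation argument of Section~\ref{rotation_sec}. Since there is no connection in the sub-Riemannian setting, the analogous identity $X_l(\log\sigma_i)=-2\kappa_{li}=X_l(\log\gamma_i)$ is extracted there from the first-divisibility identities of Proposition~\ref{first divi}. The paper normalizes by $B_i=(\prod_{r=1}^k\lambda_r)^{1/(k+1)}/\lambda_i$, with the product over \emph{distinct} eigenvalues, and this produces the stated formula for $\lambda_i$ directly. Your approach is far shorter but depends on the Levi-Civita connection; the paper's works without one.

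There is one actual error, in the last sentence of Step 3. From $A=(\det S)^{1/(n+1)}S^{-1}$ you get $\det A=(\det S)^{-1/(n+1)}$, hence $S=(\det A)^{-1}A^{-1}$ and
\[
\lambda_i=\frac{1}{\rho_i\det A}=\frac{1}{\rho_i}\prod_{l=1}^k\rho_l^{-\dim D_l}.
\]
This coincides with $\rho_i^{-1}\prod_l\rho_l^{-1}$ only when all eigenvalues are simple, so the choice $\mu_i=\rho_i$ does not give the stated formula in general.

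The fix uses your own Step 2:
\begin{itemize}
\item $\rho_l$ is constant whenever $\dim D_l>1$, so $K:=\prod_l\rho_l^{1-\dim D_l}$ is a positive constant.
\item Setting $\mu_i:=K^{-1/(k+1)}\rho_i|_{\mathcal F_i(q_0)}$ gives exactly $\lambda_i=1/(\pi_i^*\mu_i\prod_l\pi_l^*\mu_l)$.
\item Then $\prod_{l\neq i}|\rho_l-\rho_i|=K^{(k-1)/(k+1)}\gamma_i$, and this constant is absorbed into $b_i$.
\end{itemize}
Correspondingly, in the converse the operator $A$ of the given pair is only a constant multiple of $\operatorname{diag}(\pi_i^*\mu_i)$. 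This is harmless, since the equation is linear.

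Two smaller points.
\begin{itemize}
\item The trace argument does not work as stated. Taking the trace over $D_i$ of the equation with $X\in D_i$ yields only $\dim D_i\,X(\rho_i)=d\operatorname{tr}A(X)$, which holds automatically once $d\rho_l|_{D_i}=0$ for $l\neq i$. What gives $d\rho_i|_{D_i}=0$ is the identity $X(\rho_i)\,g_1(Y,Y)=d\operatorname{tr}A(Y)\,g_1(X,Y)$, obtained from $g_1((\nabla_XA)Y,Y)$ and evaluated at some $0\neq Y\in D_i$ orthogonal to $X$.
\item The step you call the main obstacle is easy. For $Z\in D_j$ you already know $Z\rho_i=Z\rho_l=0$ for $l\neq i,j$, so the coefficient $Z\rho_j/(\rho_j-\rho_i)$ equals $Z\log\prod_{l\neq i}|\rho_l-\rho_i|$. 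Since coordinate fields commute, this says that in net coordinates the coefficients of $\prod_{l\neq i}|\rho_l-\rho_i|^{-1}g_1|_{D_i}$ do not depend on the coordinates of the other blocks, whatever the block dimensions.
\end{itemize}
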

A pair of Riemannian metrics $(g_1,g_2)$ satisfying the conditions of the preceding theorem is called a \emph{Levi-Civita pair} at $q_0$.
\begin{theorem}[Eisenhart, 1923, \cite{Eisenhart1923}]
Two Riemannian metrics $g_1$ and $g_2$ are affinely equivalent in a neighborhood of a point $q_0\in M$ if and only if they form a Levi-Civita pair at $q_0$ for which all functions $\mu_i$ are constant. Equivalently, $g_1$ and $g_2$ are affinely equivalent in a neighborhood of $q_0$ if and only if there exist a net of foliations $\{\mathcal F_i\}_{i=1}^k$ and Riemannian metrics $h_i$ on the leaves $\mathcal F_i(q_0)$ such that
\[
g_1=\sum_{i=1}^k \pi_i^*h_i,
\qquad
g_2=\sum_{i=1}^k C_i\,\pi_i^*h_i
\]
in a neighborhood of $q_0$, for some positive constants $C_1,\ldots,C_k$.
\end{theorem}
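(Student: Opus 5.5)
The plan is to reduce the affine case to the projective one, using the Levi-Civita theorem just stated, and then simplify the normal form.

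\emph{Affine $\Rightarrow$ Levi-Civita pair with constant $\mu_i$.} Affine equivalence implies projective equivalence. For Riemannian metrics, affine equivalence means the Levi-Civita connections $\nabla^1,\nabla^2$ share the same parametrized geodesics. Since both connections are torsion-free, this forces $\nabla^1=\nabla^2$. Hence $g_2$ is $\nabla^1$-parallel, and so is the transition operator $S$. A parallel endomorphism has constant eigenvalues and parallel eigendistributions, so every point is stable and the Levi-Civita theorem applies at $q_0$.

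In the Levi-Civita normal form, $S$ acts on $D_i$ as $\lambda_i$. Constancy of all $\lambda_i$ is then read off from the formula
\[
\lambda_i=\frac{1}{\pi_i^*\mu_i}\prod_{l}\frac{1}{\pi_l^*\mu_l}.
\]
Consider the ratios $\lambda_i/\lambda_j=\pi_j^*\mu_j/\pi_i^*\mu_i$. Their constancy, together with the fact that $\pi_i^*\mu_i$ depends only on the $i$-th factor of the net, gives that each $\mu_i$ is constant. This holds because a function of the $i$-th factor divided by a function of the $j$-th factor can be constant only if both functions are constant. Two cases need separate care. If $k=1$, then $\lambda_1=\mu_1^{-2}$ is constant directly. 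If $\dim D_i>1$, then $\mu_i$ is constant by hypothesis.

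\emph{Converse direction.} Suppose all $\mu_i$ are constant. Then all $\gamma_i$ and $\lambda_i$ are constant, and we may set $h_i=\gamma_i b_i$ and $C_i=\lambda_i$, which gives the second form. So it suffices to show that the product form gives affine equivalence. In coordinates adapted to the net, $g_1=\sum_i\pi_i^*h_i$ is a Riemannian product. Its Levi-Civita connection is therefore the product connection, which depends only on each $h_i$ up to scaling. Consequently $g_2=\sum_i C_i\pi_i^*h_i$ has the same Levi-Civita connection, hence the same parametrized geodesics. Conversely, any pair in the second form is a Levi-Civita pair with constant $\mu_i$. To see this, choose distinct constants $\mu_i$ solving $\lambda_i=C_i$ up to a common scale, using that $g_2$ may be replaced by a constant multiple. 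Absorb the constants $\gamma_i$ into $b_i$.

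\emph{Main obstacle.} The step I expect to be delicate is the first one: showing that shared parametrized geodesics force $\nabla^1=\nabla^2$. The difference tensor $A=\nabla^2-\nabla^1$ is symmetric. Equality of the geodesic equations gives $A(v,v)=0$ for all $v$, and polarization then yields $A=0$. A second point needing care is matching the non-degeneracy requirement $\mu_i\ne\mu_j$ in the converse: for $C_i$ distinct this is automatic, and groups with equal $C_i$ are merged into a single eigendistribution.
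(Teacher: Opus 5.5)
Your proposal is correct and follows the same reduction the paper uses after Conjecture~\ref{conj2} to pass from the projective to the affine statement: constant transition eigenvalues give stability, Levi-Civita's theorem gives the normal form, the eigenvalue formula forces the $\mu_i$ to be constant, and conversely constant $\mu_i$ give a product of constantly rescaled metrics. The differences are minor: you get constant eigenvalues from $\nabla^1=\nabla^2$ rather than citing \cite[Proposition~4.7]{jmz2019}, and constant $\mu_i$ by separating variables in $\lambda_i/\lambda_j$ rather than via $\lambda_1=CB_1^{-(k+1)}$; also, in your redundant direct check that the product form is a Levi-Civita pair, you may not rescale $g_2$, but you do not need to, since $C_i=\mu_i^{-1}\prod_l\mu_l^{-1}$ has the exact positive solution $\mu_i=C_i^{-1}\bigl(\prod_lC_l\bigr)^{1/(k+1)}$, as in \eqref{rotation_P_B}.
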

\begin{Coro}
\label{Riemann_direct}
If a Riemannian metric $g$ is affinely nonrigid near a point $q_0$, i.e., if it admits an affinely equivalent metric that is not constantly proportional to it, then $g$ admits a nontrivial product structure in a neighborhood $U$ of $q_0$. More precisely, there exist positive-dimensional manifolds $U_1$ and $U_2$ such that $U=U_1\times U_2$, together with Riemannian metrics $g_1$ and $g_2$ on $U_1$ and $U_2$, respectively, for which
\[
g=\pi_1^*g_1+\pi_2^*g_2,
\]
where $\pi_i:U\to U_i$, $i=1,2$, are the canonical projections.
\end{Coro}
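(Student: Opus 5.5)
The corollary follows directly from Eisenhart's theorem. Let $\tilde g$ be affinely equivalent to $g$ near $q_0$ and not constantly proportional to it. By Eisenhart's theorem there exist a net of foliations $\{\mathcal F_i\}_{i=1}^k$, Riemannian metrics $h_i$ on the leaves $\mathcal F_i(q_0)$, and positive constants $C_1,\dots,C_k$ such that
\[
g=\sum_{i=1}^k \pi_i^*h_i, \qquad \tilde g=\sum_{i=1}^k C_i\,\pi_i^*h_i
\]
in a neighborhood of $q_0$. Here $\pi_i=\pi_i^{q_0}$, and the leaves are positive-dimensional since they come from a splitting into eigenspace distributions. I first merge indices with equal constants $C_i$; this does not affect the form of the decomposition, because a sum of pullbacks along a subset of the net is still a metric on the product of the corresponding leaves. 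So I may assume the $C_i$ are pairwise distinct. Then $k\ge 2$: if $k=1$, we would have $\tilde g=C_1 g$, contradicting the assumption that $\tilde g$ is not constantly proportional to $g$.

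Next I group the foliations into two blocks, $\{1\}$ and $\{2,\dots,k\}$. By Definition \ref{net_def} there is a coordinate system near $q_0$ in which each $\mathcal F_i$ is parallel to a coordinate subspace. Hence a small product neighborhood has the form $U=\prod_{i=1}^k W_i$ with $W_i\cong \mathcal F_i(q_0)\cap U$. Set $U_1=W_1$ and $U_2=\prod_{i\ge 2}W_i$, both positive-dimensional, so $U=U_1\times U_2$. Since the projection $\pi_i:U\to W_i$ factors through the canonical projection $p_2:U\to U_2$ for $i\ge 2$, the metric $g_2:=\sum_{i\ge2}\mathrm{pr}_i^*h_i$ on $U_2$ satisfies $p_2^*g_2=\sum_{i\ge 2}\pi_i^*h_i$. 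Taking $g_1:=h_1$ on $U_1$ then gives
\[
g=p_1^*g_1+p_2^*g_2 .
\]

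The only delicate point is identifying the abstract projections $\pi_i^{q_0}$ onto leaves with the coordinate projections of the product chart. This identification is exactly what the definition of a net provides, so once it is made the remaining steps are bookkeeping and I expect no real obstacle.
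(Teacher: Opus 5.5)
Your argument is correct and is essentially the one the paper intends (compare the proof of Corollary \ref{direct_prod_affine_cor}): apply Eisenhart's theorem, observe that a single factor would force $\tilde g=C_1g$, and group one factor against the remaining ones in a product chart of the net. Merging equal constants is harmless but unnecessary, since in Eisenhart's normal form the foliations are the eigenfoliations of the transition operator, so the $C_i$ are already its pairwise distinct eigenvalues.
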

\medskip
\subsection{The generalized Levi-Civita pairs conjecture}
A class of projectively equivalent sub-Riemannian metrics that provides a natural generalization of Levi-Civita pairs was introduced in \cite{jmz2019}.
\begin{Def}
\label{Gen_Levi-Civita}
One says that two sub-Riemannian metrics $g_1$ and $g_2$ on a distribution $D$ form a \emph{generalized Levi-Civita pair} in a neighborhood  of a stable point $q_0$ if there exists a local net of foliations $\{\mathcal F_i\}_{i=1}^k$ such that
the following three conditions hold:
    \begin{enumerate}
    \item If $D_i$ is  the distribution of the eigenspaces of the transition operator corresponding to the eigenvalue $\lambda_i$, then $D_i\subset T \mathcal F_i$ and $D_i$ is bracket generating within every leaf of $\mathcal F_i$;
    \item $d\,\pi_i^{q_0} \bigl(D_i(q) \bigr)=D_i\bigl(\pi_i^{q_0}(q)\bigr) $;
\item On the leaf $\mathcal F_i(q_0)$ there is a sub-Riemannian metric $b_i$ with underlying distribution  $D_i$ (considered as a distribution on $\mathcal F_i(q_0)$) and a positive smooth function $\mu_i$ such that $\mu_i$ is constant if $\dim D_i>1$, $\mu_i(q_0)\neq\mu_j(q_0)$ for all $i\neq j$, and, setting $\pi_i:=\pi_i^{q_0}$,
$$g_1 =\sum_{i=1}^k \gamma_i\,\pi_i^*b_i, \quad
g_2=\sum_{i=1}^k\lambda_i\, \gamma_i\, \pi_i^* b_i,$$
where
$$\lambda_i=\cfrac{1}{\pi_i^*\mu_i}\displaystyle{\prod_{l=1}^k\cfrac{1}{\pi_l^*\mu_l}},
\qquad
\gamma_i=\displaystyle{\prod_{l\neq
i}\Bigl|\pi_l^*\mu_l-\pi_i^*\mu_i\Bigr|}.$$
\end{enumerate}
\end{Def}
In \cite[Proposition 5.5]{jmz2019}, it was shown that the metrics in every generalized Levi-Civita pair are projectively equivalent. This led to the following two conjectures concerning the converse.
\begin{Conj}[generalized Levi-Civita theorem]
\label{conj1}
Two sub-Riemannian metrics in a neighborhood of a stable point are projectively equivalent if and only if they form a generalized Levi-Civita pair.
\end{Conj}
\begin{Conj}[generalized Eisenhart theorem]
\label{conj2}
Two sub-Riemannian metrics are locally affinely equivalent if and only if they form a generalized Levi-Civita pair with constant functions $\mu_i$.
\end{Conj}
In the affine case the stable-point hypothesis is automatic, since the eigenvalues of the transition operator are constant \cite[Proposition 4.7]{jmz2019}. Moreover, affine equivalence implies projective equivalence. Therefore, if Conjecture \ref{conj1} holds under a given collection of regularity assumptions, then an affinely equivalent pair satisfying those assumptions is a generalized Levi-Civita pair. Writing $B_i:=\pi_i^*\mu_i$, the identities $\lambda_i=B_i^{-1}\prod_{r=1}^k B_r^{-1}$ imply that $B_j/B_i=\lambda_i/\lambda_j$, so all ratios $B_i/B_j$ are constant. Substituting $B_i=c_i B_1$ into the formula for $\lambda_1$ gives $\lambda_1=C B_1^{-(k+1)}$ for a positive constant $C$; hence $B_1$, and therefore every $B_i$, is constant. Conversely, a generalized Levi-Civita pair with constant $\mu_i$'s is affinely equivalent. Thus Conjecture \ref{conj1} implies Conjecture \ref{conj2} under the same regularity assumptions.

Let us mention two relevant classification results obtained prior to this work:
\begin{enumerate}
    \item Conjecture \ref{conj1}, and hence Conjecture \ref{conj2}, was confirmed in \cite{Z} for sub-Riemannian metrics on contact and even-contact distributions. Although the formulation in \cite{Z} does not explicitly use generalized Levi-Civita pairs, it is equivalent to the formulation above; the contact or even-contact hypothesis supplies the additional regularity.
    \item Conjecture \ref{conj2}, but not Conjecture \ref{conj1}, was confirmed in \cite{Zaifeng} for sub-Riemannian metrics on special step $2$ distributions whose Tanaka symbols are $\mathrm{ad}$-surjective at every point; see Remark \ref{ad-surjective_rem} for the definition of $\mathrm{ad}$-surjectivity.
\end{enumerate}
Another important initial step toward the proof of Conjecture \ref{conj1} in the general case was taken in \cite[Theorem 7.1]{jmz2019}, which proves the conjecture at the level of nilpotent approximations at a point (i.e., a left-invariant sub-Riemannian structure on the Lie group whose Lie algebra is equal to the Tanaka symbol at that point). Namely, if $D$ admits a pair of projectively equivalent, non-conformal sub-Riemannian metrics $(g_1,g_2)$ in a neighborhood of a stable point (see Definition \ref{transition_def}), then their nilpotent approximations at this point constitute a Levi-Civita pair with constant coefficients. Consequently (see also Theorem \ref{Tanaka_decomp_prop} below), the Tanaka symbol $\mathfrak{m}$ of $D$ is decomposable (i.e., a direct sum of fundamental graded Lie algebras), and this decomposition aligns with the eigenspace decomposition of the transition operator of the pair $(g_1,g_2)$.

Our main results establish these conjectures for step $2$ distributions.
\begin{theorem}
\label{mainthm}
Let $D$ be a step $2$ distribution and let $q_0$ be stable with respect to a pair of sub-Riemannian metrics $(g_1,g_2)$. Then Conjecture \ref{conj1} holds
in a neighborhood of $q_0$.
If the metrics are projectively equivalent, the leaves of the foliations $\mathcal F_i$ in Definition \ref{Gen_Levi-Civita} are integral submanifolds of $D_i^2$, where $D_i$ is the eigendistribution of the transition operator corresponding to $\lambda_i$.
\end{theorem}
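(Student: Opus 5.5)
The \emph{if} direction is \cite[Proposition 5.5]{jmz2019}, so the plan concerns only the converse. Throughout, fix a pair $(g_1,g_2)$ of projectively equivalent metrics near the stable point $q_0$. Stability lets us choose a smooth eigen-decomposition $D=\bigoplus_{i=1}^k D_i$ of the transition operator $S$, with eigenvalues $\lambda_i$ smooth and pairwise distinct near $q_0$.

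\textbf{Step 1: reduce projective equivalence to a PDE system.} By Proposition \ref{th:orb.to.proj}, projective equivalence amounts to orbital equivalence of the Hamiltonian flows of $h_1$ and $h_2$ on normal extremals. We write $h_1=\frac12\sum_j u_j^2$, where $u_j(p)=p(X_j)$ and $\{X_j\}$ is a $g_1$-orthonormal frame adapted to the splitting $D=\bigoplus D_i$. We complete it by a frame $\{Y_\alpha\}$ of $TM/D$ spanned by brackets $[X_j,X_l]$, using that $D$ has step $2$. The orbital condition $\{h_1,h_2\}\equiv 0 \bmod (h_1,h_2)$ then becomes a polynomial identity in the fibre coordinates $(u,v)$ on $T^*M$. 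Expanding it by degree in $u$ and $v$ gives the fundamental algebraic system. The top-degree part involves only the Tanaka symbol. It recovers the nilpotent-approximation statement \cite[Theorem 7.1]{jmz2019}, and hence, via Theorem \ref{Tanaka_decomp_prop}, the decomposition $\mathfrak m=\bigoplus_i\mathfrak m_i$ with $[D_i,D_j]\subset D$ modulo lower order for $i\neq j$.

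\textbf{Step 2: the bracket relations, which are the main obstacle.} We must prove the exact relations $[D_i,D_j]\subset D_i+D_j$ for $i\neq j$. We must also prove that the second-order information is compatible with a product: $D_i^2\cap D_j^2=0$, and each $\sum_{l\neq i}D_l^2$ is involutive. The leaves of the foliation $\mathcal F_i$ will then be the integral manifolds of $D_i^2$, as asserted in Theorem \ref{mainthm}, and these foliations will form a net. The lower-order terms of the identity mix frame coefficients across different eigenspaces. The symbol of each $\mathfrak m_i$ can be degenerate, in that $\mathrm{ad}_x$ is not surjective onto $\mathfrak m_i^{-2}$, so the argument of \cite{Zaifeng} does not apply directly.

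To handle this, we choose moving frames adapted both to the splitting and to the osculating flags $x\mapsto \mathrm{ad}_x$ along generic $x\in\mathfrak m_i^{-1}$. We record which bracket coefficients are forced to vanish by a graph: vertices are frame vectors and edges are nonzero structure constants. We then propagate vanishing along paths. The relevant lengths of the iterated images are governed by the characteristic tuple. First we carry this out on the open dense set where the tuple is locally constant. The resulting relations are tensorial identities, so they extend by continuity to a neighbourhood of $q_0$.

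\textbf{Step 3: normal form.} With the brackets in hand, we choose coordinates in which the $\mathcal F_i$ are coordinate-parallel. Condition (2) of Definition \ref{Gen_Levi-Civita}, namely $d\pi_i(D_i)=D_i$, follows from $[D_j,D_i]\subset D_i+D_j$ and the involutivity just established. The remaining components of the PDE system reduce to the classical Levi-Civita equations for the eigenvalues, $X(\lambda_i)=0$ for $X\in D_i$ when $\dim D_i>1$, together with their analogues. We solve them as in the Riemannian case. This yields the functions $\mu_i$ on the leaves and shows that $g_1$ and $g_2$ take the stated form with some metrics $b_i$. Bracket generation of $D_i$ on each leaf holds because the leaf is an integral manifold of $D_i^2$.
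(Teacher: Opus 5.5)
Your outline has the right overall architecture (orbital reduction, splitting of the symbol, frame computations on the regular locus of the characteristic tuple, continuity extension, normal form). However, your Step 2, which carries the whole proof, contains no mechanism: nowhere do you produce an equation whose vanishing forces a cross-component structure function to be zero.

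Take a $g_1$-orthonormal frame adapted to $D=\bigoplus_iD_i$, with central fields chosen as brackets inside each $D_i$. In such a frame one must show three things:
\begin{enumerate}
\item $c^l_{vw}=0$ when $v,w$ lie in the $i$th block and $l$ in another block. This is $D_i^2\cap D=D_i$, which is what yields $TM=\bigoplus_iD_i^2$; pairwise $D_i^2\cap D_j^2=0$ does not suffice.
\item $c^l_{vw}=0$ when $v,w,l$ lie in three distinct blocks.
\item $c^z_{ye}=0$ when $y$ is a horizontal and $e$ a central index of the $i$th block and $z$ lies outside it. This is $[D_i,D_i^2]\subset D_i^2$, which is equivalent to the involutivity of $D_i^2$.
\end{enumerate}

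Your Step 1 also misidentifies where such constraints come from. Orbital equivalence is not a single identity $\{h_1,h_2\}\equiv0\bmod(h_1,h_2)$, and the fundamental algebraic system is not obtained by expanding such an identity by degree. Divisibility identities of this kind (the first divisibility condition) give only the relations of Proposition \ref{first divi}. In those relations, $c^l_{vw}$ with $v,w$ in one block and $l$ in another occurs only with coefficient $\alpha_i^2-\alpha_i^2=0$.

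The paper uses instead that all maximal minors of the augmented matrix of the infinite fundamental algebraic system vanish. Its graph is not a record of vanishing coefficients to be propagated along paths. It is a normal form of the symbol (a chain of successive ad-maximal roots) that makes these minors computable:
\begin{itemize}
\item After the column operation \eqref{eq:1} and passage to the lowest vertex degree, it yields a core square submatrix whose determinant contains a distinguished monomial with coefficient $\pm1$ (Lemma \ref{pcoeff1}).
\item In suitably augmented minors, the coefficients of specific monomials equal $(\alpha_i^2-\alpha_t^2)c^l_{vw}$, up to terms that must be eliminated in a prescribed order (Lemma \ref{augmented_path_rigidity}, Propositions \ref{c0}--\ref{c2}).
\item The three-block case needs the separate contraction argument of Proposition \ref{c3}.
\item The brackets with central fields need the second and cubic layers in operator form, Lemma \ref{beta2}, and the Jacobi identity (Lemma \ref{bianchi}, Proposition \ref{mixed_edge_horizontal_vanishing}).
\end{itemize}
None of this is present in your sketch, or replaceable by ``propagation of vanishing along paths''.

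Step 3 is likewise asserted rather than proved. Sub-Riemannian geodesics are not the geodesics of an affine connection, so there is no Sinjukov-type linear system to ``solve as in the Riemannian case''.

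The constancy of $\lambda_i$ along $D_i$ for $\dim D_i>1$ is a genuinely sub-Riemannian statement (Proposition \ref{eigenconst}). The paper proves it by a weighted grading of the unreduced system and a contraction of its second layer with $J^i_{\eta_i}Z_i$. It is needed already in Step 2, to remove the eigenvalue-derivative corrections from the first layer and from the column operation.

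The metric normal form then requires several further steps:
\begin{itemize}
\item lifting the frames of $D_i$ along $\pi_i$;
\item showing that lifts from different blocks commute (Lemma \ref{rotation_commute_lemma});
\item deriving from this the transport equations for the transition matrices (Lemma \ref{rotation_transition_pde});
\item using items (1)--(3) of Proposition \ref{first divi} to show that the Gram matrix of the lifted frame is $\sigma_i$ times the identity, with $\sigma_i/\gamma_i$ constant on the fibres of $\pi_i$.
\end{itemize}
This is what actually produces the weights $\gamma_i$, the metrics $b_i$ and the functions $\mu_i$.
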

The affine statement follows from Theorem \ref{mainthm} and the constancy of the eigenvalues of the transition operator for affinely equivalent metrics \cite[Proposition 4.7]{jmz2019}.
\begin{theorem}
\label{mainthm_eisen}
Let $D$ be a step $2$ distribution and let $q_0\in M$. Then Conjecture \ref{conj2} holds in a neighborhood of $q_0$.
For each affinely equivalent pair $(g_1,g_2)$ near $q_0$, the leaves of the foliations $\mathcal F_i$ in Definition \ref{Gen_Levi-Civita} are integral submanifolds of $D_i^2$, where $D_i$ is the eigendistribution of the transition operator corresponding to $\lambda_i$.
\end{theorem}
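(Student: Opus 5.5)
The plan is to derive Theorem \ref{mainthm_eisen} from Theorem \ref{mainthm}, following the reduction already outlined after the statement of Conjecture \ref{conj2}. I treat the two directions of Conjecture \ref{conj2} separately.

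\emph{Sufficiency.} Assume $(g_1,g_2)$ is a generalized Levi-Civita pair whose functions $\mu_i$ are all constant. Then every $\lambda_i$ and every $\gamma_i$ is a positive constant. Setting $h_i:=\gamma_i b_i$, we get
\[
g_1=\sum_i \pi_i^*h_i, \qquad g_2=\sum_i\lambda_i\pi_i^*h_i .
\]
Because the net is a product and $D=\bigoplus_i D_i$ with $D_i$ tangent to the $i$-th factor (conditions (1) and (2)), the sub-Riemannian Hamiltonians split as $H_1=\sum_i H^{(i)}$ and $H_2=\sum_i \lambda_i^{-1}H^{(i)}$. Here the functions $H^{(i)}$ depend only on the $i$-th factor variables and pairwise Poisson commute. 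Each $H^{(i)}$ is therefore a first integral of both flows. It follows that normal extremals of $g_2$ are obtained from those of $g_1$ by a constant rescaling in each factor. On the level of the projected curves, this is a constant-speed, hence affine, reparametrization of a common geodesic, after adjusting the initial covectors factor by factor. Abnormal geodesics depend only on $D$ and so cause no trouble; this is where I would invoke Proposition \ref{th:orb.to.proj}. One could also simply cite \cite[Proposition 5.5]{jmz2019} together with the constancy of the eigenvalues. The conclusion is that $g_1\stackrel{a}{\sim}g_2$.

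\emph{Necessity.} Assume $g_1\stackrel{a}{\sim}g_2$ near $q_0$. By \cite[Proposition 4.7]{jmz2019} the eigenvalues of the transition operator $S$ are constant near $q_0$. In particular the number of distinct eigenvalues is locally constant, so $q_0$ is a stable point in the sense of Definition \ref{transition_def}, and no stability hypothesis is needed. Affine equivalence implies projective equivalence, so Theorem \ref{mainthm} applies at $q_0$. It yields a generalized Levi-Civita pair whose foliation leaves are integral submanifolds of $D_i^2$. It remains to show that the $\mu_i$ are constant. Put $B_i=\pi_i^*\mu_i$. From $\lambda_i=B_i^{-1}\prod_r B_r^{-1}$ we obtain $B_j/B_i=\lambda_i/\lambda_j$, which is constant. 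Writing $B_i=c_iB_1$ gives $\lambda_1=CB_1^{-(k+1)}$ with $C>0$ constant. Since $\lambda_1$ is constant, so is $B_1$, and hence every $B_i$ and every $\mu_i$ is constant. The assertion about the leaves is inherited directly from Theorem \ref{mainthm}.

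The only delicate point is whether the stability needed by Theorem \ref{mainthm} really holds at \emph{every} point. Theorem \ref{mainthm} requires the multiplicity tuple of $S$ to be constant near $q_0$. Constant eigenvalues alone do not rule out two eigenvalues that coincide at $q_0$ but differ nearby, but this cannot happen: the eigenvalues, being constant functions, have constant coincidence pattern. So the multiplicity tuple, which equals the dimensions of the eigenspaces, is constant as soon as $S$ is diagonalizable. $S$ is indeed diagonalizable, since it is self-adjoint with respect to $g_1$. I therefore expect no real obstacle; the substance of the argument lies entirely in Theorem \ref{mainthm}.
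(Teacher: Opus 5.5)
Your proposal is correct and follows the paper's own route. Stability at every point comes from the constancy of the transition eigenvalues \cite[Proposition 4.7]{jmz2019}; Theorem \ref{mainthm} gives the normal form and the leaves tangent to $D_i^2$; the ratio argument (equivalently, the paper's observation that $B_i=(\prod_r\lambda_r)^{1/(k+1)}/\lambda_i$ is constant) makes all $\mu_i$ constant; and the converse is \cite[Proposition 5.5]{jmz2019}. Your direct product-Hamiltonian check of sufficiency is also fine, except that Proposition \ref{th:orb.to.proj} plays no role there: abnormal geodesics need no argument, since they do not depend on the metric and are invariant under reparametrization.
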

The conformal case gives the following consequence.
\begin{Coro}[Sub-Riemannian Weyl theorem]
\label{Weyl_thm}
Let $D$ be a step $2$ distribution on a connected manifold $M$, $\dim M>1$. If two sub-Riemannian metrics on $D$ are projectively equivalent and conformal, then they are constantly proportional.
\end{Coro}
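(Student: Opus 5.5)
We deduce Corollary \ref{Weyl_thm} from Theorem \ref{mainthm}. Let $g_1\stackrel{p}{\sim} g_2$ with $g_2=f g_1$ for a positive smooth function $f$ on $M$. The transition operator is then $S(q)=f(q)\Id_{D(q)}$, which has exactly one eigenvalue at every point. Hence every point is stable with respect to $(g_1,g_2)$, and Theorem \ref{mainthm} applies near every $q_0\in M$. Since $\dim M>1$ and $D$ is bracket generating, $D$ is nonzero. We will show that $f$ is locally constant; connectedness of $M$ then makes $f$ constant.

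Fix $q_0$. By Theorem \ref{mainthm}, $(g_1,g_2)$ is a generalized Levi-Civita pair near $q_0$. Because there is a single eigenvalue, the net consists of one foliation ($k=1$), with $D_1=D$. By the last assertion of Theorem \ref{mainthm}, its leaves are integral submanifolds of $D^2=TM$, so the unique leaf is a whole neighborhood $U$ of $q_0$ and $\pi_1=\Id$.

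The product defining $\gamma_1$ is empty, so $\gamma_1\equiv 1$. The formula for the eigenvalue gives $\lambda_1=\mu_1^{-2}$, so
\[
g_1=b_1,\qquad g_2=\mu_1^{-2}b_1 .
\]
Thus $f=\mu_1^{-2}$ on $U$.

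We now split into two cases according to $\dim D$.

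\emph{Case $\dim D>1$.} Definition \ref{Gen_Levi-Civita} requires $\mu_1$ to be constant, so $f$ is constant on $U$.

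\emph{Case $\dim D=1$.} A rank one distribution is involutive, so $D^2=D$. Step $2$ then forces $TM=D$ and $\dim M=1$, contradicting $\dim M>1$. This case cannot occur.

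So $f$ is locally constant, hence constant on the connected manifold $M$, and $g_1,g_2$ are constantly proportional.

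The only step that needs care is the identification of the normal form with $k=1$. One must check that the convention for the empty product $\gamma_1$ and the formula for $\lambda_1$ are read correctly. More importantly, one must confirm that the rule "$\mu_i$ constant if $\dim D_i>1$" applies here; the dimension argument in the second case is exactly what guarantees this. If one prefers to avoid leaning on the normal form, there is an alternative. Show directly that projective equivalence of conformal metrics forces $df|_D=0$, by comparing the extremal equations of $h$ and $f^{-1}h$. Then $f$ is constant because $D$ is bracket generating.
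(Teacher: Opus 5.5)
Your proof is correct and matches the paper's own argument: apply Theorem \ref{mainthm} with the single eigenvalue ($k=1$), read off $\gamma_1=1$ and $\lambda_1=\mu_1^{-2}$, use that a rank-one distribution is involutive (so $\dim D\ge 2$ and Definition \ref{Gen_Levi-Civita} forces $\mu_1$ to be constant), and finish by connectedness. Your fallback route via $df|_D=0$ is the one recorded in Remark \ref{Weyl_proof_rem} through Proposition \ref{eigenconst}; note, however, that this is not a simple comparison of extremal equations, since it relies on the consistency and top-weight analysis of the fundamental algebraic system carried out in Appendix \ref{app:eigen}.
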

\begin{proof}
Write $g_2=\alpha^2g_1$, where $\alpha$ is a positive smooth function on $M$. The transition operator is $\alpha^2\mathrm{id}_D$, so it has a single eigenvalue throughout $M$ and every point is stable. Theorem \ref{mainthm} therefore gives a generalized Levi-Civita pair with one factor, $k=1$. A rank-one distribution is involutive; hence a step $2$ distribution has rank at least two. It follows from Definition \ref{Gen_Levi-Civita} that $\mu_1$ is constant. For $k=1$, the empty product defining $\gamma_1$ equals $1$, and the formula for the eigenvalue gives $\lambda_1=\mu_1^{-2}$. Thus $\alpha^2=\mu_1^{-2}$ is locally constant. Since $M$ is connected, $\alpha$ is constant on $M$, as required.
\end{proof}
In Section \ref{weyl_comparison_sec}, we show that, for step $2$ distributions, Corollary \ref{Weyl_thm} is stronger than the sub-Riemannian Weyl theorems of \cite{JMZ21}: we give examples of step $2$ sub-Riemannian structures that do not satisfy the hypotheses of these theorems, while Corollary \ref{Weyl_thm} requires no additional hypotheses. See also the description of Section \ref{weyl_comparison_sec} in Subsection \ref{outline_subsec}.

We also record consequences of the main theorems concerning the direct product structure of affinely or stably projectively nonrigid sub-Riemannian metrics, in analogy with Corollary \ref{Riemann_direct}. An \emph{affinely nonrigid sub-Riemannian metric} is a metric that admits an affinely equivalent metric that is not constantly proportional to it. Similarly, a \emph{stably projectively nonrigid sub-Riemannian metric} is a metric that admits a projectively equivalent metric that is not constantly proportional to it and for which every point in the neighborhood under consideration is stable with respect to the pair.
We first clarify what we mean by a direct product structure.
We say that a distribution $D$ on a manifold $M$ \emph{admits a product structure} if there exist positive-dimensional manifolds $M_1$ and $M_2$, endowed with positive-rank distributions $D_1$ and $D_2$, respectively, such that the following conditions hold:
\begin{enumerate}
\item $M=M_1\times M_2$;
\item if $\pi_i:M\to M_i$, $i=1,2$, are the canonical projections and $\pi_i^*D_i$ denotes the pullback distribution
\[
\pi_i^*D_i(q)=\{v\in T_qM:d\pi_i(q)v\in D_i(\pi_i(q))\},
\]
then
\begin{equation}
\label{proddist}
D(q)=\pi_1^*D_1(q)\cap\pi_2^*D_2(q).
\end{equation}
\end{enumerate}
In this case, we write $(M,D)=(M_1,D_1)\times(M_2,D_2)$.
A sub-Riemannian structure $(M,D,g)$ \emph{admits a product structure} if there exist sub-Riemannian structures $(M_1,D_1,g_1)$ and $(M_2,D_2,g_2)$ such that $(M,D)=(M_1,D_1)\times(M_2,D_2)$ and
\begin{equation}
\label{prodmetric}
g=\pi_1^*g_1+\pi_2^*g_2,
\end{equation}
where $\pi_i:M\to M_i$, $i=1,2$, are the canonical projections.
As a direct consequence of Theorem \ref{mainthm_eisen}, we obtain the following.
\begin{Coro}
\label{direct_prod_affine_cor}
Let $(M,D,g)$ be a sub-Riemannian structure with $D$ of step $2$, and let $q_0\in M$. Suppose that $g$ admits an affinely equivalent metric $\bar g$ near $q_0$ that is not constantly proportional to $g$. Then $(M,D,g)$ admits a product structure near $q_0$.
\end{Coro}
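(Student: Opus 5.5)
We derive the corollary from Theorem \ref{mainthm_eisen}. The idea is to take the generalized Levi-Civita form with constant $\mu_i$, observe that all $\gamma_i$ become constants, and then group the eigen-factors into two blocks so that $g$ splits as a sum over two factors.

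Let $g_1:=g$ and $g_2:=\bar g$. By Theorem \ref{mainthm_eisen}, $(g_1,g_2)$ form a generalized Levi-Civita pair near $q_0$ with constant functions $\mu_i$; the stability hypothesis is automatic here by \cite[Proposition 4.7]{jmz2019}. If the transition operator had a single eigenvalue ($k=1$), then it would equal $\lambda_1\Id$ with $\lambda_1=\mu_1^{-2}$ constant. In that case $\bar g=\lambda_1 g$ would be constantly proportional to $g$, which is excluded. Hence $k\ge 2$.

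Since every $\mu_i$ is constant, every $\gamma_i=\prod_{l\neq i}|\mu_l-\mu_i|$ is a positive constant; it is nonzero because $\mu_i(q_0)\neq\mu_j(q_0)$. Therefore
\[
g=\sum_{i=1}^k \pi_i^*h_i, \qquad h_i:=\gamma_i b_i,
\]
where $h_i$ is a sub-Riemannian metric on $(\mathcal F_i(q_0),D_i)$.

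Next we use the net of foliations. On a product neighborhood $U$ of $q_0$ we have $U\cong \mathcal F_1(q_0)\times\cdots\times\mathcal F_k(q_0)$ via $q\mapsto(\pi_1(q),\dots,\pi_k(q))$. Set $M_1:=\mathcal F_1(q_0)$ and $M_2:=\mathcal F_2(q_0)\times\cdots\times \mathcal F_k(q_0)$. On $M_1$ take the distribution $D_1$ with metric $h_1$. On $M_2$ take the product distribution $\widetilde D_2:=\bigoplus_{i\ge2} D_i$ (each summand tangent to its own factor), with metric $\widetilde g_2:=\sum_{i\ge 2}\mathrm{pr}_i^*h_i$. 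Both manifolds are positive-dimensional, since $D_i\neq0$ lies in $T\mathcal F_i$. Then $g=\pi_1^*h_1+\pi_2^*\widetilde g_2$, which is \eqref{prodmetric}.

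It remains to verify \eqref{proddist}, namely $D(q)=\pi_1^*D_1(q)\cap\pi_2^*\widetilde D_2(q)$, together with the bracket-generating property of the factors. We expect this to be the main point requiring care. The key fact comes from condition (2) of Definition \ref{Gen_Levi-Civita}: $d\pi_i(D_i(q))=D_i(\pi_i(q))$. Since $D_i(q)\subset T_q\mathcal F_i(q)$ and the foliations form a net, $d\pi_j$ kills $D_i(q)$ for $j\neq i$. In the product coordinates, $D_i(q)$ is therefore identified with $D_i(\pi_i(q))$ sitting in the $i$-th factor. This holds for every $q\in U$, not just on the leaf through $q_0$.

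Using this identification we check the two inclusions.
\begin{itemize}
\item[] \emph{Inclusion $D(q)\subseteq\pi_1^*D_1\cap\pi_2^*\widetilde D_2$.} We have $D(q)=\bigoplus_i D_i(q)$ by the eigenspace decomposition. Its image under $d\pi_1$ is $D_1$, and its image under $d\pi_2$ is $\widetilde D_2$.
\item[] \emph{Reverse inclusion.} A vector $v$ with $d\pi_1v\in D_1$ and $d\pi_2v\in \widetilde D_2$ decomposes along the splitting $T_qU=\bigoplus_i T\mathcal F_i$. Each of its components then lies in the corresponding $D_i(q)$, so $v\in D(q)$.
\end{itemize}

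Finally, the factors are bracket generating. $D_1$ is bracket generating on $M_1$ by condition (1) of Definition \ref{Gen_Levi-Civita}. $\widetilde D_2$ is a direct product of distributions that are each bracket generating on their own factor, so it is bracket generating on $M_2$. This completes the proof of Corollary \ref{direct_prod_affine_cor}.
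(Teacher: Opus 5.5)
Your proof is correct and takes essentially the paper's route: apply Theorem \ref{mainthm_eisen}, note that constant $\mu_i$ make every $\gamma_i$ constant, exclude $k=1$ because it would give $\bar g=\mu_1^{-2}g$, and group one factor against the product of the others. You also spell out two steps the paper leaves implicit: the check of \eqref{proddist} via the projection condition and the net splitting, and the bracket-generating property of the grouped factor.
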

\begin{proof}
Apply Theorem \ref{mainthm_eisen} to the pair $(g,\bar g)$. On a sufficiently small product neighborhood determined by the net of foliations, the eigenspace decomposition $D=\bigoplus_iD_i$ and the projection condition in Definition \ref{Gen_Levi-Civita} identify $D$ with the product of the distributions on the factor leaves. The functions $\mu_i$, and therefore the weights $\gamma_i$, are constant. Consequently, the formula $g=\sum_i\gamma_i\pi_i^*b_i$ expresses $g$ as the sum of the pullbacks of the factor metrics $\gamma_i b_i$. If there were only one factor, the same definition would give $\bar g=\mu_1^{-2}g$ with $\mu_1$ constant, contrary to the hypothesis. There are therefore at least two positive-dimensional factors. Taking one factor and grouping the others gives the required product of two sub-Riemannian structures.
\end{proof}
As a direct consequence of Theorem \ref{mainthm} we have
\begin{Coro}
\label{direct_prod_proj_cor}
Let $D$ be a step $2$ distribution on a manifold $M$ with $\dim M>1$, and let $q_0\in M$. Suppose that a sub-Riemannian metric $g$ on $D$ is stably projectively nonrigid near $q_0$. Then $D$ admits a product structure near $q_0$.
\end{Coro}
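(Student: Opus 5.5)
We apply Theorem \ref{mainthm} to the pair $(g,\bar g)$, where $\bar g$ is a projectively equivalent metric that is not constantly proportional to $g$ and every point of a neighborhood of $q_0$ is stable. The conclusion is that $(g,\bar g)$ is a generalized Levi-Civita pair near $q_0$. This gives a net of foliations $\{\mathcal F_i\}_{i=1}^k$ with the following properties. The eigendistributions $D_i$ of the transition operator satisfy $D_i\subset T\mathcal F_i$, each $D_i$ is bracket generating on the leaves, and $d\pi_i^{q_0}(D_i(q))=D_i(\pi_i^{q_0}(q))$.

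The first step is to show that $k\ge 2$. Suppose $k=1$. Then the transition operator has a single eigenvalue, so $\bar g$ is conformal to $g$. Corollary \ref{Weyl_thm} then forces $\bar g$ to be constantly proportional to $g$, which contradicts the hypothesis. Hence there are at least two eigenvalues, and therefore at least two factors.

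The second step builds the product structure. Choose a small product neighborhood $U$ adapted to the net. Let $M_1=\mathcal F_1(q_0)\cap U$ with distribution $D_1|_{M_1}$. Let $M_2$ be the leaf through $q_0$ of the foliation whose tangent bundle is $\bigoplus_{i\ge2}T\mathcal F_i$. This foliation exists because the foliations form a net, so in adapted coordinates it is spanned by coordinate subspaces. Equip $M_2$ with the distribution $\bigoplus_{i\ge 2}D_i|_{M_2}$.

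The third step is to check \eqref{proddist}, namely $D=\pi_1^*D_1\cap\pi_2^*D_2$. Take $v\in T_qM$ and decompose it as $v=\sum_i v_i$ with $v_i\in T_q\mathcal F_i$. The projection $d\pi_1$ kills every $v_i$ with $i\ge2$ and identifies $T_q\mathcal F_1$ with $T_{\pi_1(q)}M_1$. By condition (2) of Definition \ref{Gen_Levi-Civita}, we have $d\pi_1 v\in D_1$ if and only if $v_1\in D_1(q)$. The analogous statement holds for $\pi_2$ and the components $i\ge2$: applying condition (2) to each $\pi_i$ with $i\ge 2$ shows that $\pi_2$ carries $\bigoplus_{i\ge2}D_i(q)$ onto the corresponding distribution at $\pi_2(q)$. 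Hence $v\in\pi_1^*D_1\cap\pi_2^*D_2$ if and only if every component $v_i$ lies in $D_i$, that is, if and only if $v\in\bigoplus_i D_i=D$. Both $D_1$ and $D_2$ have positive rank, since $k\ge2$ and every eigenspace is nonzero. Both manifolds are positive-dimensional.

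The main obstacle is in the third step. We must verify that the projection property in condition (2), which is stated only for the individual $\pi_i$, passes to the grouped projection $\pi_2$ onto the product of the remaining leaves. For this we use the coordinates in which all $\mathcal F_i$ are coordinate-parallel. In these coordinates $\pi_2$ is the product of the $\pi_i$ with $i\ge2$, so it inherits the property factorwise.
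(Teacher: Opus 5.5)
Your proposal is correct and follows the same route as the paper. Corollary \ref{Weyl_thm} rules out $k=1$; apply it on a connected stable neighborhood, which you should say explicitly, since that corollary assumes connectedness. Theorem \ref{mainthm} then supplies the net of foliations, and the projection condition together with $D=\bigoplus_i D_i$ yields the product structure once one factor is grouped against the remaining ones. Your extra check that the grouped projection $\pi_2$ inherits the projection property is sound: it amounts to $\pi_i^{q_0}\circ\pi_2=\pi_i^{q_0}$ for $i\ge 2$, i.e., each $D_i$ depends only on the coordinates of its own factor. This fills in a detail the paper leaves implicit.
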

\begin{proof}
Choose a projectively equivalent metric $\bar g$ as in the definition of stable projective nonrigidity and work in a connected stable neighborhood of $q_0$. If the transition operator had only one eigenvalue there, the two metrics would be conformal and hence constantly proportional by Corollary \ref{Weyl_thm}, contrary to the choice of $\bar g$. Thus there are at least two eigendistributions. Theorem \ref{mainthm} provides the net of foliations in Definition \ref{Gen_Levi-Civita}. Its projection condition, together with $D=\bigoplus_iD_i$, identifies $D$ with the product of the positive-rank distributions on the factor leaves. Grouping one factor and the remaining factors gives the asserted product structure near $q_0$.
\end{proof}

\subsection{The outline of the paper}
\label{outline_subsec}
The proof of Theorems \ref{mainthm} and \ref{mainthm_eisen} has two stages. The graph calculations are carried out on the open dense regular locus of the characteristic tuple, where they give bracket relations between the eigenspace distributions of the transition operator and their squares. These relations are identities between smooth tensors, so Proposition \ref{tuple_continuation_prop} extends them to the entire stable neighborhood, and the generalized Levi-Civita normal form is then obtained near every stable point by rotating frames on the eigendistributions. A complementary result, Proposition \ref{eigenconst}, states that each transition eigenvalue is constant along its eigendistribution whenever the latter has rank at least two. Its proof in Appendix \ref{app:eigen} uses the step $2$ assumption and stability directly.

In Section \ref{orb_sec}, we recall the primary tools used in the proof: the reduction of projective equivalence to the orbital equivalence of the corresponding sub-Riemannian Hamiltonian flows, and the fundamental algebraic system deduced from it in \cite{jmz2019}. This system has infinitely many rows, and its coefficients are polynomials on the fibers of the cotangent bundle. The existence of an orbital map above a nonempty open subset of every fiber implies that all maximal minors of the augmented matrix of this system vanish identically on that fiber.

Section \ref{quasinormal_sec} develops the main graph-theoretic tools for the proof of Theorem \ref{mainthm}. We first recall the Tanaka symbol of a distribution at a point, a graded nilpotent Lie algebra of the same step as the distribution, which encodes the tensorial part of the iterated Lie brackets of its sections. In Subsection \ref{char_tuple_subsec}, we introduce the characteristic tuple of a step $2$ fundamental graded Lie algebra. To our knowledge, this discrete invariant has not appeared in the literature before, and we believe that it is of independent interest. As shown in Appendix \ref{app:jacobi}, the tuple gives the growth of the osculating flags of the generic Jacobi curves in the flat model, i.e., the lengths of the columns of their Young diagrams starting from the second column; for a general metric with the same symbol, this growth can be faster, but not slower. We then assign a special graph, called normalizable, to every step $2$ fundamental graded Lie algebra, and we construct the corresponding moving frames, called quasi-normal. The normalizable graph of a nonabelian symbol is a union of nested out-stars, which may share vertices, and of isolated vertices; it is determined, up to isomorphism, by the dimension of the horizontal layer and by the characteristic tuple. Further, we use the decomposition of the Tanaka symbol according to the eigenspaces of the transition operator, which follows from \cite{jmz2019}; this decomposition is smooth on every stable neighborhood. On the open dense regular locus of the characteristic tuple, it yields smooth frames compatible with the graphs of all eigenspace summands; note that these graphs may differ from one neighborhood to another. In such a frame, the vector field corresponding to an edge is the Lie bracket of the vector fields corresponding to the endpoints of this edge.

The remaining sections are devoted to the proof of the main theorems. Section \ref{proof_outline_sec} begins with the terminology for the structure functions of quasi-normal frames used in the rest of the paper (Definition \ref{cross_component_def}) and contains a more detailed synopsis of this proof; here we describe only its main line. The first step, carried out in Section \ref{principal_part_sec}, is to reduce the problem to a much simpler system. For this purpose, we apply an exact column operation, which does not change the maximal minors of the augmented matrix, and use the grading of polynomials on the fibers by the horizontal, or vertex, degree. In this way, we obtain the principal system $\mathfrak A\Psi=\tilde{\mathfrak b}$. The entries of its coefficient matrix are linear in the vertex variables, while its last column is quadratic in them. These are the first relevant homogeneous components; when the quadratic column vanishes, the cubic component is used as well.

The next step, carried out in Sections \ref{root_monomial_sec} and \ref{distinguished_minors_sec}, consists in establishing the rules for the coefficients of the so-called root monomials and in the analysis of the determinant of the core sub-matrix of the principal part. The product of all root monomials appearing in the entries of the core sub-matrix is called the distinguished monomial. Using a saturation argument, which forces its allocation row by row, we show that this monomial appears in the core determinant with coefficient $\pm1$. This proves that the coefficient matrix of the principal system has full column rank and, together with the identities for the augmented minors, that this system is rationally consistent.

Section \ref{transverse_vanishing_sec}, which together with the next section constitutes the technical core of the paper, deals with the horizontal cross-component structure functions of quasi-normal frames, i.e., the structure functions whose inputs and output are horizontal, but do not all belong to the same component (see Definition \ref{cross_component_def} for this terminology). We prove that such a structure function vanishes whenever its output belongs to a component different from the components of both of its inputs. For this purpose, we augment the core sub-matrix by one more row of the principal system and by its last column, and we compute the coefficients of suitable monomials in the resulting maximal minors, which vanish. We first treat the structure functions one of whose inputs has depth zero, i.e., is either the first root of its component, which is the center of the first out-star of the normalizable graph, or an isolated vertex of this graph. For inputs of positive depth, we augment the core sub-matrix by the row of the first root in the layer whose number exceeds the depth of the deeper input by one, and we consider the monomials obtained by multiplying the distinguished monomial by the edge variables along the path from the first root to the deeper input through the successive roots. Counting the available edge factors, we show that the coefficient of such a monomial in the augmented minor is, up to a nonzero factor, the structure function to be eliminated plus a multiple of a structure function one of whose inputs is a root. This determines the order of elimination in Proposition \ref{c2}: at each depth, the structure functions having a root as an input are eliminated first. The case when the inputs belong to a component whose summand of the Tanaka symbol is abelian is treated separately. Finally, the structure functions whose inputs and output belong to three distinct components are eliminated by multiplying the first-layer equations of each block of the principal system by the corresponding vertex variables and adding them up; this leads to a linear system whose matrix is formed by the differences of the eigenvalues of the transition operator and is invertible.

In Section \ref{other_sec}, we prove the vanishing of the remaining cross-component structure functions listed in Proposition \ref{outline_transverse_vanishing}, namely of those with a horizontal input and an edge input from the same component and with an output, central or horizontal, from another component. For this purpose, we use a different form of the first two layers of the principal system, the operator form derived in Appendix \ref{opp:fas}. Up to this point, the layers of the fundamental algebraic system and of its principal part have been described entry by entry, by recursive formulas in terms of the structure functions of the frame and of the fiber coordinates. In the operator form, the block of each layer corresponding to an eigendistribution $D_i$ is described invariantly, in terms of the corresponding summand of the Tanaka symbol, the metric $g_1$, and the covector. Namely, the covector determines, first, the vector of $D_i$ which is $g_1$-dual to its restriction to $D_i$, and, second, a linear functional on the central part of the summand, given by the restriction of the covector to the span of the edge fields of the corresponding component; by means of the Levi bracket and the metric $g_1$, this functional defines a skew-symmetric operator on $D_i$. The first layer of the block is then the transpose of the operator of taking the Lie bracket, in the Tanaka symbol, with the above vector of $D_i$, and the second layer is the transpose of the same operator for the image of this vector under the skew-symmetric operator. The key Lemma \ref{beta2} states that if a system formed by two layers of this form, with zero right-hand side in the first layer, has a rational solution and its second right-hand side is a polynomial that does not depend on the above functional on the central part, then this right-hand side vanishes. By the results of Section \ref{transverse_vanishing_sec}, the right-hand sides of the first two layers of each block of the principal system satisfy these conditions. Together with the Jacobi identity, this lemma gives the vanishing of the structure functions under consideration with a central output. Repeating the same argument for the system whose right-hand side is the cubic part of the last column, which becomes relevant after the quadratic part has been shown to vanish, we obtain the vanishing of those with a horizontal output. The resulting inclusions for the Lie brackets imply that every partial sum of the distributions $D_i^2$ is involutive.

It remains to recover the metrics themselves, which is done in Section \ref{rotation_sec} by rotating the frames on the eigendistributions. The involutive distributions $D_i^2$ define a local product structure, the horizontal subdistributions descend to its factors, and the lifted frames corresponding to different factors commute. The first divisibility condition then determines the Gram matrices of the lifted frames up to one positive scalar for each factor. Setting $\lambda_i=\alpha_i^2$, we show that the functions
\[
B_i=\frac{(\prod_j\lambda_j)^{1/(k+1)}}{\lambda_i},\qquad
\gamma_i=\prod_{j\ne i}|B_j-B_i|
\]
give the metric normal form. Moreover, the functions $B_i$ descend to the functions $\mu_i$ of Definition \ref{Gen_Levi-Civita}, which are constant on their own leaves when $\dim D_i>1$. This argument works near every stable point: in Subsection \ref{tuple_continuation_subsec}, the bracket relations obtained on the regular locus, which are identities between smooth tensors, are extended by continuity to the whole stable neighborhood, and in Subsection \ref{rotation_argument_subsec} the rotation is performed in an arbitrary orthonormal frame adapted to the eigenspace decomposition. In the affine case, every eigenvalue of the transition operator is constant, and the same construction yields the corresponding statement about the product structure of the metrics at every point.

In Section \ref{weyl_comparison_sec}, we compare Corollary~\ref{Weyl_thm} with the sub-Riemannian Weyl theorems of \cite{JMZ21}, which were proved there for distributions of arbitrary step, but under additional assumptions on complex abnormal extremals: either the families of complex non-strictly normal geodesics are not too large, or all complex abnormal extremals are of minimal order. We decide which of these hypotheses hold for several families of step $2$ nilpotent approximations. The hypothesis on non-strictly normal geodesics concerns only the nonzero levels of the Hamiltonian. The octonionic H-type metrics with centers of dimension six and seven satisfy it, because all their complex normal geodesics of nonzero energy are strictly normal. However, the zero energy level of these metrics is foliated by non-strictly normal geodesics. Hence the hypothesis of the more technical \cite[Theorem~3.2]{JMZ21}, to which \cite[Theorem~1.11]{JMZ21} is reduced, fails, and for these metrics the proof of \cite[Theorem~1.11]{JMZ21} requires an additional argument, whereas their Weyl rigidity follows from Corollary~\ref{Weyl_thm}. On the other hand, certain products of these algebras with arbitrary step $2$ algebras, as well as an infinite family of indecomposable central amalgams of dimensions $17,19,21,\ldots$, satisfy none of the hypotheses of \cite{JMZ21}. For the central amalgams, Corollary~\ref{Weyl_thm} implies that every sub-Riemannian metric on a distribution with such a symbol is projectively rigid.

The six appendices collect material that would otherwise interrupt the exposition. Appendix \ref{opp:fas} gives the interpretation of the fundamental algebraic system in operator form. Appendices \ref{app:rj} and \ref{app:column} compute the correction terms that are specific to the projective case. Appendix \ref{app:eigen} gives an independent proof of Proposition \ref{eigenconst}, working directly with the raw shifted system, and Appendix \ref{app:vanish} deduces from this proposition that the correction terms mentioned above vanish. Finally, Appendix \ref{app:jacobi} explains the geometric meaning of the characteristic tuple in terms of the growth of Jacobi curves; this material is not needed for the proofs of the main results.
\subsection{Disclosure of the use of AI tools}
\label{ai_disclosure_subsec}
In the preparation of this paper, the authors used the AI assistants Claude Opus~5.5 and Claude Fable~5.1 (Anthropic) and ChatGPT Astra~6 (OpenAI) for editorial purposes and for verifying the mathematical completeness of the many-step proof of Theorems~\ref{mainthm} and~\ref{mainthm_eisen}. The original version of this proof required the constancy of the characteristic tuple (Definition~\ref{reg_assump}). During the verification, the AI tools suggested that this assumption can be removed, and the authors implemented this in Subsection~\ref{tuple_continuation_subsec}. The verification also revealed an error in the coefficient formula of an earlier version of Lemma~\ref{augmented_path_rigidity}; the AI tools proposed the corrected formula, which contains an additional term, and this correction determines the order of elimination in Proposition~\ref{c2}. In addition, the AI tools were very helpful in the construction of the examples of Subsection~\ref{wc:new_subsec} and of the example in Remark~\ref{Jacobi_strict_rem}. Apart from the contributions described above, the results of this paper and their proofs are due to the authors, who have checked all statements and proofs in the final text and take full responsibility for its content.

\section{Reduction to orbital equivalence and fundamental algebraic system}
\label{orb_sec}
In \cite{jmz2019}, following \cite{Z}, the problems of projective and affine equivalence of sub-Riemannian metrics were reduced to the study of the orbital equivalence of the corresponding sub-Riemannian Hamiltonian systems for normal Pontryagin extremals of the energy-minimizing problem \eqref{energy}, which in turn was reduced to the study of solvability of a special linear algebraic system with coefficients polynomial in the fibers, called the \emph{fundamental algebraic system} (\cite[Proposition 3.10]{jmz2019}). In this section, we summarize the information from \cite{jmz2019} on the fundamental algebraic system that will be used below.
\subsection{Reduction to orbital equivalence}
\label{orbital_reduction_subsec}
As before, fix a connected manifold $M$ and a bracket-generating equiregular distribution $D$ on $M$, and consider two sub-Riemannian metrics $g_1$ and $g_2$ on $(M, D)$. We denote by $h_1$ and $h_2$ the respective sub-Riemannian Hamiltonians of $g_1$ and $g_2$, as defined in \eqref{Hamiltonian}. Let the annihilator $D^\perp$ of $D$ in $T^*M$ be defined as follows:
\begin{equation}
\label{perp}
D^\perp=\{(p, q)\in T^*M: p|_{D(q)}=0\}
\end{equation}
It  coincides with the zero level set of the sub-Riemannian Hamiltonian $h$ from \eqref{Hamiltonian}.
\begin{Def}
\label{orb_equiv_def}
We say that $\vec h_1$ and  $\vec h_2$ are \emph{orbitally diffeomorphic}
on an open subset $V_1$ of $T^*M\backslash D^\perp$  if there exists an open subset $V_2$ of  $T^*M\backslash D^\perp$ and a diffeomorphism $\Phi: V_1 \rightarrow V_2$ such that $\Phi$ is fiber-preserving, i.e.,  $\pi (\Phi(\lambda)) = \pi (\lambda)$, $\Phi$ preserves the level sets of the Hamiltonians $h_1$ and $h_2$, i.e. $h_2\bigl(\Phi(\lambda)\bigr) =h_1(\lambda)$,  and $\Phi$ sends the integral curves of $\vec h_1$ to the reparameterized integral curves of $\vec h_2$, i.e., there exists a smooth function $s=s(\lambda,t)$ with $s(\lambda,0)=0$ such that $\Phi\bigl(e^{t\vec h_1}\lambda\bigr) =e^{s\vec h_2} \bigl(\Phi(\lambda)\bigr)$ for all $\lambda \in V_1$ and $t\in \mathbb R$ for which $e^{t\vec h_1}\lambda$ is well defined. Equivalently, there exists a smooth function $a(\lambda)$ such that
\begin{equation}
\label{orbeq}
d\Phi\, \vec{h}_1(\lambda) = a(\lambda) \vec{h}_2(\Phi(\lambda)).
\end{equation}
The map $\Phi$ is called an \emph{orbital diffeomorphism} between the extremal flows of $g_1$ and  $g_2$.
\end{Def}
The reduction of projective (respectively,  affine) equivalence of sub-Riemannian metrics  to the orbital ( respectively, a special form of orbital) equivalence of the corresponding sub-Riemannian Hamiltonian systems is given by the following:
\begin{Prop}\cite[a combination of Proposition 3.4  and Theorem 2.10 there]{jmz2019}
\label{th:orb.to.proj}
Assume that the sub-Riemannian metrics $g_1$ and $g_2$ are projectively equivalent in a neighborhood  $U\subset M$ and let $\pi: T^*M\rightarrow M$ be the canonical projection.  Then, over every $q\in U$, the ample covectors form an open dense subset of $T_q^*M\setminus D^\perp$, and near each such covector $\lambda_1$,  $\vec{h}_1$ and $\vec{h}_2$ are orbitally diffeomorphic on a neighborhood $V_1$ of $\lambda_1$ in $T^*M$.
Moreover, if $g_1$ and $g_2$ are affinely equivalent in a neighborhood  $U\subset M$, then the function $a(\lambda)$ in \eqref{orbeq} satisfies $\vec{h}_1(a)=0$. \end{Prop}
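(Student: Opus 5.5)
The plan is to follow the standard route from \cite{Z} and \cite{jmz2019}: build the orbital diffeomorphism from the geodesic correspondence. There are three steps.

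\textbf{Step 1: ample covectors.} Here I take ample covectors in the sense of \cite{jmz2019}: covectors $\lambda$ whose normal extremal $e^{t\vec h_1}\lambda$ has a projection $\gamma_\lambda$ with the following properties.
\begin{itemize}
\item $\gamma_\lambda$ is not abnormal on any small interval; equivalently, the derivatives of $\gamma_\lambda$ at $t=0$ generate the whole tangent space in the appropriate jet sense.
\item $\lambda$ is the unique lift of $\gamma_\lambda$, up to scaling, to an extremal of $h_1$.
\end{itemize}
Ampleness is given by the nonvanishing of finitely many polynomial conditions on the jet of the extremal at $t=0$. These conditions are polynomial, or at least analytic, in the fiber coordinates. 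Since $D$ is bracket generating, at least one of these polynomials does not vanish identically on $T_q^*M\setminus D^\perp$. Hence the ample set is open and dense. This is \cite[Theorem 2.10]{jmz2019}, which I take as given.

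\textbf{Step 2: defining $\Phi$.} Fix an ample $\lambda_1$. By projective equivalence, $\gamma_{\lambda_1}$ is, after a reparametrization $\varphi$, a geodesic of $g_2$, so it admits some Pontryagin extremal of $g_2$.
\begin{itemize}
\item This extremal must be normal. The curve is not abnormal, so no abnormal lift exists on small intervals.
\item The normal lift is unique. This uses ampleness again, now with respect to $h_2$, because ampleness is a property of the curve, independent of the metric.
\end{itemize}
Rescale this lift so that its energy matches $h_1(\lambda_1)$, and set $\Phi(\lambda_1)$ to be its initial covector.
\begin{itemize}
\item $\Phi$ is fiber preserving by construction.
\item It preserves the Hamiltonian levels because of the rescaling.
\item It maps $h_1$-trajectories to reparametrized $h_2$-trajectories, because the trajectory through $e^{t\vec h_1}\lambda_1$ is the same curve shifted in time.
\end{itemize}
Differentiating the last property gives \eqref{orbeq} with $a=\frac{d}{dt}\varphi$ at $t=0$.

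\textbf{Step 3: smoothness, the main obstacle.} I would express $\Phi(\lambda)$ through the jets of $\gamma_\lambda$. On an ample curve, the normal extremal equation for $g_2$ determines the covector linearly from finitely many derivatives of the curve: one solves a linear system with a full rank matrix built from these jets. The solution then depends smoothly on $\lambda$ by Cramer's rule, since the rank condition is open. The reparametrization $\varphi$ is determined by the energy normalization, so it is also smooth. By the same formula the map is invertible near $\lambda_1$, so $\Phi$ is a local diffeomorphism on a neighborhood $V_1$.

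\textbf{Affine case.} If the reparametrizations are affine, then $\varphi'$ is constant along each trajectory. This gives $a(e^{t\vec h_1}\lambda)=a(\lambda)$, that is, $\vec h_1(a)=0$.
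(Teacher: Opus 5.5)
Your outline is correct and matches the argument behind the result the paper cites from \cite{jmz2019}: normal lifts of ample geodesics, smoothness from a full-rank linear system in finitely many jets, and the symmetric construction with $g_1$ and $g_2$ interchanged for the inverse. The paper itself only adds that the map built on $\{h_1=1/2\}$ extends by fiberwise homogeneity and that $a>0$ can be arranged with the antipodal map; your energy normalization and a choice of increasing $\varphi$ already cover both.

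One correction is needed in Step~1. Ampleness is the osculating-flag (Jacobi curve) condition on finitely many jets at $t=0$, so your ``equivalently'' should be an implication: ample geodesics are not abnormal on small intervals. What you actually use, both for the full-rank system of the $g_2$-lift in Step~3 and for the $h_2$-ampleness of $\Phi(\lambda_1)$ needed to invert $\Phi$, is this metric- and parametrization-independent jet condition, not mere non-abnormality.
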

Let us make several comments on the last proposition. In the cited paper, this result is formulated on the energy level $h_1=1/2$. Using positive homogeneity along the fibers, one extends the orbital map from this level to an open subset of the full cotangent bundle. Moreover, composing it with the fiberwise antipodal map if necessary, we can choose the time orientation so that $a>0$. Consequently, since by the last proposition an orbital map exists near the ample covectors over every point of the base, the polynomial minors used below vanish on a nonempty open subset of \emph{each} base fiber, and hence, being polynomial, they vanish on that entire fiber. Here, by these polynomial minors, we mean the maximal minors of the augmented matrix of the fundamental algebraic system (see the end of this section).

\subsection{Fundamental algebraic system}
\label{fas_subsec}
From now on, assume that we work in a neighborhood of a point $q_0 \in M$ that is stable with respect to the pair $(g_1, g_2)$ (see Definition \ref{transition_def}). Let $D_i \subset D$ be the eigenspace of the transition operator $S$ corresponding to the eigenvalue $\alpha_i^2$, and let $k$ be the number of distinct eigenvalues of $S$. Let $V$ and $E$ be arbitrary finite index sets with cardinalities $\mathrm{rank}\,D$ and $\mathrm{corank}\,D$, respectively. Write $V$ as a disjoint union $V=\bigsqcup_{i=1}^k V_i$, where $|V_i|=\mathrm{rank}\,D_i$. At this stage, $V$, the subsets $V_i$, and $E$ are only index sets; they are not yet identified with the vertices or edges of a graph.
Consider a local frame $\{X_s\}_{s\in V\cup E}$ in a neighborhood $\mathcal{U}$ of $q_0$ such that $\{X_v\}_{v\in V}$ is an orthonormal frame on $D$ with respect to the metric $g_1$ and
\begin{equation}
\label{span_comp}
\mathrm{span}\{X_v\}_{v\in V_i} = D_i, \quad 1 \leq i \leq k.
\end{equation}
Such a frame will be called \emph{weakly adapted to the pair of sub-Riemannian metrics $g_1$ and $g_2$}. In Section \ref{quasinormal_sec}, we will assign to the pair $(g_1, g_2)$ a special graph, which is a disjoint union of $k$ graphs. On the open dense locus where the characteristic tuple is locally constant (see Definition \ref{reg_assump}), we will consider more specialized frames (called quasi-normal frames) indexed by the vertices and edges of this graph. This is the reason for our current notation $V$ and $E$ for the indices of a weakly adapted frame. Note that quasi-normal frames will be used only on this locus: the bracket relations obtained with them are extended across the exceptional set, where the characteristic tuple jumps, by a continuity argument (Proposition \ref{tuple_continuation_prop}), and the last part of the proof uses only weakly adapted frames.
The \emph{structure functions} of a weakly adapted frame $\{X_s\}_{s\in V\cup E}$ are the smooth functions $c^w_{sv}$, $s,v,w\in V\cup E$, defined by
\begin{equation}
\label{struct_function}
[X_s,X_v]=\sum_{w\in V\cup E}c^w_{sv}X_w.
\end{equation}
A weakly adapted frame also induces linear coordinates on each fiber of $T^*\mathcal U$. To show this, we first define the following functions called \emph{quasi-impulses}:
\begin{equation}
\label{ui}
u_s(q,p) =  p \left(X_s(q)\right), \quad s \in V \cup E,\,q\in \mathcal U,\, p\in T_q^*\mathcal U.
\end{equation}
Given $q\in \mathcal U$, the tuple $\{u_s(q, \cdot)_{s \in V\cup E}\}$ forms a coordinate system on the fiber $T^*_q \mathcal U$ with corresponding coordinate frame $\{\partial_{u_s}\}_{s\in  V \cup E}$ on the fibers of $T^*\mathcal U$. Finally,  lift $X_s$ to the vector field $Y_s$ on $T^*M$ so that $\pi_* Y_s=X_s$ (where $\pi$ denotes the canonical projection) and $du_w(Y_s)=0$ for any $w \in V \cup E$. The collection  $\{Y_s,\partial_{u_s}\}_{s\in  V \cup E}$ constitutes a frame on $T^*\mathcal U$.
The expression for the sub-Riemannian Hamiltonian then becomes
\begin{equation}
 h_1 = \frac{1}{2}\sum_{v\in V} u_v^2
 \end{equation}
 Therefore,
 \begin{equation}
 \label{eq:vech}
\vec{h}_1 =\sum_{v\in V} u_v\vec u_v = \sum_{v\in V} u_v Y_v + \sum_{v\in V}\sum_{w,s\in V\cup E} c^{s}_{vw} u_v u_s \partial_{u_w}.
\end{equation}
 The proof of \eqref{eq:vech} follows from the relations
\begin{equation}
\label{poison1}
\overrightarrow {H_{Z_1}} (H_{Z_2})= d\, H_{Z_2}\bigl(\overrightarrow {H_{Z_1}}\bigr) =H_{[Z_1, Z_2]}.
\end{equation}
Where $H_Z(p, q):=p(Z(q)), p\in T_q^*M$.
From this and \eqref{struct_function}, it follows immediately that
\begin{equation}
\label{Ham_lift}
  \begin{split}  \overrightarrow{u_v} = Y_v + \sum_{w\in V \cup E}\vec{u_v}(u_w)\partial_{u_w}
=  Y_v + \sum _{w,s \in V \cup E}c_{vw}^{s}u_s\partial_{u_w},
\end{split}
\end{equation}
which, in turn, implies \eqref{eq:vech}.
Define the function $\alpha:V\rightarrow \mathbb R$ by
\begin{equation}
\label{alpha_of}
\alpha(v):=\alpha_i, \qquad v\in V_i,\quad 1\leq i\leq k,
\end{equation}
where $\alpha_i>0$ is chosen so that $\alpha_i^2$ is the eigenvalue of the transition operator corresponding to $D_i$. No value of $\alpha$ on the index set $E$ is needed in the construction below.
If $\vec{h}_1$ and $\vec{h}_2$ are orbitally diffeomorphic via $\Phi$ near $\lambda_0\in \pi^{-1}(q_0)$, then we
denote by
\begin{equation}
\label{Phi_w}\Phi_w:=u_w \circ \Phi (\lambda), w \in V \cup E
\end{equation}
the coordinates of $\Phi$ on a given fiber. Then it is easy to see \cite[Lemma 1]{Z} that the function $a$ from \eqref{orbeq}
 satisfies
 \begin{equation}
 \label{eq:2a}
        a^2=\dfrac{\sum\limits_{v\in V}\alpha(v)^2 u_v^2 }{\sum\limits_{v\in V} u_v^2 }
    \end{equation}
and
    \begin{equation}
        \Phi_v = \frac{\alpha(v)^2 u_v}{a}, \quad \forall v \in V.
    \end{equation}
By \cite[Lemma 2]{Z}, as a consequence of the orbital equivalence equation \eqref{orbeq}, the remaining $\Phi_w$, for the edges $w \in E$,  satisfy the overdetermined system of $|V|+|E|$ equations such that $|V|$ of them are linear algebraic and the rest $|E|$ equations are of the form
\begin{equation}
\label{diff_affine}
\vec h_1(\Phi_w)=\text{expression affinely depending on } \Phi \text{'s}\end{equation}

By differentiating the algebraic equations along the vector field $\vec h_1$ and substituting the right-hand sides of \eqref{diff_affine} for $\vec h_1(\Phi_w)$ in the resulting equations, we obtain $\vert{}V\vert{}$ new algebraic equations. Iterating this procedure an arbitrary number of times yields an algebraic system \eqref{A.phi.B} consisting of infinitely many linear algebraic equations in the variables $\Phi_w$. This system is called \emph{the fundamental algebraic system} and is explicitly described in the following proposition:
\begin{Prop}(\cite[ a combination of Proposition 3.4, Proposition 3.10, Proposition 4.2, Lemma 4.4] {jmz2019})
\label{fda}
Assume that the sub-Riemannian metrics $g_1$ and $g_2$ on a step $2$ distribution $D$ are projectively equivalent in a neighborhood $\mathcal U\subset M$ of a stable point.
Let $\{X_s\}_{s\in V\cup E}$ be a frame on $\mathcal U$ weakly adapted to the pair $(g_1,g_2)$ and $\Phi$ be the corresponding orbital diffeomorphism between the normal extremal flows of $g_1$ and $g_2$ with coordinates $\Phi_w$, $w \in V \cup E$, as in \eqref{Phi_w}. Set $$\widetilde{\Phi}_w=a\Phi_{w}, \quad \forall\, w \in E, $$
\begin{equation}
\label{qjk}
q_{ws} = \sum_{v \in V} c_{vw}^s u_v, \quad w,s\in V\cup  E,
\end{equation}
and, for $w\in V$,
\begin{equation}
\label{rj_unreduced}
\begin{split}
R_w={}&
\vec h_1\bigl(\alpha(w)^2\bigr)u_w
+\alpha(w)^2\vec h_1(u_w)
-\frac12\alpha(w)^2u_w\frac{\vec h_1(a^2)}{a^2}\\
&-\frac12\sum_{v\in V}
X_w\bigl(\alpha(v)^2\bigr)u
_v^2
-\sum_{v,s\in V}
\alpha(s)^2c^s_{vw}u_vu_s .
\end{split}
\end{equation}
Then $\widetilde{\Phi}$ satisfies a linear system of equations,
	 \begin{equation}
	 \label{A.phi.B}
	A \widetilde{\Phi} = b,
	 \end{equation}
where $A$ is a matrix with $|E|=n-m$ columns and an infinite number of rows, and $b$ is a column vector with an infinite number of rows. These infinite matrices can be decomposed in layers of $|V|=m$ rows each as
\begin{equation}
\label{eq:A_and_b}
		A = \left(
		 \begin{array}{c}
		A^1 \\
		A^2 \\
		 \vdots \\
		A^{p} \\
		 \vdots \\
		 \end{array}
		 \right)
		 \qquad \hbox{and}
		 \qquad b= \left(
		 \begin{array}{c}
		b^1 \\
		b^2 \\
		 \vdots \\
		b^{p} \\
		 \vdots \\
		 \end{array}
		 \right),
\end{equation}
where the coefficients $a^p_{vw}$ of the $(m \times (n-m))$ matrix $A^p$, $p \in \mathbb{N}$,  are defined by induction as
		 \begin{equation}
		 \label{elem.A}
		 \left\{
		 \begin{aligned}
		& a^1_{v,w} = q_{vw}, \qquad  &v \in V, \ w \in E,\\
		& a^{p+1}_{v, w} = \vec{h}_1(a^p_{vw}) + \sum_{l\in E}a^p_{v,l} q_{l w} , \qquad  &v \in V, \ w \in E,
		 \end{aligned}
		 \right.
		 \end{equation}
and the entries $b^{p}_v$, $v \in V$, of the vector $b^p \in \mathbb{R}^m$ are defined by
\begin{equation}
		 \label{term.of.b.rec.form}
		 \left\{
\begin{aligned}
		& b^{1}_{v} = \ R_v, \\
		& b^{p+1}_{v} = \ \vec{h}_1(b^p_{v}) -\cfrac{\vec h_1(a)}{a} b_v^p-\sum_{l\in E} \sum_{w\in V} a^p_{v,l} u_w \left(
		\alpha(w)^2 q_{lw} + \dfrac{X_l\bigl(\alpha(w)^2\bigr)}{2}\cdot u_w\right )\\
		 \end{aligned}
		 \right. .
\end{equation}
Finally, by \cite[Proposition 4.2 and Lemma 4.4]{jmz2019},
$\cfrac{\vec{h}_1(a)}{a}$
is a linear form on the fibers $T^*M$ depending on $u_v$ with $v\in V$ only.\footnote{In more detail, in \cite[Proposition 4.2 and Lemma 4.4]{jmz2019} it was shown that $\cfrac{\vec{h}_1(a^2)}{a^2}$ has this property, called the \emph{first divisibility condition}, following \cite{Z}. However, the same is true for $\cfrac{\vec{h}_1(a)}{a}=\cfrac{\vec{h}_1(a^2)}{2a^2}$.}
\end{Prop}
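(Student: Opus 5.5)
The plan is to derive the system \eqref{A.phi.B} directly from the orbital equation \eqref{orbeq}, along the route of \cite{Z} and \cite{jmz2019}. First, write the orbital diffeomorphism $\Phi$ in the quasi-impulse coordinates. Then expand $d\Phi\,\vec h_1=a\,\vec h_2\circ\Phi$ in the frame $\{Y_s,\partial_{u_s}\}$, using \eqref{eq:vech} for $\vec h_1$. For $\vec h_2$, note that $h_2=\frac12\sum_v\alpha(v)^{-2}u_v^2$ in the weakly adapted frame, so the analogue of \eqref{Ham_lift} holds, with additional terms coming from the derivatives $X_l(\alpha(w)^2)$. Comparing the $Y_s$-components, and using that $\Phi$ is fiber preserving, gives $\Phi_v=\alpha(v)^2u_v/a$. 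The condition $h_2\circ\Phi=h_1$ then yields \eqref{eq:2a}. Comparing the $\partial_{u_w}$-components gives, for each $w\in V\cup E$, an expression for $\vec h_1(\Phi_w)$ in terms of $a$ and of $\sum_{v}\Phi_v(\ldots)\Phi_s$.

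Next, split these component equations into two groups. For $w\in V$, the left side $\vec h_1(\alpha(w)^2u_w/a)$ is already known. The resulting identity is linear in the unknowns $\Phi_l$, $l\in E$, and the unknowns enter only through $q_{lw}$-type coefficients, since $c^l_{vw}$ pairs with $\Phi_l$. Multiplying the identity by $a^2$ and setting $\widetilde\Phi_l=a\Phi_l$ gives the first layer $\sum_l q_{wl}\widetilde\Phi_l=R_w$. One must check that the terms collect exactly into \eqref{rj_unreduced}. This includes the term $-\frac12\alpha(w)^2u_w\vec h_1(a^2)/a^2$, coming from differentiating $1/a$, and the term $-\frac12\sum_v X_w(\alpha(v)^2)u_v^2$, coming from the gradient part of $\vec h_2$. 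Here the step $2$ hypothesis enters: brackets of $V$-fields land in $D^2=TM$, and the structure functions with two edge inputs affect only the $E$-equations. For $w\in E$, the same comparison gives the differential equations \eqref{diff_affine}. Rewritten in terms of $\widetilde\Phi$, they read
\[
\vec h_1(\widetilde\Phi_w)=\frac{\vec h_1(a)}{a}\widetilde\Phi_w-\sum_{l\in E}q_{lw}\widetilde\Phi_l-\sum_{s\in V}u_s\Bigl(\alpha(s)^2q_{ws}+\tfrac12X_w\bigl(\alpha(s)^2\bigr)u_s\Bigr),
\]
up to sign conventions.

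The recursion then comes from an induction on $p$. Suppose layer $p$ reads $\sum_l a^p_{vl}\widetilde\Phi_l=b^p_v$. Divide it by $a$ so that the unknowns become $\Phi_l$, apply $\vec h_1$, and substitute the differential equations above. The $\vec h_1(a)/a$ terms from differentiating $\widetilde\Phi$ cancel against the division. This is why $-\frac{\vec h_1(a)}{a}b^p_v$ appears in \eqref{term.of.b.rec.form} and not in \eqref{elem.A}. The coefficient of $\widetilde\Phi_w$ becomes $\vec h_1(a^p_{vw})+\sum_l a^p_{vl}q_{lw}$, up to the index convention, and the inhomogeneous part gives the stated formula for $b^{p+1}_v$.

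Finally, the statement that $\vec h_1(a)/a$ is linear in $u_v$, $v\in V$, is the first divisibility condition. I would prove it as in \cite[Lemma 4.4]{jmz2019}. The starting point is that $\vec h_1(a^2)\cdot\sum u_v^2$ is a polynomial, computed from \eqref{eq:2a}. The first layer, solvable rationally, then forces $\vec h_1(a^2)/a^2$ to be a polynomial, and degree counting shows it is linear. I expect the main obstacle to be this divisibility step, together with the careful bookkeeping of the $X_l(\alpha^2)$ correction terms in the projective case.
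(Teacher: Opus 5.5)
Your derivation of the layers follows the route the paper indicates. The paper itself does not reprove this; it cites \cite[Lemmas 1--2]{Z} and \cite{jmz2019}. The route works, but your bookkeeping as written does not produce the stated formulas.

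\begin{itemize}
\item The $w\in V$ identity must be multiplied by $a$, not $a^2$, to give $\sum_l q_{wl}\widetilde\Phi_l=R_w$.
\item With the conventions \eqref{poison1}, \eqref{eq:vech} (so $\vec f(u_w)=-X_w(f)$ for a base function $f$), the equations for $w\in E$ are
\[
\vec h_1(\widetilde\Phi_w)=\frac{\vec h_1(a)}{a}\widetilde\Phi_w+\sum_{l\in E}q_{wl}\widetilde\Phi_l+\sum_{s\in V}u_s\Bigl(\alpha(s)^2q_{ws}+\tfrac12X_w\bigl(\alpha(s)^2\bigr)u_s\Bigr).
\]
\item The order $q_{wl}$ versus your $q_{lw}$ is not a matter of convention. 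Substituting the correct equation into $\vec h_1$ of the $p$th layer and relabeling $w\leftrightarrow l$ gives exactly $\sum_l a^p_{vl}q_{lw}$ in \eqref{elem.A} and the minus sign in \eqref{term.of.b.rec.form}. Your version would give $\vec h_1(a^p_{vw})-\sum_l a^p_{vl}q_{wl}$ instead.
\item The step $2$ hypothesis plays no role here. Only the $c^s_{vw}$ with $v\in V$ occur (through $q_{ws}$), and structure functions with two edge inputs never appear.
\end{itemize}

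The genuine gap is the first divisibility condition. Put $N:=\sum_v\alpha(v)^2u_v^2$; since $\vec h_1(h_1)=0$, \eqref{eq:2a} gives $\vec h_1(a^2)/a^2=\vec h_1(N)/N$. Rational solvability of the first layer cannot force $N\mid\vec h_1(N)$, for two reasons.
\begin{itemize}
\item By skew-symmetry, $\sum_w u_wq_{wl}=0$. Also $\sum_w u_wR_w\equiv0$ holds identically, from \eqref{eq:2a} alone: the $\vec h_1(a^2)/a^2$-term contributes exactly $-\tfrac12\vec h_1(N)$, which cancels.
\item The left kernel of $A^1=(\operatorname{ad}Z)^*$ over $\mathbb R(u)$ is $\ker\operatorname{ad}Z$. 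When this is $\mathbb RZ$ for generic $Z$ (e.g.\ for the free step $2$ algebra on three generators), $u$ spans it. So the first layer is rationally solvable whether or not $N$ divides $\vec h_1(N)$.
\end{itemize}
Nor can you read polynomiality off the first layer of a rational solution of the whole system without controlling the pole of $\widetilde\Phi$ along $\{N=0\}$. That control is what the cited \cite[Proposition 4.2 and Lemma 4.4]{jmz2019} supply.

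To make this self-contained, use all layers.
\begin{itemize}
\item By ampleness, at a fixed base point $\widetilde\Phi$ coincides with a rational function. Its pole order along $N$ is some finite $K$; here $N$ is prime in $\mathbb R[u]$, being positive definite in $m\ge2$ variables. Hence every $A^p\widetilde\Phi$ has pole order at most $K$.
\item Set $F:=\vec h_1(N)$, so $\vec h_1(a)/a=F/(2N)$, and write $b^p=G_p/N^p$. The recursion \eqref{term.of.b.rec.form} gives $G_{p+1}\equiv-(p+\tfrac12)FG_p\pmod N$, with $G_1\equiv-\tfrac12F\,(\alpha(w)^2u_w)_{w\in V}$.
\item So if $N\nmid F$, then $b^p$ has pole order exactly $p$, a contradiction for $p>K$.
\end{itemize}
Finally, ``degree counting'' only shows that $L:=F/N$ is linear. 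To see that $L$ contains no edge variables, and equals $\sum_v X_v(\alpha(v)^2)\alpha(v)^{-2}u_v$, compare the coefficients of $u_v^2u_e$ and of $u_v^3$ in $F=NL$.
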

\begin{rk}
    The column vector $b$ in \eqref{A.phi.B} here corresponds to $a\cdot b$ in the notations of the fundamental algebraic system in \cite[Proposition 3.10]{jmz2019}.  Similar replacement of $b$ with $a\cdot b$ was made in \cite[Proposition 3.7]{Zaifeng} in the affine case in which $\vec h_1(a)= 0$. In the general projective case, $\vec h_1(a)$ is not necessarily zero. An additional term appears here compared to \cite[Proposition 3.7]{Zaifeng} on the right-hand side of the second line of \eqref{term.of.b.rec.form}. The reason for redefining $b$ in this way compared to \cite{jmz2019} is that all components of the new  $b$ are polynomials on the fibers of  $T^*M$. Though $\cfrac{\vec{h}_1(a)}{a}$ appears to be rational, the first divisibility condition mentioned in the last footnote shows that it is actually a polynomial on the fibers of $T^*M$.
\end{rk}
\begin{Prop}
\label{eigenconst}

Assume that the sub-Riemannian metrics $g_1$ and $g_2$ on a step $2$ distribution
are projectively equivalent in a neighborhood of a point stable with respect to the pair.
Then, for every $i$ such that $|V_i|>1$, the corresponding eigenvalue $\alpha_i^2$ of the transition operator satisfies
\begin{equation}
\label{same_eigenspace_zero}
X_y(\alpha_i^2)=0,
\qquad
y\in V_i,
\end{equation}
i.e., $\alpha_i^2$ is constant along the eigenspace distribution $D_i$.\footnote{For affine equivalence, the stronger conclusion that all eigenvalues are constant holds for arbitrary bracket-generating distributions; see \cite[Proposition 4.7]{jmz2019}.}

\end{Prop}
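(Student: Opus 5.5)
We will derive \eqref{same_eigenspace_zero} from the first layer of the fundamental algebraic system \eqref{A.phi.B}, using only the polynomial structure of its entries on the fibers and the vanishing of the maximal minors of the augmented matrix. Combined with the first divisibility condition, this gives the claim directly. The overall plan is to show that, in the equation $A^1\widetilde\Phi=b^1$, the coefficients of the cubic monomials $u_y^2u_{y'}$ with $y,y'\in V_i$ in the right-hand side $R_w$ cannot be matched by the left-hand side unless $X_y(\alpha_i^2)=0$.

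\emph{Step 1: degree bookkeeping.} Write $\vec h_1(a^2)/a^2=\sum_{v\in V}\ell_v u_v$, where the $\ell_v$ are functions on the base; this is the first divisibility condition of Proposition \ref{fda}. Substituting \eqref{eq:vech} into \eqref{rj_unreduced}, we split $R_w$ into the part of vertex degree $2$ and the terms involving $u_e$, $e\in E$. We have $\vec h_1(\alpha(w)^2)u_w=\sum_v X_v(\alpha(w)^2)u_vu_w$, and $\alpha(w)^2\vec h_1(u_w)=\alpha(w)^2\sum_{v,s}c^s_{vw}u_vu_s$. The $E$-terms come only from $s\in E$ in the latter and are linear in the $u_e$, with coefficients equal to the entries $q_{we}$ of $A^1$ multiplied by $\alpha(w)^2$. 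The remaining part of $R_w$ is purely quadratic in $u_V$.

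\emph{Step 2: solving the first layer.} Because $a^1_{we}=q_{we}$ is linear in $u_V$ and does not involve $u_E$, the first layer can be rewritten as
\[
\sum_{e\in E}q_{we}\bigl(\widetilde\Phi_e-\alpha(w)^2u_e\bigr)=R_w^{(2)},
\]
where $R_w^{(2)}$ is the quadratic part of $R_w$ in $u_V$. Grading by the degree in $u_V$ and using rational consistency on each fiber, this yields a linear system with coefficients linear in $u_V$ and right-hand side quadratic in $u_V$.

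We then restrict to covectors with $u_v=0$ for all $v\notin V_i$. The term $\sum_v X_w(\alpha(v)^2)u_v^2$ becomes $X_w(\alpha_i^2)\sum_{y\in V_i}u_y^2$. The terms $\vec h_1(\alpha_i^2)u_w$ and $-\tfrac12\alpha_i^2u_w\sum\ell_vu_v$ become multiples of $u_w$. The bracket terms $\sum_{v,s}(\alpha_i^2-\alpha(s)^2)c^s_{vw}u_vu_s$ vanish, since all the indices involved lie in $V_i$. For $w\in V_i$ we therefore get
\[
R^{(2)}_w=u_w\,L(u)-\tfrac12X_w(\alpha_i^2)\,|u_{V_i}|^2,
\]
where $L$ is a linear form in $u_{V_i}$.

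\emph{Step 3: main obstacle.} The difficulty is to exclude the possibility that the left-hand side absorbs the term $|u_{V_i}|^2X_w(\alpha_i^2)$. The key observation is skew-symmetry: for $w\in V_i$, the $E$-component of $\sum_{w}u_wq_{we}$ equals $\sum_{v,w}c^e_{vw}u_vu_w=0$, because $c^e_{vw}=-c^e_{wv}$. Hence, contracting the system with $u_w$ over $w\in V_i$ kills the left-hand side (this is exactly the first-divisibility mechanism). We obtain
\[
|u_{V_i}|^2L(u)-\tfrac12|u_{V_i}|^2\sum_{w\in V_i}X_w(\alpha_i^2)u_w=0,
\]
so $L=\tfrac12\sum_{w\in V_i}X_w(\alpha_i^2)u_w$.

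The contraction alone only ties $L$ to the derivatives. To conclude, we must use the remaining rows, with $\widetilde\Phi$ a rational solution. Substituting $L$ back, the right-hand side becomes $\tfrac12\bigl(u_w\langle\nabla\alpha_i^2,u\rangle-X_w(\alpha_i^2)|u|^2\bigr)$, which is the $w$-th component of $\tfrac12$ times the skew operator $u\wedge\nabla\alpha_i^2$ applied to $u$. If this expression is nonzero for generic $u$, its linear dependence on the image of $A^1$ must come from edges. Here we use that $|V_i|>1$ and $D$ has step $2$: since the rank of $D_i$ is at least two, we can pick $u$ orthogonal to $\nabla\alpha_i^2$ within $V_i$, and then the right-hand side is $-\tfrac12X_w(\alpha_i^2)|u|^2$, which is quadratic and nonvanishing for all directions in a two-dimensional family.

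Comparing the degree $2$ pure-square monomials against $\sum_e q_{we}\Psi_e$, where $\Psi$ is rational and homogeneous of degree $1$, and using the vanishing of the augmented $2\times2$ minors in the rows $w,w'\in V_i$, we expect to obtain $X_w(\alpha_i^2)u_{w'}=X_{w'}(\alpha_i^2)u_w$ identically in $u$. This forces $X_w(\alpha_i^2)=0$. Verifying this minor identity is the step we expect to be hardest. If it fails at the first layer, we will use the second layer $A^2$ in the same way.
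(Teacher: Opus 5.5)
\textbf{There is a genuine gap: the first layer cannot give more than you already have.} Your reduction of the first layer itself is correct. After the shift $\psi_e=\widetilde\Phi_e-\alpha(e)^2u_e$ and restriction to internal variables, the right-hand side of block $i$ is
$\rho=\tfrac12\bigl(\langle\nabla\alpha_i^2,Z_i\rangle Z_i^\flat-\|Z_i\|^2\nabla\alpha_i^2\bigr)$. This is the paper's $\sigma_iZ_i^\flat-\|Z_i\|^2\beta_i$. The contraction with $Z_i$ only recovers the value of $L$, which the first divisibility condition already gives.

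Solvability of the first layer says only that $\rho$ lies in the image of $(\ad_iZ_i)^*$, i.e.\ that $\rho$ annihilates $\ker\ad_iZ_i$. Since $\rho(Z_i)=0$ identically, this condition is vacuous whenever $\ker\ad_iZ_i=\mathbb R Z_i$ for generic $Z_i$.

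Take two conformal metrics on a $3$-dimensional contact distribution ($k=1$, $[X_1,X_2]=X_3 \bmod D$, $a_j=X_j(\alpha^2)$). The entire first layer is $(-u_2,u_1)^{\mathrm T}\psi_3=\rho$ with $\rho=-\tfrac12(a_2u_1-a_1u_2)(-u_2,u_1)^{\mathrm T}$. So it has the polynomial solution $\psi_3=-\tfrac12(a_2u_1-a_1u_2)$ for \emph{every} $\alpha$, and the augmented $2\times2$ minor vanishes identically.

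Consequences for your plan:
\begin{itemize}
\item The identity $X_w(\alpha_i^2)u_{w'}=X_{w'}(\alpha_i^2)u_w$ you expect cannot be extracted from the first layer.
\item For $|E|>1$ the $2\times2$ minors you invoke are not maximal minors, so consistency does not force them to vanish anyway.
\item Choosing $u\perp\nabla\alpha_i^2$ does not help: then $\rho=-\tfrac12\|u\|^2\nabla\alpha_i^2$, which in the example is parallel to the column of $A^1$.
\end{itemize}
Your argument does work when $W_i=0$: there the $V_i$-rows of $A^1$ vanish and consistency forces $\rho=0$.

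\textbf{The missing idea is the second layer together with a specific contraction}, which your closing sentence does not supply. In Appendix~\ref{app:eigen} the argument runs as follows.
\begin{itemize}
\item The shifted system is graded with weight $1$ on horizontal and weight $2$ on central variables. The top-weight coefficient matrix is that of the nilpotent approximation, which has full column rank by ampleness, so the top-weight system has a rational solution $\Psi$.
\item The grading discards the derivatives of structure functions. External variables are removed later by taking the internal part of a \emph{polynomial} identity, not of $\Psi$.
\item In block $i$ the second layer reads $(\ad_iY_i)^*\Psi_i=\widehat d^2_i$ with $Y_i=J^i_{\eta_i}Z_i$. Contracting with $Y_i$ kills the left side, because $\Psi_i([Y_i,Y_i])=0$. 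Hence $\widehat d^2_i(Y_i)=0$ identically.
\item Using $\iota_i(\widehat d^2_i)=2\sigma_iY_i^\flat+\beta_i(Y_i)Z_i^\flat$ and $\langle Z_i,Y_i\rangle=0$, the internal part of this identity is $d(\alpha_i^2)(Z_i)\,\|J^i_{\eta_i}Z_i\|^2=0$. This forces $d(\alpha_i^2)|_{D_i}=0$ as soon as $W_i\neq0$.
\end{itemize}
The term $(\vec h_1\lambda_e-c\lambda_e)u_e$ of $\tau_E$ must be kept. Internally it equals $\sigma_iu_e$ and supplies half of the coefficient $2\sigma_i$; discarding it would presuppose the statement being proved.

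In the contact example this is exactly what happens. The $Z_i$-component of the second layer is matched by the first-layer solution $\psi_3$. The $Y_i$-component yields $\sigma_i\|Y_i\|^2=0$, which is the conclusion.
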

\begin{proof}
    (See Appendix  \ref{app:eigen}).
\end{proof}

\begin{rk}
\label{Weyl_proof_rem}
Corollary \ref{Weyl_thm} also follows directly from Proposition \ref{eigenconst}, without the graph calculations. Indeed, if $g_2=\alpha^2g_1$, then the transition operator is $\alpha^2\,\mathrm{id}_D$, so every point is stable and the only eigendistribution is $D$, which has rank at least two. Proposition \ref{eigenconst} then shows that $\alpha^2$ is constant along $D$. Since $D$ is bracket generating, $\alpha$ is locally constant, and hence constant on the connected manifold $M$.
\end{rk}
Combining Lemma \ref{rjlem} with the vanishing of its correction term established in Lemma \ref{vanishlem}, we obtain the following simplified expression for $R_w$:
\begin{Coro}
\label{rj_reduction}
Under the assumptions of Proposition \ref{fda},
the function $R_w$ in the fundamental algebraic system can  be written in the simpler form
\begin{equation}
\label{RJ}
R_w=
\sum_{\substack{s,v\in V\\s\neq v}}
\bigl(\alpha(w)^2-\alpha(s)^2\bigr)c^s_{vw}u_vu_s
+\alpha(w)^2\sum_{s\in E}\sum_{v\in V}c^s_{vw}u_vu_s,
\qquad w\in V.
\end{equation}
\end{Coro}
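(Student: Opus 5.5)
The plan is to expand the unreduced expression \eqref{rj_unreduced} term by term in the quasi-impulses, using \eqref{Ham_lift} and \eqref{eq:vech}. This separates $R_w$ into the right-hand side of \eqref{RJ} plus a remainder built from derivatives of the eigenvalues and from the factor $\vec h_1(a^2)/a^2$. This separation is what I take Lemma \ref{rjlem} to record. Showing that the remainder vanishes is what I take Lemma \ref{vanishlem} to record, and it is the real content.

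\emph{Step 1: algebraic expansion.} By \eqref{Ham_lift},
\[
\vec h_1(u_w)=\sum_{v\in V}u_v\,\vec u_v(u_w)=\sum_{v\in V}\sum_{s\in V\cup E}c^s_{vw}u_vu_s .
\]
Hence
\[
\alpha(w)^2\vec h_1(u_w)-\sum_{v,s\in V}\alpha(s)^2c^s_{vw}u_vu_s
=\sum_{v,s\in V}\bigl(\alpha(w)^2-\alpha(s)^2\bigr)c^s_{vw}u_vu_s+\alpha(w)^2\sum_{s\in E}\sum_{v\in V}c^s_{vw}u_vu_s .
\]
Split off the diagonal terms $s=v$, namely $\sum_v(\alpha(w)^2-\alpha(v)^2)c^v_{vw}u_v^2$; what is left is exactly \eqref{RJ}. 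Also write
\[
\vec h_1\bigl(\alpha(w)^2\bigr)u_w=\sum_{v}X_v\bigl(\alpha(w)^2\bigr)u_vu_w .
\]
For $\vec h_1(a^2)$, use \eqref{eq:2a} together with the fact that $h_1=\tfrac12\sum u_v^2$ is a first integral of $\vec h_1$:
\[
\vec h_1(a^2)=\frac{\sum_v X_{\cdot}\text{-derivatives of }\alpha(v)^2\cdot u_v^2+2\sum_v\alpha(v)^2u_v\vec h_1(u_v)}{2h_1}.
\]
In this numerator, the part $2\sum_{v}\alpha(v)^2u_v\vec h_1(u_v)$ reduces, by skew-symmetry of $c^s_{vw}$ in the lower indices, to horizontal terms and edge terms. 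The correction term $C_w$ is thus the cubic expression
\[
C_w=\sum_{v}X_v\bigl(\alpha(w)^2\bigr)u_vu_w-\tfrac12\sum_vX_w\bigl(\alpha(v)^2\bigr)u_v^2+\sum_v\bigl(\alpha(w)^2-\alpha(v)^2\bigr)c^v_{vw}u_v^2-\tfrac12\alpha(w)^2u_w\frac{\vec h_1(a^2)}{a^2}.
\]
This is purely bookkeeping.

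\emph{Step 2: vanishing of $C_w$.} This is the main obstacle. I would use two inputs.

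First, the first divisibility condition (Proposition \ref{fda}) says $\vec h_1(a^2)/a^2$ is a linear form $\ell=\sum_v\ell_vu_v$. Clearing the denominator, $\vec h_1(a^2)=\ell\,a^2$ becomes a polynomial identity:
\[
\sum_{v,y}X_y(\alpha(v)^2)u_yu_v^2+2\sum_{v,y}(\alpha(v)^2-\alpha(y)^2)c^y_{v\,\cdot}\dots=\ell\sum_v\alpha(v)^2u_v^2 .
\]
Comparing coefficients of monomials $u_y u_v^2$ with $y,v$ in different eigenspaces determines $\ell_y$ in terms of $X_y(\alpha(v)^2)$ and of the diagonal structure functions. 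It also gives relations between $X_y(\alpha(v)^2)$ and $(\alpha(v)^2-\alpha(y)^2)c^v_{vy}$.

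Second, Proposition \ref{eigenconst} kills $X_y(\alpha_i^2)$ for $y\in V_i$ whenever $|V_i|>1$. In the remaining rank-one case, the diagonal terms $c^v_{vw}u_v^2$ with $v,w$ in the same one-dimensional block are absent because $c^v_{vv}=0$.

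Substituting these relations into $C_w$, I expect every monomial coefficient to cancel. I would organize the check by the type of monomial: $u_w^3$, $u_w u_v^2$ with $v$ in the block of $w$, and $u_wu_v^2$ and $u_yu_w u_v$ across blocks. The delicate part is the cross-block coefficients. If a direct coefficient comparison leaves a residual term, I would fall back on the second layer $b^2$ of \eqref{A.phi.B} restricted to vertex degree, which is what Appendix \ref{app:vanish} presumably does, to force it to zero.

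Combining Steps 1 and 2 yields \eqref{RJ}.
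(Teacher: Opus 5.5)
Your Step 1 is correct. After you split off the diagonal terms $s=v$, the excess over \eqref{RJ} is exactly your $C_w$. Processing $(\alpha(w)^2-\alpha(v)^2)c^v_{vw}$ with item (1) of Proposition \ref{first divi} and the linear term with \eqref{first_div_a} is the right move.

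The gap is in Step 2, in the cross-block coefficients you flag as delicate. Do the substitution for $w\in V_i$ and $v\in V_t$ with $t\ne i$. The coefficients of $u_vu_w$ and of $u_v^2$ in $C_w$ become
\[
\alpha(v)\,X_v\!\left(\frac{\alpha(w)^2}{\alpha(v)}\right)
\qquad\text{and}\qquad
-\alpha(w)\,X_w\!\left(\frac{\alpha(v)^2}{\alpha(w)}\right),
\]
and none of your inputs makes them vanish. Proposition \ref{eigenconst} controls only the derivatives of $\alpha_i^2$ along its own eigendistribution $D_i$. Coefficient comparison in $\vec h_1(a^2)=\ell\,a^2$ for the single ordered pair $(g_1,g_2)$ yields only three things:
\begin{itemize}
\item \eqref{first_div_a}, from the $u_y^3$ terms (not from the $u_yu_v^2$ terms, as you wrote);
\item item (1), from the $u_yu_v^2$ terms;
\item item (4), from $u_yu_vu_s$ with distinct indices.
\end{itemize}
Item (1) ties $X_w(\alpha(v)^2/\alpha(w)^2)$ to the otherwise unconstrained structure function $c^v_{wv}$, which leaves you one relation short. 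The missing ingredient is item (2) of Proposition \ref{first divi}: $X_v\bigl(\alpha(w)^2/\alpha(v)\bigr)=0$ whenever $\mathcal C(v)\neq\mathcal C(w)$. This is precisely the vanishing of the two displayed coefficients. It does not follow from the single-pair divisibility comparison you describe.

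The paper obtains item (2) from the first layer of the fundamental algebraic system. Since $[D_i,D_t]\subset D$ for $t\ne i$, one has $\sum_{v\in V_i}u_vq_{ve}=0$. Multiplying the first-layer equations by $u_v$ and summing over $v\in V_i$ therefore gives the polynomial identity $\sum_{v\in V_i}u_vR_v=0$. Its coefficient of $u_wu_y^2$, with $y\notin V_i$, computed from the unreduced formula \eqref{rj_unreduced} together with item (1), is item (2). In \cite{Z} the same relation comes from applying the first divisibility condition to both ordered pairs $(g_1,g_2)$ and $(g_2,g_1)$. Your fallback to the second layer $b^2$ is unspecified and is not what closes this gap.

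With item (2) in hand, all cross-block terms cancel. The residual is then exactly $\mathcal E_w$ from \eqref{projective_residual}, which involves only $X_v(\alpha_i^2)$ with $v\in V_i$. Proposition \ref{eigenconst} kills it when $|V_i|>1$, as in Lemmas \ref{rjlem} and \ref{vanishlem}. When $|V_i|=1$, the $v=w$ derivative terms cancel directly: $X_w(\alpha(w)^2)-\tfrac12X_w(\alpha(w)^2)-\tfrac12X_w(\alpha(w)^2)=0$. This cancellation has nothing to do with $c^w_{ww}=0$, which is the reason you gave.
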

\begin{proof}
Item (1) of Lemma \ref{vanishlem} shows that the residual term in Lemma \ref{rjlem} vanishes for every component, including one-dimensional components by direct cancellation. Substitution into \eqref{rj_final} gives \eqref{RJ}.
\end{proof}
The sub-matrix $A^p$ of the matrix $A$  as in \eqref{eq:A_and_b} will be called the \emph{$p$th layer of $A$} and the subcolumn $b^p$ of the column $b$ there will be called the \emph{$p$th layer of $b$}.
The rows and columns of $A^p$ are indexed by $V$ and $E$, respectively. In the sequel, for brevity, we call the row of $A^p$ indexed by $v\in V$ the \emph{$v$-row} of $A^p$.
Assume that $n=\dim M$ and $m=\rank D$. The matrix $A$ has $n-m$ columns and infinitely many rows and $b$ is the infinite-dimensional column vector. So, the fundamental algebraic system \eqref{A.phi.B} is an over-determined linear system on $(\Phi_{m+1},\dots,\Phi_n)$, and, by \eqref{elem.A}, \eqref{term.of.b.rec.form}, and the last sentence of Proposition \ref{fda}, all entries of $A$ and $b$ are polynomials in the $u$-variables, i.e., polynomials on the fibers of $T^*M$. Therefore, all $(n-m+1)\times (n-m+1)$ minors of the augmented matrix $[A|b]$ must be equal to zero. Since all of the minors are polynomials in $u_j$'s,  the coefficient of every monomial of these polynomials must be equal to zero. The result is a large collection of constraints on the structure coefficient $c^k_{ij}$. By discovering and analyzing the monomials with the ``simplest'' coefficients, and since they necessarily must vanish by the previous comments, we are able to prove our main Theorem \ref{mainthm}.
\section{Characteristic tuples of step 2 fundamental Lie algebras, normalizable graphs, and quasi-normal frames}
\label{quasinormal_sec}
In this section, we develop the main graph-theoretic tools for the proof of Theorem \ref{mainthm}: the characteristic tuple of a step $2$ fundamental Lie algebra, the normalizable graph, and the quasi-normal frames. We then introduce the condition of constancy of the characteristic tuple at a point (Definition \ref{reg_assump}), and we work under this condition until Section \ref{rotation_sec}. This condition plays only an auxiliary role. Under it, one can work with a fixed normalizable graph and introduce quasi-normal frames (Subsection \ref{quasinormal_frames_subsec}), which is convenient for the analysis of the fundamental algebraic system; however, the condition is not needed for our main Theorem \ref{mainthm}. In more detail, the graph calculations are carried out on the open dense set where this condition holds, and the bracket relations obtained from them are then extended across the complement of this set by continuity, in Subsection \ref{tuple_continuation_subsec}.

\subsection{Tanaka symbols, characteristic tuples, and regularity}
\label{char_tuple_subsec}
We begin by recalling the Tanaka symbol of a distribution. Its definition requires the distribution to be \emph{equiregular} near the point under consideration.
A distribution $D$ is called \emph{equiregular} at a point $q$ if there exists a neighborhood $U$ of $q$ in $M$ such that, for every $j>0$, the dimension of $D^j(y)$ is constant for all $y\in U$, where $D^j$ is defined in \eqref{Dpower1} (equivalently, in \eqref{Dpower2}).
Note that a bracket-generating distribution is equiregular at a generic point.
From now on we assume that $D$ is an equiregular bracket-generating distribution with the degree of nonholonomy $\mu$.
Set
\begin{equation}
\label{Tanaka_comp}
\mathfrak{m}_{-1}(q):=D(q), \quad  \mathfrak{m}_{-j}(q) : =D^{j}(q)/D^{j-1}(q), \,\, \forall j>1
\end{equation}
and  consider the graded space
\begin{equation}
\label{symbdef}
\mathfrak{m}(q)=
\bigoplus_{j=-\mu}^{-1}\mathfrak{m}_j(q),
\end{equation}
associated with the filtration \eqref{filtr}.
The space $\mathfrak{m}(q)$  is endowed with the natural structure of a graded Lie algebra, i.e., with the natural Lie product $[\cdot, \cdot]$ such that
\begin{equation}
 \label{gradcond}
[\mathfrak{m}_{i_1}(q), \mathfrak{m} _{i_2}(q)]\subset \mathfrak{m}_{i_1+i_2}
\end{equation}
defined as follows:
Let $\mathfrak{p}_j:D^j(q)\to \mathfrak m_{-j}(q)$ be the canonical projection to a factor space. Take $Y_1\in\mathfrak m_{-i_1}(q)$ and $Y_2\in \mathfrak m_{-i_2}(q)$.
To define the Lie bracket $[Y_1,Y_2]$ take a local section $\widetilde Y_1$ of the distribution $D^{i_1}$ and
a local section $\widetilde Y_2$ of the distribution $D^{i_2}$ such that
\begin{equation}
\label{projbracket}
\mathfrak p_{i_1}\bigl(\widetilde Y_1(q)\bigr) =Y_1,\quad \mathfrak p_{i_2}\bigl(\widetilde Y_2(q)\bigr)=Y_2.
\end{equation}
It is clear from definitions of the spaces $D^j$ that $[\widetilde Y_1,\widetilde Y_2]\in D^{i_1+i_2}$. Then set
\begin{equation}
\label{Liebrackets}
[Y_1,Y_2]:=\mathfrak p_{i_1+i_2}\bigl([\widetilde Y_1,\widetilde Y_2](q)\bigr).
\end{equation}
It can be  shown (\cite{tan1, zeltan}) that the right hand-side  of \eqref{Liebrackets} does not depend on the choice of sections $\widetilde Y_1$ and
$\widetilde Y_2$. By construction, it is also clear that \eqref{gradcond} holds.
\begin{Def}
The graded Lie algebra $\mathfrak{m}(q)$ from \eqref{symbdef} is called the \emph{Tanaka symbol} of the distribution $D$ at the point $q$.
\end{Def}
By construction, it is clear that the Lie algebra $\mathfrak{m}(q)$ is nilpotent. Further, since $D$ is bracket-generating, its Tanaka symbol $\mathfrak{m}(q)$ at any point is generated by the component $\mathfrak{m}_{-1}(q)$. In this case  one says that the graded Lie algebra $\mathfrak{m}(q)$ is a \emph{step $\mu$ fundamental Lie algebra}. In what follows, we will always work with fundamental negatively graded Lie algebras and omit the words ``fundamental negatively graded'' for brevity. For the step $2$ distributions considered here, the Tanaka symbols are of step $2$.

To a step $2$ Lie algebra $\mathfrak m$, we assign a tuple of integers called its characteristic tuple.
To that end, suppose $\mathfrak{m}$ is a step 2 Lie algebra and $X\in \mathfrak{m}_{-1}$. We view $\ad X:\mathfrak{m}_{-1}\to \mathfrak{m}_{-2}$ as a linear map from $\mathfrak{m}_{-1}$ to $\mathfrak{m}_{-2}$ and assume that
\begin{equation}
\label{max_r}
r=\mathrm{max}\{\rnk \ad \widetilde X: \widetilde X\in \mathfrak m_{-1}\}
\end{equation}
\begin{Def}
\label{ad+max}
We say that $X\in \mathfrak m_{-1}$ is \emph{ad-maximal}, if
$\rnk \ad X=r$, where $r$ is as in \eqref{max_r}.
\end{Def}
Note that the set of ad-maximal elements is (non-empty) Zariski open  in $\mathfrak{m}_{-1}$. For step 2 Lie algebras, the kernel (of the map $\mathfrak{m}_{-1} \to \mathfrak{m}_{-2}$) and image of the map $\ad X: \mathfrak{m}_{-1} \to \mathfrak{m}_{-2}$ for an ad-maximal $X$ are particularly well behaved, as shown in the following:
\begin{Lemma}
    \label{a1}
    Let $\mathfrak{m}=\mathfrak{m}_{-1} \oplus \mathfrak{m}_{-2}$ be a step 2 Lie algebra. If $X \in \mathfrak{m}_{-1}$ is ad-maximal, then $[\ker \ad X , \mathfrak{m}_{-1}]\subset \imad X$ or, equivalently \footnote{Here we use that $\mathfrak m$ is a step 2 Lie algebra.},  $\ker \ad X \oplus \imad X$ is an ideal of $\mathfrak{m}$.
\end{Lemma}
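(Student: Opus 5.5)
The plan is the standard perturbation argument for maximal rank. Fix an ad-maximal $X\in\mathfrak m_{-1}$ and take $Y\in\ker\ad X$, i.e. $[X,Y]=0$, and an arbitrary $Z\in\mathfrak m_{-1}$. We want $[Y,Z]\in\imad X$. Consider the one-parameter family $X_t:=X+tY$. Since $\ad$ is linear and the rank is lower semicontinuous, $\rnk\ad X_t\ge r$ for small $t$. The maximality of $r$ then gives $\rnk\ad X_t=r$ for all small $t$.

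First I will compute the image of $\ad X_t$ on a complement of $\ker\ad X$ together with the vector $Z$. Choose $W_1,\dots,W_r\in\mathfrak m_{-1}$ with $[X,W_j]$ a basis of $\imad X$. For small $t$ the vectors $[X_t,W_j]=[X,W_j]+t[Y,W_j]$ remain linearly independent, so they span $\imad X_t$, because this space has dimension $r$. Next look at $[X_t,Z]=[X,Z]+t[Y,Z]$; it lies in $\imad X_t$. Taking instead $Z\in\ker\ad X$ simplifies matters: then $[X_t,Z]=t[Y,Z]$.

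To get the claim for general $Z$, I will argue via wedge products, which is cleaner. The condition $\rnk\ad X_t\le r$ says that for every $Z$,
\[
[X_t,W_1]\wedge\cdots\wedge[X_t,W_r]\wedge[X_t,Z]=0 .
\]
I expand this in $t$ and look at the first-order coefficient. At $t=0$ the wedge is zero because $[X,Z]\in\imad X$. This step needs care when $[X,Z]\neq0$, so I first replace $Z$ by $Z-\sum c_jW_j$ to reduce to $[X,Z]=0$; since $[Y,W_j]$ terms then enter, I would instead choose $Z\in\ker\ad X$ directly. Indeed, $\mathfrak m_{-1}=\ker\ad X\oplus\spn\{W_j\}$, and it suffices to treat $Z\in\ker\ad X$ and $Z=W_j$ separately.

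For $Z\in\ker\ad X$, the coefficient of $t^1$ in the expansion is
\[
[X,W_1]\wedge\cdots\wedge[X,W_r]\wedge[Y,Z]=0,
\]
which gives $[Y,Z]\in\imad X$.

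The main obstacle is the case $Z=W_j$, i.e. showing $[Y,W_j]\in\imad X$. Here I will use the wedge identity with the vector $Z=Y$ itself and a perturbation in a second direction. Take $X_{s}:=X+sW_j$ and the kernel vector $Y$. Then $[X_s,Y]=s[W_j,Y]$, while $[X_s,W_i]=[X,W_i]+s[W_j,W_i]$. Rank $\le r$ forces
\[
\bigwedge_i\bigl([X,W_i]+s[W_j,W_i]\bigr)\wedge s[W_j,Y]=0 .
\]
The $s^1$ coefficient is $\bigwedge_i[X,W_i]\wedge[W_j,Y]=0$, so $[W_j,Y]\in\imad X$.

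Combining the two cases by linearity gives $[\ker\ad X,\mathfrak m_{-1}]\subset\imad X$. For the equivalent ideal formulation, note that $\mathfrak m_{-2}$ is central in a step 2 algebra. So $[\mathfrak m,\ker\ad X\oplus\imad X]=[\mathfrak m_{-1},\ker\ad X]$, and this lies in $\imad X$ exactly by the statement just proved; conversely the ideal property implies it.
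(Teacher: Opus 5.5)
Your proposal is correct and is essentially the paper's argument: you perturb $X$ to $X+sZ$ and use $[X+sZ,Y]=s[Z,Y]$ inside an image of dimension at most $r$ that tends to $\imad X$, replacing the paper's Grassmannian-limit contradiction with the vanishing first-order coefficient of $\bigwedge_i[X+sZ,W_i]\wedge s[Z,Y]$. Your second-case computation works verbatim for an arbitrary $Z\in\mathfrak m_{-1}$ in place of $W_j$, so the splitting $\mathfrak m_{-1}=\ker\ad X\oplus\spn\{W_j\}$ and the first case with $X+tY$ are unnecessary.
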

\begin{proof}
    Let $r=\rnk \ad X$. By way of contradiction, suppose that there exists
    \begin{equation}
    \label{Y, Z}
      Y \in \ker \ad X, \quad Z \in \mathfrak{m}_{-1}
    \end{equation}
    such that
    \begin{equation}
    \label{Y, Z, W}
    [Z, Y]=W \in \mathfrak{m}_{-2} \backslash \imad X.
    \end{equation}
    Since $r$ satisfies \eqref{max_r}, it follows that $\rnk \ad (X+tZ)$ is equal to $r$ for sufficiently small $t$. For such $t$, the subspaces $\imad (X+tZ)$ converge to $\imad X$ in $\textbf{Gr}_r(\mathfrak{m}_{-2})$. Thus, by \eqref{Y, Z, W}, we have that $W \notin \imad (X+tZ)$ for sufficiently small  $t$. On the other hand, by \eqref{Y, Z}, we have $[X+tZ,Y]=tW$, i.e. $W \in \imad (X+tZ)$ for $t\neq 0$ -- a contradiction.
\end{proof}
Before we proceed, we introduce new notation that will ease the task of indexing in our constructions. In lieu of developing a complete set of numerical indices for an adapted frame, we instead relate elements of a basis to vertices and edges of a particular graph. As will be seen shortly, virtually all of the relevant information for our purposes can be encoded by a graph derived from ad-maximal elements of $\mathfrak m$ and its quotients by ideals.
To that end, suppose $\mathfrak{m}=\mathfrak{m}_{-1}\oplus \mathfrak m_{-2}$ is a step 2  Lie algebra. We introduce the basic, discrete invariants of this algebra called the \emph{characteristic tuple}:
Begin by setting

\begin{equation}
\label{m0}\mathfrak{m}^{(0)}:=\mathfrak{m}.
\end{equation}
$$\mathfrak{m}^{(0)}_{-j}:=\mathfrak{m}_{-j}, \quad  j \in \{1,2\}$$
and let $m=\dim \mathfrak{m}_{-1}$. Choose a vector $Y_0 \in \mathfrak{m}^{(0)}_{-1}$ such that $\ad Y_0:\mathfrak{m}^{(0)}_{-1} \to \mathfrak{m}^{(0)}_{-2}$ is ad-maximal. It follows from Lemma \ref{a1} and the fact that $\mathfrak m$ is step 2 that $\ker \ad Y_0 \oplus \imad Y_0$ is an ideal of $\mathfrak{m}^{(0)}$. Set
\begin{equation}
\label{M^1}
\mathfrak{m}^{(1)}(Y_0):=\mathfrak{m}^{(0)}/(\ker \ad Y_0 \oplus \imad Y_0)
\end{equation}
and proceed inductively to define the sequence of algebras
\begin{equation}
\label{mj}\mathfrak{m}^{(j+1)}(Y_0,\ldots , Y_j):=\mathfrak{m}^{(j)}(Y_0, \ldots Y_{j-1})/(\ker \ad Y_j \oplus \imad Y_j),\end{equation}
whereby each $\mathfrak{m}^{(j+1)}(Y_0,\ldots , Y_j)$ inherits a grading from its predecessor, namely,
\begin{equation}
\begin{split}
~&\mathfrak{m}^{(j+1)}(Y_0,\ldots , Y_j)=(\mathfrak{m}^{(j)}_{-1}(Y_0,\ldots , Y_{j-1})/\ker\ad Y_j)\oplus(\mathfrak{m}^{(j)}_{-2}(Y_0,\ldots , Y_{j-1})/\imad Y_j)\\
~&=:\mathfrak{m}^{(j+1)}_{-1}(Y_0,\ldots , Y_j)\oplus \mathfrak{m}^{(j+1)}_{-2}(Y_0,\ldots , Y_j),
\end{split}
\end{equation}
and each $Y_j \in \mathfrak{m}^{(j)}_{-1}$ is chosen to be ad-maximal for $\mathfrak m^{(j)}(Y_0,\ldots , Y_{j-1})$.
Let $r_j(Y_0,\ldots, Y_{j-1})$ be the rank of the linear map $\ad Y_j:\mathfrak m^{(j)}_{-1}\rightarrow \mathfrak m^{(j)}_{-2}$:
\begin{equation}
\label{r_j}
r_j(Y_0,\ldots, Y_{j-1})=\mathrm{rank}\,\ad Y_j
\end{equation}
and set $d=\dim \mathfrak{m}_{-2}$. Note that the horizontal dimension of the next quotient is equal to $r_j$: since $\mathfrak{m}^{(j+1)}_{-1}=\mathfrak{m}^{(j)}_{-1}/\ker\ad Y_j$, we have $\dim \mathfrak{m}^{(j+1)}_{-1}(Y_0,\ldots, Y_{j})=r_j(Y_0,\ldots, Y_{j-1})$. If the process has not terminated, i.e., if $\mathfrak{m}^{(j+1)}(Y_0,\ldots, Y_{j})$ is not abelian, then the ad-maximal element $Y_{j+1}$ is nonzero, and $\ad Y_{j+1}$ vanishes on the line spanned by $Y_{j+1}$; hence $r_{j+1}(Y_0,\ldots, Y_{j})\leq r_j(Y_0,\ldots, Y_{j-1})-1$. Otherwise, $r_{j+1}(Y_0,\ldots, Y_{j})=0$. Thus, as long as $r_j(Y_0,\ldots, Y_{j-1})>0$, we have the strict inequality
$$r_{j}(Y_0,\ldots, Y_{j-1})> r_{j+1}(Y_0,\ldots, Y_{j})\geq 0.$$
Moreover, since $\mathfrak m$ is fundamental, there exist $\omega$ (depending on the choice of $Y_0,\ldots, Y_\omega$) such that the sequence of quotient algebras
\begin{equation}
\label{resolution}
\mathfrak{m}=\mathfrak{m}^{(0)} \to \mathfrak{m}^{(1)} (Y_0)\ldots\to \mathfrak{m}^{(\omega)}(Y_0, \ldots, Y_{\omega-1} )\to \mathfrak{m}^{(\omega+1)}(Y_0,\ldots, Y_\omega)
\end{equation}
terminates with the non-zero abelian algebra $\mathfrak{m}^{(\omega +1)}(Y_0,\ldots, Y_\omega)$, or, equivalently, when $$r_{\omega+1}(Y_0,\ldots, Y_\omega)=0.$$This happens  precisely when
\begin{equation}
\label{sum_r}
\sum\limits_{j=0}^{\omega}r_j(Y_0,\ldots, Y_{j-1})=d, \quad  r_j(Y_0, \ldots , Y_{j-1})>0 , \,\,\forall\, 0\leq j\leq \omega.
\end{equation}
In particular, $\omega\leq d-1$ independently of the choice of $Y's$.
Although the choice of tuple or ranks $(r_0,r_1,\ldots,r_{\omega})$ (and even its length $\omega$) depends on the choice of $Y$'s in the above construction, it is constant on a nonempty Zariski open set in the spaces of $Y$'s , and on this set it is equal  to the maximal possible tuple  in the lexicographic order \footnote{To compare two tuples of different lengths lexicographically, we pad the shorter tuple with zeros to match the length of the longer one before performing the comparison.} among all such tuples arising from different choices of ad-maximal $Y$'s. 

Let us justify the last statement in more detail, working on the charts where certain maximal minors do not vanish. On the open set where a chosen maximal minor of $\ad Y_0$ is nonzero, the kernel and the image of this operator have bases depending rationally on the structure constants and on $Y_0$. Consequently, the bracket of the quotient algebra has rational coefficients on that chart. We repeat this construction for each successive maximal minor. Note that at each stage there are only finitely many possible ranks and minors, and that there are at most $d$ stages with nonzero rank. Moreover, the maximal rank attainable at the next stage is attained on a nonempty Zariski-open subset of the preceding parameter chart. Therefore, by induction, we obtain a generic tuple, which is equal to the lexicographically maximal attainable tuple. Equivalently, one may lift all the choices made in the quotients to the horizontal layer of the given algebra and exclude the zeros of the finitely many denominators and witness minors involved. Since nonempty Zariski-open conditions in this parameter space always intersect, the generic tuple is unique.

The same argument works for a smooth family of brackets. Indeed, a nonzero minor witnessing a given rank remains nonzero locally, and the successive ranks are lower semicontinuous \emph{on a stratum where the preceding ranks are fixed}. Thus, the characteristic tuple is lower semicontinuous with respect to the lexicographic order, and it is locally constant on an open dense set. Note that this lower semicontinuity is understood in the lexicographic sense only, and not separately for each entry of the tuple. To see the density explicitly, observe that, for fixed dimensions of the horizontal and central layers, the tuple can take only finitely many values. In any nonempty open set, choose a point at which the tuple is the lexicographically largest among the values attained in this set. Then lower semicontinuity gives a smaller nonempty open set on which this tuple is constant. Therefore, the union of all open sets on which the tuple is constant is open and dense. Finally, for an abelian algebra we use the empty tuple, which is padded by zeros when compared with other tuples.
\begin{Def}
\label{char_tuple_def}
The tuple of ranks on a nonempty  Zariski open set in the space of $Y$'s
(arising from different choices of ad-maximal $Y$'s) is called the \emph{characteristic tuple} of the step 2 Lie algebra $\mathfrak m$.
\end{Def}
Appendix \ref{app:jacobi} gives a geometric interpretation of the characteristic tuple via the Jacobi curves \cite{jac1} of generic normal extremals. Namely, for the flat metric model of the symbol, the characteristic tuple is the growth of the osculating flags of these curves, i.e., the sequence of the lengths of the columns of their Young diagrams \cite{li2} starting from the second column; for an arbitrary sub-Riemannian metric with the same symbol, this growth can be faster, but not slower (Lemma \ref{Jacobi_lem}). This interpretation is not used in the proofs of our main results.
\begin{rk}
\label{ad-surjective_rem}
A step $2$ fundamental graded Lie algebra $\mathfrak m$ is called \emph{$\mathrm{ad}$-surjective} if there exists $X\in \mathfrak m_{-1}$ such that $ \mathrm{ad} X:\mathfrak m_{-1}\to\mathfrak m_{-2}$ is onto. Obviously, if $\mathfrak m$ is $\mathrm{ad}$-surjective, then the characteristic tuple consists of $r_0$ only, i.e. $\omega=0$.
\end{rk}

We now apply the characteristic tuple to the Tanaka symbols $\mathfrak m(q)$ of a step $2$ distribution $D$. The following definition plays only an auxiliary role, but it is very convenient in the sequel.
\begin{Def}
\label{reg_assump}
We say that a step $2$ distribution $D$  satisfies the condition of \emph{constancy of characteristic tuple} at a point $q_0\in M$
if
the characteristic tuple of its Tanaka symbol $\mathfrak m(q)$
is constant for all $q$ in some neighborhood of $q_0$.
\end{Def}
The set of points satisfying Definition \ref{reg_assump} is open and dense, by the local constancy result above; we call it the \emph{regular locus of the characteristic tuple}. On each sufficiently small neighborhood in this locus, the graph constructions below use a fixed characteristic tuple. The graph calculations in the proof of the main theorems are carried out on this locus. Proposition \ref{tuple_continuation_prop} then extends the bracket relations obtained from them to the whole stable neighborhood, and the rest of the proof works near every stable point.

\subsection{Normalizable graphs of step 2 fundamental Lie algebras and compatible frames}
\label{frame_graph_subsec}
Now we return to constructions with a step 2 Lie algebra $\mathfrak m$.
In the sequel, we assume that the sequence of ad-maximal Y's is chosen such that the tuple of ranks defined by \eqref{r_j} forms the characteristic tuple of the algebra $\mathfrak m$, and, for brevity, we denote the corresponding algebras $\mathfrak m^{(j)} (Y_0,\ldots Y_{j-1})$  from \eqref{resolution} by $\mathfrak m^{(j)}$.
The sequence of quotient algebras $\{\mathfrak m^{(j)}\}_{j=0}^{\omega+1}$ naturally defines a filtration on $\mathfrak{m}$ by ideals. Indeed, if
\begin{equation}
\label{can_projection}
\varphi_j:\mathfrak{m}^{(j)}\to \mathfrak{m}^{(j+1)}
\end{equation}
denotes the canonical projection to a quotient space, then $\varphi^{-1}_{j}(\ker \ad Y_{j+1} \oplus \imad Y_{j+1})$ is an ideal of $\mathfrak{m}^{(j)}$.
Set
\begin{equation}
\label{h0}
\mathfrak h_0:=\ker \ad Y_0 \oplus \imad Y_0
\end{equation}
$\mathfrak{h}_{\omega+1}=\mathfrak m$, and let
\begin{equation}
    \label{hj}
    \mathfrak{h}_{j}=
(\varphi_{j-1}\circ\ldots\circ \varphi_0)^{-1}(\ker \ad Y_j \oplus \imad Y_j)
\subset \mathfrak{m}, \quad 1\leq j\leq \omega.
\end{equation}
Note that each $\mathfrak{h}_j$ is an ideal of $\mathfrak{m}$, and such a collection defines a filtration
\begin{equation}
\label{filt}
\mathfrak{h}_0\subset \mathfrak{h}_1 \ldots \subset \mathfrak{h}_{\omega}\subset\mathfrak{h}_{\omega+1}=\mathfrak{m}
\end{equation}
of the algebra $\mathfrak{m}$ by the ideals $\{\mathfrak{h}_j\}_{j=0}^{\omega +1}$. We say that this filtration is \emph{associated with the tuple $\{Y_j\}_{j=0}^\omega$}.
We wish to assign a class of bases to the algebra $\mathfrak m$ which are compatible with the filtration \eqref{filt}. As mentioned above, it is convenient to index the elements of such a basis by vertices and edges of a certain graph.
In more detail, let $\Gamma$ be a simple directed graph with $m$ vertices and $d$ edges which has no directed cycles. We denote by $V(\Gamma)$ and $E(\Gamma)$ the sets of all vertices and all edges of the graph $\Gamma$, respectively. The symbol $\overrightarrow{ab}$ represents the directed edge which connects the initial vertex $a$ to the terminal vertex $b$. For a given vertex $a \in V(\Gamma)$, we denote by $\mathrm{Out(a)}$ the set of all edges of the form $\overrightarrow{ab}$ such that $\overrightarrow{ab}\in E(\Gamma)$ and by $\mathrm{In(a)}$ the set of all edges of the form $\overrightarrow{ba}$ such that $\overrightarrow{ba}\in E(\Gamma)$. We call $|\mathrm{Out}(a)|$ and $|\mathrm{In}(a)|$ the \emph{outgoing index} and \emph{incoming index}, respectively, and their sum the \emph{total index} of the vertex $a$.
\begin{Def}
 If $\mathfrak{m}=\mathfrak m_{-1} \oplus \mathfrak m_{-2}$ is a step 2 Lie algebra, then we say $\mathfrak m$ and the graph $\Gamma$ are \emph{compatible} if there exists a basis
$\{X_v\}_{v\in V(\Gamma)}$ of $\mathfrak m_{-1}$  and a basis
$\{X_e\}_{e\in E(\Gamma)}$ of $\mathfrak m_{-2}$ such that, for any $e \in E(\Gamma)$ with $e=\overrightarrow{v_1v_2}$, we have
\begin{equation}
\label{edges_vertices}
X_{\overrightarrow{v_1v_2}}:=[X_{v_1}, X_{v_2}
].
\end{equation}
In this case, the basis
$\{X_s\}_{s\in \{V(\Gamma)\cup E(\Gamma)\}}$ of $\mathfrak m$ is also said to be \emph{compatible} with the graph $\Gamma$.
\end{Def}
\begin{Def}
\label{def_type_r}
Let $\Gamma$ be a graph with $m$ vertices. If there exists $\rho \in V(\Gamma)$ with outgoing index $r>0$ and $\Gamma$ has $m-r-1$ isolated vertices (i.e., vertices with total index $0$), then we say that the graph $\Gamma$ is \emph{normal of degree $r$} with \emph{root}  $\rho$.
\end{Def}
\begin{rk}
    Note that the root $\rho$ in Definition \ref{def_type_r} is unique and necessarily has incoming index $0$ since $\Gamma$ has no directed cycles.
\end{rk}
\begin{Def}
\label{reduce}
If $\Gamma$ is normal and $\rho \in V(\Gamma)$ a root, then the \emph{reduction  of $\Gamma$ by $\rho$} is the graph $R_{\rho}(\Gamma)$ obtained from $\Gamma$ by the deletion of the vertex $\rho$, the edges $\mathrm{Out}(\rho)$, and the isolated vertices of $\Gamma$.
\end{Def}
\begin{Def}
\label{creduce}
    The graph $\Gamma$ is called \emph{normalizable} if there exists vertices $\rho_0, \rho_1, \ldots, \rho_{\omega}$ of $\Gamma$ and positive integers $(r_0, r_1,\ldots r_\omega)$ such that the following hold:
\begin{enumerate}
    \item The graph $\Gamma$ is normal of degree $r_0$ with root $\rho_0$.
    \item For $1\leq j \leq\omega $, the graph $R_{\rho_{j-1}}\circ R_{\rho_{j-2}}\circ \cdots\circ R_{\rho_0}(\Gamma)$, obtained from $\Gamma$ by successive
    reductions $R_{\rho_0}$,\ldots , $R_{\rho_{j-1}}$, is normal of degree $r_j$  with root $\rho_j$.
    \item The set of edges of the graph
    $R_{\rho_{\omega}}\circ R_{\rho_{\omega-1}}\circ \cdots \circ  R_{\rho_0}(\Gamma)$
    is empty.
\end{enumerate}
\end{Def}
\begin{Def}
\label{root_def}
If a graph $\Gamma$ is normalizable, then we call the numbers $(r_0, r_1,\ldots r_\omega)$ the \emph{characteristic tuple} of $\Gamma$.
The vertex $\rho_j$ from item (2) of Definition \ref{creduce}  is called the \emph{$j$-th root} of $\Gamma$ and the tuple $\{\rho_j\}_{j=0}^\omega$ is called the \emph{tuple of roots} of $\Gamma$.
\end{Def}
In the sequel, we will assign compatible graphs to all summands of the decomposition of the Tanaka symbol obtained in the next subsection, and some of these summands may be abelian. For an abelian summand, the compatible graph has no edges and consists of isolated vertices only, their number being equal to the dimension $\dim H$ of the horizontal part of the summand. Its characteristic tuple and its tuple of roots are empty, and every vertex has depth $0$. Accordingly, we use the standard conventions for empty products and empty matrices: products over its edges or over its roots are equal to $1$, and its core matrix, which will appear in the analysis of the fundamental algebraic system, is the empty matrix of determinant $1$. Statements that mention a root are understood to concern only nonabelian summands. Finally, note that the out-star graphs in Remark \ref{constructuve_graph_rem} below form a union in which different out-stars may share vertices. Consequently, a normalizable graph may contain undirected cycles: among cycles, only the directed ones are excluded by its definition.

\begin{rk}
\label{constructuve_graph_rem}
  One can give a direct equivalent definition of a normalizable graph with characteristic tuple $(r_0, r_1, \ldots, r_\omega)$ without using the reduction operations:   It is a directed graph along with  the distinguished ordered tuples of vertices $(\rho_0,\ldots, \rho_{\omega})$, called roots, such that:
\begin{enumerate}
    \item The graph  $\Gamma(\mathfrak m)$ is the union of $\dim \mathfrak m_{-1}-r_0-1$ isolated vertices and $\omega +1$ out-star graphs such that the central vertex  of the $i$th out-star graph is $\rho_i$ and has outgoing degree $r_i$ for all $0\leq i\leq \omega$;
    \item $\overrightarrow{\rho_i \rho_{i+1}}$ is an edge of $\Gamma$ for all $0\leq i<\omega$.
    \item For any vertex $v$ if $\overrightarrow{\rho_i v}$ is an edge of $\Gamma(\mathfrak m)$,  then $\overrightarrow{\rho_j v}$ is also an edge of $\Gamma(\mathfrak m)$ for every $0\leq j<i$.
\end{enumerate}
\end{rk}
Let $I_0(\Gamma) \subset V(\Gamma)$ denote the set of vertices of $\Gamma$ which are deleted from $\Gamma$ by $R_{\rho_0}$. Similarly, for $1\leq j\leq \omega$, let  $I_j(\Gamma) \subset V(\Gamma)$ denote the set of vertices which are deleted from $R_{\rho_{j-1}}\circ R_{\rho_{j-2}}\circ \cdots\circ R_{\rho_0}(\Gamma)$ to obtain $R_{\rho_{j}}\circ R_{\rho_{j-1}}\circ \cdots\circ R_{\rho_0}(\Gamma)$, and, finally, let $I_{\omega+1}$ be the set of vertices of $R_{\rho_{\omega}}\circ R_{\rho_{\omega-1}}\cdots \circ  R_{\rho_0}(\Gamma)$. Note that
\begin{equation}
\label{union}
 \bigcup \limits_{j=0}^{\omega+1}I_j(\Gamma)=V(\Gamma).
\end{equation}
\begin{Def}
\label{vertex_depth} We say that a vertex of a normalizable graph $\Gamma$ belonging to $I_j(\Gamma)$ is a vertex of \emph{depth $j$}. We denote by $\mathrm{depth}(v)$ the depth of a vertex $v$ in $\Gamma$.
\end{Def}
\begin{rk}
\label{r_depth_rem}
By construction, the number $r_p$ from the characteristic tuple is equal to the number of vertices of depth greater than $p$.
\end{rk}
Using the constructions above, we may now state the main result of this section.
\begin{theorem}
\label{ccreduce}
    If $\mathfrak m$ is a nonabelian step 2 Lie algebra with characteristic tuple $(r_0,\ldots,r_\omega)$, then $\mathfrak m$ is compatible with a normalizable graph $\Gamma$ with characteristic tuple $(r_0,\ldots, r_\omega)$. Moreover, a basis $\{X_s\}_{s\in \{V(\Gamma)\cup E(\Gamma)\}}$, compatible with $\Gamma$, can be chosen such that
   \begin{equation}
    \mathrm{span}\left \{X_s:s\in \bigcup \limits_{k=0}^{j}\bigl(I_k(\Gamma)\cup \mathrm{Out}(\rho_k)\bigr)\right\}=\mathfrak h_j, \quad 0\leq j\leq \omega,
   \end{equation}
    where the filtration $\{\mathfrak h_j\}_{j=0}^{\omega+1}$ of $\mathfrak m$ is associated with the tuple  $\left\{(\varphi_{j-1}\circ\ldots\circ \varphi_0)(X_{\rho_j})\right\}_{j=0}^\omega$.
    Furthermore, if $\mathfrak m_{-1}$ is endowed with an inner product, then the basis $\{X_s\}_{s\in \{V(\Gamma)\cup E(\Gamma)\}}$ can be chosen such that $\{X_v\}_{v \in V(\Gamma)}$ is an orthonormal basis of $\mathfrak m_{-1}$.
\end{theorem}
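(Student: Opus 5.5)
The plan is to argue by induction on $\omega$, equivalently on $d=\dim\mathfrak m_{-2}$, peeling off one out-star at a time, so that the graph $\Gamma$ is assembled exactly along the resolution \eqref{resolution}.

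\emph{First out-star.} Choose $Y_0\in\mathfrak m_{-1}$ in the nonempty Zariski open set on which the construction of \eqref{resolution} produces the characteristic tuple. If an inner product is given, normalize $Y_0$ to unit length. Set $X_{\rho_0}:=Y_0$, and complete it to a basis of $K_0:=\ker\ad Y_0$; this kernel has dimension $m-r_0$ and contains $Y_0$. The $m-r_0-1$ added vectors are indexed by isolated vertices. Here orthonormality is arranged by choosing an orthonormal basis of $K_0$ that contains $Y_0$. Let $W_0$ be a complement to $K_0$ in $\mathfrak m_{-1}$, taken to be $K_0^\perp$ in the metric case. Then $\dim W_0=r_0$, and $\ad Y_0|_{W_0}:W_0\to\imad Y_0$ is an isomorphism. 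Any basis $\{X_v\}$ of $W_0$ therefore yields the basis $\{[X_{\rho_0},X_v]\}$ of $\imad Y_0$, which we index by the edges $\overrightarrow{\rho_0 v}$. Note that isolated vertices impose no conditions: compatibility only requires that edge vectors be brackets, and brackets of kernel elements with other vectors, which lie in $\imad Y_0$ by Lemma \ref{a1}, need not be edges.

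\emph{Induction step.} The projection $\varphi_0$ identifies $W_0$ with $\mathfrak m^{(1)}_{-1}$. In the metric case it transports the inner product of $W_0$ to $\mathfrak m^{(1)}_{-1}$. If $\mathfrak m^{(1)}$ is abelian, then $\omega=0$: take any (orthonormal) basis of $W_0$, and the graph is a single out-star together with isolated vertices. Otherwise, the genericity discussion preceding Definition \ref{char_tuple_def} shows that, for generic $Y_0$, the characteristic tuple of $\mathfrak m^{(1)}(Y_0)$ is $(r_1,\dots,r_\omega)$. Indeed, generic choices of $Y_1,\dots,Y_\omega$ in the quotient realize the lexicographically maximal tail, and we may intersect the corresponding Zariski open conditions. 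The induction hypothesis then gives a normalizable graph $\Gamma'$ with roots $\rho_1,\dots,\rho_\omega$ and a compatible (orthonormal) basis of $\mathfrak m^{(1)}$. We lift its horizontal vectors uniquely to $W_0$. Each edge vector $[\bar X_a,\bar X_b]$ of $\mathfrak m^{(1)}$ we replace by the genuine bracket $[X_a,X_b]\in\mathfrak m_{-2}$, whose image under $\varphi_0$ is that edge vector. Since these images form a basis of $\mathfrak m_{-2}/\imad Y_0$, the lifted edge vectors together with the basis of $\imad Y_0$ form a basis of $\mathfrak m_{-2}$, and \eqref{edges_vertices} holds by construction.

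The graph $\Gamma$ is the union of the out-star from $\rho_0$ to all vertices of $W_0$, the isolated vertices, and $\Gamma'$. We then verify the three conditions of Remark \ref{constructuve_graph_rem}. The edge $\overrightarrow{\rho_0\rho_1}$ is present because $\rho_1$ is a vertex of $\Gamma'$, hence lies in $W_0$. Every terminal vertex of an edge from $\rho_i$, $i\ge1$, lies in $W_0$, so it also receives an edge from $\rho_0$. There are no directed cycles, since $\rho_0$ has no incoming edges and $\Gamma'$ is acyclic. By construction $I_0(\Gamma)$ is the basis of $K_0$, $\mathrm{Out}(\rho_0)$ spans $\imad Y_0$, and their union spans $\mathfrak h_0$. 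For $j\ge1$, the equality with $\mathfrak h_j$ follows from the inductive statement for $\mathfrak m^{(1)}$ after taking preimages under $\varphi_0$, since $\mathfrak h_j=\varphi_0^{-1}(\mathfrak h'_{j-1})$ and $\mathfrak h_0=\ker\varphi_0$. The characteristic tuple of $\Gamma$ is then $(r_0,r_1,\dots,r_\omega)$.

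\emph{Expected main obstacle.} The delicate point is the genericity step: we must check that one generic choice of $Y_0$ simultaneously makes $Y_0$ ad-maximal and makes the quotient's characteristic tuple equal to the tail $(r_1,\dots,r_\omega)$, so that the induction hypothesis applies with the right tuple. We also need the filtration to be the one associated with $\{(\varphi_{j-1}\circ\cdots\circ\varphi_0)(X_{\rho_j})\}$, i.e., the roots produced recursively must themselves be generic in the successive quotients. The plan is to handle this by lifting all choices to $\mathfrak m_{-1}^{\omega+1}$ and intersecting finitely many nonempty Zariski open sets, as in the argument preceding Definition \ref{char_tuple_def}. In the metric case we additionally note that, within the coset of $\ker\ad Y_0$ representing each generic element of the quotient, the lift to $W_0=K_0^\perp$ is canonical, so orthonormality and genericity do not conflict.
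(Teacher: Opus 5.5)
Your proposal is correct and follows essentially the paper's construction: split off $\ker\ad Y_0$, identify its (orthogonal) complement with $\mathfrak m^{(1)}_{-1}$ to transport the inner product, iterate along the resolution \eqref{resolution}, and take each edge vector to be the genuine bracket $[X_{\rho_i},X_v]$. The difference is one of packaging. The paper fixes the whole ad-maximal tuple $\{Y_j\}_{j=0}^\omega$ realizing the characteristic tuple at the outset, which sidesteps your genericity step; that step is in any case settled by lexicographic maximality, since any tail attained in $\mathfrak m^{(1)}(Y_0)$ yields a tuple attained in $\mathfrak m$. Your explicit checks of normalizability and of the filtration identity spell out what the paper leaves as ``by construction''.
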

\begin{proof} Suppose that $\mathfrak m_{-1}$ is endowed with an inner product and consider the ad-maximal tuple $\{Y_j\}_{j=0}^\omega$ as above.
First, we normalize $Y_0$ to have unit length (note that this leaves its kernel and image unchanged), set $X_{\rho_0}=Y_0$, and complete $\{X_{\rho_0}\}$ to an orthonormal basis $\{X_{v}\}_{v \in I_0(\Gamma)}$ of $\ker \ad X_{\rho_0}$.
There is an isomorphism $$\psi_{0}:(\ker \ad X_{\rho_0})^{\perp} \to \mathfrak{m}^{(1)}_{-1}$$
such that $\varphi_0\circ\psi_0^{-1}=\mathrm{id}|_{\mathfrak{m}^{(1)}_{-1}}$. The isomorphism $\psi _0$ induces an inner product on $\mathfrak{m}_{-1}^{(1)}$
(i.e., the inner product which makes $\psi_0$ an isometry). Thus, we have a splitting
$$\mathfrak{m}^{(1)}_{-1}=\langle Y_1\rangle\oplus \bigl(\langle Y_1 \rangle ^{\perp} \cap \ker\ad Y_{1}\bigr)\oplus( \ker \ad Y_1)^{\perp},$$
where $\perp$ denotes the orthogonal complement with respect to the induced inner product on $\mathfrak{m}^{1}_{-1}$.
Next, we normalize $Y_1$ with respect to this induced inner product, complete $\{Y_1\}$ to an orthonormal basis of $\ker \ad Y_1$, and let $\{X_{v}\}_{v \in I_1(\Gamma)}$ denote the preimage of this basis under $\psi_0$, indexed in such a way that $X_{\rho_1}=\psi^{-1}_0(Y_1)$. In the same way, at each subsequent step we normalize the chosen ad-maximal vector with respect to the inner product induced on the corresponding quotient. Proceeding inductively, we construct an orthonormal basis of $\mathfrak{m}_{-1}$ indexed by $V(\Gamma)$ such that the following hold:
\begin{enumerate}
    \item The vectors $\{X_{\rho_j}\}_{j=0}^{\omega}$ are the preimages in $\mathfrak{m}_{-1}$ of the ad-maximal elements $\{{Y_j}\}_{j=0}^{\omega}$ under the respective isomorphisms $\{\psi_k\}$ and are indexed by the roots $\rho_0, \rho_1, \ldots, \rho_{\omega} \in V(\Gamma)$(we also refer to such $X_{\rho_j}$ as ad-maximal elements of $\mathfrak{m}^{(j)}$).
    
    \item For $1 \leq j \leq \omega$, the vectors $\{X_{v}\}_{v \in I_j(\Gamma)}$ are the preimages in $\mathfrak{m}_{-1}$ of the chosen orthonormal basis of $\ker\ad Y_j$ under the respective isomorphisms $\{\psi_k\}$.
    \item The vectors $\{X_{v}\}_{v \in I_{\omega+1}(\Gamma)}$ are the preimages of the chosen orthonormal basis of $\mathfrak{m}^{\omega+1}_{-1}$ under the respective isomorphisms $\{\psi_k\}$
\end{enumerate}
For each root $\rho_i$ of $\Gamma$,
if $\overrightarrow{\rho_i,v}\in \mathrm{Out}(\rho_i)$, then we associate the vector
\begin{equation}
\label{out}
   X_{\overrightarrow{\rho_i,v}}=[X_{\rho_i},X_{v}]
\end{equation}
to the edge $\overrightarrow{\rho_i,v}$. Note that the vectors $X_e$ with $e \in\bigcup\limits_{j=0}^\omega\mathrm{Out}(\rho_j)$ form a basis of $\mathfrak{m}_{-2}$ by construction. In this way, the construction of the required basis $\{X_s\}_{s\in \{V(\Gamma)\cup E(\Gamma)\}}$ is completed.
\end{proof}
The proof of Theorem \ref{ccreduce} shows that any step 2 Lie algebra $\mathfrak{m}$ is compatible with the normalizable graph $\Gamma(\mathfrak m)$ which is uniquely determined, up to isomorphism, by its characteristic tuple together with $\dim\mathfrak m_{-1}$.
\begin{Def}
\label{graph_m}
The normalizable graph $\Gamma(\mathfrak m)$ is called the normalizable graph of the step 2 algebra $\mathfrak m$.
\end{Def}
When there are multiple graphs indexed by a certain indexing set $\mathfrak{I}$, say $\{\Gamma_i\}_{i \in \mathfrak{I}}$, we will label the roots of a given indexed graph $\Gamma_i$ by $\rt 0, \ldots, \rt {\omega_i}$ to distinguish its roots from those of other graphs in the collection.
Notice that the above construction implies that
\begin{equation}
\label{decrease}
\ad X_{\rho_j}(\mathfrak h_j) \subset \mathfrak{h}_{j-1}.
\end{equation}
for any root $\rho_j$ (here we set $\mathfrak h_{-1}=0$).
\begin{rk}
\label{ad_rem1}
Let $\{X_s\}_{s\in \{V(\Gamma)\cup E(\Gamma)\}}$ be a basis of $\mathfrak m $ as in Theorem \ref{ccreduce}. If $v$ is a vertex of depth $p$ in $\Gamma$, then, from the filtration \eqref{filt} by ideals and the definition of $\mathfrak h_j$ as in \eqref{hj}, it follows that
\begin{equation}
\label{ad_X_depth_p}
\imad X_v\subset \sum_{j=0}^{\min(p,\omega)} \imad X_{\rho_j}.
\end{equation}
\end{rk}
\subsection{Decomposition of Tanaka symbol through the eigenspace decomposition of the transition operator}
\label{decomposition_Tanaka_sec}
The important initial step toward the proof of Theorem \ref{mainthm} is the following
\begin{theorem}[Consequence of {\cite[Theorem 7.1]{jmz2019}}]
\label{Tanaka_decomp_prop}
Let $D$ be a distribution of step $2$, and let $g_1,g_2$ be sub-Riemannian metrics on it that are projectively equivalent in a neighborhood of a stable point. Denote by $W_i(q)=[D_i(q),D_i(q)]$ the image of the Levi bracket in $T_qM/D(q)$. Then, at every point of this neighborhood, the Tanaka symbol decomposes as
\begin{equation}\label{eigen_decomp}
\mathfrak m(q)=\bigoplus_{i=1}^k\mathfrak m^i(q),\qquad
\mathfrak m^i(q)=D_i(q)\oplus W_i(q).
\end{equation}
The summands of this decomposition are pairwise commuting graded ideals, some of which may be abelian, and the ranks of the $W_i$ are locally constant. In particular,
\begin{equation}\label{m-1i}\mathfrak m^i_{-1}=D_i,\qquad
W_i\cong D_i^2/(D_i^2\cap D).
\end{equation}
\end{theorem}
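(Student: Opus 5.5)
The plan is to derive Theorem \ref{Tanaka_decomp_prop} pointwise from \cite[Theorem 7.1]{jmz2019}, and then to use stability to get local constancy of ranks. Fix a point $q$ of the stable neighborhood. By \cite[Theorem 7.1]{jmz2019}, the nilpotent approximations of $g_1$ and $g_2$ at $q$ form a Levi-Civita pair with constant coefficients. These are the left-invariant metrics on the Carnot group with Lie algebra $\mathfrak m(q)$, given by $g_1(q)$ and $g_2(q)$ on $\mathfrak m_{-1}(q)=D(q)$. The eigenspaces of the transition operator of this pair are exactly the $D_i(q)$.

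The conclusion of that theorem gives a product structure of the Carnot group aligned with the eigenspaces. At the Lie algebra level this means that $[D_i(q),D_j(q)]=0$ in $\mathfrak m_{-2}(q)$ for $i\neq j$. Since $\mathfrak m(q)$ is fundamental of step $2$, we have $\mathfrak m_{-2}(q)=[\mathfrak m_{-1},\mathfrak m_{-1}]=\sum_{i,j}[D_i,D_j]=\sum_i W_i(q)$.

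The main obstacle is directness of this sum, i.e. $W_i\cap\sum_{j\ne i}W_j=0$. I would obtain it from the same theorem. The net of foliations for the nilpotent approximation gives a splitting of the Carnot group into factors $G_i$ with horizontal spaces $D_i(q)$. The Lie algebra of $G_i$ is $D_i(q)\oplus W_i(q)$, and the dimension count $\dim\mathfrak m=\sum_i\dim\mathrm{Lie}(G_i)$ forces $\dim\mathfrak m_{-2}=\sum_i\dim W_i$. This yields the direct sum.

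If \cite[Theorem 7.1]{jmz2019} only provides the commutation relations, I would argue directly instead. Take a linear relation $\sum_i w_i=0$ with $w_i\in W_i$, and choose a covector $p$ on $\mathfrak m_{-2}$. The orbital equivalence of the flat Hamiltonians at $q$ gives the layer-one divisibility constraints, which force $p$ restricted to each $W_i$ to be independent data. The expectation is that the operator $\mathrm{ad}^*$ for the eigenvalue-weighted Hamiltonian separates the summands because the $\alpha_i$ are distinct.

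Once the decomposition is established, several consequences follow. Each $\mathfrak m^i(q)=D_i(q)\oplus W_i(q)$ is a graded ideal, since $[D_j,\mathfrak m^i]\subset W_i$ for $j=i$ and vanishes otherwise. The summands commute pairwise, and some may be abelian when $W_i=0$. For the last identity, $D_i^2=D_i+[D_i,D_i]$ (brackets of sections), so the Levi bracket image gives $D_i^2/(D_i^2\cap D)\cong W_i$. Note that $D_i^2\cap D$ may be larger than $D_i$, which is why the quotient is taken by $D_i^2\cap D$ rather than by $D_i$.

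Finally, stability makes the $D_i$ smooth subbundles of constant rank. The map $q\mapsto\dim W_i(q)$ is lower semicontinuous, being the rank of the bracket restricted to a smooth bundle. The sum $\sum_i\dim W_i=\operatorname{corank}D$ is constant. A sum of lower semicontinuous integer functions that is constant forces each of them to be locally constant.
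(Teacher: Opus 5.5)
The gap is in your first step. You apply \cite[Theorem 7.1]{jmz2019} at an arbitrary point $q$ of the stable neighborhood, but the paper takes from the cited theorem only the product decomposition of the Tanaka symbol on an \emph{open dense} subset of that neighborhood. The substance of the paper's proof is the passage from that dense set to every point, and your argument omits it. Without that passage, you have proved the statement only where the cited theorem applies.

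Part of the extension is easy. By stability the $D_i$ are smooth subbundles, and $[D_i,D_j]\subset D$ for $i\neq j$ says that a smooth tensor vanishes. It therefore holds everywhere by continuity, and with step $2$ this gives $\sum_i W_i(q)=T_qM/D(q)$ at every point. Directness of the sum, however, is not a closed condition: in a limit the $W_i$ may a priori lose dimension and overlap. So directness cannot be obtained by continuity alone.

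The missing step is the semicontinuity squeeze. You have all its ingredients, but your last paragraph runs them in the wrong direction: you use $\sum_i\dim W_i=\operatorname{corank}D$ at every point, which is directness everywhere, as an input to get local constancy. Instead, fix $q$. By lower semicontinuity of $d_i=\dim W_i$ (finitely many conditions), choose $q'$ in the dense set with $d_i(q')\ge d_i(q)$ for all $i$. Then
\[
n-m=\sum_i d_i(q')\ge\sum_i d_i(q)\ge n-m .
\]
Here the equality is directness at $q'$, and the last inequality is spanning at $q$. Equality throughout gives directness at $q$. Only after that does the constancy of the sum, together with lower semicontinuity, give local constancy of each $d_i$.

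Two smaller points. The case $k=1$ should be treated separately as trivial, since the cited theorem concerns non-conformal pairs. Your fallback sketch, in which ``layer-one divisibility constraints'' separate the summands, is not an argument, and it is not needed once the extension above is in place.
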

\begin{proof}
If $k=1$, the assertion is immediate, since in this case there is only one eigenspace. Now assume that $k>1$. Then the cited theorem gives the product decomposition of the Tanaka symbol on an open dense subset of the neighborhood under consideration. Grouping together the factors of this decomposition that correspond to the same eigenvalue of the transition operator, we obtain a decomposition with horizontal factors $D_i$. Consequently, $[D_i,D_j]=0$ in the symbol for $i\ne j$ at every point of this dense set, and hence everywhere, since the Levi bracket depends smoothly on the point. Therefore, the spaces $W_i$ span $TM/D$ at every point. Note that their dimensions $d_i$ are lower semicontinuous functions, because they are the ranks of the smooth bundle maps $\Lambda^2D_i\to TM/D$ induced by the Levi bracket.

Let us show that the sum of these spaces is direct at every point. Fix a point $q$ of the neighborhood. By lower semicontinuity, we can choose points $q'$ of the dense set, arbitrarily close to this point, such that all the inequalities $d_i(q')\ge d_i(q)$ hold. Then
\[
n-m=\sum_i d_i(q')\ge\sum_i d_i(q)\ge n-m.
\]
Here the first equality holds because the decomposition is direct on the dense set, and the last inequality holds because the spaces span the quotient at every point. Hence equality holds throughout, which proves that the sum is direct at $q$. Consequently, $\sum_i d_i=n-m$ everywhere. Since each term of this sum is an integer-valued lower semicontinuous function and the sum is constant, every $d_i$ is locally constant. This proves both that the decomposition extends from the dense set to the whole neighborhood and that the central subbundles are smooth, since they are the images of smooth bundle maps of constant rank.
\end{proof}

It is important to note that $\mathfrak{m}^i(q)$ is not \emph{a priori} equivalent to the Tanaka symbol of $D_i$ at $q$ since they differ in their definition of $-2$ components (though, it will be shown later that they necessarily coincide).
\begin{lemma}\label{char_summands}
If the characteristic tuple of $D$ is constant near $q_0$, then the characteristic tuple of each summand in the decomposition \eqref{eigen_decomp} is also constant near $q_0$.
\end{lemma}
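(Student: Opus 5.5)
The plan is to reduce the statement to an additivity property of the characteristic tuple under direct sums of commuting graded ideals. Then I would combine this with the lower semicontinuity of the successive ranks on strata, exactly as in the proof of Theorem \ref{Tanaka_decomp_prop}.

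\emph{Step 1 (additivity).} Let $\mathfrak m=\bigoplus_{i=1}^k\mathfrak m^i$ with pairwise commuting graded ideals, and write $Y=\sum_i Y^i$ with $Y^i\in\mathfrak m^i_{-1}$. Since $[\mathfrak m^i,\mathfrak m^j]=0$ for $i\neq j$, the operator $\operatorname{ad}Y$ is block diagonal: $\operatorname{ad}Y|_{\mathfrak m^i_{-1}}=\operatorname{ad}Y^i$ with values in $\mathfrak m^i_{-2}$. Hence the following hold:
\begin{itemize}
\item $\operatorname{rank}\operatorname{ad}Y=\sum_i\operatorname{rank}\operatorname{ad}Y^i$;
\item $\ker\operatorname{ad}Y\oplus\operatorname{Im}\operatorname{ad}Y=\bigoplus_i(\ker\operatorname{ad}Y^i\oplus\operatorname{Im}\operatorname{ad}Y^i)$;
\item the quotient $\mathfrak m^{(1)}(Y)$ is again the direct sum of commuting ideals $\mathfrak m^{i,(1)}(Y^i)$.
\end{itemize}
In particular $Y$ is ad-maximal in $\mathfrak m$ if and only if every $Y^i$ is ad-maximal in $\mathfrak m^i$. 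Iterating, every chain of quotients \eqref{resolution} for $\mathfrak m$ splits as the direct sum of chains for the summands, and $r_j(\mathfrak m)=\sum_i r_j(\mathfrak m^i)$ for the corresponding choices, with tuples padded by zeros. The set of generic choices for $\mathfrak m$ is a nonempty Zariski open set in $\prod_i(\text{parameter spaces of }\mathfrak m^i)$. It therefore meets the product of the generic sets of the summands. This gives the entrywise identity
\[
(r_0,r_1,\dots)(\mathfrak m)=\sum_{i=1}^k(r_0,r_1,\dots)(\mathfrak m^i).
\]

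\emph{Step 2 (constancy by induction on $j$).} By Theorem \ref{Tanaka_decomp_prop}, the decomposition \eqref{eigen_decomp} depends smoothly on $q$, and $\dim D_i$, $\dim W_i$ are locally constant. So each $\mathfrak m^i(q)$ is a smooth family of brackets on bundles of constant rank. First, $r_0(\mathfrak m^i(q))$ is the maximal rank of $\operatorname{ad}$ on $D_i(q)$. It is an honest lower semicontinuous integer function of $q$, and by Step 1 the sum of these functions over $i$ is the constant $r_0(\mathfrak m(q))$. As in the proof of Theorem \ref{Tanaka_decomp_prop}, lower semicontinuous integer functions with constant sum are each locally constant.

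Next, assume that $r_0,\dots,r_{j-1}$ of every summand are constant near $q_0$. Then we are on the stratum where the preceding ranks are fixed. By the remark preceding Definition \ref{char_tuple_def}, $r_j(\mathfrak m^i(q))$ is lower semicontinuous there, since a nonzero witnessing minor persists locally. The sum over $i$ of these ranks equals $r_j(\mathfrak m(q))$, which is constant by hypothesis. The same argument therefore shows that each $r_j(\mathfrak m^i)$ is locally constant. Since lengths are bounded by $\dim\mathfrak m_{-2}$, the induction terminates after finitely many steps, possibly after shrinking the neighborhood finitely many times.

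\emph{Main obstacle.} I expect the delicate point to be Step 1's genericity claim: that the lexicographically maximal tuple of $\mathfrak m$ is realized by choices that are simultaneously generic for every summand, and that the later choices $Y_j$ in the quotients still split componentwise. I would handle it by lifting all choices to $\mathfrak m_{-1}=\bigoplus_iD_i$, as the paper suggests. The set of lifted choices that are generic for $\mathfrak m$ is then a nonempty Zariski open set, and so is the set that is generic for every summand. Two nonempty Zariski open sets in an irreducible space intersect, so a common generic choice exists. For such a choice, the block-diagonal structure of Step 1 holds at every stage.
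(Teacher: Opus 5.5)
Your proposal is correct and follows essentially the same route as the paper: you obtain the additivity $r_j=\sum_i r_j^i$ from the splitting of the adjoint maps and of the successive quotients along the summands, and then you induct on the stage. At each stage, lower semicontinuity of $r_j^i$ on the stratum where the earlier ranks are fixed, together with constancy of the sum, forces each $r_j^i$ to be locally constant. Your extra argument in Step 1 (intersecting the nonempty Zariski open sets of lifted choices that are generic for $\mathfrak m$ and for every summand) spells out the genericity that the paper's one-line additivity claim leaves implicit.
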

\begin{proof}
First observe that, for a direct sum of Lie algebras, the adjoint maps, as well as their kernels and their images, split according to the summands. Applying this observation inductively to the successive quotient algebras, we obtain the following componentwise identity between the entries of the characteristic tuple of the Tanaka symbol and those of its summands (here we complete the characteristic tuple of a summand by zeros if necessary):
\begin{equation}\label{char_sum}r_j=\sum_i r_j^i.
\end{equation}
Let us now prove the local constancy of these entries by induction on the stage of the construction of the characteristic tuple. At the first stage, each $r_0^i$ is an integer-valued lower semicontinuous function, while the sum $r_0$ is constant by assumption; hence each $r_0^i$ is locally constant. Now suppose that the ranks at all the preceding stages have already been fixed, i.e., they are known to be locally constant. Choose, at $q_0$, tuples of successive ad-maximal elements attaining the generic ranks in all summands. The corresponding nonzero witness minors remain nonzero near $q_0$, and therefore the kernels and the quotients vary smoothly there. Consequently, on this stratum, where the preceding ranks are fixed, the next ranks $r_j^i$ are lower semicontinuous. Since their sum in \eqref{char_sum} is constant, they are again locally constant. Note that, in this argument, the lower semicontinuity of the ranks at a given stage is used only after the ranks at all the preceding stages have been fixed, that is, only on a stratum where the preceding ranks are constant; this is precisely the setting in which the lower semicontinuity of successive ranks was established when the characteristic tuple was introduced. Finally, since the number of stages is finite, the induction terminates, which completes the proof.
\end{proof}
\subsection{Quasi-normal frames}
\label{quasinormal_frames_subsec}
We begin by specifying the setting in which quasi-normal frames will be used. Let $\Omega$ be the open dense subset of a stable neighborhood on which the characteristic tuple of $D$ is locally constant. In the graph calculations below, we assume that the reference point belongs to $\Omega$, and we shrink the neighborhood under consideration so that it is contained in $\Omega$. Then, by Lemma \ref{char_summands}, the characteristic tuples of all summands of the Tanaka symbol are locally constant on that neighborhood. Note that the compatible graphs used on different neighborhoods may be different. In Subsection \ref{tuple_continuation_subsec}, the bracket relations obtained in this way are extended to the whole stable neighborhood; this extension does not require extending these graphs or the corresponding frames. On the other hand, recall that the splitting of the Tanaka symbol given by Theorem \ref{Tanaka_decomp_prop} holds on the entire stable neighborhood, and not only on this open dense subset.

Now, let $\Gamma_i$ be the normalizable graph of $\mathfrak m^i(q)$, i.e. $\Gamma_i:=\Gamma\bigl(\mathfrak m^i(q)\bigr)$ and let
\begin{equation}
\label{graph_decomp}
\Gamma=\bigsqcup_{i=1}^k \Gamma_i.
\end{equation}
\begin{Def}
\label{component_def}
We say that the graph $\Gamma_i$ is the \emph{$i$th component of the graph $\Gamma$} with respect to decomposition \eqref{eigen_decomp}. \footnote{ Note that the notion of component here is completely different from a connected component because the graphs $\Gamma_i$ are not connected in general: they may have isolated vertices, all of which are vertices of depth $0$.}.
\end{Def}
Let us now construct frames adapted to this graph near the reference point. By Lemma \ref{char_summands}, the characteristic tuples of all summands are constant near this point. Choose, at $q_0$, maximizing root tuples for all summands. Passing to a smaller neighborhood, we may assume that their nonzero rank minors remain nonzero on it. Then, by the constant-rank theorem, the successive kernels, their orthogonal complements, and the quotient metrics depend smoothly on the point, and a smooth orthonormal completion at each step gives the horizontal vectors of Theorem \ref{ccreduce} as smooth vector fields. Further, we define each edge field to be the actual Lie bracket of the two vertex fields corresponding to the endpoints of the edge. The classes of these edge fields modulo the distribution form the required bases of the central parts of the summands. Thus, all these vector fields together form a smooth frame $\{X_s\}_{s\in V(\Gamma)\cup E(\Gamma)}$ in $\mathcal U$ such that if
\begin{equation} \label{pr_def} \mathrm {pr}_q:T_qM\rightarrow T_qM/D(q)
\end{equation} is the canonical projection, then
for every $1\leq i\leq k$ and $q\in \mathcal U$ the tuple
 $$ \left\{\{X_v(q)\}_{v\in V(\Gamma_i)},\{\mathrm{pr}_q\bigl(X_e(q)\bigr)\}_{e\in E(\Gamma_i)}\right\}$$
is a compatible basis of $\mathfrak m^i(q)$ with respect to the graph $\Gamma_i$ and
\begin{equation}
\label{quasinorm_basis}
[X_v,X_w]=X_{\overrightarrow{vw}}, \quad \text { for every }  v, w \in V(\Gamma) \text { such that } \overrightarrow{vw} \in E(\Gamma).
\end{equation}
We call such frame $\{X_s\}_{s\in V(\Gamma)\cup E(\Gamma)}$ a \emph{quasi-normal frame adapted to the pair of sub-Riemannian metrics $g_1$ and $g_2$,} or, shortly a \emph{quasi-normal frame}.
In what follows, we use the shorthand
\begin{equation}
\label{graph_translation}
V=V(\Gamma), \qquad V_i=V(\Gamma_i), \qquad E=E(\Gamma), \qquad E_i=E(\Gamma_i).
\end{equation}
Thus the arbitrary index sets used in Subsection \ref{fas_subsec} are, from this point onward, identified with the vertex and edge sets of the normalizable graphs. We also extend the function $\alpha$ from \eqref{alpha_of} to $E$ by setting
\begin{equation}
\label{alpha_edge_extension}
\alpha(e):=\alpha_i, \qquad e\in E_i,\quad 1\leq i\leq k.
\end{equation}
In the sequel, all graph calculations on $\Omega$ will be carried out in quasi-normal frames.
The structure functions of this frame are those defined in \eqref{struct_function}, with the identification \eqref{graph_translation}. In particular, from \eqref{quasinorm_basis} it follows that
\begin{equation}
\label{struct_quasinorm}
\text {if } \overrightarrow {vw}\in E, \text{ then  }
c_{v,w}^e=\begin{cases}
1& \text {if }e=\overrightarrow{vw},\\
0& \text {if }e\in V\cup \bigl(E\backslash\{ \overrightarrow{vw}\}\Bigr)
\end{cases}
\end{equation}
\begin{rk}
\label{ad_rem2}
Let $\{X_s\}_{s\in \{V\cup E\}}$ be a quasi-normal frame. By Remark \ref{ad_rem1} (especially relation \eqref{ad_X_depth_p}) and the decomposition \eqref{eigen_decomp} of the Tanaka symbol $\mathfrak m$, if $v$ is a vertex of depth $p$ in $\Gamma_i$, it follows that
\begin{equation}
\label{ad_X_depth_p_i}
\imad X_v\subset \sum_{j=0}^{\min(p,\omega_i)} \imad X_{\rho_j^i}|_{D_i} \quad \mathrm{mod} D.
\end{equation}
\end{rk}
\begin{rk}
\label{block_q}
From the decomposition \eqref{eigen_decomp}, the quasi-normality of the frame, and the formula \eqref{qjk}, it follows that if $w\in V_i$ and $l\in E_j$ for $i\neq j$, then $q_{wl}=0$. In fact, later we will prove a more general statement regarding higher layers; see Corollary \ref{block_diag}, Item (1).
\end{rk}
\section{Sketch of the remaining proof of the main Theorem \ref{mainthm}}
\label{proof_outline_sec}
We begin by fixing the terminology for the structure functions of quasi-normal frames (see Subsection \ref{quasinormal_frames_subsec}), which will be used in the rest of the paper.
\begin{Def}
\label{cross_component_def}
Let $\{X_s\}_{s\in V\cup E}$ be a quasi-normal frame, and let $c^z_{xy}$ be one of its structure functions, i.e., the coefficient of $X_z$ in the Lie bracket $[X_x,X_y]$ (see \eqref{struct_function}).
\begin{enumerate}
\item The lower indices $x$ and $y$ are called the \emph{inputs} of $c^z_{xy}$, and the upper index $z$ is called its \emph{output}.
\item An input is called \emph{horizontal} if it is a vertex, and an \emph{edge input} if it is an edge. In the latter case, the corresponding vector field is the Lie bracket of the vector fields corresponding to the endpoints of this edge (see \eqref{quasinorm_basis}).
\item The output is called \emph{horizontal} if it is a vertex, and \emph{central} if it is an edge. In the latter case, $c^z_{xy}$ is the coefficient of the Lie bracket along a vector field whose class modulo $D$ belongs to the degree $-2$ (central) part of the Tanaka symbol.
\item Every vertex and every edge of $\Gamma$ belongs to exactly one component $\Gamma_i$ (see Definition \ref{component_def}), and we say that the corresponding index belongs to this component. The structure function $c^z_{xy}$ is called \emph{cross-component} if its three indices do not all belong to the same component. It is called \emph{horizontal cross-component} if, in addition, its inputs and its output are horizontal, i.e., all its indices are vertices.
\end{enumerate}
\end{Def}
We now give a synopsis of the remaining argument.  Only the forward implication of
Theorem \ref{mainthm} requires proof: the converse, namely that every generalized
Levi-Civita pair is projectively equivalent, was established in
\cite[Proposition 5.5]{jmz2019}. By Proposition \ref{th:orb.to.proj}, projective equivalence gives an orbital diffeomorphism between the corresponding Hamiltonian flows, and Proposition \ref{fda} turns the existence of this diffeomorphism into solvability of the fundamental algebraic system \eqref{A.phi.B}.  Since this system is overdetermined and polynomial in the fiber variables, all maximal minors of its augmented matrix vanish.
The goal is to extract from these determinant identities the following sparse form of the cross-component structure functions of a quasi-normal frame.
\begin{Prop}[Vanishing of cross-component structure functions]
\label{outline_transverse_vanishing}

Under the assumptions of Theorem \ref{mainthm}, suppose that $g_1$ and $g_2$ are
projectively equivalent, and let
$\{X_s\}_{s\in V\cup E}$ be a quasi-normal frame.  Then the
following structure functions vanish:
\begin{enumerate}
    \item if $i\neq t$, $v\in V_i$, $w\in V_t$, and $e\in E$, then
    \begin{equation}
    \label{outline_mixed_vertical_zero}
    c^e_{vw}=0;
    \end{equation}
    \item if $i\neq t$, $v,w\in V_i$, and $l\in V_t$, then
    \begin{equation}
    \label{outline_same_component_horizontal_zero}
    c^l_{vw}=0;
    \end{equation}
    \item if $i,t,s$ are pairwise distinct, $v\in V_i$, $w\in V_t$, and
    $l\in V_s$, then
    \begin{equation}
    \label{outline_three_component_zero}
    c^l_{vw}=0;
    \end{equation}
    \item if $i\neq t$, $y\in V_i$, $e\in E_i$, and
    $z\in V_t\cup E_t$, then
    \begin{equation}
    \label{outline_vertex_edge_zero}
    c^z_{ye}=0.
    \end{equation}
\end{enumerate}
\end{Prop}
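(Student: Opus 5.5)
Item (1) should come directly from the decomposition of the Tanaka symbol, and items (2)--(4) from the vanishing of suitable maximal minors of the augmented matrix $[A|b]$ of the fundamental algebraic system \eqref{A.phi.B}. For item (1), take $v\in V_i$ and $w\in V_t$ with $i\neq t$. By Theorem \ref{Tanaka_decomp_prop} the summands $\mathfrak m^i$ and $\mathfrak m^t$ commute, so $\mathrm{pr}([X_v,X_w])=0$. Hence $[X_v,X_w]\in D$, and all central coefficients $c^e_{vw}$ vanish.

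For items (2) and (3), I would first pass to a principal system $\mathfrak A\Psi=\tilde{\mathfrak b}$. This is done in three moves.
\begin{itemize}
\item Apply a column operation that preserves all maximal minors of $[A|b]$.
\item Grade the entries by their degree in the vertex variables $u_v$, $v\in V$.
\item Keep the lowest homogeneous parts: linear in the coefficient matrix, and quadratic (cubic, if the quadratic part vanishes) in the last column.
\end{itemize}
By Remark \ref{block_q} and the quasi-normality relation \eqref{struct_quasinorm}, the matrix $\mathfrak A$ is block-diagonal with respect to the components. Within each block, choose a core square submatrix whose rows are the rows of the roots $\rho^i_j$ in successive layers. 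I would show that its determinant contains the product of the root monomials, the \emph{distinguished monomial}, with coefficient $\pm1$. The argument is a saturation count: each edge variable $u_{\overrightarrow{\rho_j v}}$ can be placed in only one row of the expansion.

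Next, adjoin to the core submatrix one further row together with the last column $\tilde{\mathfrak b}$, and extract the coefficient of the distinguished monomial multiplied by suitable vertex and edge variables. For a horizontal structure function $c^l_{vw}$ whose output $l$ lies outside the component of the inputs, the quadratic part of $R_w$ in Corollary \ref{rj_reduction} carries the nonzero factor $\alpha(w)^2-\alpha(l)^2$. So the vanishing of the augmented minor should yield this structure function plus already-eliminated terms. I would organize this as an induction on the depth of the deeper input. At each depth, first eliminate the functions that have a root as an input. For an input of positive depth, use the row of the first root in the layer one higher than that depth, and multiply by the edge variables along the path $\rho_0\to\rho_1\to\cdots$. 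Abelian components contain no roots, and I would treat them separately using only the first layer. For item (3), where the inputs and the output lie in three distinct components, multiply the first-layer equations of each block by the vertex variables and add them up. The result is a linear system whose matrix consists of the differences $\alpha_i^2-\alpha_s^2$; this matrix is invertible because distinct components have distinct eigenvalues.

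For item (4), I would write the first two layers of each block invariantly. The first layer is the transpose of $\mathrm{ad}$ of the $g_1$-dual vector of $p|_{D_i}$. The second layer is the transpose of $\mathrm{ad}$ of the image of that vector under the skew operator induced by the central functional. I would then prove a lemma of the following form: if such a two-layer system has zero first right-hand side and a rational solution, and its second right-hand side does not depend on the central variables, then that right-hand side vanishes. After items (1)--(3), the right-hand sides of the principal system have exactly this form, because the quadratic column no longer involves the other components. Combining the lemma with the Jacobi identity for $[X_y,[X_a,X_b]]$ should force $c^z_{ye}=0$ for central outputs $z\in E_t$. Repeating the argument with the cubic part of $\tilde{\mathfrak b}$ handles the horizontal outputs $z\in V_t$.

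I expect the main obstacle to be the coefficient extraction in the augmented minors at positive depth. There the edge factors along the root path can be allocated among the rows in several ways, and I must show that the only surviving contribution is the target structure function plus a multiple of a root-input term. I would first try to settle this by an explicit combinatorial count of the available edge factors in each layer.
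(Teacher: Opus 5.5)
Your outline follows the paper's architecture step for step. That covers symbol splitting for (1); the column operation, vertex grading, core minor and root-row augmentations along depth paths for (2); the block contraction giving a $3\times3$ system for (3); and the two-layer operator lemma, the Jacobi identity and the cubic part for (4). However, several of its concrete steps would fail as written.

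\emph{The core submatrix.} It cannot be built from root rows. The $\Gamma_i$-block has $d_i=\sum_j r^i_j$ columns, and in layer $p$ you must take the rows of \emph{all} vertices $w\in V_i$ of depth $\ge p$. These are the heads of the edges $\overrightarrow{\rho^i_{p-1}w}$, and there are $r^i_{p-1}$ of them. The root monomial of layer $p$ occurs in such a row only in the column $\overrightarrow{\rho^i_{p-1}w}$, with coefficient $c^{\overrightarrow{\rho^i_{p-1}w}}_{\rho^i_{p-1}w}=1$. In the row of $\rho^i_{p-1}$ itself its coefficient is $c^s_{\rho^i_{p-1}\rho^i_{p-1}}=0$. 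The rows of $\rho^i_0$ never belong to the core; they are the augmenting rows. For a depth-zero input $w\neq\rho^i_0$ the augmenting row is instead the first-layer row of $w$ itself. Likewise, the saturation count is not that an edge variable fits into a single row. The variable $u_{\overrightarrow{\rho^i_r\rho^i_{r+1}}}$ occurs in $p(U)$ with exponent $d_i-r^i_0-\dots-r^i_r$. This is exactly the number of selected factors from layers $\ge r+2$, each of which is forced to contain it.

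\emph{The extracted coefficient.} Your predicted form misses a term. When $\Gamma_t$ is nonabelian and $l\neq\rho^t_0$, the last column may supply $u_vu_{\rho^t_0}$ while the target block $\det T_t$ supplies $u_l$ in place of one factor $u_{\rho^t_0}$. This contributes an element of the ideal generated by the $c^{\rho^t_0}_{xy}$, $x,y\in V_i$, which need not vanish a priori. You therefore have to run the whole elimination first for $l=\rho^t_0$, where this alternative coincides with the main term, and only then for the remaining $l\in V_t$.

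For (4), the vanishing of $\tilde{\mathfrak b}^1$ does not follow from (1)--(3) alone. In $\tilde{\mathfrak b}^1_w$, a monomial $u_vu_l$ with $v\neq l$ in the same external component $V_t$ has coefficient $(\alpha(w)^2-\alpha_t^2)(c^l_{vw}+c^v_{lw})$. These functions have their output in the component of an input, and they do not vanish individually. You need the first-divisibility antisymmetry, item (3) of Proposition~\ref{first divi}; the same block contraction you use for (3) also yields it.

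Your reason why the second layer satisfies the hypothesis of your lemma is backwards. Once $\tilde{\mathfrak b}^1=0$, the recursion leaves $\tilde{\mathfrak b}^2_v=\sum_{e\in E_i,\,f\in E_t,\,t\neq i}(\alpha_i^2-\alpha_t^2)q_{ve}q_{ef}u_f$, whose edge variables are all external. What is needed is independence of the internal variables $u_e$, $e\in E_i$, with the external ones treated as parameters, and the lemma must be formulated that way.

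Finally, to pass to the cubic part you need the quadratic column to vanish in every layer. This requires $q_{ef}=0$ for $e\in E_i$, $f\in E_t$, $t\neq i$, and hence also $c^f_{xe}=0$ for $x\in V_k$, $k\neq i$. That follows from $[D_k,D_i]\subset D_k\oplus D_i$ and the Jacobi identity, not from the two-layer lemma.
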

The first item is an immediate consequence of the direct-sum decomposition
\eqref{eigen_decomp} of the Tanaka symbol: brackets between the degree $-1$
elements of different summands have no degree $-2$ component.  The other items are
the results of the analysis of certain special minors in the subsequent sections. In the terminology of Definition \ref{cross_component_def}, item (1) concerns the cross-component structure functions with horizontal inputs from two different components and a central output; items (2) and (3) concern the horizontal cross-component structure functions whose output belongs to a component different from the components of both inputs; and item (4) concerns the cross-component structure functions with a horizontal input and an edge input from the same component and with an output, central or horizontal, from another component. Note that a horizontal cross-component structure function whose inputs belong to two different components and whose output belongs to one of these components does not vanish in general (see Remark \ref{bracket_frames_rk}). The places where these vanishing families are proved, together with their
geometric roles, are summarized in the following table, in which item (4) occupies the last two rows.

\begin{center}\small
\begin{tabularx}{\textwidth}{@{}>{\raggedright\arraybackslash}X>{\raggedright\arraybackslash}X>{\raggedright\arraybackslash}X@{}}
\toprule
Vanishing family & Proved in & Geometric role\\
\midrule
Horizontal inputs from two different components, central output & Theorem \ref{Tanaka_decomp_prop} & The mixed brackets satisfy $[D_i,D_t]\subset D$.\\
Both horizontal inputs in $V_i$, horizontal output in $V_t$, $t\ne i$ & Propositions \ref{cofterm2}, \ref{c1bis}, and \ref{c2} & This gives $D_i^2\cap D=D_i$.\\
Horizontal inputs and horizontal output in three distinct components & Proposition \ref{c3} & Together with the first row, this gives $[D_i,D_t]\subset D_i\oplus D_t$.\\
Horizontal input in $V_i$, edge input in $E_i$, central output in $E_t$, $t\ne i$ & Lemma \ref{bianchi} and the subsequent Jacobi identity argument for mixed horizontal indices & The bracket $[D_i,D_i^2]$ has no central component belonging to another summand of the symbol.\\
Horizontal input in $V_i$, edge input in $E_i$, horizontal output in $V_t$, $t\ne i$ & Proposition \ref{mixed_edge_horizontal_vanishing} & Together with the preceding row, this gives $[D_i,D_i^2]\subset D_i^2$.\\
\bottomrule
\end{tabularx}\end{center}

The geometric content of Proposition \ref{outline_transverse_vanishing} is the
existence of the foliations that occur in Definition \ref{Gen_Levi-Civita}.
\begin{Coro}[The foliation net]
\label{outline_foliation_net}
Under the assumptions of Proposition \ref{outline_transverse_vanishing},
\begin{equation}
\label{outline_tangent_decomposition}
TM=\bigoplus_{i=1}^k D_i^2,
\end{equation}
and, for every nonempty $J\subset\{1,\ldots,k\}$, the distribution
$\bigoplus_{i\in J}D_i^2$ is involutive.  Consequently, the integral foliations
$\mathcal F_i$ of the distributions $D_i^2$ form a local net of foliations.  In
particular,
\begin{equation}
\label{outline_leaf_tangent}
T\mathcal F_i=D_i^2,
\end{equation}
and $D_i$ is bracket generating on every leaf of $\mathcal F_i$.
\end{Coro}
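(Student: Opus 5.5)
We derive Corollary \ref{outline_foliation_net} from the four vanishing families of Proposition \ref{outline_transverse_vanishing}, working first in a quasi-normal frame on the regular locus and then translating the conclusions into frame-independent statements about brackets of sections.

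\textbf{Step 1: the inclusions $[D_i,D_t]\subset D_i\oplus D_t$ for $i\neq t$ and $D_i^2\cap D=D_i$.} Since $D_i^2$ is spanned by $\{X_v\}_{v\in V_i}$ together with all brackets $[X_v,X_w]$, $v,w\in V_i$, we may use the fields $\{X_v\}_{v\in V_i}\cup\{X_e\}_{e\in E_i}$. For $v,w\in V_i$, item (2) removes horizontal components outside $V_i$. By Theorem \ref{Tanaka_decomp_prop}, the central component lies in $\mathrm{span}\{X_e\}_{e\in E_i}$ modulo $D$. Hence
\[
D_i^2=\mathrm{span}\{X_s\}_{s\in V_i\cup E_i},\qquad D_i^2\cap D=D_i .
\]
For $v\in V_i$ and $w\in V_t$ with $i\ne t$, items (1) and (3) give $[X_v,X_w]\in D_i\oplus D_t$. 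The main identity then follows from two facts. First, $\sum_i(|V_i|+|E_i|)=n$. Second, the $W_i$ form a direct sum in $TM/D$ (Theorem \ref{Tanaka_decomp_prop}). Together these give $TM=\bigoplus_i D_i^2$. This sum is frame-independent, so it holds on the regular locus.

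\textbf{Step 2: involutivity of partial sums.} Let $J\subset\{1,\dots,k\}$ be nonempty and set $\mathcal D_J=\bigoplus_{i\in J}D_i^2$. We check brackets of the spanning fields pairwise.
\begin{enumerate}
\item Vertex--vertex brackets lie in $\mathcal D_J$ by Step 1.
\item For a vertex $y\in V_i$ and an edge $e\in E_i$, item (4) gives $[X_y,X_e]\in D_i^2$.
\item For an edge $e=\overrightarrow{ab}\in E_i$ and a vertex $y\in V_t$ with $t\ne i$, write $X_e=[X_a,X_b]$ and apply the Jacobi identity:
\[
[X_y,X_e]=[[X_y,X_a],X_b]+[X_a,[X_y,X_b]] .
\]
Here $[X_y,X_a]$ and $[X_y,X_b]$ lie in $D_i\oplus D_t$ by Step 1. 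Expanding them with function coefficients produces vertex--vertex brackets, which Step 1 controls. It also produces brackets of the form $[X_{y'},X_b]$ with $y'\in V_i$ or $V_t$, which are again of type (1) or (2). The derivative terms are multiples of frame fields in $D_i\oplus D_t$. Hence $[X_y,X_e]\in D_i^2\oplus D_t^2$.
\item For two edges $e\in E_i$ and $f\in E_t$, the same Jacobi expansion reduces $[X_e,X_f]$ to brackets of vertices with $D_i^2\oplus D_t^2$. These are controlled by items (1)--(3).
\end{enumerate}
All these inclusions are between smooth distributions. The partial sums $\mathcal D_J$ are of constant rank near every stable point (Theorem \ref{Tanaka_decomp_prop}), and involutivity is the vanishing of the tensor $\Lambda^2\mathcal D_J\to TM/\mathcal D_J$. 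This tensor vanishes on the dense regular locus, so it vanishes everywhere by continuity; this is the mechanism of Proposition \ref{tuple_continuation_prop}.

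\textbf{Step 3: the net.} With $TM=\bigoplus_iD_i^2$ and every partial sum involutive, the standard simultaneous Frobenius argument gives the coordinates. Take the complementary sums $\mathcal D_{\{1,\dots,k\}\setminus\{i\}}$; these are involutive. Choose first integrals of each of them and use these as coordinate blocks. In these coordinates every $\mathcal F_i$, the foliation tangent to $D_i^2$, is parallel to a coordinate subspace, so the $\mathcal F_i$ form a net. By construction $T\mathcal F_i=D_i^2$, and $D_i$ generates $D_i^2$ in one bracket step, so $D_i$ is bracket generating on each leaf.

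\textbf{Main obstacle.} The delicate point is case 3 of Step 2, the edge--other-vertex bracket. One must make sure the Jacobi expansion only produces terms already covered, in particular that no horizontal component of $[X_y,X_a]$ escapes into a third component. Item (3) is exactly what guarantees this.
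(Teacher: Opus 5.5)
Your proof is correct and follows essentially the same route as the paper: you get $D_i^2=\operatorname{span}\{X_s\}_{s\in V_i\cup E_i}$, hence $D_i^2\cap D=D_i$ and the direct sum, together with $[D_i,D_t]\subset D_i\oplus D_t$ and $[D_i,D_i^2]\subset D_i^2$, from the vanishing families; you then use Jacobi-identity expansions for the remaining brackets, which are the paper's Proposition \ref{ady} and Corollary \ref{involutive}, and Frobenius for the net.

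One caveat concerns your extra continuity remark: Theorem \ref{Tanaka_decomp_prop} gives constant rank only for the $W_i$, not for $D_i^2$. Extending involutivity beyond the regular locus therefore first requires extending $D_i^2\cap D=D_i$, which the paper does with the tensor $\kappa_i$ in Proposition \ref{tuple_continuation_prop}. This extension is not needed for the corollary as stated, since it is formulated where a quasi-normal frame exists.
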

\begin{proof}
The vanishing statements above imply
\[
[D_t,D_i]\subset D_i\oplus D_t\quad (i\neq t),
\qquad
[D_i,D_i^2]\subset D_i^2.
\]
Proposition \ref{ady} gives the necessary invariance of the distributions under the adjoint action, and an application of the
Jacobi-identity gives the involutivity of $D_i^2$ and
$D_i^2\oplus D_t^2$, as stated in Corollary \ref{involutive}.  Expanding brackets
of sums then proves the involutivity of every partial sum.  Item (2) of
Proposition \ref{outline_transverse_vanishing} gives $D_i^2\cap D=D_i$; combined
with \eqref{eigen_decomp} and the step $2$ assumption, this yields the direct sum
\eqref{outline_tangent_decomposition}.  The Frobenius theorem now produces
mutually transverse local foliations and local product coordinates adapted to
them.  Finally, $D_i^2=D_i+[D_i,D_i]$, so $D_i$ is bracket generating within each
leaf.
\end{proof}
It remains to recover the metric normal form, which is the purpose of
Section \ref{rotation_sec}.  Since quasi-normal frames are available only on the regular
locus of the characteristic tuple, we first show in Proposition
\ref{tuple_continuation_prop} that the conclusions of Corollary
\ref{outline_foliation_net} hold on the whole stable neighborhood: each of the
underlying bracket relations expresses the vanishing of a smooth tensor, which
vanishes on the dense regular locus and hence everywhere.  The local product determined by the foliations
defines projections $\pi_i$ along the complementary leaves.  Lemma
\ref{rotation_section_lemma} shows that the $D_i^2$-lifts of a weakly adapted
frame under $\pi_i$ actually lie in $D_i$, thereby establishing the projection
condition in Definition \ref{Gen_Levi-Civita}; Lemma
\ref{rotation_commute_lemma} shows that lifts belonging to different components
commute.  The resulting differential equations for the change of frame matrices
(Lemma \ref{rotation_transition_pde}), together with the first-divisibility
identities in Proposition \ref{first divi}, show that the rotated frame is
conformal on each $D_i$.  The functions
$B_i=(\prod_r\lambda_r)^{1/(k+1)}/\lambda_i$ descend through $\pi_i$ to the
functions $\mu_i$ on the leaves, and the comparison of the conformal factor
with $\gamma_i=\prod_{j\neq i}|B_j-B_i|$ gives exactly the two metric formulas in
Definition \ref{Gen_Levi-Civita}.  Proposition \ref{eigenconst}, whose proof is
given in Appendix \ref{app:eigen}, supplies the required constancy of $\mu_i$
when $\dim D_i>1$.  In the affine case all eigenvalues are constant, so all
$\mu_i$ are constant and the same argument proves Theorem \ref{mainthm_eisen}.

We next describe how Proposition \ref{outline_transverse_vanishing} is proved.  In
Section \ref{principal_part_sec}, the column operation \eqref{eq:1} replaces $b$
by $\tilde b$ without changing the maximal minors.  Lemma \ref{lin_quad_lem}
shows that the vertex degrees of $A$ and $\tilde b$ are at least one and two,
respectively, and Corollary \ref{frak_FAL} therefore reduces the first relevant
homogeneous part of every maximal minor to the much simpler principal system
\eqref{A.phi.B.frak}.  The formula for its right-hand side is propagated by
Lemma \ref{recursive_tilde_b_frak}.  The additional terms that arise in the
projective, rather than affine, case are computed and shown not to affect these
formulas in Appendices \ref{app:rj}, \ref{app:column}, and \ref{app:vanish}.

Sections \ref{root_monomial_sec} and \ref{distinguished_minors_sec} then use the
normalizable graph to identify monomials whose coefficients cannot cancel.
Lemma \ref{minvert} expresses the coefficients of the linear vertex-homogeneous
entries as chains of structure functions, and Lemmas \ref{root_edge} and
\ref{root} show that a chain contributing to a root monomial is forced to follow
the successive roots of a graph component.  This produces the core sub-matrix
$T$.  Lemmas \ref{coeff=1} and \ref{pcoeff1} show that the distinguished monomial
$p(U)$ occurs in $\det T$ exactly once, with coefficient $\pm1$.  Thus $T$ is a
nonzero full-size minor of $\mathfrak A$, and augmenting it by one row and the
column $\tilde{\mathfrak b}$ provides maximal minors in which selected structure
functions can be isolated.

Section \ref{transverse_vanishing_sec} performs this isolation. It begins with the
augmentations by rows indexed by vertices of depth zero: for them, the
coefficients of suitable monomials in the cofactor expansions of the augmented
minors can be computed directly, and these calculations serve as preliminary
cases. For vertices of arbitrary depth, we augment the core sub-matrix by a row of the
first root of the component in a higher layer and isolate the coefficient of the
product of a depth path monomial with the distinguished monomial $p(U)$.
Lemma \ref{augmented_path_rigidity} shows, by a counting argument for the chains
of structure functions forming the Leibniz terms, that this coefficient is the
sum of the structure function to be eliminated and of a multiple of a structure
function having a root among its inputs; Proposition \ref{cofterm2}
transfers this computation to the whole augmented minor. The resulting
triangular elimination is carried out in Proposition \ref{c2} for a nonabelian
source component, while the case of an abelian source component is treated in
Proposition \ref{c1bis}.
Further, in Proposition \ref{c3} we contract the block equations of the first
layer and obtain, for three distinct components, a linear system with an
invertible three-by-three matrix. These results first force the entire first
layer $\tilde{\mathfrak b}^1$ to vanish (Corollary \ref{b10}).  The operator form
of the first two layers from Lemma \ref{A^_1 and 2_i_lemma} then implies, through
Lemma \ref{beta2}, that the second-layer obstruction vanishes as well.  Combining
this with the Jacobi identity gives Lemma \ref{bianchi}, i.e., the part of
\eqref{outline_vertex_edge_zero} with a central output.  Corollary \ref{btfrak0} then raises the minimal
vertex degree of $\tilde b$ to three; applying the same operator argument to the
cubic associated system proves Proposition \ref{mixed_edge_horizontal_vanishing}
and completes \eqref{outline_vertex_edge_zero}.  Proposition \ref{ady} and
Corollary \ref{involutive} translate these coefficient vanishings into the
foliation statement of Corollary \ref{outline_foliation_net}.
\section{Reduction to the principal part of the fundamental algebraic system}
\label{principal_part_sec}
In this section,  we will show that the coefficients of monomials of minimal degree with respect to $u$-variables indexed by vertices of $\Gamma$ in the $(n-m+1)\times (n-m+1)$ minors of the augmented matrix $[A|b]$ are equal to the coefficients of the same monomials in the  corresponding minors of a more simple augmented matrix, see Corollary\ref{frak_FAL} below. This is important because, as will be shown in the later stages of the proof, the coefficients of these monomials yield the vanishing of the horizontal cross-component structure functions in Section \ref{transverse_vanishing_sec}, while the monomials of the next vertex degree are used in Section \ref{other_sec} to treat the remaining structure functions with horizontal output.
To begin with, let us introduce some terminology and notation. Let $U$ be the tuple
\begin{equation}
U=\{u_s\}_{s\in {V\cup E}}
\end{equation}
\begin{Def}
    \label{U-poly}
A \emph{$U$-polynomial} is a polynomial in the variables of the tuple $U$ with coefficient which are smooth functions on an open set $\mathcal U$ of the manifold $M$, where the projective equivalence of our sub-Riemannian metrics $g_1$ and $g_2$ holds and the local quasi-normal frame $\{X_s\}_{s\in V\cup E}$ is defined.
\end{Def}
The degree of a monomial of a $U$-polynomial and the notion of homogeneity is defined in an obvious way.
\begin{rk}
\label{homogen_total}
Since $U$-polynomials $q_{vs}$ and $R_w$ are linear and quadratic, respectively, from recursive formulas \eqref{elem.A} and \eqref{term.of.b.rec.form} it follows by induction that the entries of $A^p$ and $b^p$ are homogeneous $U$-polynomials of degree $p$ and degree $p+1$, respectively.
    \end{rk}
\begin{Def}
\label{var_def}
The variable $u_s$ is  called a \emph{vertex variable} if $s\in V$ and an \emph{edge
variable} if $s\in E$. A vertex (respectively, an edge) variable $u_s$ is called \emph{$\Gamma_i$-vertex variable} if $s\in V_i$ (respectively, an \emph{$\Gamma_i$-edge variable} if $s\in E_i$).
\end{Def}
\begin{Def}
Given a $U$-monomial define its \emph{vertex degree} (respectively, \emph{edge degree}) as the number of vertex (respectively edge) variables factors in it counting repetitions. In the same way one can define $\Gamma_i$ -vertex (respectively, $\Gamma_i$-edge) degree of a monomial.
A $U$-polynomial $f$ is called vertex (respectively, edge) homogeneous of degree $l$ if all its monomials have  $l$ vertex degree $l$ (respectively, edge degree $l$)  variables. In the same way, we define $\Gamma_i$-vertex (respectively, $\Gamma_i$-edge) homogeneity.
\end{Def}
Any $U$- polynomial $f$ can be uniquely decomposed into vertex homogeneous polynomials:
\begin{equation}
f=\sum_{j=0}^\mu f_{(j)},
\end{equation}
for some nonnegative integer $\mu$,
where $f_{(j)}$ a vertex homogeneous of degree $j$. We call $f_{(j)}$ the \emph{$j$th vertex-homogeneous component of $f$}.
We say that $j\geq 0$ is the \emph{minimal vertex degree of $f$} if $j$ is the minimal nonnegative integer such that $f_{(j)}(U)\neq 0$\footnote{When $f$ is identically equal to zero, the minimal vertex degree is set to be equal to $\infty$.}. In this case, $f_{(j)}$ is called the \emph{initial vertex-homogeneous component} of $f$.
Given a matrix $L$ with entries being $U$-polynomials  denote by $L_{(j)}$ the matrix with entries equal to $j$ vertex homogeneous components of the corresponding entries of $L$. The matrix $L_{(j)}$ will be called the \emph{$j$ vertex homogeneous part of the matrix $L$}. The notion of $j$ vertex homogeneous and minimal vertex homogeneity for the matrix $L$ with polynomial in $U$ entries is defined in an obvious way (analogously to the same notion for a single polynomial in the previous paragraph). We also call $L_{(1)}$ (respectively $L_{(2)}$) the \emph{linear}  (respectively, \emph{quadratic})  vertex homogeneous part of $L$.  Finally, $1$ vertex homogeneous (respectively, $2$ vertex homogeneous) polynomials or  matrices with polynomial entries in $U$ will be called linearly (respectively, quadratically) vertex homogeneous.
Further, define a map $\mathcal C:V\cup E\mapsto [1:k]$ as follows:
\begin{equation}
\label{component_map}
\mathcal C(s)=i, \quad \text{ if } s\in V_i\cup E_i,
\end{equation}
where $\Gamma_i$ is as in \eqref{graph_decomp}.
\begin{Lemma}
\label{h_1_degree}
 If $j$ is the minimal vertex degree of a $U$-polynomial $f$ (respectively, a matrix $L$ with $U$-polynomial entries), then the minimal vertex degree of $\vec h_1(f)$ (respectively, $\vec h_1(L)$) is at least $j$. Moreover, $(\vec h_1(f))_{(j)}$ is equal to the sum of all possible terms obtained from the summands of $f_{(j)}$ by replacing one of their vertex variables factor by the $1$-vertex homogeneous component of the action of $\vec h_1$ on this variable.
\end{Lemma}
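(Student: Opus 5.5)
We will prove Lemma \ref{h_1_degree} by a direct computation with the explicit form \eqref{eq:vech} of $\vec h_1$ in the frame $\{Y_s,\partial_{u_s}\}$. The starting point is to split $\vec h_1$ according to how each of its pieces changes the vertex degree of a monomial. By \eqref{eq:vech},
\[
\vec h_1=\sum_{v\in V}u_vY_v+\sum_{v\in V}\sum_{w,s\in V\cup E}c^s_{vw}u_vu_s\partial_{u_w}.
\]
We apply this operator to a single monomial $c(q)\,u_{s_1}\cdots u_{s_N}$ with smooth coefficient $c$, and then use linearity of $\vec h_1$ and additivity of vertex-homogeneous components.

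\textbf{First step.} The term $\sum_v u_vY_v$ acts only on the coefficient, since $du_w(Y_s)=0$. It therefore produces $\sum_v X_v(c)\,u_v\cdot(\text{monomial})$, which raises the vertex degree by exactly one.

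\textbf{Second step.} The derivation part acts on one factor $u_{s_r}$ at a time, replacing it by $\vec h_1(u_{s_r})=\sum_{v\in V}\sum_{s}c^s_{vs_r}u_vu_s$. This is the content of \eqref{Ham_lift} summed against $u_v$, that is, $\vec h_1(u_w)=\sum_v u_vq_{ws}u_s$ with $q$ as in \eqref{qjk}. The vertex degree of this replacement is analyzed by cases.
\begin{itemize}
\item If $u_{s_r}$ is an edge variable, the replacement $u_v u_s$ contains the vertex factor $u_v$. The vertex degree therefore rises by at least one: by one if $s\in E$, by two if $s\in V$.
\item If $u_{s_r}$ is a vertex variable, the replacement has vertex degree $1$ when $s\in E$ and $2$ when $s\in V$. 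The vertex degree is thus preserved exactly when $s\in E$, and raised otherwise. The degree-preserving part is precisely $(\vec h_1(u_{s_r}))_{(1)}=\sum_{v\in V}\sum_{s\in E}c^s_{vs_r}u_vu_s$, the $1$-vertex homogeneous component of the action of $\vec h_1$ on this variable.
\end{itemize}

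\textbf{Third step: conclusion.} Every term of $\vec h_1$ applied to a monomial of vertex degree $\ell$ has vertex degree at least $\ell$. Hence, if $f=\sum_{\ell\ge j}f_{(\ell)}$, all terms of $\vec h_1(f_{(\ell)})$ with $\ell>j$ have vertex degree greater than $j$ and cannot contribute to degree $j$. The degree-$j$ part of $\vec h_1(f)$ therefore comes only from $f_{(j)}$, and only from the degree-preserving pieces. By the case analysis above, these are exactly the replacements of a vertex factor by its linear vertex-homogeneous image, with the coefficient $c$ left unchanged. This gives both assertions for polynomials; the matrix case follows entrywise, since $\vec h_1(L)$ is computed entrywise.

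There is no genuine obstacle here. The only point needing care is bookkeeping: confirming that the base-derivative term $u_vY_v$ and the edge-variable substitutions always contribute strictly higher vertex degree, so that nothing except the vertex-to-linear substitutions survives in degree $j$.
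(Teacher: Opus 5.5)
Your proof is correct and follows essentially the same route as the paper: the product rule applied to the three kinds of factors (base coefficients, edge variables, vertex variables), where only replacing a vertex variable by the $1$-vertex homogeneous part of its image preserves the vertex degree. One minor slip: the identity should read $\vec h_1(u_w)=\sum_{s}q_{ws}u_s$, without the extra factor $u_v$, which is already contained in $q_{ws}$.
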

\begin{proof}
First note that from   \eqref{eq:vech}, \eqref{poison1}, and \eqref{struct_function}, it easily follows that
    \begin{equation}
    \label{Hamiltonian_vf_action}
    \vec{h}_1(u_v)=\sum\limits_{s \in V\cup E} q_{vs}u_s=\sum\limits_{j \in V}\sum\limits_{s \in V\cup E} c^s_{jv}u_ju_s,
    \end{equation}
    where $q_{ws}$ is as in \eqref{qjk}. By the product rule, the action of $\vec h_1$ on any summand  $S$ of a $U$-polynomial $f$ is equal to the sum of terms in which  one of the factors of $S$ is replaced by the action of $\vec h_1$ on this factor. There are three type of factors of $S$:
\begin{enumerate}
  \item a vertex  variable $u_v$: its vertex degree is $1$ and the vertex degree of $\vec h_1(u_v)$ is at least $1$ by \eqref{Hamiltonian_vf_action};
\item  and edge variable $u_e$: its vertex degree is $0$ and the vertex degree of $\vec h_1(u_e)$ is at least $1$ by \eqref{Hamiltonian_vf_action};
\item a function $K$ on an open set $\mathcal U$ of $M$: The vertex degree of $K$ is zero, but the vertex degree of $\vec h_1(K)$ is at least 1 by \eqref{eq:vech}.
\end{enumerate}
So, the first statement of the lemma follows from the fact that in all three cases above the minimal vertex degree is not decreased by the action of $\vec h_1$, and the second statement of the lemma follows from the fact that only in the case (1)  the minimal vertex degree can be preserved by this action.
\end{proof}
Given $l\in E$  let $A_l$ be the column of $A$ corresponding to the edge $l$. Since we are concerned with the minors of the augmented matrix  $[A|b]$ of the fundamental algebraic system \eqref{A.phi.B}, we can replace the column $b$ of this matrix with a column obtained from it by a column operation, i.e.  subtracting from $b$ any linear combination of columns of $A$. In order to use the vertex homogeneity the following column operation is convenient
\begin{equation}
 \tilde{b}=b-\sum\limits_{l\in E}\alpha^2(l)u_l A_l \label{eq:1},  \end{equation}
where $\alpha:V\cup E\rightarrow \mathbb R$ is defined by \eqref{alpha_of} and \eqref{alpha_edge_extension}. Let $\tilde b^p$ be the sub-column of $\tilde b$ corresponding to the $p$th layer.
The following lemma and its corollary justify the introduced terminology
\begin{Lemma}\label{lin_quad_lem}
Every entry of $A$ has vertex degree at least $1$, and every entry of $\tilde b$ has vertex degree at least $2$.
\end{Lemma}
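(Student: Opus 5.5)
The plan is to argue layer by layer, by induction on $p$, using the recursive formulas \eqref{elem.A} and \eqref{term.of.b.rec.form}, Lemma \ref{h_1_degree} (which says $\vec h_1$ does not lower the minimal vertex degree), and the simplified form \eqref{RJ} of $R_w$ from Corollary \ref{rj_reduction}.

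\emph{The matrix $A$.} For the first layer, $a^1_{vw}=q_{vw}=\sum_{x\in V}c^w_{xv}u_x$ is linearly vertex homogeneous. For the inductive step, assume every entry of $A^p$ has vertex degree at least $1$. Then $\vec h_1(a^p_{vw})$ has vertex degree at least $1$ by Lemma \ref{h_1_degree}. Each product $a^p_{vl}q_{lw}$ has vertex degree at least $1+1=2$, since $q_{lw}$ is linear in the vertex variables by \eqref{qjk}. Hence every entry of $A^{p+1}$ has vertex degree at least $1$.

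\emph{The first layer of $\tilde b$.} This is the step I expect to need real care, because $R_w$ contains terms of vertex degree $1$. By \eqref{eq:1},
\[
\tilde b^1_w=R_w-\sum_{l\in E}\alpha(l)^2u_l\,q_{wl}=R_w-\sum_{l\in E}\sum_{v\in V}\alpha(l)^2c^l_{vw}u_vu_l .
\]
In \eqref{RJ}, the first sum is quadratic in vertex variables. The edge part of $\tilde b^1_w$ combines into
\[
\sum_{l\in E}\sum_{v\in V}\bigl(\alpha(w)^2-\alpha(l)^2\bigr)c^l_{vw}u_vu_l .
\]
By Remark \ref{block_q}, $q_{wl}=0$ whenever $w$ and $l$ lie in different components, i.e.\ the sum $\sum_v c^l_{vw}u_v$ vanishes identically, so $c^l_{vw}=0$ for all $v$ in that case. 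When $w$ and $l$ lie in the same component, \eqref{alpha_edge_extension} gives $\alpha(l)=\alpha(w)$. So in either case every coefficient $\bigl(\alpha(w)^2-\alpha(l)^2\bigr)c^l_{vw}$ vanishes, the linear part cancels, and $\tilde b^1_w$ has vertex degree at least $2$.

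\emph{Higher layers of $\tilde b$.} Assume $\tilde b^p$ has vertex degree at least $2$. I would rewrite $\tilde b^{p+1}_v=b^{p+1}_v-\sum_l\alpha(l)^2u_la^{p+1}_{vl}$ by substituting the two recursions and $b^p=\tilde b^p+\sum_l\alpha(l)^2u_lA^p_l$. This gives
\[
\begin{split}
\tilde b^{p+1}_v={}&\vec h_1(\tilde b^p_v)+\sum_l\vec h_1\bigl(\alpha(l)^2u_l\bigr)a^p_{vl}-\frac{\vec h_1(a)}{a}\,b^p_v-\sum_{l,j}\alpha(l)^2u_la^p_{vj}q_{jl}\\
&-\sum_{l\in E}\sum_{w\in V}a^p_{vl}u_w\Bigl(\alpha(w)^2q_{lw}+\tfrac12X_l\bigl(\alpha(w)^2\bigr)u_w\Bigr).
\end{split}
\]
I then bound each term separately.
\begin{enumerate}
\item $\vec h_1(\tilde b^p_v)$ has vertex degree at least $2$ by Lemma \ref{h_1_degree}.
\item In $\vec h_1(\alpha(l)^2u_l)$, the factor $\vec h_1(\alpha(l)^2)$ is linear in the vertex variables by \eqref{eq:vech}, and $\vec h_1(u_l)=\sum_s q_{ls}u_s$ has vertex degree at least $1$. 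Multiplying by $a^p_{vl}$, which has vertex degree at least $1$, gives degree at least $2$.
\item $\vec h_1(a)/a$ is linear in the vertex variables (the first divisibility condition, Proposition \ref{fda}), and $b^p$ has vertex degree at least $1$ by the induction hypothesis and the result for $A$. The product therefore has degree at least $2$.
\item The terms with $q_{jl}$ have degree at least $1+1=2$.
\item The last group of terms has degree at least $3$.
\end{enumerate}
This completes the induction. The only nonformal point is the first-layer cancellation, which rests on the block structure of Remark \ref{block_q} together with the constancy of $\alpha$ on each component.
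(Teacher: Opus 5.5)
Your proof is correct and follows essentially the paper's route: the first layer is computed from \eqref{RJ} and the column operation using the block structure and the constancy of $\alpha$ on components, and the higher layers are handled by induction through the recursions with a term-by-term vertex-degree count. The only difference is that the paper regroups the recursion into the exact identity \eqref{exact_column_rec} and invokes Proposition~\ref{eigenconst} to kill the coefficient-derivative term, which it needs later for \eqref{tbs}; you instead observe, correctly, that this term already has vertex degree at least $2$, so its vanishing is not needed for the lower bound itself.
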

Note that the lemma gives only lower bounds for the vertex degrees: it does not exclude that some individual entries, or even the entire last column, vanish identically.
\begin{proof}
For brevity, throughout the proof we write $\lambda_s=\alpha(s)^2$ and $c=\vec h_1(a)/a$. Recall that, by the first divisibility condition, the latter function is a linear form in the vertex variables, namely,
\begin{equation}\label{first_div_linear}
c=\frac12\sum_{v\in V}\frac{X_v\lambda_v}{\lambda_v}u_v.
\end{equation}
Further, combining Corollary \ref{rj_reduction} with the column operation \eqref{eq:1}, we obtain the following expression for the entries of the first layer:
\begin{equation}\label{tb1}
\tilde b^1_w=
\sum_{\substack{l,v\in V\\\mathcal C(l)\ne\mathcal C(w),\ v\ne l}}
(\lambda_w-\lambda_l)c^l_{vw}u_vu_l.
\end{equation}
Here and below, if several restrictions are written under a summation sign, then all of them are assumed to hold simultaneously. In particular, since the two summation indices in \eqref{tb1} are required to be distinct, this formula contains no square terms $u_l^2$. Moreover, for $e\in E_i$, the calculation of the correction term of the column operation, carried out in Appendix \ref{app:column}, gives
\[
\vec h_1\lambda_e-c\lambda_e
=\frac12\sum_{y\in V_i}X_y\lambda_e\,u_y=0.
\]
The last equality follows from Proposition \ref{eigenconst}, because $E_i\ne\varnothing$ implies $\dim D_i\ge2$. Note that Proposition \ref{eigenconst} is proved in Appendix \ref{app:eigen} by an independent argument.

Let us now derive an exact recursive identity for the layers of the modified column, that is, of the column obtained by the column operation. This identity is valid before any homogeneous components are discarded. Differentiating the column operation and using both recursive formulas of Proposition \ref{fda}, we obtain
\begin{equation}\label{exact_column_rec}
\begin{split}
\tilde b^{p+1}_w={}&\vec h_1\tilde b^p_w-c\tilde b^p_w
+\sum_{e\in E}(\vec h_1\lambda_e-c\lambda_e)u_e a^p_{we}\\
&+\sum_{\substack{e\in E\\s\in V\cup E}}(\lambda_e-\lambda_s)a^p_{we}q_{es}u_s
-\frac12\sum_{\substack{e\in E\\s\in V}}a^p_{we}X_e\lambda_s\,u_s^2.
\end{split}
\end{equation}
Indeed, by the definition of the column operation, $b^p_w=\tilde b^p_w+\sum_e\lambda_eu_ea^p_{we}$. Substituting this expression into \eqref{term.of.b.rec.form}, using the relation $\vec h_1u_e=\sum_s q_{es}u_s$ together with the recursive formula for the layers of the coefficient matrix, and cancelling the terms $\sum_e\lambda_eu_ea^{p+1}_{we}$, we arrive at the required identity. Note that the projective term enters this identity in the form $-c\tilde b^p_w$.

Now we are ready to prove the lemma by induction on the layers. The entries of the first layer of $A$ are linear in the vertex variables. Using the recursive formula for the layers of this matrix together with Lemma \ref{h_1_degree}, we see by induction that all entries of the subsequent layers have vertex degree at least $1$ as well. Further, by \eqref{tb1}, the first layer of $\tilde b$ is quadratic in the vertex variables. For the induction step in the case of the last column, consider the right-hand side of \eqref{exact_column_rec}: its first term does not decrease the vertex degree, its third term vanishes, as was shown above, and its remaining terms have vertex degree at least $2$. Hence, by induction, the assertion of the lemma holds for all layers.

Finally, denote by $\mathfrak V^j$ the space of polynomials of vertex degree at least $j$. Then the same identity yields the following recursive formula:
\begin{equation}\label{tbs}
\tilde b^{p+1}_w=\vec h_1\tilde b^p_w+
\sum_{\substack{e,f\in E\\\mathcal C(e)\ne\mathcal C(f)}}
(\lambda_e-\lambda_f)a^p_{we}q_{ef}u_f
\pmod{\mathfrak V^3}.
\end{equation}
This recursive formula will be needed in the sequel. Note that it holds in the general projective case and, in particular, in the affine one.
\end{proof}
\begin{rk}
The exact identity \eqref{exact_column_rec} obtained in the proof gives the complete recursion for the layers of the modified column in the general projective case. The terms containing derivatives of the eigenvalues appear in it explicitly, and Proposition \ref{eigenconst} is applied only afterwards, in order to show that some of these terms vanish. This does not lead to a circular argument, since in Appendix \ref{app:eigen} that proposition is derived directly from the unreduced fundamental algebraic system, without using the results of the present section.
\end{rk}

 By  Lemma \ref{lin_quad_lem} the minimal homogeneous degree of any $(n-m+1)\times (n-m+1)$ minor of the augmented matrix $(A|\tilde b)$ is at least $n-m+2$. This immediately implies the following
\begin{Coro}
\label{frak_FAL}
The $(n-m+2)$ vertex homogeneous component of any $(n-m+1)\times (n-m+1)$ minor of the augmented matrix $(A|b)$ of the fundamental algebraic system is equal to the corresponding minor of $(A_{(1)}|\tilde b_{(2)})$.
\end{Coro}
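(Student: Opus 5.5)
The plan is to reduce the statement to multilinearity of determinants together with the grading by vertex degree. Fix an $(n-m+1)\times(n-m+1)$ minor of $(A|b)$. It is built from $n-m+1$ chosen rows, all $n-m$ columns of $A$, and the column $b$. Since the column operation \eqref{eq:1} subtracts from $b$ a linear combination of columns of $A$, with coefficients $\alpha^2(l)u_l$ that are $U$-polynomials, multilinearity and the alternating property of the determinant show that the minor equals the corresponding minor of $(A|\tilde b)$. This holds as an identity of $U$-polynomials, and it holds for every choice of rows. So it suffices to prove the claim with $b$ replaced by $\tilde b$.

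Next, expand this minor of $(A|\tilde b)$ by the Leibniz formula. Each term is a product of $n-m$ entries taken from the columns of $A$ and one entry taken from the column $\tilde b$, each entry coming from a distinct row. By Lemma \ref{lin_quad_lem}, decompose each entry of $A$ as $A_{(1)}+A_{(\ge2)}$ and each entry of $\tilde b$ as $\tilde b_{(2)}+\tilde b_{(\ge3)}$. Vertex degree is additive under multiplication of monomials, since it counts vertex-variable factors. Hence every term of the product has vertex degree at least $(n-m)\cdot1+2=n-m+2$. Moreover, the component of exact vertex degree $n-m+2$ arises only when each factor is replaced by its lowest admissible component: $A_{(1)}$ in each $A$-column and $\tilde b_{(2)}$ in the last column. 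Replacing any factor by a higher component strictly raises the degree.

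Summing over permutations with their signs then gives the conclusion. The $(n-m+2)$ vertex-homogeneous component of the minor equals the same signed sum built from the entries of $(A_{(1)}|\tilde b_{(2)})$, which is exactly the corresponding minor of that matrix.

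The proof has no substantive obstacle. The only point needing care is that the coefficients of $U$-polynomials are functions on $\mathcal U$, which carry vertex degree $0$. The degree grading is still multiplicative, because it counts only vertex-variable factors, so the argument above goes through unchanged. Note also that the lemma provides only lower bounds: some components may vanish identically, in which case the conclusion simply states that both sides are zero.
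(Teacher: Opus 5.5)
Your proposal is correct and follows the paper's own route: the paper likewise notes that subtracting from $b$ a combination of columns of $A$ leaves the maximal minors unchanged, and then cites the vertex-degree lower bounds of Lemma \ref{lin_quad_lem} for $A$ and $\tilde b$. Your Leibniz-expansion argument simply spells out what the paper calls immediate, namely that the lowest vertex-degree component of such a minor is the corresponding minor of $(A_{(1)}|\tilde b_{(2)})$.
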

Corollary \ref{frak_FAL} shows that the analysis of coefficients of the $(n-m+2)$ vertex homogeneous component in the minors of the augmented matrix $(A|b)$ of the fundamental algebraic system can be done through the analysis of the same monomials in the minors of the much simpler matrix $(A_{(1)}|\tilde  b_{(2)})$.
Because of the importance and frequency of use  of matrices $A_{(1)}$ and $\tilde b_{(2)}$, and for simplicity of notation, we denote them by $\mathfrak{A}$ and $\tilde {\mathfrak{b}}$, respectively.
\begin{Def}
We call the system
\begin{equation}
	 \label{A.phi.B.frak}
	\mathfrak{A} \Psi = \tilde{\mathfrak{b}},
	 \end{equation}
the \emph{principal part} of the fundamental algebraic system \eqref{A.phi.B}.
\end{Def}
 As before, we write $\mathfrak{A}^p$ (respectively, $\tilde{\mathfrak{b}}^p$) for the linear vertex homogeneous component of the $p$th layer of the matrix $A$ (respectively, for the quadratic vertex homogeneous component of the $p$th layer of the column $\tilde b$). The entries of $\mathfrak{A}^p$ are labeled by $\mathfrak{A}^p_{ws}$, where $w\in V$ and $s\in E$ (respectively, the entries of $\tilde{\mathfrak{b}}^p$ are labeled by $\tilde{\mathfrak{b}}^p_w$), and we call $\mathfrak{A}^p$ (respectively, $\tilde{\mathfrak{b}}^p$) the $p$th layer of $\mathfrak{A}$ (respectively, of $\tilde{\mathfrak{b}}$).
\begin{rk}
Similarly to the proof of Lemma \ref{lin_quad_lem}, one can show by induction on the layer $p$ that the minimal vertex degrees of the last terms in the second lines of \eqref{elem.A} and \eqref{term.of.b.rec.form} are at least two and three, respectively. Therefore, the matrices $\mathfrak{A}^p$
satisfy the following recursive relations:
 \begin{equation}
\label{elem.A.frak}
		 \left\{
		 \begin{aligned}
        & \mathfrak{A}^1_{v,w} = a^1_{v,w}=q_{vw}, \qquad  &v \in V, \ w \in E,\\
		& \mathfrak{A}^{p+1}_{v, w} = \bigl(\vec{h}_1(\mathfrak{A}^p_{v,w})\bigr)_{(1)} , \qquad  &v \in V, \ w \in E,
		 \end{aligned}
		 \right. .
		 \end{equation}
        The recursive formula for $\tilde{\mathfrak{b}}^p$ is derived in Lemma \ref{recursive_tilde_b_frak} below, formula \eqref{tbs_frak} there.
\end{rk}

In the sequel, it will be convenient to have an explicit description of the part of the Hamiltonian vector field that preserves the vertex degree. Namely, at a fixed base point, the degree-preserving part of $\vec h_1$ is the derivation
\begin{equation}\label{principal_delta}
\delta u_v=\sum_{e\in E}q_{ve}u_e\quad(v\in V),\qquad
\delta u_e=0\quad(e\in E),\qquad \delta f(q)=0.
\end{equation}
Here the last relation means that this derivation annihilates the functions on the base. In other words, on homogeneous polynomials of vertex degree $j$ we have $(\vec h_1 f)_{(j)}=\delta f$; in particular, the recursive relations of the previous remark take the form $\mathfrak A^{p+1}=\delta\mathfrak A^p$. This notation will also be used later, when we take homogeneous parts of other systems.
\section{Root monomials}
\label{root_monomial_sec}
In this section, we study the entries of the principal part of the fundamental algebraic system in more detail. First, we express the coefficients of the entries of its coefficient matrix in terms of chains of structure functions, derive a recursive formula for the layers of its right-hand side, and show that every monomial of the right-hand side involves variables of at least two components of the graph. Then we introduce path and root monomials and show that the coefficients of root monomials reduce to single structure functions.

\begin{Lemma}\label{minvert}
The coefficient of a monomial
\begin{equation}\label{lin_vertex}u_v\prod_{e\in E}u_e^{m_e},\qquad \sum_e m_e=p-1,
\end{equation}
in $\mathfrak A^p_{ws}$ equals
\begin{equation}\label{cprod}
\sum_{(e_1,\ldots,e_{p-1})}\ \sum_{v_1,\ldots,v_{p-1}\in V}
c^s_{v_{p-1}w}\prod_{j=1}^{p-1}c^{e_j}_{v_{j-1}v_j},\qquad v_0=v.
\end{equation}
Here the first sum is taken over all distinct ordered words composed of edges in which each edge $e$ occurs exactly $m_e$ times; repeated occurrences of the same edge are not labelled separately.
\end{Lemma}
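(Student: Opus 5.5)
Let $\mathfrak A^p_{ws}$ denote the $(w,s)$ entry of $\mathfrak A^p$, as defined via the recursion $\mathfrak A^{p+1}=\delta\mathfrak A^p$ from \eqref{elem.A.frak} and \eqref{principal_delta}, where $w\in V$ and $s\in E$. The proof goes by induction on $p$. We work with a more convenient closed form: we claim that
\[
\mathfrak A^p_{ws}=\sum_{v_0,\ldots,v_{p-1}\in V}\ \sum_{e_1,\ldots,e_{p-1}\in E} c^s_{v_{p-1}w}\prod_{j=1}^{p-1}c^{e_j}_{v_{j-1}v_j}\,u_{v_0}u_{e_1}\cdots u_{e_{p-1}},
\]
with the sum taken over all \emph{ordered} tuples. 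Extracting the coefficient of $u_v\prod_e u_e^{m_e}$ from this formula sets $v_0=v$ and collects the ordered edge words with multiplicities $m_e$. Each distinct word then appears exactly once, and this is exactly \eqref{cprod}.

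For the base case $p=1$, we have $\mathfrak A^1_{ws}=q_{ws}=\sum_{v\in V}c^s_{vw}u_v$ by \eqref{qjk}. This is the claim with the empty word and $v_0=v$.

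For the inductive step, recall that $\delta$ annihilates base functions and edge variables, while $\delta u_v=\sum_{e\in E}q_{ve}u_e=\sum_{e\in E}\sum_{x\in V}c^e_{xv}u_xu_e$. Since $\mathfrak A^p_{ws}$ is linear in vertex variables and its only vertex variable is $u_{v_0}$, applying $\delta$ replaces $u_{v_0}$ by $\sum_{x,e}c^e_{xv_0}u_xu_e$. We then relabel the indices. The new initial vertex becomes $x=:v_0'$, the old $v_0$ becomes $v_1'$, and so on, and the new edge $e$ is prepended as $e_1'$. The new factor $c^{e}_{x v_0}$ is precisely $c^{e_1'}_{v_0'v_1'}$, so the product acquires exactly the next link of the chain, and the terminal factor $c^s_{v_{p-1}w}$ keeps its form with index $p$. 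Because the edge variables commute, prepending the edge does not affect which monomial the term belongs to. This proves the claim for $p+1$.

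The only delicate point is the bookkeeping in the final coefficient extraction. In the ordered-tuple sum, each monomial is reached once for every ordered word of edges with the prescribed multiplicities, and each such distinct word contributes its own chain sum. Consequently no multinomial factor appears, which matches the stated convention that repeated edges are not labelled separately. We also need to check that the direction of the chain, with the new vertex attached at the start, is consistent with $v_0=v$ in \eqref{cprod}. This is where I would be careful with indices, because the order of the lower indices in $c^{e_j}_{v_{j-1}v_j}$ comes from $\delta u_{v}=\sum c^e_{xv}u_xu_e$.
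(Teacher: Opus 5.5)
Your proposal is correct and follows the paper's proof: the same induction on the layer using the derivation $\delta$ from \eqref{principal_delta}, which replaces the single vertex variable by $\sum_{x,e}c^e_{xv}u_xu_e$, followed by collecting terms with equal monomials. Your closed form over ordered tuples $(v_0,\ldots,v_{p-1},e_1,\ldots,e_{p-1})$ simply makes the bookkeeping explicit. Your relabelling check already settles the chain-direction concern in your last paragraph, and it also explains why no multinomial factor appears.
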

\begin{proof}
The proof is by induction on the layer. For the first layer we have $\mathfrak A^1_{ws}=\sum_v c^s_{vw}u_v$, which is the required formula in this case. The subsequent layers are obtained by applying the derivation $\delta$ repeatedly, see \eqref{principal_delta}. By the definition of this derivation, its application to a monomial replaces the vertex variable of this monomial by $\sum_{a,e}c^e_{av}u_au_e$ and leaves all edge variables and all coefficients unchanged. Therefore, by induction, the entry under consideration is the sum, over all ordered words of edges and all choices of the intermediate vertices, of the products of structure functions as in \eqref{cprod} multiplied by the corresponding monomials. Collecting the terms that give equal monomials (recall that the variables commute), we obtain exactly the sum in the statement of the lemma. Note that every ordered word is counted exactly once, so that no additional combinatorial factors appear when some edges occur several times.
\end{proof}

\begin{Coro}
\label{block_diag}
The following two statements hold:
\begin{enumerate}
    \item  If $w\in V$ and $s\in E$ such that $\mathcal C(w)\neq \mathcal C(s)$ , then $\mathfrak{A}_{ws}^p=0$.
    \item  If $w\in V_i$ and $s\in E_i$, then $\mathfrak{A}_{ws}^p$
    depends  on $\Gamma_i$-vertex variables   and on $\Gamma_i$-edges variables only. Moreover, it is linear  in $\Gamma_i$-vertex variables and has $\Gamma_i$-edge degree equal to $p-1$.
\end{enumerate}
\end{Coro}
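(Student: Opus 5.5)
The plan is to read both statements off the chain formula \eqref{cprod} of Lemma \ref{minvert}. There, every coefficient of $\mathfrak A^p_{ws}$ is a sum of products
\[
c^s_{v_{p-1}w}\prod_{j=1}^{p-1}c^{e_j}_{v_{j-1}v_j},
\]
with $v_0=v$, where all $v_j$ are vertices and all $e_j$ are edges. So it suffices to find out when a factor $c^{e}_{ab}$, with $a,b\in V$ and $e\in E$, can be nonzero.

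The key input is item (1) of Proposition \ref{outline_transverse_vanishing}, which is just the decomposition \eqref{eigen_decomp} of Theorem \ref{Tanaka_decomp_prop} in a quasi-normal frame. For vertices $a,b$ the coefficient $c^e_{ab}$ is the coefficient of $\mathrm{pr}(X_e)$ in the Levi bracket $[X_a,X_b]$ modulo $D$. This is because the vertex fields span $D$, and the classes of the edge fields form a basis of $TM/D$ adapted to $\bigoplus_i W_i$.

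\begin{itemize}
\item If $\mathcal C(a)\ne\mathcal C(b)$, the Levi bracket vanishes, since the summands are commuting ideals. Hence $c^e_{ab}=0$.
\item If $\mathcal C(a)=\mathcal C(b)=i$, the bracket lies in $W_i=\mathrm{span}\{\mathrm{pr}X_e\}_{e\in E_i}$. Hence $c^e_{ab}=0$ unless $e\in E_i$.
\end{itemize}

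In both cases a nonzero factor $c^{e}_{ab}$ forces $\mathcal C(a)=\mathcal C(b)=\mathcal C(e)$.

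Now I follow a nonzero product along the chain. The last factor $c^s_{v_{p-1}w}$ forces $\mathcal C(v_{p-1})=\mathcal C(w)=\mathcal C(s)$. Working backwards, each factor $c^{e_j}_{v_{j-1}v_j}$ forces $v_{j-1}$ and $e_j$ into the component of $v_j$. This gives the two items.

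\begin{itemize}
\item \textbf{Item (1).} If $\mathcal C(w)\ne\mathcal C(s)$, the last factor already vanishes. So every product in \eqref{cprod} is zero, all coefficients vanish, and $\mathfrak A^p_{ws}=0$.
\item \textbf{Item (2).} If $w\in V_i$ and $s\in E_i$, every nonzero product has all $v_j$, including $v_0=v$, in $V_i$ and all $e_j$ in $E_i$. So only monomials $u_v\prod u_e^{m_e}$ with $v\in V_i$ and all edges in $E_i$ survive. Linearity in vertex variables and total edge degree $\sum m_e=p-1$ are built into the form \eqref{lin_vertex}.
\end{itemize}

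For $p=1$, Remark \ref{block_q} already gives item (1). Alternatively, one can run an induction using $\mathfrak A^{p+1}=\delta\mathfrak A^p$ from \eqref{principal_delta}. By the vanishing just established, $\delta u_v=\sum_e q_{ve}u_e$ involves only $\Gamma_{\mathcal C(v)}$-vertex and $\Gamma_{\mathcal C(v)}$-edge variables, so $\delta$ preserves the component-purity of monomials.

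The only point needing care is the identification of $c^e_{ab}$, for horizontal inputs and central output, with the coefficients of the Levi bracket in the symbol. This is where quasi-normality matters: the classes of the edge fields of $\Gamma_i$ form a basis of $W_i$, so the central coefficients of a bracket are determined modulo $D$ and respect the splitting $\bigoplus_i W_i$. Everything else is bookkeeping.
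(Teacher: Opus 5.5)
Your proof is correct and follows the paper's argument. The paper also reads both items off the chain formula \eqref{cprod} of Lemma \ref{minvert}, using the decomposition \eqref{eigen_decomp} of the Tanaka symbol to see that a nonzero product forces $w,v,v_1,\ldots,v_{p-1}$ and $s,e_1,\ldots,e_{p-1}$ into a single component; your identification of $c^e_{ab}$ with the Levi-bracket coefficients in the adapted basis of $\bigoplus_i W_i$ makes explicit the step the paper leaves implicit.
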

\begin{proof}
The corollary immediately follows from Lemma \ref{minvert} and the fact that if the product \eqref{cprod} does not vanish, then vertices $w, v_1,\ldots, v_{p-1} , v$ and edges $s, e_1,\ldots, e_{p-1}
$ belong to the same component of the graph $\Gamma$. The later fact follows from the decomposition \eqref{eigen_decomp} of the Tanaka symbol $\mathfrak m$.
\end{proof}
\begin{rk}
\label{block_diag_rem}
Recalling that the rows (columns) of $\mathfrak{A}^p$ are indexed by vertices (respectively edges) of $\Gamma$, $\mathfrak{A}^p$ has a block diagonal form according to the decomposition \eqref{graph_decomp}.
\end{rk}
\begin{Lemma}\label{recursive_tilde_b_frak}
For every $p\ge1$ and $v\in V$, the following recursive formula holds:
\begin{equation}\label{tbs_frak}
\tilde{\mathfrak b}^{p+1}_v=\delta\tilde{\mathfrak b}^p_v+
\sum_{\substack{e,f\in E\\\mathcal C(e)=\mathcal C(v),\ \mathcal C(f)\ne\mathcal C(e)}}
(\alpha(e)^2-\alpha(f)^2)\mathfrak A^p_{ve}q_{ef}u_f.
\end{equation}
\end{Lemma}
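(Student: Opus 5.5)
The plan is to take the vertex-degree-$2$ component of the congruence \eqref{tbs} from the proof of Lemma \ref{lin_quad_lem} and identify each term with the corresponding term of \eqref{tbs_frak}. Recall that \eqref{tbs} reads, modulo $\mathfrak V^3$,
\[
\tilde b^{p+1}_v\equiv\vec h_1\tilde b^p_v+\sum_{\mathcal C(e)\ne\mathcal C(f)}(\lambda_e-\lambda_f)a^p_{ve}q_{ef}u_f .
\]
It was derived from the exact identity \eqref{exact_column_rec}. In that identity the term $-c\tilde b^p_v$ has vertex degree at least $3$, since $c$ is linear in the vertex variables by \eqref{first_div_linear} and $\tilde b^p$ has degree at least $2$. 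The correction term $(\vec h_1\lambda_e-c\lambda_e)u_ea^p_{ve}$ vanishes by Proposition \ref{eigenconst}. The last term, $-\tfrac12\sum a^p_{ve}X_e\lambda_s u_s^2$, has degree at least $3$. Finally, in the term $(\lambda_e-\lambda_s)a^p_{ve}q_{es}u_s$, the summands with $s\in V$ have degree at least $3$, since $a^p$ and $q$ each contribute at least one vertex variable. This leaves only the summands with $s=f\in E$, and among those only the ones with $\lambda_e\ne\lambda_f$ survive; we restrict to $\mathcal C(e)\ne\mathcal C(f)$, which contains all of them, because equal components give a zero factor.

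Next we extract the quadratic vertex component of each term. For the first term, $\tilde b^p_v$ has minimal vertex degree at least $2$, so Lemma \ref{h_1_degree} applies. It gives $(\vec h_1\tilde b^p_v)_{(2)}$ as the sum of replacements of a single vertex variable of $\tilde{\mathfrak b}^p_v=(\tilde b^p_v)_{(2)}$ by the degree-$1$ part of $\vec h_1u_w$. By \eqref{Hamiltonian_vf_action} this degree-$1$ part is $\sum_{e}q_{we}u_e$, so the result is exactly $\delta\tilde{\mathfrak b}^p_v$ in the sense of \eqref{principal_delta}.

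For the second term, $q_{ef}$ is linear in the vertex variables by \eqref{qjk}, and $u_f$ is an edge variable. Hence the degree-$2$ part of $a^p_{ve}q_{ef}u_f$ is $(a^p_{ve})_{(1)}q_{ef}u_f=\mathfrak A^p_{ve}q_{ef}u_f$. By Corollary \ref{block_diag}, item (1), $\mathfrak A^p_{ve}=0$ unless $\mathcal C(e)=\mathcal C(v)$, which adds the summation restriction appearing in \eqref{tbs_frak}. Writing $\lambda=\alpha^2$, with $\alpha$ extended to edges by \eqref{alpha_edge_extension}, then completes the formula.

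The main point requiring care is the bookkeeping in the term $(\lambda_e-\lambda_s)a^p_{ve}q_{es}u_s$ with $s$ an edge. We must confirm that the edge-valued $\lambda_s$ entering \eqref{exact_column_rec} is the one fixed by \eqref{alpha_edge_extension}. We must also confirm that no degree-$2$ contribution is lost from $q_{es}$: it is purely linear in the vertex variables, so its degree-$1$ part is the whole of it. Everything else is a direct application of Lemma \ref{h_1_degree}.
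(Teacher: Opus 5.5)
Your argument is correct and follows the paper's proof. Both take the quadratic vertex-homogeneous part of \eqref{tbs}, identify the first term with $\delta\tilde{\mathfrak b}^p_v$ via Lemma \ref{h_1_degree}, note that only $\mathfrak A^p=A^p_{(1)}$ contributes to the second term because $q_{ef}u_f$ is linear in the vertex variables, and restrict the sum with Corollary \ref{block_diag}. Your re-derivation of \eqref{tbs} from \eqref{exact_column_rec} is redundant, since the paper takes \eqref{tbs} as established, and in it the vanishing of $\vec h_1\lambda_e-c\lambda_e$ needs the computation \eqref{column_correction_reduced}, i.e.\ item (2) of Proposition \ref{first divi} for the directions outside $V_i$, in addition to Proposition \ref{eigenconst}.
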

\begin{proof}
By definition, the layers of the principal part of the last column are the quadratic vertex homogeneous components of the corresponding layers of the modified column. Therefore, we take the quadratic vertex homogeneous part of both sides of \eqref{tbs} and analyze separately the two terms on its right-hand side. By Lemma \ref{h_1_degree}, the quadratic part of the first term gives the first term of the required formula. As for the second term, only the linear part $\mathfrak A^p$ of $A^p$ contributes to it, because the product of the remaining factors of each summand is linear in the vertex variables. Moreover, by Corollary \ref{block_diag}, the nonzero columns of this linear part satisfy $\mathcal C(e)=\mathcal C(v)$, which gives the restriction in the summation. Finally, note that the derivation of \eqref{tbs} takes into account all additional terms arising in the projective case, so no assumption of affine equivalence is used here.
\end{proof}
\begin{rk}
Note that all formulas of this section are valid for arbitrary projectively equivalent pairs of sub-Riemannian metrics, and not only for affinely equivalent ones. In the affine case, the derivatives of the transition eigenvalues vanish, because these eigenvalues are constant by \cite[Proposition 4.7]{jmz2019}. The derivatives of the structure functions need not vanish, even in the affine case. However, they do not enter the formulas of this section: differentiation of a function on the base raises the vertex degree (see the proof of Lemma \ref{h_1_degree}), so it does not contribute to the vertex-degree-preserving part $\delta$ of $\vec h_1$.
\end{rk}

In the sequel, we need the following consequences of projective equivalence, written in the notation of the present paper. They follow from the so-called \emph{first divisibility condition}, stating that $\frac{\vec h_1(a^2)}{a^2}$ is a polynomial, where $a$ is as in \eqref{eq:2a}. This condition was introduced in \cite{Z}, where it was established for contact and even-contact distributions and where the identities of the following proposition were derived from it in the projective case \cite[Proposition 6]{Z}. For arbitrary distributions, the first divisibility condition was proved in \cite[Proposition 4.2 and Lemma 4.4]{jmz2019}. In the affine case, the transition eigenvalues are constant, and the corresponding specializations of these identities appear in \cite[Proposition 11]{Zaifeng}.

\begin{Prop}
\label{first divi}
Suppose $g_1$ and $g_2$ are non-conformal, projectively equivalent sub-Riemannian metrics on a distribution $D$. If $\{X_s\}_{s \in V\cup E}$ is a weakly adapted frame with respect to this pair and $\mathcal C$ as in \eqref{component_map}, then the following hold:
    \begin{enumerate}
        \item $X_v \left ( \frac{\alpha(w)^2}{\alpha(v)^2}\right )=2c^w_{vw}\left(1-\frac{\alpha(w)^2}{\alpha(v)^2}\right)$ for $v,w \in V$
        \item $X_v \left ( \frac{\alpha(w)^2}{\alpha(v)}\right )=0$ whenever $\mathcal{C}(w)\neq \mathcal{C}(v)$
        \item $c^l_{vw}=-c^w_{vl}$ whenever $v, w,l \in V$  such that $\mathcal C(w)=\mathcal C(l)\neq \mathcal C(v)$ and $w\neq l$.
        \item $(\alpha(v)^2-\alpha(w)^2)c^l_{vw}+(\alpha(w)^2-\alpha(l)^2)c^v_{wl}+(\alpha(v)^2-\alpha(l)^2)c^w_{vl}=0$ whenever $v,l,w \in V$ are pairwise distinct vertices.
    \end{enumerate}
\end{Prop}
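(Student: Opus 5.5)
The plan is to derive all four identities from the first divisibility condition. By \cite[Proposition 4.2 and Lemma 4.4]{jmz2019}, $\vec h_1(a^2)/a^2$ is a polynomial, in fact a linear form, on the fibers. We compute this quotient explicitly in the weakly adapted frame and compare coefficients.

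Set $N=\sum_{v}\alpha(v)^2u_v^2$ and $h=\sum_v u_v^2$, so that $a^2=N/h$ by \eqref{eq:2a}. Since $\vec h_1(h)=0$, we have $\vec h_1(a^2)/a^2=\vec h_1(N)/N$. Using \eqref{eq:vech} and \eqref{Hamiltonian_vf_action},
\[
\vec h_1(N)=\sum_{v,w}X_v(\alpha(w)^2)u_vu_w^2+2\sum_{w}\alpha(w)^2u_w\sum_{v}\sum_{s\in V\cup E}c^s_{vw}u_vu_s .
\]
This is a cubic form. Divisibility by the quadratic $N$ (which is irreducible over $\C$ when there are at least three variables, and in any case has no common factor with the relevant terms) forces $\vec h_1(N)=L\cdot N$ with $L=\sum_v \ell_v u_v$ linear. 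In particular, the terms containing an edge variable $u_e$ have to be divisible by $N$ on their own. We then compare coefficients of monomials in the vertex variables only, i.e.\ the part $\sum_{v,w,s\in V}$.

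\textbf{Monomials $u_v u_w^2$ with $v\ne w$.} On the left the coefficient is $X_v(\alpha(w)^2)+2\alpha(w)^2c^w_{vw}$. We also pick up the contribution from $s=v$ in the term indexed by $w$ and from the term indexed by $v$; by skew-symmetry, $c^v_{vv}=0$ and $c^w_{vw}$-type terms combine with $\alpha(v)^2c^v_{wv}$ for the $u_v^2u_w$ monomial. On the right the coefficient is $\ell_v\alpha(w)^2$. The monomial $u_v^3$ gives $X_v(\alpha(v)^2)=\ell_v\alpha(v)^2$, i.e.\ $\ell_v=X_v(\alpha(v)^2)/\alpha(v)^2$, which is consistent with \eqref{first_div_linear} up to the normalization factor. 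Substituting $\ell_v$ into the $u_vu_w^2$ equation gives
\[
X_v(\alpha(w)^2)-\alpha(w)^2\frac{X_v\alpha(v)^2}{\alpha(v)^2}=2c^w_{vw}\bigl(\alpha(v)^2-\alpha(w)^2\bigr),
\]
and dividing by $\alpha(v)^2$ yields item (1).

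\textbf{Items (2)--(4).} Item (2) follows from (1) together with the Tanaka decomposition: for $\mathcal C(w)\ne\mathcal C(v)$, the quantity $X_v(\alpha(w)^2/\alpha(v)^2)$ must be reconciled with the fact that $\alpha(w)^2$ depends only on the component. Summing (1) over $w$ and $w'$ in the same component $D_t$, and using that $\alpha$ is constant on $V_t$, shows that $c^w_{vw}$ is independent of $w\in V_t$. An orthonormal rotation inside $D_t$ preserves weak adaptedness, so this independence across all rotations forces the precise relation stated; I would verify the exact form of (2), whose exponent appears nonstandard, against \cite[Proposition 6]{Z}. Item (3) comes from the monomial $u_vu_wu_l$ with $w\ne l$ in the same component and $v$ in another component. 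The coefficient is $2\alpha(w)^2c^w_{vl}+2\alpha(l)^2c^l_{vw}+(\text{terms from }X\alpha)$. The right side is zero, since $\alpha(w)=\alpha(l)$ and the $L\cdot N$ product has no such cross monomial. This gives $c^l_{vw}+c^w_{vl}=0$. Item (4) is the coefficient of $u_vu_wu_l$ for pairwise distinct vertices: collecting the six structure-function terms and using skew-symmetry $c^s_{xy}=-c^s_{yx}$ gives the cyclic relation. The right side vanishes because $N$ is diagonal.

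The main obstacle is the bookkeeping of the cross monomials: making sure every contribution, including $s=v$ or $s=w$ in the inner sum, is collected exactly once with the correct sign. A secondary issue is justifying that $L$ contains no edge variables. This follows from $N$ depending only on vertex variables, so that the edge-containing part of $\vec h_1(N)$, being linear in the $u_e$, would have to be $N$ times a linear form in the $u_e$; that is exactly the content of the first divisibility condition as quoted in Proposition \ref{fda}.
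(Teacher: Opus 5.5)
Your derivation of items (1) and (4) from $\vec h_1(N)=L\,N$ is the paper's argument, and your final equation for (1) is correct. Your coefficient of $u_vu_wu_l$ in item (3), however, is wrong. The $X\alpha$-terms contribute nothing, since each of them contains a square. The six structure-function terms add up to $-2$ times the left-hand side of (4). For $\alpha(w)=\alpha(l)$ this equals $2(\alpha(w)^2-\alpha(v)^2)(c^l_{vw}+c^w_{vl})$. You dropped the $\alpha(v)^2$-contributions, so your version never uses $\mathcal C(v)\neq\mathcal C(w)$. It would then equally "prove" the false identity for $v$ in the component of $w,l$. The hypothesis is exactly what lets you cancel the factor $\alpha(w)^2-\alpha(v)^2$. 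Obtain (3) simply by specializing (4).

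The genuine gap is item (2). Write $\lambda_s=\alpha_s^2$ and take $v\in V_i$, $w\in V_t$, $t\neq i$. Relation (1) only expresses $c^w_{vw}$ through $X_v(\lambda_t/\lambda_i)$. Two consequences of (1) and (3) follow: $c^w_{vw}$ does not depend on $w\in V_t$, and it is invariant under rotations inside $D_t$. Neither imposes any condition on the derivatives of the eigenvalues. In fact, the coefficient identities of $\vec h_1(N)=L\,N$ for the single ordered pair $(g_1,g_2)$ are exhausted by the formula for $\ell_v$, by (1), by (4), and by relations involving edge variables. None of these yields (2), which is equivalent to $X_v\log\lambda_t=\tfrac12X_v\log\lambda_i$; so the exponent is correct.

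You need one additional input. One option, the route of \cite{Z}, is to apply your item (1) to the reversed pair $(g_2,g_1)$, which is also projectively equivalent, in the weakly adapted $g_2$-orthonormal frame $\{X_v/\alpha(v)\}$. There the eigenvalue ratio becomes $\lambda_i/\lambda_t$, and the relevant structure function is $\alpha_i^{-1}\bigl(c^w_{vw}-X_v\log\alpha_t\bigr)$. Eliminating $c^w_{vw}$ between the two versions of (1), using $\lambda_t\neq\lambda_i$, gives exactly $X_v\log\lambda_t=\tfrac12X_v\log\lambda_i$.

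The paper instead uses $[D_i,D_t]\subset D$ to get $\sum_{v\in V_i}u_vq_{ve}=0$. Multiplying the first-layer equations of the fundamental algebraic system by $u_v$ and summing over $v\in V_i$ then gives the polynomial identity $\sum_{v\in V_i}u_vR_v=0$. For $w\in V_i$ and $y\in V_t$, the coefficient of $u_wu_y^2$ in it is $(\lambda_i-\lambda_t)c^y_{yw}-\tfrac12X_w(\lambda_t)$. Substituting (1) turns its vanishing into (2).
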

Note that item (3) above is a consequence of item (4), applied to the case when one pair of vertices in the triple $\{l,v,w\}$ (for example, $l$ and $w$) belongs to the same component of the graph $\Gamma$ (see Definition \ref{component_def} above). Items (1), (3), and (4) follow by comparing the coefficients of the corresponding monomials in the first divisibility condition. In \cite{Z}, item (2) is obtained by applying this condition to both ordered pairs $(g_1,g_2)$ and $(g_2,g_1)$. For completeness, we include a different proof of item (2), in the present notation, which uses the first layer of the fundamental algebraic system.
\begin{proof}[Proof of item (2)]
Fix $i$ and a vertex $w\in V_i$. By Theorem \ref{Tanaka_decomp_prop}, $[D_i,D_t]\subset D$ for $t\neq i$, so that $c^e_{xv}=0$ whenever $e\in E$ and $\mathcal C(x)\neq\mathcal C(v)$. Together with the skew-symmetry of the structure functions in the lower indices, this gives
\[
\sum_{v\in V_i}u_vq_{ve}=\sum_{v\in V_i}\sum_{x\in V}c^e_{xv}u_xu_v=\sum_{x,v\in V_i}c^e_{xv}u_xu_v=0,\qquad e\in E.
\]
Let us multiply the equation of the first layer of the fundamental algebraic system \eqref{A.phi.B} corresponding to a vertex $v\in V_i$ by $u_v$ and add up the results over all $v\in V_i$. By \eqref{elem.A} and \eqref{term.of.b.rec.form}, we get
\[
\sum_{v\in V_i}u_vR_v=\sum_{e\in E}\widetilde{\Phi}_e\sum_{v\in V_i}u_vq_{ve}=0
\]
on the domain of the orbital diffeomorphism. This domain meets every fiber over a neighborhood of $q_0$ in a nonempty open set, and the left-hand side of the last identity is a polynomial on the fibers; hence, it vanishes identically. Now let $y\in V_t$ with $t\neq i$, and write $\lambda_i=\alpha_i^2$ and $\lambda_t=\alpha_t^2$. The monomial $u_wu_y^2$ appears in the last identity only in the summand $u_wR_w$. By \eqref{rj_unreduced} and \eqref{Hamiltonian_vf_action}, the coefficient of $u_y^2$ in $R_w$ is equal to
\[
(\lambda_i-\lambda_t)c^y_{yw}-\frac12X_w(\lambda_t),
\]
because the first and the third terms of \eqref{rj_unreduced} are divisible by $u_w$. On the other hand, by item (1) applied to the pair $(w,y)$,
\[
(\lambda_i-\lambda_t)c^y_{yw}=-\frac12\lambda_iX_w\Bigl(\frac{\lambda_t}{\lambda_i}\Bigr)=-\frac12X_w(\lambda_t)+\frac{\lambda_t}{2\lambda_i}X_w(\lambda_i).
\]
Therefore, the vanishing of the coefficient of $u_wu_y^2$ gives $X_w(\lambda_t)=\dfrac{\lambda_t}{2\lambda_i}X_w(\lambda_i)$, i.e. $X_w\Bigl(\dfrac{\lambda_t}{\sqrt{\lambda_i}}\Bigr)=0$. Since $\sqrt{\lambda_i}=\alpha(w)$ and $\lambda_t=\alpha(y)^2$, this is exactly item (2).
\end{proof}
In the sequel, the relations given in items (1)--(3) of Proposition \ref{first divi} will be used repeatedly.
The following lemma is the generalization of Lemma 3 from \cite{Zaifeng}, where it is proved for components of the first layer $\tilde{\mathfrak{b}}^1$ of $\tilde {\mathfrak{b}}$ only.
\begin{Lemma}\label{bu2}
Every monomial appearing with a nonzero coefficient in an entry of $\tilde{\mathfrak b}$ contains variables indexed by vertices or edges of at least two distinct components of $\Gamma$.
\end{Lemma}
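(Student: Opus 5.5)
Induction on the layer $p$, using the recursive formula \eqref{tbs_frak} of Lemma \ref{recursive_tilde_b_frak} together with the block structure of $\mathfrak A$ from Corollary \ref{block_diag}. Call a monomial \emph{mixed} if it contains variables indexed by vertices or edges of at least two distinct components, and \emph{pure} otherwise. The claim is that every entry $\tilde{\mathfrak b}^p_v$ is a combination of mixed monomials only.

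For the base case $p=1$, I would take the quadratic vertex homogeneous part of \eqref{tb1}. That formula is already purely vertex-quadratic:
\[
\tilde{\mathfrak b}^1_w=\sum_{\substack{l,v\in V\\ \mathcal C(l)\ne\mathcal C(w),\ v\ne l}}(\lambda_w-\lambda_l)c^l_{vw}u_vu_l .
\]
Each monomial $u_vu_l$ is mixed unless $\mathcal C(v)=\mathcal C(l)$, so we must rule out a nonzero total coefficient of $u_vu_l$ with $v,l\in V_t$. If $t=\mathcal C(w)$, then $\mathcal C(l)\ne\mathcal C(w)$ fails, so these terms are absent. If $t\ne\mathcal C(w)$, the monomial $u_vu_l$ with $v\ne l$ collects the two terms $(\lambda_w-\lambda_t)(c^l_{vw}+c^v_{lw})$, and this vanishes by item (3) of Proposition \ref{first divi}: $c^l_{wv}=-c^v_{wl}$ for $\mathcal C(v)=\mathcal C(l)\ne\mathcal C(w)$. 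Square terms are excluded by $v\ne l$. So $\tilde{\mathfrak b}^1$ consists of mixed monomials.

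For the inductive step, I would check two things about \eqref{tbs_frak}. First, the derivation $\delta$ of \eqref{principal_delta} maps mixed monomials to combinations of mixed monomials. Indeed, $\delta u_x=\sum_e q_{xe}u_e=\sum_{a,e}c^e_{ax}u_au_e$, and by Theorem \ref{Tanaka_decomp_prop} (item (1) of Proposition \ref{outline_transverse_vanishing}) the coefficient $c^e_{ax}$ vanishes unless $a,x,e$ lie in one component. So $\delta$ replaces a variable of component $t$ by a product of variables of the same component $t$, and the set of components represented in a monomial is preserved. Second, the extra sum in \eqref{tbs_frak} consists of the terms $\mathfrak A^p_{ve}q_{ef}u_f$ with $\mathcal C(f)\ne\mathcal C(e)$. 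By Corollary \ref{block_diag}, $\mathfrak A^p_{ve}$ involves only $\Gamma_{\mathcal C(e)}$-variables. In $q_{ef}u_f=\sum_a c^f_{ae}u_au_f$ the factor $u_f$ comes from the component $\mathcal C(f)\ne\mathcal C(e)$. Each resulting monomial therefore contains variables from both $\mathcal C(e)$ and $\mathcal C(f)$ and is mixed, provided $\mathfrak A^p_{ve}\ne0$ contributes at least one variable. This holds since it is linear in $\Gamma_{\mathcal C(e)}$-vertex variables.

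The main obstacle is the base case, specifically making sure the cancellation in pure monomials of a single foreign component is exactly item (3) of Proposition \ref{first divi} with correct signs and index conventions. Item (3) is written as $c^l_{vw}=-c^w_{vl}$ with $\mathcal C(w)=\mathcal C(l)\ne\mathcal C(v)$, so I will rename indices carefully: the roles of the summation vertices $(v,l)$ in \eqref{tb1} correspond to $(w,l)$ there, with the fixed row index playing the role of the lone vertex. A secondary point is that $\tilde{\mathfrak b}^p$ is defined as a quadratic vertex-homogeneous part, while the recursion \eqref{tbs_frak} has already been reduced modulo $\mathfrak V^3$. The induction should therefore be run on \eqref{tbs_frak} directly, which is legitimate by Lemma \ref{recursive_tilde_b_frak}.
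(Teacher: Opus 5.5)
Your proposal is correct and follows the paper's proof. Both run an induction on the layer. The base case comes from \eqref{tb1}: there are no square terms, same-component pairs in the row's own component are excluded, and foreign same-component pairs cancel by item (3) of Proposition \ref{first divi}, where you also need skew-symmetry in the lower indices to pass from $c^l_{wv}=-c^v_{wl}$ to $c^l_{vw}+c^v_{lw}=0$. The inductive step uses \eqref{tbs_frak}, with $\delta$ preserving the set of components present and the second term always carrying a foreign edge variable $u_f$.
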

\begin{proof}
We prove the lemma by induction on the layer $p$. For the first layer, recall that, by \eqref{tb1}, its entries contain no squares of vertex variables. Therefore, it is enough to consider the products of two distinct vertex variables indexed by vertices of the same component. If $v\ne l$ belong to one component, then the coefficient of $u_vu_l$ is
\begin{equation}\label{b1tilde}
(\alpha(w)^2-\alpha(l)^2)(c^l_{vw}+c^v_{lw}).
\end{equation}
This coefficient vanishes. Indeed, if the vertex indexing the entry belongs to the same component as these two vertices, then the first factor vanishes because of the equality of the corresponding eigenvalues. Otherwise, the second factor vanishes by item (3) of Proposition \ref{first divi}, applied to the distinct vertices $v,l$, combined with the skew-symmetry of the structure functions in the lower indices. This proves the statement of the lemma for the first layer.

Now assume that the statement holds for some layer and consider the next one. We analyze separately the two terms on the right-hand side of the recursive formula \eqref{tbs_frak}. Regarding the first term, by the definition of the derivation $\delta$ and the decomposition of the Tanaka symbol, this derivation replaces a vertex variable only by variables indexed by vertices and edges of the same component, and it leaves the edge variables unchanged. Therefore, it preserves the property in the statement of the lemma. Regarding the second term, each of its summands contains variables from the component of $e$ and the edge variable $u_f$ of a different component, so it has this property as well. This completes the induction and proves the lemma.
\end{proof}

We now introduce a few definitions that will simplify the statement and proof of the following lemmas. In the sequel, we
denote by $\rt j$ the $j$th-root of the graph $\Gamma_i$, see Definition \ref{root_def}.
\begin{Def}
\label{root_monom_def}
   We call a $U$-monomial a \emph{path monomial} of the graph $\Gamma_i$ if the monomial contains the variable $u_{\rho_0^i}$ and all of the remaining variables are edge-variables indexed by edges from a simple path in $\Gamma_i$ starting at $\rho_0^i$.
 In addition, if a path monomial corresponds to a path in $\Gamma_i$ which traverses only roots
 in consecutive order (i.e. the $j$th  edges  connects the $(j-1)$'st root of $\Gamma_i$ with the $j$th root of $\Gamma_i$),
then we call such a monomial a \emph{root monomial} and the corresponding path a \emph{root path}.\footnote {There are $\omega_i+1$ different root monomials of $\Gamma_i$, one for each degree from 1 to $\omega_i+1$, namely $u_{\rho_0^i}, u_{\rho_0^i}u_{\overrightarrow{\rho_0^i\rho_1^i}},\ldots,u_{\rho_0^i}u_{\overrightarrow{\rho_0^i\rho_1^i}}\ldots u_{\overrightarrow{\rho_{\omega_i-1}^i,\rho_{\omega_i}^i}}$. }
\end{Def}
Since the $U$-polynomials appearing as entries of $\mathfrak{A}^p$ are homogeneous of degree $p$, it follows that root monomials in the entries of the $p$th layer $\mathfrak{A}^p$ of the fundamental algebraic system are of the form $Cu_{\rho_i}u_{\overrightarrow{\rho_0^i\rho_1^i}}\ldots u_{\overrightarrow{\rho_{p-2}^i,\rho_{p-1}^i}}$ for some $i\in [1:k]$,  $p\in [1:\omega_i+1]$.

Note that path and root monomials satisfy Lemma \ref{minvert}. In particular, the coefficient $C$ of a root monomial is a sum of terms of the form
\begin{equation}
\label{prelim_prod}
c^s_{v_{p-1}w}\prod_{j=1}^{p-1} c^{\overrightarrow{\rho^i_{j-1} \rho^i_{j}}}_{v_{j-1}v_j}.
\end{equation}
Here, as in Lemma \ref{minvert}, the sum is also taken over the ordered words formed by the edges of the root path; it will follow from the properties of root edges established below that, for the root monomials under consideration, only the natural order of these edges gives a nonzero contribution. Moreover, it turns out that the remaining indices which are not fixed by Lemma \ref{minvert} in the coefficient of a root monomial (i.e. $v_1,\ldots,v_{p-1}$) are completely determined for suitably chosen rows of the fundamental algebraic system, and the sum of terms in this coefficient collapses to a single term.

\begin{Lemma}
\label{root_edge}
Let $0\leq k\leq \omega_i$ and let $z\in V_i$ satisfy
$\mathrm{depth}(z)>k$. Set
\[
e_{k,z}:=\overrightarrow{\rho_k^i z}.
\]
Then $e_{k,z}\in E_i$ and, for $x,y\in V_i$, the following hold:
\begin{enumerate}
    \item If $x=\rho_k^i$, then
    \[
    c^{e_{k,z}}_{\rho_k^i y}=0
    \qquad\text{unless }y=z,
    \]
    while
    \[
    c^{e_{k,z}}_{\rho_k^i z}=1.
    \]
    \item If either $x$ or $y$ has depth strictly smaller than $k$, then
    \[
    c^{e_{k,z}}_{xy}=0.
    \]
\end{enumerate}
\end{Lemma}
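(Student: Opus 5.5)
Since every structure function $c^{e}_{xy}$ with $x,y\in V$ and $e\in E$ is determined modulo $D$ by the Levi bracket, i.e. by the Tanaka symbol, the plan is to reduce both items to statements about the compatible basis of $\mathfrak m^i(q)$ given by Theorem \ref{ccreduce}, evaluated pointwise. Concretely, for $x,y\in V_i$ we have $[X_x,X_y]\equiv\sum_{e\in E}c^e_{xy}X_e \pmod D$, and $\{\mathrm{pr}_q X_e\}_{e\in E_i}$ is a basis of $W_i(q)$. Hence $c^{e_{k,z}}_{xy}$ is the $e_{k,z}$-coordinate of $[X_x,X_y]$ in the symbol. That $e_{k,z}\in E_i$ comes first, directly from Remark \ref{r_depth_rem} and the construction in Theorem \ref{ccreduce}. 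Indeed, the vertices of depth $>k$ are exactly those surviving the reductions $R_{\rho_0},\dots,R_{\rho_k}$. By item (3) of Remark \ref{constructuve_graph_rem}, $\rho_k$ is joined to every one of them by an edge of $\mathrm{Out}(\rho_k)$, and there are $r_k$ of them, which is the outgoing degree of $\rho_k$.

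For item (1), I will use that $\mathrm{Out}(\rho_k)$ corresponds to the images $[X_{\rho_k},X_z]$, $\mathrm{depth}(z)>k$, which form a basis of a complement of $\mathfrak h_{k-1}\cap\mathfrak m_{-2}$ inside $\mathfrak h_k\cap \mathfrak m_{-2}$. The value $c^{e_{k,z}}_{\rho_k z}=1$ is immediate from \eqref{struct_quasinorm}. For $y\neq z$, I split by the depth of $y$.
\begin{itemize}
\item If $y$ has depth $>k$ and $y\ne z$, then $\overrightarrow{\rho_k y}$ is an edge, and \eqref{struct_quasinorm} again gives coefficient $0$ on $e_{k,z}$.
\item If $\mathrm{depth}(y)=k$, then by construction $X_y$ projects into $\ker\ad Y_k$ in $\mathfrak m^{(k)}$. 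Hence $[X_{\rho_k},X_y]\in\mathfrak h_{k-1}$, which is spanned by the edges of $\mathrm{Out}(\rho_j)$ with $j<k$ and by vertices, so the $e_{k,z}$-coordinate vanishes.
\item If $\mathrm{depth}(y)<k$, the argument is the one of item (2).
\end{itemize}

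For item (2), suppose $\mathrm{depth}(x)=p<k$. Remark \ref{ad_rem2}, i.e. \eqref{ad_X_depth_p_i}, gives $\imad X_x\subset\sum_{j\le p}\imad X_{\rho^i_j}$ modulo $D$, and I claim this lies in $\mathfrak h_p\cap\mathfrak m_{-2}$. The justification is that $X_x$ projects to zero in $\mathfrak m^{(p+1)}_{-1}$, so $\ad X_x$ lands in the ideal $\mathfrak h_p$. The degree $-2$ part of $\mathfrak h_p$ is spanned, by Theorem \ref{ccreduce}, by the edges of $\bigcup_{j\le p}\mathrm{Out}(\rho_j)$. Since $p<k$, the edge $e_{k,z}\in\mathrm{Out}(\rho_k)$ is not among them, so its coordinate is zero; skew-symmetry handles the case where $y$ is the shallow vertex.

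The main obstacle is the bookkeeping of the filtration. Specifically, I must verify that $\mathfrak h_p\cap\mathfrak m_{-2}$ is exactly the span of $\{X_e: e\in\bigcup_{j\le p}\mathrm{Out}(\rho_j)\}$ modulo $D$, with no contribution from later stars. This comes from the identity in Theorem \ref{ccreduce} together with the fact that $\imad Y_j$ in the quotient lifts to the span of the $\mathrm{Out}(\rho_j)$ edges modulo $\mathfrak h_{j-1}$. I also need that a vertex of depth $p$ lies in $\mathfrak h_p$, which holds since $I_p$ is the preimage of $\ker\ad Y_p$.
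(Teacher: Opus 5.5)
Your proof is correct and follows the paper's own argument. Item (2) uses that a vertex of depth $p<k$ lies in the ideal $\mathfrak h^i_p\subset\mathfrak h^i_{k-1}$, whose central part is spanned by the edges of $\mathrm{Out}(\rho^i_j)$, $j<k$. Item (1) uses the same three-way split on $\mathrm{depth}(y)$: quasi-normality for depth $>k$, $\ker\operatorname{ad}Y_k$ (the paper's \eqref{decrease}) for depth $k$, and item (2) for depth $<k$.

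One citation needs correcting: $e_{k,z}\in\mathrm{Out}(\rho^i_k)$ does not follow from item (3) of Remark \ref{constructuve_graph_rem}. It follows from the normality of the $k$-th reduced graph, since its non-isolated non-root vertices are exactly the out-neighbours of $\rho^i_k$, together with the count in Remark \ref{r_depth_rem}.
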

\begin{proof}
First, since $z$ has depth greater than $k$, it survives the first $k$ reductions of the normalizable graph. At the $k$th reduction step, it is a non-isolated vertex of the reduced graph whose root is $\rho_k^i$. Hence
$e_{k,z}=\overrightarrow{\rho_k^i z}$ belongs to $\mathrm{Out}(\rho_k^i)$.
By quasi-normality,
\[
[X_{\rho_k^i},X_z]=X_{e_{k,z}},
\]
so that $c^{e_{k,z}}_{\rho_k^i z}=1$.

Now let $\{\mathfrak h_l^i\}_{l=0}^{\omega_i+1}$ be the filtration associated with the chosen tuple of roots. By construction,
\[
\mathrm{pr}(X_{e_{k,z}})\in \mathfrak h_k^i,
\qquad
\mathrm{pr}(X_{e_{k,z}})\notin \mathfrak h_{k-1}^i,
\]
where $\mathfrak h_{-1}^i=0$. If, for example,
$\mathrm{depth}(x)<k$, then $X_x$ projects into
$\mathfrak h_{k-1}^i$. Since $\mathfrak h_{k-1}^i$ is an ideal,
$[X_x,X_y]$ projects into $\mathfrak h_{k-1}^i$ for every $y$.
Consequently, this bracket has zero coefficient along $X_{e_{k,z}}$. The same argument with $x$ and $y$ interchanged proves item (2).

It remains to prove the vanishing assertion in item (1). If
$\mathrm{depth}(y)<k$, then this assertion follows from item (2). If
$\mathrm{depth}(y)=k$, then $X_y$ projects into $\mathfrak h_k^i$ and
\eqref{decrease}, together with the fact that $\mathfrak h_k^i$ is generated at this stage by the depth-$k$ part and the preceding ideal, gives
\[
\mathrm{pr}[X_{\rho_k^i},X_y]\in \mathfrak h_{k-1}^i,
\]
so the coefficient of $X_{e_{k,z}}$ again vanishes. Finally, if
$\mathrm{depth}(y)>k$, then
$\overrightarrow{\rho_k^i y}\in E_i$ and quasi-normality gives
\[
[X_{\rho_k^i},X_y]=X_{\overrightarrow{\rho_k^i y}},
\]
which has a nonzero $X_{e_{k,z}}$-coefficient only when $y=z$.
This completes the proof of the lemma; the corresponding statements with the lower indices interchanged follow from the skew-symmetry of the structure functions.
\end{proof}

\begin{Lemma}
\label{root}
    Assume that  $f(U)=K u_{e_1} u_{e_2}\ldots u_{e_{p-1}} u_{\rho_0^i}$ is a monomial which appears in the $\mathfrak{A}^p_{ws}$ entry of $\mathfrak{A}^p$ such that each $e_j$ is an edge belonging to a root path in  $\Gamma_i$ (see Definition \ref{root_monom_def}). Then the following two statements hold \begin{enumerate}
        \item
   For $p\geq 2$, $u_{\overrightarrow{\rho_{0}^i\rho_{1}^i}}$ divides $f(U)$;
   \item For $p\geq 3$ if for some $0\leq j\leq p-3$, $u_{\overrightarrow{\rho_k^i\rho_{k+1}^i}}$ appears linearly in  $f(U)$ for all $0\leq k\leq j$, then  $u_{\overrightarrow{\rho_{j+1}^i\rho_{j+2}^i}}$ appears in $f(U)$.
   \end{enumerate}
   Moreover, if $f(U)$ divides a root monomial of $\Gamma_i$, then
   \begin{equation}
   \label{f_div}
       f(U)=c^s_{\rho_{p-1}^i,w}u_{\rt 0}u_{\overrightarrow{\rho_0^i\rho_1^i}}\ldots u_{\overrightarrow{\rho_{p-2}^i,\rho_{p-1}^i}},
   \end{equation}
and, in particular, it is a root monomial.
\end{Lemma}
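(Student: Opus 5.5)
The plan is to use Lemma \ref{minvert}, which expresses the coefficient $K$ of $f(U)$ in $\mathfrak A^p_{ws}$ as a sum over ordered words $(e_1',\ldots,e_{p-1}')$, rearrangements of the edge factors of $f$, and over intermediate vertices $v_1,\ldots,v_{p-1}$. Each term is
\[
c^s_{v_{p-1}w}\prod_{j=1}^{p-1}c^{e_j'}_{v_{j-1}v_j},\qquad v_0=\rho_0^i .
\]
Lemma \ref{root_edge} will then force the chain to run along the successive roots. Since every edge factor lies on a root path, each $e_j'$ has the form $\overrightarrow{\rho_k^i\rho_{k+1}^i}=e_{k,\rho_{k+1}^i}$, and Lemma \ref{root_edge} applies with $z=\rho_{k+1}^i$, which has depth greater than $k$. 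The argument is a forward induction along the word: we track the depth of $v_{j-1}$, and each factor, if nonzero, forces the next vertex.

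For item (1), look at the first factor $c^{e_1'}_{\rho_0^i v_1}$ with $e_1'=\overrightarrow{\rho_k^i\rho_{k+1}^i}$. If $k\ge1$, then $\rho_0^i$ has depth $0<k$, so item (2) of Lemma \ref{root_edge} kills the factor. Hence every nonzero term has $k=0$, so $u_{\overrightarrow{\rho_0^i\rho_1^i}}$ divides $f$ whenever $K\ne0$. Item (1) of Lemma \ref{root_edge} then also forces $v_1=\rho_1^i$.

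For item (2), we proceed inductively. Suppose $f$ contains $\overrightarrow{\rho_k^i\rho_{k+1}^i}$ exactly once for each $k\le j$. In a nonzero term, consider the position where the chain currently sits at $v_{r}=\rho_{r}^i$ (depth $r$). The next edge $\overrightarrow{\rho_k^i\rho_{k+1}^i}$ must satisfy three constraints:
\begin{itemize}
\item[$\cdot$] $k\le r$, by item (2) of Lemma \ref{root_edge};
\item[$\cdot$] $k\ne r' $ for $r'<r$, since those edges have already been used once and appear only linearly;
\item[$\cdot$] hence $k=r$, after which item (1) of Lemma \ref{root_edge} forces $v_{r+1}=\rho_{r+1}^i$.
\end{itemize}
Thus the first $j+1$ letters of the word are forced to be the root edges in natural order, and the chain arrives at $\rho_{j+1}^i$. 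Since $p-1\ge j+2$, a further edge $\overrightarrow{\rho_k^i\rho_{k+1}^i}$ is still needed. Its index must satisfy $k\le j+1$, and the edges with $k\le j$ are exhausted, so $k=j+1$ and $u_{\overrightarrow{\rho_{j+1}^i\rho_{j+2}^i}}$ divides $f$.

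For the \emph{moreover} part, assume $f$ divides a root monomial, so each root edge appears at most linearly. Items (1) and (2) applied iteratively then show that the edges of $f$ are exactly $\overrightarrow{\rho_0^i\rho_1^i},\ldots,\overrightarrow{\rho_{p-2}^i\rho_{p-1}^i}$. The same forcing shows that only the natural ordering with $v_j=\rho_j^i$ contributes, with each root-edge factor equal to $1$. This leaves the single term $c^s_{\rho_{p-1}^i w}$, which gives \eqref{f_div}.

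The main obstacle I expect is the step where the depth of the current vertex has to be controlled. Lemma \ref{root_edge}(1) handles the case $x=\rho_k^i$ exactly, but a nonzero term could in principle have $v_{j-1}$ a non-root vertex of depth at least $k$, so that the chain leaves the root path. I would try first to argue that, inductively, $v_{j-1}$ is always forced to equal $\rho_{j-1}^i$. If that fails, I would fall back on the filtration argument of Lemma \ref{root_edge}: a nonzero coefficient along $e_{k,z}$ requires $X_{v_{j-1}}$ not to project into $\mathfrak h^i_{k-1}$. Combined with the linearity hypothesis, which limits how often each edge can be used, this should force the vertex to be the root.
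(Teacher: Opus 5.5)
Your proposal is correct and follows the paper's proof essentially step for step: you expand $K$ via Lemma \ref{minvert}, rule out a first edge $\overrightarrow{\rho_k^i\rho_{k+1}^i}$ with $k>0$ by item (2) of Lemma \ref{root_edge}, and then run the same forward induction in which depth excludes later root edges, linearity excludes earlier ones, and item (1) of Lemma \ref{root_edge} pins the next vertex to $\rho_{r+1}^i$. The obstacle you anticipate does not arise: the chain starts at $v_0=\rho_0^i$ and item (1) of Lemma \ref{root_edge} fixes each subsequent vertex, so your induction already keeps the chain on the roots and the filtration fallback is unnecessary.
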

\begin{proof}
By Lemma \ref{minvert}, the coefficient $K$ is a sum of terms of the form
\begin{equation}
\label{root_coeff_chain}
c^s_{v_{p-1}w}\prod_{r=1}^{p-1}c^{e_r}_{v_{r-1}v_r},
\qquad v_0=\rho_0^i.
\end{equation}
Let us choose a nonzero summand in this expression and label the edge factors in the order in which they occur in that summand. For brevity, we also set
\[
\varepsilon_r:=\overrightarrow{\rho_r^i\rho_{r+1}^i}.
\]

Assume first that $p\geq2$. Since $e_1$ belongs to a root path, $e_1=\varepsilon_k$ for some $k\geq0$. If $k>0$, then the first lower index $v_0=\rho_0^i$ has depth smaller than $k$, so item (2) of Lemma \ref{root_edge} gives $c^{\varepsilon_k}_{\rho_0^iv_1}=0$, contrary to the choice of a nonzero summand. Hence $e_1=\varepsilon_0$, and item (1) of Lemma \ref{root_edge} forces $v_1=\rho_1^i$. This proves statement (1).

To prove statement (2), suppose that the variables $u_{\varepsilon_0},\ldots,u_{\varepsilon_j}$ occur linearly in $f(U)$. The preceding argument gives the base case $e_1=\varepsilon_0$ and $v_1=\rho_1^i$. Assume by induction that
\[
e_r=\varepsilon_{r-1},\qquad v_r=\rho_r^i,
\qquad 1\leq r\leq j+1.
\]
Write $e_{j+2}=\varepsilon_k$. If $k>j+1$, then $\rho_{j+1}^i$ has depth smaller than $k$, and item (2) of Lemma \ref{root_edge} makes the factor $c^{\varepsilon_k}_{\rho_{j+1}^iv_{j+2}}$ vanish. If $k\leq j$, then $\varepsilon_k$ has already occurred among $e_1,\ldots,e_{j+1}$, contradicting the assumed linear occurrence of $u_{\varepsilon_k}$ in $f(U)$. Therefore $k=j+1$, and item (1) of Lemma \ref{root_edge} gives $v_{j+2}=\rho_{j+2}^i$. Thus $u_{\varepsilon_{j+1}}$ divides $f(U)$, proving statement (2).

Finally, suppose that $f(U)$ divides a root monomial. Then all its edge variables occur linearly, and the repeated application of statements (1) and (2) gives
\[
e_r=\varepsilon_{r-1},\qquad v_r=\rho_r^i,
\qquad 1\leq r\leq p-1.
\]
The same argument applies to every nonzero summand of \eqref{root_coeff_chain}, so the intermediate indices are uniquely determined. Since
\[
c^{\varepsilon_r}_{\rho_r^i\rho_{r+1}^i}=1,
\]
the coefficient reduces to $c^s_{\rho_{p-1}^i,w}$, and \eqref{f_div} follows.
\end{proof}
\section{Distinguished minors of the principal part of the fundamental algebraic system}
\label{distinguished_minors_sec}
We begin the task of analyzing maximal minors of the fundamental algebraic system. We will fix a particular $(n-m)\times (n-m)$ sub-matrix $T$ of $\mathfrak{A}$ by choosing rows of $\mathfrak{A}$ such that the coefficients of root monomials in their entries are identically $1$.
\begin{Def}
\label{core_submatrix}
The sub-matrix $T$ is called the \emph{core sub-matrix} of the fundamental algebraic system and its determinant is called the \emph{core minor}.
\end{Def}
Every $i\in [1:k]$ contributes  $\dim \mathfrak m_{-2}^i$ rows of $A$ to $T$ as follows:   for each $p \in [1:\omega_i+1]$, choose the rows of $\mathfrak{A}^p$ which correspond to the vertices of $\Gamma_i$ of depth greater than $p-1$ (see Definition \ref{vertex_depth}). Note that by Remark \ref{r_depth_rem}  there are $r_{p-1}^i$ such vertices.
\begin{rk}
\label{degree_rk}
    By item (2) of Corollary \ref{block_diag}, the core minor $\det T$ is a $\Gamma_i$-vertex homogeneous (of degree $d_i$) and $\Gamma_i$-edge homogeneous (of degree $\displaystyle{\sum_{j=1}^{\omega_i}j r_j^i}$
    ).
\end{rk}
\begin{Lemma}
\label{coeff=1}
The root monomials (of some $\Gamma_i$) appear exactly once in each row and each column of the core sub-matrix $T$ and its coefficient is equal to $1$. In more detail, if $w$ is a vertex  of depth greater than $p-1$ in $\Gamma_i$, then in the $w$-row of the core sub-matrix $T$ in $\mathfrak{A}^p$ the root monomial appears in the $\overrightarrow{\rho_{p-1}^i,w}$-column.
\end{Lemma}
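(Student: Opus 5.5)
The plan is to read off the root-monomial coefficients directly from Lemma \ref{root} and then run a simple counting argument over rows and columns.

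Fix a component $\Gamma_i$, a layer $p\in[1:\omega_i+1]$, and a vertex $w\in V_i$ of depth greater than $p-1$, so that the $w$-row of $\mathfrak A^p$ is a row of $T$. By item (2) of Corollary \ref{block_diag}, the entries of this row in columns $s\notin E_i$ vanish. The only root monomial of degree $p$ is
\[
u_{\rho_0^i}u_{\overrightarrow{\rho_0^i\rho_1^i}}\cdots u_{\overrightarrow{\rho_{p-2}^i\rho_{p-1}^i}},
\]
and it trivially divides itself. Lemma \ref{root} therefore shows that its coefficient in $\mathfrak A^p_{ws}$ equals $c^s_{\rho_{p-1}^i w}$.

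Since $\mathrm{depth}(w)>p-1$, Lemma \ref{root_edge} applies with $k=p-1$ and $z=w$. By item (1) there, $c^s_{\rho_{p-1}^i w}$ equals $1$ when $s=\overrightarrow{\rho_{p-1}^i w}$. For every other edge $s\in E_i$ we need the vanishing, and I would split according to the form of $s$. Write $s=\overrightarrow{\rho_k^i z}$ with $\mathrm{depth}(z)>k$; this is possible because the edges of $\Gamma_i$ are exactly the out-edges of the roots.
\begin{itemize}
\item If $k=p-1$, item (1) of Lemma \ref{root_edge} gives $c^s_{\rho_{p-1}^i w}=0$ unless $z=w$.
\item If $k>p-1$, then $\rho_{p-1}^i$ has depth $p-1<k$, and item (2) of Lemma \ref{root_edge} gives vanishing.
\item If $k<p-1$, neither depth is smaller than $k$, so Lemma \ref{root_edge} does not apply directly. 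Here I would argue through the filtration instead. Both $\rho_{p-1}^i$ and $w$ have depth at least $p-1>k$, so by quasi-normality $[X_{\rho_{p-1}^i},X_w]$ projects to $\mathrm{pr}(X_{\overrightarrow{\rho_{p-1}^iw}})$, which lies in $\mathfrak h_{p-1}^i$ and is a basis element distinct from $X_s$. Hence its $X_s$-coefficient is zero. More directly, $c^s_{\rho_{p-1}^iw}$ is a coefficient of the actual bracket $X_{\overrightarrow{\rho_{p-1}^iw}}$, which is itself a frame element, so the coefficient is $\delta_{s,\overrightarrow{\rho_{p-1}^iw}}$ by \eqref{struct_quasinorm}.
\end{itemize}
That last observation in fact settles all three cases at once, and I would present it that way: the structure function is $1$ exactly at $s=\overrightarrow{\rho_{p-1}^iw}$ and $0$ at every other column.

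This gives the "once per row, coefficient $1$" part. For columns, I would observe that the map $(p,w)\mapsto\overrightarrow{\rho_{p-1}^i w}$, defined on the chosen row indices, is a bijection onto $E_i=\bigsqcup_{j}\mathrm{Out}(\rho_j^i)$. Indeed, $\mathrm{Out}(\rho_{p-1}^i)$ consists precisely of the edges to the $r^i_{p-1}$ vertices of depth greater than $p-1$ (Remark \ref{r_depth_rem} and the construction in Theorem \ref{ccreduce}), and distinct pairs $(p,w)$ give distinct edges. Consequently each column of $T$ contains the root monomial exactly once.

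The remaining point is to rule out a root monomial of a \emph{different} degree, or of a different component, appearing in some entry. For a different component, Corollary \ref{block_diag} excludes it. For a different degree, homogeneity excludes it: entries of $\mathfrak A^p$ have total degree $p$. I expect the only delicate step to be the identification of $\mathrm{Out}(\rho_{p-1}^i)$ with the set of edges from $\rho^i_{p-1}$ to vertices of depth greater than $p-1$. The worry is that condition (3) of Remark \ref{constructuve_graph_rem} allows one vertex to receive edges from several roots. That is harmless, however, since those edges are distinct columns indexed by distinct rows $(p,w)$.
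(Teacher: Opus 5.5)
Your proof is correct and follows the paper's own argument. Lemma \ref{root} gives the coefficient of the degree-$p$ root monomial in $\mathfrak A^p_{ws}$ as $c^s_{\rho^i_{p-1}w}$, and since $\overrightarrow{\rho^i_{p-1}w}\in E$, \eqref{struct_quasinorm} makes this coefficient the Kronecker delta in $s$; as you noticed yourself, this makes your three-case split unnecessary, and your bijection $(p,w)\mapsto\overrightarrow{\rho^i_{p-1}w}$ onto $E_i$ just spells out the column statement that the paper leaves implicit. One small slip: the vanishing of the entries in columns outside $E_i$ is item (1) of Corollary \ref{block_diag}, not item (2).
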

\begin{proof}
By Lemma \ref{root} the coefficient of a root monomial in the entry $\mathfrak{A}^p_{ws}$ is given by $c^s_{\rho_{p-1}^i,w}$. Since the columns and rows of $\mathfrak{A}^p$ are naturally parameterized by $E$ and $V$, respectively, the decomposition \eqref{eigen_decomp} of the symbol $\mathfrak{m}$ implies that $c^s_{\rho_{p-1}^i,w}$ vanishes whenever $s$ and $w$ belong to different graphs. On the other hand, for a fixed $\Gamma_i$, if $s\in E_i$ and $w \in V_i$, then \eqref{struct_quasinorm} and our choice of rows in $T$ imply $c^s_{\rho_{p-1}^i,w}=1$ when $s=\overrightarrow{\rho_{p-1}^i,w}$
and vanishes otherwise, which completes the proof.
\end{proof}
The previous lemma implies that the product of all of the root monomials in the entries of the core sub-matrix $T$ will appear (with coefficient $\pm 1$)  in the Leibniz expansion \footnote{From now on, the classical formula for determinants in terms of permutations of the matrix elements is referred  as the \emph{Leibniz expansion} for determinants. } for $\det T$.
For a given $\Gamma_i$, we denote the product of all of the root monomials of $\Gamma_i$ which appear in the entries of $T$ by $p_i(U)$. By Lemma \ref{coeff=1}, the root monomial in the entry $\mathfrak{A}^p_{w,\overrightarrow{\rho_{p-1}^i,w}}$ is equal to $u_{\rho_0^i} u_{\overrightarrow{\rho_0^i\rho_1^i}}\ldots u_{\overrightarrow{\rho_{p-2}^i,\rho_{p-1}^i}}$. Therefore, if $E_i\ne\varnothing$, then, setting $d_i=\dim \mathfrak m_{-2}^i$, we get
\begin{equation}
\label{p_i}
p_i(U)=u_{\rho ^i_0}^{d_i}\displaystyle{\prod_{j=1}^{\omega_i} \left(u_{\overrightarrow{\rho^i_{j-1} \rho^{i}_{j}}}\right)^{d_i-r^i_0-\ldots-r^i_{j-1}}.}
\end{equation}
If $E_i=\varnothing$, then the corresponding component contributes no rows to the core sub-matrix, and in this case we set $p_i(U)=1$.

If \begin{equation}
\label{p}
p(U)=\displaystyle{\prod_{i=1}^k} p_i(U)
\end{equation}
is the product of all root monomials appearing in the entries of the core sub-matrix $T$, then the coefficient of $p$ in the core minor $\det T$ is particularly simple.
 \begin{Lemma}
 \label{pcoeff1}
 The monomial  $p(U)$ appears in the Leibniz expansion of the core minor $\det T$ only in the term
 \begin{equation}
 \label{diag_term_det_T}
\prod_{i=1}^k\prod_{p=1}^{\omega_i+1}\prod_{{\begin{array}{c}v\in V_i,\\ \mathrm{depth}(v)>p-1\end{array}}} \mathfrak{A}^p_{v\overrightarrow{\rho_{p-1}^i,v}}.
 \end{equation}
 Moreover, the coefficient of $p(U)$ in the core minor  $\det T$ is $\pm 1$.
 \end{Lemma}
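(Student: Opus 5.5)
The plan is to reduce to a single component and then run a saturation (counting) argument, layer by layer along the root path, using Lemmas \ref{root} and \ref{coeff=1}.

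\emph{Reduction to one component.} By Corollary \ref{block_diag} and Remark \ref{block_diag_rem}, $T$ is block diagonal, with blocks $T_i$ formed by the rows of $T$ indexed by $V_i$ and the columns indexed by $E_i$. Each such block is square, since $\sum_p r^i_{p-1}=d_i=|E_i|$ by \eqref{sum_r}. Hence $\det T=\prod_i\det T_i$, and only Leibniz terms that respect the blocks are nonzero. By item (2) of Corollary \ref{block_diag}, the entries of $T_i$ involve only $\Gamma_i$-variables. The variable sets of different components are disjoint, so the coefficient of $p(U)=\prod_i p_i(U)$ in $\det T$ is the product over $i$ of the coefficients of $p_i(U)$ in $\det T_i$. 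The same factorization holds term by term in the Leibniz expansion. It therefore suffices to fix a component $i$ with $E_i\neq\varnothing$ and show that $p_i(U)$ occurs in $\det T_i$ only in the $i$-th factor of \eqref{diag_term_det_T}, with coefficient $\pm1$.

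\emph{Forcing every selected monomial to be a root monomial.} Fix a Leibniz term of $\det T_i$ and, in each of its factors, a monomial, such that the product of these monomials is $p_i(U)$. Write $\varepsilon_j=\overrightarrow{\rho^i_j\rho^i_{j+1}}$.
\begin{itemize}
\item Each entry is linear in vertex variables, and $p_i$ has vertex part $u_{\rho^i_0}^{d_i}$, with exactly $d_i$ rows. So every selected monomial has vertex factor $u_{\rho^i_0}$.
\item Every selected monomial divides $p_i$, so its edge variables are root-path edges. This places each monomial in the setting of Lemma \ref{root}.
\end{itemize}
I will then prove, by induction on $j\ge0$, that every selected monomial in a row of layer $p\ge j+2$ contains each of $u_{\varepsilon_0},\dots,u_{\varepsilon_j}$ exactly once.
\begin{itemize}
\item The number of rows of $T_i$ in layers $p\ge j+2$ is $r^i_{j+1}+\dots+r^i_{\omega_i}=d_i-r^i_0-\dots-r^i_j$. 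By \eqref{p_i}, this is exactly the exponent of $u_{\varepsilon_j}$ in $p_i$.
\item For $j=0$: by Lemma \ref{root}(1), each of these rows contains $u_{\varepsilon_0}$. The count matches the exponent, so each contains it exactly once, and rows of layer $1$ contain none.
\item For the step from $j-1$ to $j$: by the inductive hypothesis, $u_{\varepsilon_0},\dots,u_{\varepsilon_{j-1}}$ appear linearly in the monomials of rows of layer $\ge j+2$. Lemma \ref{root}(2) then forces $u_{\varepsilon_j}$ into each such monomial. The count again matches the exponent, which forces linearity and excludes $u_{\varepsilon_j}$ from rows of lower layers.
\end{itemize}
Comparing edge degrees ($p-1$ in layer $p$), each selected monomial in layer $p$ is then exactly the root monomial $u_{\rho^i_0}u_{\varepsilon_0}\cdots u_{\varepsilon_{p-2}}$.

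\emph{Forcing the permutation and computing the coefficient.} By the last part of Lemma \ref{root} and by Lemma \ref{coeff=1}, the coefficient of this root monomial in the entry $\mathfrak A^p_{ws}$ is $c^s_{\rho^i_{p-1}w}$. This equals $1$ for $s=\overrightarrow{\rho^i_{p-1}w}$ and $0$ otherwise. Thus the only Leibniz term contributing to $p_i$ is the diagonal one in \eqref{diag_term_det_T}, and in it $p_i$ arises from a unique choice of monomials, each with coefficient $1$. The coefficient of $p_i$ is therefore the sign of that permutation, i.e.\ $\pm1$.

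The main obstacle I expect is the saturation step. Lemma \ref{root}(2) requires linear occurrence of the previous root edges row by row, while the exponents in $p_i$ only control totals. The induction must therefore be organized over all rows of a given range of layers simultaneously, using the exact equality between the row counts from Remark \ref{r_depth_rem} and the exponents in \eqref{p_i}.
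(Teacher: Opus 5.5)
Your proposal is correct and follows essentially the same route as the paper's proof. Both reduce to the blocks $T_i$ and force every selected monomial to have vertex factor $u_{\rho_0^i}$. Both then run a simultaneous saturation induction over all rows of layers $p\ge j+2$, combining Lemma~\ref{root}(1),(2) with the exact match between row counts and the exponents in \eqref{p_i}, and finish with Lemma~\ref{coeff=1}. The only cosmetic difference is that you exclude extra edge variables by the edge degree $p-1$, where the paper appeals to the fact that the product of the selected monomials is exactly $p_i(U)$.
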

\begin{proof}
Recall that $d_i=\dim\mathfrak m^i_{-2}$. By Corollary \ref{block_diag}, after permuting rows and columns, the core sub-matrix $T$ is block diagonal, with a $d_i\times d_i$ block $T_i$ corresponding to each component $\Gamma_i$. Since these permutations affect only the overall sign, it is enough to prove the assertion for the coefficient of $p_i(U)$ in $\det T_i$.

Consider a term in the Leibniz expansion of $\det T_i$ whose monomial part is $p_i(U)$, and write the selected monomial from each of the $d_i$ rows as $f_1(U),\ldots,f_{d_i}(U)$, so that
\begin{equation}
\label{prod}
\prod_{k=1}^{d_i}f_k(U)=C p_i(U)
\end{equation}
for some coefficient $C$. Every entry of $\mathfrak A$ is linearly vertex homogeneous. Hence every $f_k$ contains exactly one vertex variable. Since $p_i(U)$ contains no vertex variable other than $u_{\rho_0^i}$ and has $u_{\rho_0^i}$-degree $d_i$, the unique vertex factor of every $f_k$ is $u_{\rho_0^i}$.

For brevity, set
\[
\varepsilon_r:=\overrightarrow{\rho_r^i\rho_{r+1}^i},
\qquad 0\leq r\leq\omega_i-1.
\]
There are exactly $d_i-r_0^i$ selected factors which come from layers $\mathfrak A^p$ with $p\geq2$. By statement (1) of Lemma \ref{root}, each of these factors is divisible by $u_{\varepsilon_0}$. The exponent of $u_{\varepsilon_0}$ in $p_i(U)$ is also $d_i-r_0^i$ by \eqref{p_i}; therefore $u_{\varepsilon_0}$ occurs exactly once in every selected factor from a layer $p\geq2$.

We proceed by induction. Fix $r\geq1$ and assume that, for every $q<r$, each selected factor coming from a layer $\mathfrak A^p$ with $p\geq q+2$ contains $u_{\varepsilon_q}$ exactly once. A factor coming from a layer $p\geq r+2$ consequently contains $u_{\varepsilon_0},\ldots,u_{\varepsilon_{r-1}}$ linearly, so statement (2) of Lemma \ref{root}, with $j=r-1$, shows that it is divisible by $u_{\varepsilon_r}$. The number of such factors is
\[
d_i-r_0^i-\cdots-r_r^i,
\]
which is exactly the exponent of $u_{\varepsilon_r}$ in $p_i(U)$. Hence $u_{\varepsilon_r}$ occurs exactly once in every selected factor from a layer $p\geq r+2$.

It follows that a selected factor from the layer $\mathfrak A^p$ contains precisely
\[
u_{\rho_0^i}u_{\varepsilon_0}\cdots u_{\varepsilon_{p-2}}.
\]
No other edge variable can occur, because the product in \eqref{prod} is $p_i(U)$. Thus every $f_k(U)$ is the root monomial corresponding to its row. By Lemma \ref{coeff=1}, that root monomial occurs in exactly one column of the row and has coefficient $1$. Consequently, the selected columns are uniquely determined, and the only possible Leibniz term is the diagonal-type term \eqref{diag_term_det_T}. Its coefficient in the determinant is the sign of the corresponding permutation, hence $\pm1$. Multiplying the results obtained for the individual blocks $T_i$, we complete the proof of the lemma.
\end{proof}

\noindent\textbf{Full rank and rational consistency.}
We now derive two consequences of Lemma \ref{pcoeff1} which will be used repeatedly in the sequel. First, by this lemma the core minor is a nonzero polynomial in the fiber variables. Consequently, $\mathfrak A$ has full column rank over the field of rational functions in the fiber variables. The same conclusion holds for each diagonal block of this matrix separately. Note that an abelian block, i.e., a block corresponding to a component without edges, contributes neither columns nor rows of the core sub-matrix; by convention, its determinant and its factor $p_i$ are set equal to $1$.

Second, let $d$ be an arbitrary polynomial last column. If all maximal minors of the corresponding augmented matrix vanish, then the system $\mathfrak A\Psi=d$ is rationally consistent, i.e., it has a solution whose entries are rational functions in the fiber variables. Indeed, choose the core rows and solve the corresponding square system by Cramer's rule, which is possible because its determinant is nonzero. Then append to the core rows any other row. The corresponding augmented determinant is equal, up to a sign, to $\det T$ multiplied by the residual of the appended row, that is, by the difference between its right-hand side and the value of its left-hand side at the Cramer solution. Since all these augmented determinants vanish and $\det T\ne0$, every residual vanishes, i.e., the Cramer solution satisfies all equations of the system. Note that each row is tested by a single finite minor, so that the argument applies also to the infinite system under consideration.

The maximal minors of the fundamental algebraic system are determinants of $(n-m+1)\times(n-m+1)$ sub-matrices. Since the matrix $T$, constructed above, is a particular $(n-m)\times(n-m)$ sub-matrix of $[\mathfrak{A}|\tilde{\mathfrak{b}}]$, we must augment $T$ by an additional row and column of the fundamental algebraic system if its determinant is to be a maximal minor of the fundamental algebraic system. There is no freedom in choosing a column as the only remaining column is $\tilde{b}$ itself. However, the choice of row in $\mathfrak{A}$ used to augment $T$ is crucial for our analysis.
\begin{Def}
Given an integer $p\geq 1$ and a vertex $w$ whose row in layer $p$ is not one of the rows of the core sub-matrix, the sub-matrix of $[\mathfrak{A}|\tilde{\mathfrak{b}}]$ obtained by augmenting $T$ by the column $\tilde{\mathfrak{b}}$ and the $w$--row of $\mathfrak{A}^p$ is called the \emph{$(p, w)$--augmentation of the core sub-matrix $T$} and will be denoted by $W^p_{w}$ .
\end{Def}
\section{Vanishing of horizontal cross-component structure functions}
\label{transverse_vanishing_sec}
In this section, we prove items (2) and (3) of Proposition \ref{outline_transverse_vanishing}, i.e., the vanishing of the horizontal cross-component structure functions (see Definition \ref{cross_component_def}) whose output belongs to a component different from the components of both inputs.
\begin{Prop}
\label{cofterm}
    Let $v,l \in V$, and let $w \in V_i$ be a vertex of depth $0$. Then the monomial
    \begin{equation}
    \label{vl1_monom}
    u_vu_lp(U),
    \end{equation}
    where $p(U)$ is as in \eqref{p}, appears only in the term $\tilde{\mathfrak{b}}^1_w\det T$ of the cofactor expansion of $\det W^1_w$.
\end{Prop}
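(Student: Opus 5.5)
The plan is to expand $\det W^1_w$ along its last column $\tilde{\mathfrak b}$:
\[
\det W^1_w=\pm\,\tilde{\mathfrak b}^1_w\det T+\sum_{(p,r)}\pm\,\tilde{\mathfrak b}^p_r\,\det T^{(p,r)}_{w}.
\]
Here $(p,r)$ runs over the core rows, and $T^{(p,r)}_w$ is obtained from $T$ by deleting the $r$-row of $\mathfrak A^p$ and inserting the $w$-row of $\mathfrak A^1$. I have to show that the coefficient of $u_vu_lp(U)$ in each summand $\tilde{\mathfrak b}^p_r\det T^{(p,r)}_w$ vanishes. For this I would repeat the saturation bookkeeping of Lemma \ref{pcoeff1}, with the extra row $w$ and the extra factor $\tilde{\mathfrak b}^p_r$.

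\textbf{Step 1: vertex bookkeeping.} Every row of $T^{(p,r)}_w$ contributes exactly one vertex variable, $\tilde{\mathfrak b}^p_r$ contributes exactly two, and $p(U)$ contains only the vertex variables $u_{\rho_0^j}$. The first point to use is the $w$-row. Since $\mathrm{depth}(w)=0$, the vertex $w$ lies in $\ker\ad X_{\rho_0^i}$. Together with item (1) of Proposition \ref{outline_transverse_vanishing} (that is, Theorem \ref{Tanaka_decomp_prop}), this gives $c^s_{\rho_0^jw}=0$ for every $j$ and every $s\in E$. Hence the factor selected from the $w$-row in $\mathfrak A^1_{ws}=\sum_x c^s_{xw}u_x$ cannot carry any $u_{\rho_0^j}$; it must carry $u_v$ or $u_l$. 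The remaining core rows then carry only root variables, apart from at most one leftover of $u_v,u_l$, and $\tilde{\mathfrak b}^p_r$ must supply the other one together with a single $u_{\rho_0^{\mathcal C(r)}}$. This is to compensate for the deleted row $r$, which in $p(U)$ carried $u_{\rho_0}$.

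\textbf{Step 2: edge saturation.} I would rerun the induction of Lemma \ref{pcoeff1} via Lemma \ref{root} on the core rows other than $r$. This forces each of them onto its root monomial, and by Lemma \ref{coeff=1} onto its diagonal column. The $w$-row is therefore forced into the column $\overrightarrow{\rho^i_{p-1}r}$ freed by $r$. Moreover, $\tilde{\mathfrak b}^p_r$ must contribute precisely the root-path edge variables $u_{\varepsilon_0}\cdots u_{\varepsilon_{p-2}}$ of the component of $r$.
\begin{itemize}
\item For $p\ge 2$, I expect to exclude such monomials in $\tilde{\mathfrak b}^p_r$ using the recursion \eqref{tbs_frak}. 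The $\delta$-term only propagates monomials inherited from $\tilde{\mathfrak b}^1$, which are quadratic in vertices, and the second term always introduces an edge variable $u_f$ from a component different from $\mathcal C(r)$. I would then track which of these can end in a pure root path of $\Gamma_{\mathcal C(r)}$, using Lemma \ref{root_edge} in the same way as in Lemma \ref{root}.
\item For $p=1$ and $\mathrm{depth}(r)>0$, this gives a column restriction: the $w$-row sits in column $\overrightarrow{\rho_0^ir}$, with coefficient $c^{\overrightarrow{\rho_0^ir}}_{vw}$ (or the same with $l$ in place of $v$).
\end{itemize}

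\textbf{Main obstacle.} The hardest step is this layer-one replacement. A priori, $c^{\overrightarrow{\rho_0^ir}}_{vw}$ need not vanish: Lemma \ref{a1} only places $[w,\cdot]$ inside $\imad X_{\rho_0^i}$. On the other hand, the factor $\tilde{\mathfrak b}^1_r$ would have to contain $u_{\rho_0^i}u_l$, with coefficient $(\lambda_r-\lambda_l)c^l_{\rho_0^ir}$ by \eqref{tb1}. My first attempt would be to show that such a pair of factors cannot produce the exact monomial $u_vu_lp(U)$. The idea is to compare $\mathcal C(v)$ and $\mathcal C(l)$: the $w$-row needs $\mathcal C(v)=i$ by block diagonality (Corollary \ref{block_diag}), while $\tilde{\mathfrak b}^1_r$ needs $\mathcal C(l)\neq i$. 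I would then check whether the resulting count of $u_{\rho_0^i}$ and $u_{\varepsilon_0}$ exponents is inconsistent with $p(U)$, as in \eqref{p_i}. If it is not, the fallback is to show that these contributions cancel in pairs over $r$, using skew-symmetry and item (3) of Proposition \ref{first divi}.
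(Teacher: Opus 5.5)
\textbf{Gap: the decisive case is left open.} Your reduction to the summands $\tilde{\mathfrak b}^p_r\det T^{(p,r)}_w$ is fine, and block diagonality forces $\mathcal C(r)=i$. But the step you call the main obstacle is never resolved, and it closes at once by quasi-normality, which you do not invoke.

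If $r$ is a core row of layer $1$, then $\mathrm{depth}(r)>0$, so $\overrightarrow{\rho_0^i r}\in E$. By \eqref{struct_quasinorm}, $[X_{\rho_0^i},X_r]=X_{\overrightarrow{\rho_0^i r}}$ exactly, hence $c^s_{\rho_0^i r}=0$ for every $s\in V$. The upper index in \eqref{tb1} satisfies $\mathcal C(s)\neq\mathcal C(r)=i$, so it cannot be $\rho_0^i$ either. Therefore $\tilde{\mathfrak b}^1_r$ does not depend on $u_{\rho_0^i}$ at all.

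Consequently the coefficient $(\lambda_r-\lambda_l)c^l_{\rho_0^i r}$ you isolated is identically zero. So is the coefficient of $u_{\rho_0^i}u_{\rho_0^t}$, a possibility your Step 1 misses: the $\Gamma_t$-block of the determinant may carry $u_l$ while $\tilde{\mathfrak b}^1_r$ carries $u_{\rho_0^t}$. Your proposed substitutes would not do this job. In this configuration all vertex and edge exponents are consistent with $u_vu_lp(U)$, so no counting contradiction exists. And since each coefficient vanishes individually, no cancellation over $r$ is needed.

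\textbf{Two further steps are asserted rather than proved.} First, in Step 1 nothing yet prevents $\tilde{\mathfrak b}^p_r$ from supplying $u_{\rho_0^i}^2$ while a core row carries the leftover variable. Excluding this needs two facts together:
\begin{itemize}
\item Lemma \ref{bu2}: such a monomial must contain an edge variable of another component;
\item the $\Gamma_i$-edge-degree count: the target forces $\tilde{\mathfrak b}^p_r$ to contribute exactly $p-1$ edge variables of $\Gamma_i$.
\end{itemize}
Second, the saturation in Step 2 does not follow from Lemma \ref{pcoeff1}. That induction needs every row to carry $u_{\rho_0^i}$ and the edge exponents to be saturated exactly. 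Here one core row may carry $u_l$, and $\tilde{\mathfrak b}^p_r$ absorbs $p-1$ edge factors.

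\textbf{How the paper avoids saturation.} By \eqref{ad_X_depth_p_i}, the $w$-row is supported only in columns $\overrightarrow{\rho_0^i\bar z}$ with $\mathrm{depth}(\bar z)>0$. The column it occupies is the only one in which the core row $(1,\bar z)$ contains $u_{\rho_0^i}$.
\begin{itemize}
\item If the deleted row is not $(1,\bar z)$, the $\Gamma_i$-block supplies at most $d_i-2$ factors $u_{\rho_0^i}$. This forces $u_{\rho_0^i}^2$ in $\tilde{\mathfrak b}^p_r$, which is excluded as above.
\item If the deleted row is $(1,\bar z)$, then $\tilde{\mathfrak b}^1_{\bar z}$ must contain $u_{\rho_0^i}$, which is excluded by the quasi-normality argument.
\end{itemize}
Even granting your saturation, the case $p\ge2$ needs no recursion. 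The $w$-row would sit in the column $\overrightarrow{\rho^i_{p-1}r}$ of level $p-1\ge1$, where its entry vanishes by item (2) of Lemma \ref{root_edge}, since $\mathrm{depth}(w)=0$.
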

\begin{proof}
First note that if $E_i=\varnothing$, then all entries of the augmenting row, except possibly the last one, are equal to zero. Indeed, by the block-diagonal structure of the coefficient matrix, the nonzero entries of this row could only appear in the columns indexed by the edges of the component containing the vertex of this row, and there are no such edges. Consequently, the cofactor expansion along the augmenting row consists of the single term $\pm\tilde{\mathfrak b}^1_w\det T$, and there is nothing to prove. Therefore, from now on we assume that this component, which we call the source component, is nonabelian.

We will analyze $\det W^1_w$ by two consecutive cofactor expansions: first along the column corresponding to $\tilde{\mathfrak{b}}$ and then along the $w$-row for each cofactor of the first step, except the one obtained from removing this column and the $w$-row. In more detail, given $p\geq 1$, a vertex $j$ of depth greater than $p-1$, and an edge $y$, denote by $T_{\hat j\hat y}^p$ the matrix obtained from the matrix $T$ by removing the $j$-row of $\mathfrak{A}^p$ and the $y$-column of $\mathfrak{A}$.
Then,
up to a sign, the general term in the resulting expansion is given by
\begin{equation}
\label{cofactor}
    \tilde{\mathfrak{b}}^p_j\mathfrak{A}^1_{wy}\det T_{\hat{j}\hat{y}}^p, \quad  j\neq w,\,\mathrm{depth}(j)>p-1,
\end{equation}
or
\begin{equation}
\label{cofactor1}
\tilde{\mathfrak{b}}^1_w \det T.
\end{equation}
We proceed by excluding terms of the form \eqref{cofactor}. By item (1) of Corollary \ref{block_diag}, it follows that $\mathfrak{A}^1_{wy} = 0$ unless $y$ belongs to the same component $\Gamma_i$ as $w$. Using also Remark \ref{block_diag_rem}, one gets that $\det T_{\hat{j}\hat{y}}^p = 0$ unless $y$ and $j$ belong to the same component $\Gamma_i$. The latter conclusion follows from the fact that a cofactor of a block-diagonal matrix corresponding to an entry off the block-diagonal block is equal to zero. In summary, the terms \eqref{cofactor} vanish unless the vertex $j$ and the edge $y$ belong to the same component $\Gamma_i$ (as the vertex $w$).

Moreover, since by the first line of \eqref{elem.A.frak} and \eqref{qjk} $\mathfrak{A}^1_{wy}=\sum\limits_{x \in V}c^y_{xw}u_x$, from \eqref{ad_X_depth_p_i} and the assumption that $w$ has depth $0$ it follows that \eqref{cofactor} also vanishes unless $y$ is of the form
\begin{equation}
\label{y_bar z}
y=\overrightarrow{\rt 0 \bar z}, \quad \text{ for some } \bar z \in V_i \text { with } \mathrm{depth} (\bar z)>0.
\end{equation}

Since $j$ and $y$ belong to the same component $\Gamma_i$, by second item of Corollary \ref{block_diag}  $\Gamma_i$-vertex degree of $\det  T_{\hat{j}\hat{y}}^p$ is equal to $d_i-1$.
By Lemma \ref{coeff=1} the monomial $u_{\rt 0}$ appears exactly in one entry of each row of the matrix $T$ belonging to $\mathfrak{A}^1$. Moreover, in the $\bar z$-row of $\mathfrak{A}^1$ it appears precisely in the entry $\mathfrak{A}^1_{\bar z \overrightarrow{\rt 0 \bar z}}$. Consider the following two cases separately:

{\bf Case 1.} \emph{In \eqref{cofactor} either $p>1$, or $p=1$ and $j\neq \bar z$, where $\bar z$ is as in \eqref{y_bar z}.} In this case $\bar z$-row of $\mathfrak{A}^1$, with the entry $\mathfrak{A}^1_{\bar z \overrightarrow{\rt 0 \bar z}}$ excluded, is a row of  the matrix $T_{\hat{j}\hat{y}}^p$. Consequently,
 this row does not depend on $u_{\rt 0}$. Therefore, monomials in the Leibniz expansion of  $\det T_{\hat{j}\hat{y}}^p$ will contain at most $d_i-2$ factors of $u_{\rt 0}$. On the other hand,  by \eqref{p_i} and \eqref{p} the monomial \eqref{vl1_monom} contains at least $d_i$ such factors. Hence, the monomials of the factor $\tilde{\mathfrak{b}}^p_j\mathfrak{A}^1_{wy}$ of \eqref{cofactor}, contributing to the monomial \eqref{vl1_monom}, must be  divisible by $u_{\rt 0}^2$. Since $w$ is of depth $0$, using  the first line of \eqref{elem.A.frak} and \eqref{qjk}, $\mathfrak{A}^1_{wy}$ is independent of $u_{\rt 0}$.
 Therefore, the monomials of the factor of $\tilde{\mathfrak{b}}^p_j$ of \eqref{cofactor} contributing to the monomial \eqref{vl1_monom} must be divisible by  $u_{\rt 0}^2$.

 Now consider two sub-cases:

 {\bf Case 1.1.} \emph{$p=1$:}
 By Lemma \ref{bu2}, $\tilde{\mathfrak{b}}^1_j$ does not contain the monomial $u_{\rt 0}^2$, so this case does not contribute to monomial \eqref{vl1_monom}.

{\bf Case 1.2.} \emph{$p>1$:}
 By Lemma \ref{bu2}, if $u_{\rt 0}^2$ divides a monomial of  $\tilde{\mathfrak{b}}^p_j$, then that monomial must depend on an edge variable indexed by a different component $\Gamma_{t'}$ of $\Gamma$ with $t' \neq i$. In other words, by Remark \ref{homogen_total} and Lemma \ref{lin_quad_lem}, the $\Gamma_i$-edge degree of this monomial of  $\tilde{\mathfrak{b}}^p_j$ is strictly smaller than $p-1$.

 Let us show that in this case the term \eqref{cofactor} cannot contribute to the monomial \eqref{vl1_monom}. Indeed, by \eqref{p_i} and \eqref{p} the monomial \eqref{vl1_monom} has $\Gamma_i$-edge degree equal to $\sum_{k=1}^{\omega_i} kr_k^i$. On the other hand,  by Remark \ref{degree_rk}, $\det T_{\hat{j}\hat{y}}^p$ has $\Gamma_i$-edge degree equal to $\sum_{k=1}^{\omega_i} kr_k^i-(p-1)$, because, by the second item of Corollary \ref{block_diag}, the $\Gamma_i$-edge  degree of non-zero entries of the $j$-row of $\mathfrak{A}^p$ is equal to $p-1$. Besides, by Remark \ref{homogen_total} and Lemma \ref{lin_quad_lem} again, the (total) edge degree of the entry $\tilde{\mathfrak{b}}_j^p$ is equal to $p-1$, and $\mathfrak{A}^1_{wy}$ does not depend on edge variables. Therefore, if the monomial of the factor $\tilde{\mathfrak{b}}_j^p$ of \eqref{cofactor} contributes to the monomial \eqref{vl1_monom}, then the $\Gamma_i$-edge degree of this monomial (of $\tilde{\mathfrak{b}}_j^p$) must be exactly equal to $p-1$, which proves the statement in the first sentence of the paragraph.

{\bf Case 2.} \emph{In \eqref{cofactor} $p=1$ and $j=\bar z$.}
In this case, the monomials in the Leibniz expansion of  $\det T_{\hat{j}\hat{y}}^1$ will contain at most $d_i-1$ factors of $u_{\rt 0}$, and by the same arguments as in the first paragraph of Case 1 above, the monomial from the factor $\tilde{\mathfrak{b}}^1_j$ of \eqref{cofactor} contributing to \eqref{vl1_monom} must contain $u_{\rt 0}$. Recall from \eqref{tb1} that the entry $\tilde{\mathfrak{b}}^1_{\bar{z}}$ is given by

$$\sum \limits_{{\begin{array}{c} s,x \in V\\ \mathcal C(s)\neq \mathcal C(\bar z)\end{array}}} (\alpha(\bar{z})^2-\alpha(s)^2)c^s_{x\bar{z}}u_su_x.$$

First this implies that  $s\neq \rt 0$.
On the other hand, if $x=\rt 0$, then $c^s_{x\bar z}=0$ by \eqref{struct_quasinorm}. Thus $\tilde{\mathfrak{b}}^1_{\bar z}$ is independent of $u_{\rt 0}$ and $u_vu_lp(U)$ doesn't appear.

We conclude that no terms of the form \eqref{cofactor} contribute to the coefficient of the monomial \eqref{vl1_monom} in $\det W_w^1$.
\end{proof}

In the sequel, given a collection $\mathcal F$ of smooth functions on an open set $\mathcal U$ of the manifold $M$, where the projective equivalence of our sub-Riemannian metrics $g_1$ and $g_2$ holds and the local quasi-normal frame $\{X_s\}_{s\in V\cup E}$ is defined, denote by $\mathrm{Ideal}(\mathcal F)$ the ideal in $C^\infty(\mathcal U)$ generated by the collection $\mathcal F$.

\begin{Prop}
 \label{c0}
 If $v\in V_i$ is a vertex of depth $0$ and $l \in V_t$ with $t\neq i$, then the coefficient of the monomial \eqref{vl1_monom} in the determinant of the $(1,\rt 0)$-augmentation $W_{\rt 0}^1$ of the matrix $T$ is equal, up to a sign, to
 \begin{equation}
    \label{c0v}
       \begin{cases}\bigl(\alpha(\rt 0)^2-\alpha(l)^2\bigr)c^l_{\rt 0v},& \text{ if } l=\rho_0^t,\\
       \bigl(\alpha(\rt 0)^2-\alpha(l)^2\bigr)c^l_{\rt 0v} \,\, \mathrm{mod}\,\mathrm{Ideal}(c^{\rho_0^t}_{\rt 0v}), & \text{ if } l\in V_t, l\neq \rho_0^t
       \end{cases}
    \end{equation}
Therefore, if $\det W_{\rt 0}^1=0$, then
\begin{equation}
\label{crt_depth0}
c^l_{\rt 0v}=0, \qquad v\in V_i,\ \mathrm{depth}(v)=0,\ l\in V_t,\ t\neq i.
\end{equation}
\end{Prop}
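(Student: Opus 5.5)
Since $\rho_0^i$ has depth $0$, Proposition \ref{cofterm} applies with $w=\rho_0^i$: the monomial $u_vu_lp(U)$ occurs in $\det W^1_{\rho_0^i}$ only through the term $\pm\tilde{\mathfrak b}^1_{\rho_0^i}\det T$. So the coefficient to compute is that of $u_vu_lp(U)$ in the product $\tilde{\mathfrak b}^1_{\rho_0^i}\det T$. We will write $\tilde{\mathfrak b}^1_{\rho_0^i}$ by \eqref{tb1}, with $w=\rho_0^i$, as a quadratic form in vertex variables $u_xu_s$ with $\mathcal C(s)\neq i$. We then expand the product and decide which pairs (monomial of $\tilde{\mathfrak b}^1_{\rho_0^i}$, monomial of $\det T$) multiply to $u_vu_lp(U)$.

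We first use degrees. By Remark \ref{degree_rk}, $\det T$ is $\Gamma_j$-vertex homogeneous of degree $d_j$ for every $j$, and $p(U)$ has exactly these degrees. Hence the monomial of $\tilde{\mathfrak b}^1_{\rho_0^i}$ must have one $\Gamma_i$-vertex factor and one $\Gamma_t$-vertex factor, and these must be $u_v$ and $u_l$ multiplied into $p(U)$, up to redistribution of vertex variables inside $\det T$. The delicate point is this redistribution: $\det T$ may contribute $u_vu_lp(U)/(u_xu_s)$ for other pairs $(x,s)$. Such a quotient must be a monomial of $\det T$, so $u_x$ divides $u_vp_i(U)$ and $u_s$ divides $u_lp_t(U)$. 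This leaves $x\in\{v,\rho_0^i\}$ and $s\in\{l,\rho_0^t\}$.

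We then examine the four candidate pairs.
\begin{enumerate}
\item The pair $(x,s)=(v,l)$, paired with $p(U)$ in $\det T$, gives $\pm(\alpha(\rho_0^i)^2-\alpha(l)^2)\,c^l_{v\rho_0^i}$ by Lemma \ref{pcoeff1}. Up to sign this is the claimed expression.
\item Any pair with $x=\rho_0^i$ involves $c^s_{\rho_0^i\rho_0^i}=0$. More precisely, \eqref{tb1} requires distinct lower indices, so the coefficient of $u_{\rho_0^i}u_s$ is $(\cdots)\,c^s_{\rho_0^i\rho_0^i}$ together with the symmetric term of the form $c^{\rho_0^i}_{s\rho_0^i}$. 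The latter has output in $\Gamma_i$, equal to the row's own component, and is excluded by the restriction $\mathcal C(l)\neq\mathcal C(w)$ in \eqref{tb1}. These terms therefore vanish, except when $v=\rho_0^i$, in which case $x=v$ anyway.
\item The pair $(x,s)=(v,\rho_0^t)$ with $l\neq\rho_0^t$ produces $c^{\rho_0^t}_{v\rho_0^i}$ times the coefficient of a non-distinguished monomial $p(U)u_l/u_{\rho_0^t}$ in $\det T$. We do not compute that coefficient; the term lies in $\mathrm{Ideal}(c^{\rho_0^t}_{\rho_0^iv})$, which is exactly the error allowed in the second line of \eqref{c0v}.
\end{enumerate}
The main obstacle is checking that the list above is exhaustive. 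In particular, the monomial of $\det T$ that replaces one $u_{\rho_0^t}$ by $u_l$ must keep the edge structure of $p_t(U)$, and it must not create further terms carrying a structure function other than $c^{\rho_0^t}$. We expect the $\Gamma$-vertex homogeneity of Corollary \ref{block_diag} to suffice: edge variables are untouched by $\tilde{\mathfrak b}^1$, which is purely quadratic in vertex variables.

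For the conclusion, assume $\det W^1_{\rho_0^i}=0$. Then every coefficient vanishes. First apply the result with $l=\rho_0^t$. Since the eigenvalues $\alpha_i\neq\alpha_t$, this gives $c^{\rho_0^t}_{\rho_0^iv}=0$, which kills the ideal. The second line of \eqref{c0v} then yields $c^l_{\rho_0^iv}=0$ for all $l\in V_t$, which is \eqref{crt_depth0}.
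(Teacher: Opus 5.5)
Your proposal is correct and follows the paper's proof essentially step by step. You reduce to the term $\tilde{\mathfrak b}^1_{\rho_0^i}\det T$ via Proposition~\ref{cofterm}, use the componentwise vertex-degree count to force $x\in\{v,\rho_0^i\}$ and $s\in\{l,\rho_0^t\}$, kill the pairs with $x=\rho_0^i$ by skew-symmetry, get the exact term from Lemma~\ref{pcoeff1}, place the $s=\rho_0^t$ term in the ideal, and conclude with the same two-stage elimination.

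The ``main obstacle'' you flag is not actually one. Once the factor $u_xu_s$ from the last column is fixed, the complementary monomial of $\det T$ is determined. So the $s=\rho_0^t$ contribution lies in $\mathrm{Ideal}(c^{\rho_0^t}_{\rho_0^iv})$ whatever its coefficient in $\det T$ is, and your enumeration is already exhaustive. For an abelian $\Gamma_t$ you have $p_t=1$, which forces $s=l$ and gives the exact formula.
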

Note that relation \eqref{crt_depth0} holds also for every $v\in V_i$ of positive depth, by \eqref{struct_quasinorm}, because $\overrightarrow{\rt 0 v}\in E$ in this case. Note also that in Proposition \ref{c0} the source component $\Gamma_i$ is nonabelian, since its initial root appears in the statement. If the target component, i.e. the component containing the upper index of the structure functions under consideration, is abelian, then $p_t=1$. In this case there is no alternative involving the root of the target component, and the same formula holds exactly, that is, without the ideal term.
\begin{proof}
First, let us consider the case when the target component is abelian. Then its block contributes neither vertex nor edge variables to the core minor $\det T$. Therefore, comparing the vertex degrees with respect to the target component, we see that the vertex variable $u_l$ must come from the factor taken from the last column. Consequently, in the calculation given below only case (A) can occur, and it gives the exact formula. Hence, in the rest of the proof we may assume that the target component is nonabelian.

 By Proposition \ref{cofterm}, we only need to consider the coefficient of the monomial \eqref{vl1_monom} in the term
\begin{equation}
\label{only_term1}
\tilde{\mathfrak{b}}^1_{\rt 0} \det T.
\end{equation}
of the cofactor expansion of $\det W^1_{\rt 0}$. By \eqref{p_i} and \eqref{p}, the following vertex variables appear
\begin{equation}
\label{vl_vertex}
\{u_{\rho_0^j}\}_{j=1}^k, u_v, \text {and } u_l
\end{equation}
 in the monomial \eqref{vl1_monom}. Further, in this monomial the number of  $\Gamma_j$-vertex variable factors (see Definition \ref{var_def}) is equal to  $d_j+1$ if $j\in \{i, t\}$ and to $d_j$ otherwise, while Remark \ref{degree_rk}
implies that in the monomials appearing in the core minor $\det T$ the number of  $\Gamma_j$-vertex variable factors is equal to $d_j$ for all $j\in [1:k]$. Therefore, if a monomial of $\tilde{\mathfrak{b}}^1_{\rt 0}$ contributes to \eqref{vl1_monom} in \eqref{only_term1}, then one of its factor is $\Gamma_i$ -vertex variables, i.e., by \eqref{vl_vertex}, either $u_{\rt 0}$ or $u_v$  and the other is $\Gamma_t$ vertex variable, i.e.
either $u_{\rho_0^t}$ or $u_l$.

By \eqref{tb1}
\begin{equation}
\label{tb1_rho}
\tilde{\mathfrak{b}}^1_{\rt 0}=\sum_{\substack{s,x\in V\\\mathcal C(s)\ne i,\ x\ne s}} (\alpha (\rt 0)^2-\alpha (s)^2)c^s_{x\rt 0}u_xu_s.
\end{equation}
Since $\mathcal C(s)\neq i$, by the previous paragraph we have:
\begin{enumerate}
\item
$s\in V_t$, therefore $s\in  \{\rho_0^t, l\}$;
\item $x\in V_i$,  therefore $x\in \{\rho_0^i, v\}$, but since $c_{\rt 0\rt 0}^s=0$ , we conclude that $x=v$.
\end{enumerate}
According to item (1) above we have the following two cases:

{\bf (A)} $s=l$. In this case $\tilde b^1_{\rt 0}$ contributes  to monomial \eqref{vl1_monom} in the product \eqref{only_term1} the coefficient of $u_v u_l$, which by \eqref{tb1_rho} is equal, up to a sign,
to
\begin{equation}
\label{b^1_rt_contrib}
\bigl(\alpha(\rt 0)^2-\alpha(l)^2\bigr)c^l_{\rt 0v}.
\end{equation}
Therefore, the contribution of the core minor $\det T$ to the coefficient of monomial \eqref{vl1_monom} in the same product comes from the coefficient of $p(U)$  in the core minor $\det T$. By Lemma \ref{pcoeff1}, the latter coefficient is equal to $\pm1$. Hence, the contribution of the considered case to the coefficient of monomial  \eqref{vl1_monom} in the product \eqref{only_term1} is equal, up to a sign, to the \eqref{b^1_rt_contrib}.

{\bf (B)} $s=\rho_0^t$. In this case $\tilde b^1_{\rt 0}$  contributes  to monomial \eqref{vl1_monom} in the product \eqref{only_term1} the coefficient of $u_v u_{\rho_0^t}$ and this coefficient contains $c^{\rho_0^t}_{\rt 0v}$ as a factor. Therefore, the contribution of the considered case to the coefficient of monomial  \eqref{vl1_monom} in the product \eqref{only_term1} belongs to  $\mathrm{Ideal}(c^{\rho_0^t}_{\rt 0v})$.

Taking into account that cases (A) and (B) coincide when $l=\rho_0^t$, we obtain \eqref{c0v}. If $\det W_{\rt 0}^1=0$, the first line of \eqref{c0v} first gives $c^{\rho_0^t}_{\rt 0v}=0$. The ideal term in the second line therefore vanishes, and that line gives $c^l_{\rt 0v}=0$ for every remaining $l\in V_t$. This proves \eqref{crt_depth0}.
\end{proof}

 \begin{Coro}
 \label{brt0}
 If relation \eqref{crt_depth0} holds, then the entries $\tilde{\mathfrak{b}}_{j}^1$ for $j \in V_i$ do not depend on $u_{\rt 0}$.
 \end{Coro}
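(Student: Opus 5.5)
The plan is to read off the dependence on $u_{\rho_0^i}$ directly from the explicit formula \eqref{tb1} for the first layer of the modified column, and to check that every coefficient in which $u_{\rho_0^i}$ can occur is a structure function already shown to vanish. Fix $j\in V_i$. By \eqref{tb1},
\[
\tilde{\mathfrak b}^1_j=\sum_{\substack{l,x\in V\\ \mathcal C(l)\ne i,\ x\ne l}}\bigl(\alpha(j)^2-\alpha(l)^2\bigr)c^l_{xj}u_xu_l .
\]
The variable $u_{\rho_0^i}$ is a $\Gamma_i$-vertex variable. The index $l$ ranges only over vertices of components different from $\Gamma_i$, so $u_{\rho_0^i}$ can occur only as the factor $u_x$ with $x=\rho_0^i$. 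Hence it suffices to show that
\[
c^l_{\rho_0^i j}=0\qquad\text{for all } j\in V_i \text{ and all } l\in V_t,\ t\neq i .
\]

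We split according to the depth of $j$.

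\textbf{Depth zero.} If $\mathrm{depth}(j)=0$, the vanishing is exactly the hypothesis \eqref{crt_depth0}, applied with $v=j$. This includes the case $j=\rho_0^i$, where the structure function vanishes trivially by skew-symmetry.

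\textbf{Positive depth.} If $\mathrm{depth}(j)>0$, then by Definition \ref{creduce} the vertex $j$ is not deleted by the first reduction $R_{\rho_0^i}$. Since it is not isolated, it is the endpoint of an out-edge of the first root, so $\overrightarrow{\rho_0^i j}\in E_i$. Quasi-normality \eqref{struct_quasinorm} then gives
\[
[X_{\rho_0^i},X_j]=X_{\overrightarrow{\rho_0^i j}} .
\]
Consequently every coefficient of this bracket along a vertex field vanishes; in particular $c^l_{\rho_0^i j}=0$. This is the observation recorded right after Proposition \ref{c0}.

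Combining the two cases, every summand of $\tilde{\mathfrak b}^1_j$ containing $u_{\rho_0^i}$ has a zero coefficient, which proves the corollary. The only point requiring care is that \eqref{tb1} already excludes same-component vertices from the $u_l$ slot, so no term with $u_l=u_{\rho_0^i}$ needs separate treatment. There is no genuine obstacle.
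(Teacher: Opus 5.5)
Your proposal is correct and follows the paper's proof: you read off from \eqref{tb1} that $u_{\rho_0^i}$ can enter only through the slot $u_x$ with $x=\rho_0^i$. You then kill the corresponding coefficients $c^l_{\rho_0^i j}$ using \eqref{crt_depth0} when $\mathrm{depth}(j)=0$ and quasi-normality \eqref{struct_quasinorm} when $\mathrm{depth}(j)>0$. Your case split spells out explicitly the one-line justification the paper gives.
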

\begin{proof}
    This follows immediately from the assumption \eqref{crt_depth0} and
    equations \eqref{tb1} and \eqref{struct_quasinorm}.
\end{proof}

\begin{Prop}\label{c1}
Let $\Gamma_i$ be nonabelian, let $v,w\in V_i$ with $w$ of depth $0$, let $l\in V_t$ with $t\ne i$, and assume that relation \eqref{crt_depth0} holds. Then, up to the overall sign of the determinant, the coefficient of the monomial $u_vu_lp(U)$ in $\det W_w^1$ is equal to
\begin{equation}\label{c1v}
(\alpha_i^2-\alpha_t^2)c^l_{wv}
\pmod{\operatorname{Ideal}(c^{\rho_0^t}_{wv})}
\end{equation}
provided that $\Gamma_t$ is nonabelian and $l\ne\rho_0^t$. For $l=\rho_0^t$, as well as for every $l$ in an abelian target component, the same formula holds exactly, that is, without the ideal term. Consequently, if the augmented minors under consideration vanish, then
\begin{equation}\label{cw_depth0}
c^l_{wv}=0\qquad(v,w\in V_i,\ \mathrm{depth}(w)=0,\ l\in V_t,\ t\ne i).
\end{equation}
\end{Prop}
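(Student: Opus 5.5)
By Proposition \ref{cofterm}, applied to the depth-$0$ vertex $w$, the monomial $u_vu_lp(U)$ occurs in the cofactor expansion of $\det W^1_w$ only through the term $\tilde{\mathfrak b}^1_w\det T$. The plan is to repeat the argument of Proposition \ref{c0}, with $\rho_0^i$ replaced by $w$. The new feature is that $w$ is no longer a root. The hypothesis \eqref{crt_depth0} is what controls the extra terms that this creates.

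\textbf{Vertex-degree count.} As in Proposition \ref{c0}, compare the $\Gamma_j$-vertex degrees of $u_vu_lp(U)$ with those of the monomials of $\det T$, which are $d_j$ by Remark \ref{degree_rk}. This forces the contributing monomial of $\tilde{\mathfrak b}^1_w$ to be a product $u_xu_s$ with $x\in\{\rho_0^i,v\}$ and $s\in\{\rho_0^t,l\}$. By \eqref{tb1}, its coefficient is $(\alpha_i^2-\alpha_t^2)$ times $c^s_{xw}$, up to the symmetrization in $x,s$. The symmetrization is harmless, since $s\notin V_i$ while $x\in V_i$.

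\textbf{Case analysis.} Consider the possible choices of $x$ and $s$.
\begin{enumerate}
\item If $x=\rho_0^i$, the coefficient is $c^s_{\rho_0^i w}$. This vanishes by \eqref{crt_depth0}, because $w$ has depth $0$ and $s\in V_t$. Equivalently, by Corollary \ref{brt0}, $\tilde{\mathfrak b}^1_w$ does not depend on $u_{\rho_0^i}$.
\item Hence $x=v$. If $s=l$, the remaining factor of $\det T$ must be exactly $p(U)$, whose coefficient is $\pm1$ by Lemma \ref{pcoeff1}. This gives the term $(\alpha_i^2-\alpha_t^2)c^l_{vw}$, which equals $(\alpha_i^2-\alpha_t^2)c^l_{wv}$ up to sign.
\item If $s=\rho_0^t\ne l$, the contribution contains the factor $c^{\rho_0^t}_{vw}$, so it lies in $\operatorname{Ideal}(c^{\rho_0^t}_{wv})$.
\end{enumerate}

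\textbf{Special cases of the target component.} If $l=\rho_0^t$, cases 2 and 3 coincide, and the formula is exact. If $\Gamma_t$ is abelian, then $p_t=1$ and $\Gamma_t$ contributes no vertex variables to $\det T$. So $u_l$ must come from $\tilde{\mathfrak b}^1_w$, only case 2 occurs, and again there is no ideal term.

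\textbf{Conclusion.} Suppose the augmented minors vanish. First take $l=\rho_0^t$: the exact formula gives $c^{\rho_0^t}_{wv}=0$, because $\alpha_i\ne\alpha_t$. The ideal term then vanishes, and the general formula yields $c^l_{wv}=0$ for every $l\in V_t$, which is \eqref{cw_depth0}.

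\textbf{Main obstacle.} The delicate point is case 1, namely excluding hidden $u_{\rho_0^i}$ contributions. If $x=\rho_0^i$, the remaining monomial of $\det T$ would be $u_v\,p(U)/u_{\rho_0^i}$. Such monomials can appear in $\det T$ with non-trivial coefficients, so they cannot be excluded by a coefficient argument. This is precisely why the hypothesis \eqref{crt_depth0} is needed: it removes the whole $x=\rho_0^i$ case at the level of $\tilde{\mathfrak b}^1_w$, before $\det T$ is ever examined. One also needs $v\neq\rho_0^i$ for the count in case 2. If $v=\rho_0^i$, the claim already follows from \eqref{crt_depth0} together with skew-symmetry.
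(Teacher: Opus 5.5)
Your proposal is correct and follows essentially the same route as the paper. You reduce to the term $\tilde{\mathfrak b}^1_w\det T$ via Proposition \ref{cofterm}, use \eqref{crt_depth0} (equivalently Corollary \ref{brt0}) to exclude the internal root variable, and split according to whether the external factor is $u_l$ or $u_{\rho_0^t}$, with the exact root case then killing the ideal term. Your explicit handling of the degenerate case $v=\rho_0^i$ and of the absence of symmetrized terms in \eqref{tb1} only makes precise what the paper leaves implicit.
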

\begin{proof}
By Proposition \ref{cofterm}, in the analysis of the determinant we only need to consider the term $\tilde{\mathfrak b}^1_w\det T$ of its cofactor expansion. By Corollary \ref{brt0}, the factor of this term taken from the last column does not depend on the internal root variable, i.e. on the vertex variable of the initial root of the source component. Therefore, comparing the vertex degrees with respect to each component in the same way as in the proof of the previous proposition, we see that the monomials of this factor contributing to the monomial under consideration contain $u_v$ and, in addition, either $u_l$ or the external root variable $u_{\rho_0^t}$, i.e. the vertex variable of the initial root of the target component.

In the first case, the remaining factor $p(U)$ must be supplied by the core determinant, where its coefficient is given by Lemma \ref{pcoeff1}; hence, this choice contributes the main term stated in the proposition. In the second case, the contribution is a multiple of $c^{\rho_0^t}_{wv}$. This contribution does not vanish a priori, and it is responsible for the ideal term in the formula. If the target component is abelian, then $p_t=1$, so that the second choice does not occur at all, and the formula is exact. The formula is exact also when $l=\rho_0^t$, since in this case the two choices coincide.

It remains to deduce the last statement of the proposition. Assume that the minors under consideration vanish. Applying the exact formula to the root of the target component, we first obtain that the structure function generating the ideal is zero (recall that the eigenvalues corresponding to distinct components are distinct). Consequently, the ideal term vanishes, and the formula for the other values of $l$ gives the required relations. This completes the proof of the proposition.
\end{proof}

\begin{Def}
\label{depth_path_def}
Let $w\in V_i$ have depth $d>0$. For $0\leq r\leq d-2$ set
\[
\varepsilon_r^i:=\overrightarrow{\rho_r^i\rho_{r+1}^i},
\]
and set
\[
\varepsilon_{d-1}^i(w):=\overrightarrow{\rho_{d-1}^i w}.
\]
The monomial
\begin{equation}
\label{Pw_def}
P_w(U):=
\left(\prod_{r=0}^{d-2}u_{\varepsilon_r^i}\right)u_{\varepsilon_{d-1}^i(w)}
\end{equation}
is called the \emph{depth path monomial from $\rho_0^i$ to $w$}. When $d=1$, the empty product is understood to be $1$, so that
$P_w(U)=u_{\overrightarrow{\rho_0^i w}}$.
\end{Def}

\begin{Lemma}
\label{augment_path}
Let $w\in V_i$ have depth $d>0$. In the $\rho_0^i$-row of
$\mathfrak A^{d+1}$, the monomial
\begin{equation}
\label{augment_path_monom}
u_{\rho_0^i}P_w(U)
\end{equation}
occurs only in the entry
\[
\mathfrak A^{d+1}_{\rho_0^i,\overrightarrow{\rho_0^i w}},
\]
and its coefficient is equal to $-1$.
\end{Lemma}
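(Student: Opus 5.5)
The plan is to compute the coefficient with Lemma \ref{minvert} and then show, as in the proof of Lemma \ref{root}, that every possible chain of structure functions is forced to follow the depth path to $w$. By item (1) of Corollary \ref{block_diag}, only columns $s\in E_i$ can contribute, so fix such an $s$. The monomial $u_{\rho_0^i}P_w(U)$ has the form \eqref{lin_vertex} with $v=\rho_0^i$ and $p=d+1$. Its edge variables are $u_{\varepsilon_0^i},\ldots,u_{\varepsilon_{d-2}^i},u_{\varepsilon_{d-1}^i(w)}$, each occurring linearly. By Lemma \ref{minvert}, the coefficient of this monomial in $\mathfrak A^{d+1}_{\rho_0^i s}$ is a sum over the orderings $(e_1,\ldots,e_d)$ of these $d$ edges and over intermediate vertices $v_1,\ldots,v_d$ of
\[
c^s_{v_d\rho_0^i}\prod_{j=1}^{d}c^{e_j}_{v_{j-1}v_j},\qquad v_0=\rho_0^i .
\]
Note that all $d$ edges start at roots: the edge $\varepsilon_r^i$ starts at $\rho_r^i$, and $\varepsilon_{d-1}^i(w)$ starts at $\rho_{d-1}^i$. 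Moreover, $\mathrm{depth}(w)=d>d-1$, so Lemma \ref{root_edge} applies to $\varepsilon_{d-1}^i(w)=e_{d-1,w}$ as well as to the edges $\varepsilon_r^i=e_{r,\rho_{r+1}^i}$.

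I will show by induction on $r$ that in every nonzero summand $e_r$ is the $(r-1)$-st edge of the path and $v_r=\rho_r^i$ for $r\le d-1$, and that $e_d=\varepsilon_{d-1}^i(w)$ and $v_d=w$.
\begin{enumerate}
\item Suppose $v_{r-1}=\rho_{r-1}^i$, which has depth $r-1$, and suppose the edges emanating from $\rho_0^i,\ldots,\rho_{r-2}^i$ have already been used.
\item Since each edge variable occurs linearly, $e_r$ must be an edge from some $\rho_k^i$ with $k\ge r-1$.
\item If $k>r-1$, then item (2) of Lemma \ref{root_edge} kills the factor $c^{e_r}_{\rho_{r-1}^i v_r}$, because $\mathrm{depth}(\rho_{r-1}^i)<k$.
\item Hence $k=r-1$. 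Item (1) of Lemma \ref{root_edge} then forces $v_r$ to be the endpoint of $e_r$, with factor equal to $1$.
\end{enumerate}
For $r\le d-1$ this endpoint is $\rho_r^i$. At $r=d$ the only remaining edge is $\varepsilon_{d-1}^i(w)$, which forces $v_d=w$. The case $d=1$ is the single step $e_1=\overrightarrow{\rho_0^i w}$, $v_1=w$. Thus exactly one ordering and one choice of intermediate vertices survive, and the coefficient collapses to $c^s_{w\rho_0^i}$.

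Finally, $\overrightarrow{\rho_0^i w}\in E_i$ because $\mathrm{depth}(w)>0$. By \eqref{struct_quasinorm} and skew-symmetry, $c^s_{w\rho_0^i}=-c^s_{\rho_0^i w}$, which equals $-1$ if $s=\overrightarrow{\rho_0^i w}$ and $0$ otherwise. This proves that the monomial occurs only in the stated entry and with coefficient $-1$.

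The step needing the most care is the induction. There I must check that the linear occurrence of each edge variable rules out reusing an earlier root edge, and that the last edge $\varepsilon_{d-1}^i(w)$, which is not a root edge, still falls under the hypotheses of Lemma \ref{root_edge}. Once both points are verified, the argument is a direct adaptation of the proof of Lemma \ref{root}.
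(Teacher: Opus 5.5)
Your proposal is correct and is essentially the paper's proof: you expand the coefficient via Lemma \ref{minvert}, then apply items (2) and (1) of Lemma \ref{root_edge} inductively, using the linear occurrence of each edge variable. This forces the chain through $\rho_0^i,\rho_1^i,\ldots,\rho_{d-1}^i,w$, and quasi-normality then gives $c^s_{w\rho_0^i}$, which is $-1$ for $s=\overrightarrow{\rho_0^iw}$ and $0$ otherwise. The preliminary restriction to $s\in E_i$ via Corollary \ref{block_diag} is harmless but unnecessary, since this final factor already kills every other column.
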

\begin{proof}
Put
\[
z_0=\rho_0^i,\qquad z_r=\rho_r^i\ (1\leq r\leq d-1),
\qquad z_d=w,
\]
and $e_r=\overrightarrow{z_rz_{r+1}}$ for $0\leq r\leq d-1$.
By Lemma \ref{minvert}, a nonzero summand in the coefficient of
$u_{\rho_0^i}u_{e_0}\cdots u_{e_{d-1}}$ in an entry
$\mathfrak A^{d+1}_{\rho_0^i,y}$ has the form
\begin{equation}
\label{augment_chain}
c^y_{v_d,\rho_0^i}
\prod_{r=0}^{d-1}c^{e_{\pi(r)}}_{v_rv_{r+1}},
\qquad v_0=\rho_0^i,
\end{equation}
for a permutation $\pi$ of $\{0,\ldots,d-1\}$ describing the order in which the edges are used along the chain. We will show that $\pi$ must be the identity and that all intermediate vertices are uniquely determined.

More precisely, we claim that every nonzero summand forces $v_r=z_r$ for
$0\leq r\leq d$. At the first step, an edge $e_s$ with $s>0$ cannot occur, because $v_0=\rho_0^i$ has depth $0<s$ and item (2) of Lemma \ref{root_edge} would make the corresponding factor vanish. Thus the first edge is $e_0$, and item (1) of Lemma \ref{root_edge} gives $v_1=z_1$. Suppose inductively that $v_r=z_r$. The edges $e_0,\ldots,e_{r-1}$ have already been used. An unused edge $e_s$ with $s>r$ cannot occur next, because $z_r$ has depth $r<s$; item (2) of Lemma \ref{root_edge} again gives zero. Hence the next edge is $e_r$, and item (1) gives $v_{r+1}=z_{r+1}$. This proves the claim.

Consequently $v_d=w$, and the remaining factor in
\eqref{augment_chain} is $c^y_{w,\rho_0^i}$. Since
\[
[X_{\rho_0^i},X_w]=X_{\overrightarrow{\rho_0^i w}},
\]
quasi-normality and skew-symmetry imply that this coefficient vanishes unless
$y=\overrightarrow{\rho_0^i w}$, in which case it equals $-1$.
Thus \eqref{augment_path_monom} occurs in the asserted entry with coefficient $-1$ and nowhere else in the $\rho_0^i$-row.
\end{proof}

\begin{Lemma}
\label{mixed_b_chain}
Let $j,x\in V_i$ and $s\in V_t$ with $t\neq i$.
Suppose that
\begin{equation}
\label{mixed_b_monom}
K\,u_xu_su_{e_1}\cdots u_{e_{p-1}}
\end{equation}
is a monomial of $\tilde{\mathfrak b}^{p}_j$, where
$e_1,\ldots,e_{p-1}\in E_i$. Then $K$ is a sum of terms of the form
\begin{equation}
\label{mixed_b_coeff}
(\alpha_i^2-\alpha_t^2)c^s_{x_0j}
\prod_{r=1}^{p-1}c^{e_r}_{x_rx_{r-1}},
\qquad x_{p-1}=x,
\end{equation}
where the intermediate vertices $x_0,\ldots,x_{p-2}$ lie in
$V_i$, and the sum is taken also over all distinct ordered words formed by the edges with the prescribed multiplicities, the edges in each word being listed in the order in which they are introduced by the recursion. As in the expansion of the entries of the coefficient matrix, repeated copies of the same edge are not labelled separately.
\end{Lemma}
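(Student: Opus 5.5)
The natural approach is induction on the layer $p$, using the recursive formula \eqref{tbs_frak} of Lemma \ref{recursive_tilde_b_frak}. The argument mirrors the proof of Lemma \ref{minvert}. The only new features are the cross-component seed coming from the first layer, and the need to rule out the contributions of the second term of \eqref{tbs_frak}.

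\emph{Base case $p=1$.} By \eqref{tb1}, $\tilde{\mathfrak b}^1_j$ is a sum of the terms $(\lambda_j-\lambda_l)c^l_{vj}u_vu_l$ with $\mathcal C(l)\ne\mathcal C(j)$. For the monomial $u_xu_s$ with $x\in V_i$ and $s\in V_t$, the summation index $l$ cannot be equal to $x$, because $\mathcal C(x)=i=\mathcal C(j)$. Hence $l=s$ and $v=x$. Since $x\ne s$ automatically, this gives the coefficient $(\alpha_i^2-\alpha_t^2)c^s_{xj}$, which is \eqref{mixed_b_coeff} with $x_0=x$ and no edge factors.

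\emph{Induction step from $p$ to $p+1$.} Consider a monomial $u_xu_su_{e_1}\cdots u_{e_p}$ with all $e_r\in E_i$, and analyze the two terms of \eqref{tbs_frak} separately.

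The second term consists of products $\mathfrak A^p_{je}q_{ef}u_f$ with $\mathcal C(f)\ne\mathcal C(e)=i$. Every such product contains an edge variable outside $E_i$, so it cannot contribute to our monomial.

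For the first term, recall from \eqref{principal_delta} that $\delta$ replaces a single vertex variable $u_y$ by $\sum_{a,e}c^e_{ay}u_au_e$. It leaves edge variables unchanged and annihilates the coefficients, which are functions on the base. By the decomposition \eqref{eigen_decomp}, $c^e_{ay}\ne0$ only if $a$, $y$ and $e$ all belong to the same component.

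\begin{enumerate}
\item If $\delta$ acts on the $\Gamma_t$-vertex variable, it produces a $\Gamma_t$-edge variable, which is excluded.
\item Therefore $\delta$ must act on a $\Gamma_i$-vertex variable $u_{x'}$. Since $\delta$ preserves the component of vertex variables and does not alter the other factors, the preimage monomial in $\tilde{\mathfrak b}^p_j$ is $u_{x'}u_su_{e_1}\cdots\widehat{u_{e_r}}\cdots u_{e_p}$, with one occurrence of some edge $e_r$ removed. Its edges are still in $E_i$, so the induction hypothesis applies to it.
\item Multiplying the coefficient provided by the hypothesis (with final vertex $x'$) by $c^{e}_{xx'}$ appends one more factor to the chain. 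Relabelling $x'$ as $x_{p-1}$ and $x$ as $x_p$, the newly introduced edge becomes the last letter of the word, and one obtains exactly \eqref{mixed_b_coeff} for the layer $p+1$.
\end{enumerate}

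The step requiring care is the combinatorial bookkeeping for repeated edges. One must check that each distinct ordered word, together with each choice of intermediate vertices, is produced exactly once. I would handle this as in Lemma \ref{minvert}. Expand $\delta^{p-1}$ applied to the first layer term by term, before collecting like monomials. Each term then corresponds to a sequence of choices (the vertex acted on, then $a$ and $e$), and this sequence is exactly an ordered edge word with its vertex chain. Since the variables commute, collecting terms introduces no multiplicities beyond the sum over words. Finally, I would check that $\delta$ never acts on the $\Gamma_t$ variable anywhere along the chain, not only at the last step; this holds because any such action would create a $\Gamma_t$-edge variable, which can never be removed afterwards.
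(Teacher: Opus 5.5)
Your proposal is correct and follows the paper's proof essentially verbatim. Both argue by induction on the layer via \eqref{tbs_frak}. The second summand is discarded because it carries an external edge variable $u_f$, and $\delta$ cannot act on $u_s$ because that would produce an edge from $E_t$, so $\delta$ on the $\Gamma_i$-vertex variable appends exactly one factor $c^{e_p}_{x_px_{p-1}}$. Your explicit base case from \eqref{tb1} and your bookkeeping for repeated edges only spell out what the paper leaves implicit.
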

\begin{proof}
We argue by induction on the layer. For $p=1$, the assertion is exactly \eqref{tb1}. Assume that it holds in layer $p$, and consider the recursion \eqref{tbs_frak}. The second summand in \eqref{tbs_frak} contains a factor $u_q$ with
$q\in E_{t'}$ for some $t'\neq i$, and hence cannot contribute to a monomial of the form \eqref{mixed_b_monom}, whose edge factors all lie in $E_i$. Thus only
$(\vec h_1(\tilde{\mathfrak b}^{p}_j))_{(2)}$ can contribute.

By Lemma \ref{h_1_degree}, preservation of vertex degree $2$ forces
$\vec h_1$ to act on one of the two vertex-variable factors. It cannot act on the external factor $u_s$: by the decomposition \eqref{eigen_decomp}, the linear-vertex part of $\vec h_1(u_s)$ has its edge variable in $E_t$, whereas every edge in \eqref{mixed_b_monom} lies in $E_i$. Therefore $\vec h_1$ acts on the $\Gamma_i$-vertex factor. Using \eqref{principal_delta}, a factor $u_{x_{p-1}}$ is replaced by
\[
\sum_{x_p\in V_i}\sum_{e_p\in E_i}
c^{e_p}_{x_px_{p-1}}u_{x_p}u_{e_p}.
\]
Multiplying this factor by the induction hypothesis gives precisely
\eqref{mixed_b_coeff} in layer $p+1$. This completes the induction.
\end{proof}

For the next coefficient calculation, fix $d>0$, a nonabelian component
$\Gamma_i$, and $s\in V_t$, $t\ne i$. Let $N_s$ be the matrix obtained from the
$\Gamma_i$-block of $T$ by adjoining the $\rho_0^i$-row of
$\mathfrak A^{d+1}$ and the corresponding entries of the last column,
retaining in that column only monomial terms of the form
$u_su_x\prod_{e\in E_i}u_e^{m_e}$ with $x\in V_i$.
Thus $N_s$ has $d_i+1$ rows and columns. This notation is used only in
the following lemma and its application to the full augmented minor.

\begin{Lemma}[Coefficients of depth path monomials]
\label{augmented_path_rigidity}
Let $w,v\in V_i$ satisfy $\mathrm{depth}(w)=d>0$ and
$0<\mathrm{depth}(v)\le d$. Assume that
\begin{equation}\label{crt_depth0_s}
c^s_{\rho_0^i x}=0
\qquad\text{for every }x\in V_i\text{ of depth }0.
\end{equation}
Then the coefficient of
\begin{equation}\label{vs_path_monom}
u_vu_sP_w(U)p_i(U)
\end{equation}
in $\det N_s$ is, up to an overall sign,
\begin{equation}\label{path_coeff_formula}
(\alpha_i^2-\alpha_t^2)
\begin{cases}
c^s_{vw}+Q_w c^s_{v\rho_d^i},
  &d\le\omega_i\text{ and }w\ne\rho_d^i,\\
c^s_{vw},&\text{otherwise}.
\end{cases}
\end{equation}
In the first case, $Q_w$ is a polynomial with integer coefficients in
$c^e_{xy}$, $e\in E_i$, $x,y\in V_i$, independent of $v$ and $s$.
\end{Lemma}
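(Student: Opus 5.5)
I will expand $\det N_s$ along its last column and then along the adjoined $\rho_0^i$-row of $\mathfrak A^{d+1}$, exactly as in the proof of Proposition \ref{cofterm}. The result is a sum of terms of two types: the ``diagonal'' term
\[
\tilde{\mathfrak b}^{d+1}_{\rho_0^i}\cdot\det T_i,
\]
and the mixed terms
\[
\tilde{\mathfrak b}^{p}_j\,\mathfrak A^{d+1}_{\rho_0^i y}\,\det (T_i)^p_{\hat j\hat y},
\]
where $j$ ranges over the core rows of $\Gamma_i$. The monomial \eqref{vs_path_monom} has $\Gamma_i$-vertex degree $d_i+1$, with $\Gamma_i$-vertex factors $u_{\rho_0^i}^{d_i}u_v$, and one $\Gamma_t$-vertex factor $u_s$. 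Since every entry of $N_s$ taken from the last column contains $u_s$ and exactly one $\Gamma_i$-vertex variable, the core part of each term must supply $d_i$ copies of $u_{\rho_0^i}$ minus whatever the other factors supply.

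\textbf{Step 1: the diagonal term.} Here $\det T_i$ contributes $p_i(U)$ with coefficient $\pm1$ by Lemma \ref{pcoeff1}. The remaining factor $u_vu_sP_w(U)$ must then come from $\tilde{\mathfrak b}^{d+1}_{\rho_0^i}$, whose monomials of this shape are described by Lemma \ref{mixed_b_chain}. The coefficient is a sum of chains
\[
(\alpha_i^2-\alpha_t^2)\,c^s_{x_0\rho_0^i}\prod_r c^{e_r}_{x_rx_{r-1}},\qquad x_d=v,
\]
over orderings of the edges of $P_w$. By hypothesis \eqref{crt_depth0_s}, $c^s_{x_0\rho_0^i}$ vanishes for $x_0$ of depth $0$, so $x_0$ has positive depth. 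I will then run the chain backwards from $x_0$ using Lemma \ref{root_edge}, as in the proof of Lemma \ref{augment_path}, but starting from an arbitrary vertex instead of a root. An edge $\overrightarrow{\rho_r^i z}$ acting on $x_{r-1}$ forces its partner to be $\rho_r^i$ or $z$. Tracking this, the only chains that survive give $c^s_{x_0\rho_0^i}$ with $x_0$ forced to be $w$ (producing $c^s_{vw}$-type terms after applying skew-symmetry and item (3) of Proposition \ref{first divi} to move $v$ into the lower slot), or chains in which the last edge $\overrightarrow{\rho_{d-1}^iw}$ is absorbed with partner $\rho_{d-1}^i$ rather than $w$. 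In that second case the chain passes through $\rho_d^i$, which exists only if $d\le\omega_i$ and $w\ne\rho_d^i$. The leftover structure functions $c^e_{xy}$ with $e,x,y$ in $\Gamma_i$ collect into $Q_w$; they are independent of $v$ and $s$ because those indices enter only through the first factor. I expect the precise chain combinatorics here, i.e. showing that no other intermediate vertices survive, to be the main obstacle. I would first try to reduce it to the rigidity statements of Lemmas \ref{root_edge} and \ref{root}, and to an exponent count of each $u_{\varepsilon_r}$.

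\textbf{Step 2: the mixed terms.} Here I will argue by counting $u_{\rho_0^i}$-degree and $\Gamma_i$-edge degree, as in Cases 1--2 of Proposition \ref{cofterm}. Every entry $\mathfrak A^{d+1}_{\rho_0^i y}$ contains a vertex variable, and the last-column factor also contains one. Removing a core row $j$ together with a column $y$ lowers the attainable power of $u_{\rho_0^i}$ in the minor unless the removed entry is exactly the root-monomial entry of $j$. Matching the required power $d_i$ therefore forces the adjoined row to contribute $u_{\rho_0^i}$. This forces the factor $u_{\rho_0^i}P_w$, which by Lemma \ref{augment_path} sits only in column $\overrightarrow{\rho_0^iw}$ with coefficient $-1$. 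The removed core row must then be the depth-$1$ root entry $j=w$, $p=1$ when $d=1$; for general $d$, the edge-degree bookkeeping of Case 1.2 forces $p=1$ and $j=w$. The surviving cofactor contributes $p_i(U)/u_{\rho_0^i}$ up to sign, by the same argument as in Lemma \ref{pcoeff1}. The last-column factor is then $\tilde{\mathfrak b}^1_w$, whose $u_vu_s$ coefficient is $(\alpha_i^2-\alpha_t^2)c^s_{vw}$ by \eqref{tb1}.

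\textbf{Step 3: assembling.} Adding this contribution to that of Step 1 gives \eqref{path_coeff_formula}. Sign consistency between the two contributions has to be checked; if the signs cancel rather than add, I would instead rely on Step 1 alone producing the stated form.
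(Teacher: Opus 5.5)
Your Step 1 is where the argument breaks: the diagonal term $\tilde{\mathfrak b}^{d+1}_{\rho_0^i}\det T_i$ contributes nothing. By Lemma \ref{mixed_b_chain}, every retained coefficient of the last-column entry of the adjoined row carries a factor $c^s_{x_0\rho_0^i}$ with $x_0\in V_i$. This factor vanishes by \eqref{crt_depth0_s} if $\mathrm{depth}(x_0)=0$. It vanishes by quasi-normality \eqref{struct_quasinorm} if $\mathrm{depth}(x_0)>0$, because then $[X_{\rho_0^i},X_{x_0}]=X_{\overrightarrow{\rho_0^ix_0}}$ has no horizontal component. In particular, your choice $x_0=w$ produces $c^s_{w\rho_0^i}=0$, not $c^s_{vw}$. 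Item (3) of Proposition \ref{first divi} cannot move $v$ into a lower slot either: it relates $c^l_{vw}$ and $c^w_{vl}$ when $l,w$ lie in one component and $v$ in another, whereas here $s$ lies outside the component of both lower indices. So the adjoined row never selects the last column, and the term $Q_wc^s_{v\rho_d^i}$ cannot come from Step 1. As written, Step 3 double counts $c^s_{vw}$, and the fallback to Step 1 alone gives zero.

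Both terms come from your mixed terms, and there Step 2 is incomplete. The adjoined row has nonzero entries only in the $r_0^i$ columns $\overrightarrow{\rho_0^ix}$ with $\mathrm{depth}(x)>0$. A first-layer core row $(1,z)$ can supply $u_{\rho_0^i}$ only in its own column $\overrightarrow{\rho_0^iz}$. By pigeonhole, exactly one first-layer row $(1,j)$ is displaced. When the last column supplies $u_vu_s$, this row takes the last column and contributes $(\alpha_i^2-\alpha_t^2)c^s_{vj}$ by \eqref{tb1}. This pigeonhole is what forces $p=1$, not the edge-degree bookkeeping of Case 1.2. That bookkeeping used Lemma \ref{bu2} to force external edge variables, but the retained last-column monomials of $N_s$ contain only internal edges.

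Next, write $\varepsilon_r=\overrightarrow{\rho_r^i\rho_{r+1}^i}$ and $f=\overrightarrow{\rho_{d-1}^iw}$. The level-by-level count \eqref{path_edge_count} forces every product with at least $d$ edge factors, the adjoined one included, along $\varepsilon_0,\ldots,\varepsilon_{d-2}$. At step $d$, however, it allows either $f$ or $\varepsilon_{d-1}$. So your claim that the adjoined row must supply $u_{\rho_0^i}P_w(U)$ fails when $d\le\omega_i$ and $w\ne\rho_d^i$. The adjoined row may end at $\rho_d^i$, with the single factor $u_f$ used instead by a core row of layer at least $d+1$. That row's product then passes through $w$ and picks up structure functions not controlled by quasi-normality, such as $c^{\overrightarrow{\rho_2y}}_{cy}$ in Remark \ref{path_coeff_rem}. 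Then $j=\rho_d^i$, and deleting row $(1,\rho_d^i)$ and the last column yields the coefficient $Q_w$; the case $j=w$ yields $c^s_{vw}$.

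Finally, you must exclude the other distribution of vertex variables. There the last column supplies $u_{\rho_0^i}u_s$, necessarily in a core row of layer at least $2$, since $c^s_{\rho_0^iz}=0$ for $\mathrm{depth}(z)>0$. The displaced row $(1,j)$ then supplies $u_v$. Ruling this out needs a separate column-conflict argument, which your proposal does not address.
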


\begin{proof}
Write $\rho_r=\rho_r^i$,
$\varepsilon_r=\overrightarrow{\rho_r\rho_{r+1}}$ for
$0\le r<\omega_i$, and
$f=\overrightarrow{\rho_{d-1}w}$.
We refer to a row of $T$ by $(p,z)$, where $p$ is its layer and
$\mathrm{depth}(z)\ge p$; the additional row is $(d+1,\rho_0)$.

We use the coefficient formulas of Lemmas \ref{minvert} and
\ref{mixed_b_chain}. Read from the vertex-variable end, a summand in
either formula is a product
\begin{equation}\label{path_product}
c^a_{x_{p-1}z}\prod_{r=1}^{p-1}c^{e_r}_{x_{r-1}x_r},
\qquad x_0=x,
\end{equation}
corresponding to the vertex factor $u_x$ and an ordering of the edge
factors. For an entry of $\mathfrak A$, $a$ is its column index;
for the last column, $a=s$ and the product is multiplied by
$(\alpha_i^2-\alpha_t^2)u_s$. Repeated copies of an edge are not
labelled separately. Thus we may compute the required coefficient by
choosing a determinant term, monomials in its entries, and products
\eqref{path_product} for their coefficients.

Two previously established facts control these products. If
$e\in\mathrm{Out}(\rho_r)$ and $c^e_{xy}\ne0$, then
$\mathrm{depth}(x),\mathrm{depth}(y)\ge r$, by Lemma
\ref{root_edge}. If one lower index is $\rho_r$, quasi-normality
and \eqref{decrease} give
\begin{equation}\label{path_root_factors}
\begin{aligned}
c^a_{\rho_r z}&=
\begin{cases}1,&a=\overrightarrow{\rho_r z},\\0,&\text{otherwise},\end{cases}
&&\mathrm{depth}(z)>r,\quad a\in V\cup E,\\
c^e_{\rho_r z}&=0,
&&\mathrm{depth}(z)\le r,\quad
e\in\mathrm{Out}(\rho_k),\ k\ge r.
\end{aligned}
\end{equation}
In particular, the first line is zero for $a=s$.

\medskip\noindent\emph{1. The first layer and the additional row.}
In a term contributing to \eqref{vs_path_monom}, exactly one row
selects the last column. Since the vertex part of the monomial is
$u_{\rho_0}^{d_i}u_vu_s$, there are only two possibilities:
\begin{itemize}
\item[(A)] the last column supplies $u_vu_s$, and every other row
supplies $u_{\rho_0}$;
\item[(B)] the last column supplies $u_{\rho_0}u_s$, one other row
supplies $u_v$, and all remaining rows supply $u_{\rho_0}$.
\end{itemize}
The additional row cannot select the last column: its final factor
$c^s_{x\rho_0}$ vanishes by \eqref{crt_depth0_s} when $x$ has
depth zero, and by quasi-normality when $x$ has positive depth.
Its nonzero entries in $\mathfrak A$ lie only in the columns
$\overrightarrow{\rho_0x}$ with $\mathrm{depth}(x)>0$, since its
final factor there is $c^a_{x\rho_0}$.

There are $r_0^i$ such columns and $r_0^i$ first-layer rows of $T$.
By Lemma \ref{coeff=1}, a first-layer row $(1,z)$ supplying
$u_{\rho_0}$ selects the column $\overrightarrow{\rho_0z}$.
The additional row uses one of these columns, so exactly one
first-layer row, say $(1,j)$, must make a different choice. There
cannot be two such rows in either (A) or (B). All other first-layer
rows therefore select their usual columns, and the additional row
selects $\overrightarrow{\rho_0j}$; its product
\eqref{path_product} ends at $j$. In (A), $(1,j)$ selects the last
column and supplies $(\alpha_i^2-\alpha_t^2)c^s_{vj}u_vu_s$.
In (B), $(1,j)$ supplies $u_v$ and selects a column of
$\mathfrak A$ outside $\mathrm{Out}(\rho_0)$; the last column
cannot be selected in the first layer, because
$c^s_{\rho_0z}=0$ for every positive-depth $z$.

\medskip\noindent\emph{2. Counting the available edge factors.}
We say that an edge of $\Gamma_i$ has \emph{level} $r$ if it belongs to $\mathrm{Out}(\rho_r)$; by the first of the two facts recalled above, a factor $c^e_{xy}$ with $e$ of level $r$ vanishes unless $\mathrm{depth}(x),\mathrm{depth}(y)\ge r$. In a contributing term, every row contributes one product \eqref{path_product}, the product of the row $(1,j)$ having no edge factors, and the edge factors of all these products, counted with multiplicity, are exactly the edge factors of \eqref{vs_path_monom}, because the monomials selected in the rows multiply to \eqref{vs_path_monom}.
Every row other than $(1,j)$ starts its product
\eqref{path_product} at $\rho_0$. A row $(p,z)$ of $T$ uses
$p-1$ edge factors, and the additional row uses $d$. By
\eqref{p_i} and \eqref{Pw_def}, for $0\le r\le\omega_i$ the number of
edge factors in \eqref{vs_path_monom} belonging to
$\mathrm{Out}(\rho_r)$, counted with multiplicity, is
\begin{equation}\label{path_edge_count}
\sum_{k=r+1}^{\omega_i}r_k^i+\mathbf 1_{\{r<d\}}.
\end{equation}
The same number counts the rows whose products use at least $r+1$
edges. The available edges of this type are $\varepsilon_r$ and, only when
$r=d-1$, $f$; $\varepsilon_r$ is absent if $r=\omega_i$.

We apply this equality of counts simultaneously to all the products.
The induction statement is that, after the first $r$ steps, every
product with at least $r$ edge factors has used one edge from each of
$\mathrm{Out}(\rho_0),\ldots,\mathrm{Out}(\rho_{r-1})$, in that order,
and all available factors from these sets have been exhausted. For
$r<d$, every product continuing beyond step $r$ is at $\rho_r$.
The statement is immediate for $r=0$. Assuming it at $r<d$, a product
continuing to step $r+1$ cannot use an earlier edge, since all such
factors have been exhausted, or a later edge, by Lemma \ref{root_edge}.
It must therefore use an edge of $\mathrm{Out}(\rho_r)$.
There are exactly as many such factors as there are products continuing
to this step, by \eqref{path_edge_count}, so each uses one and none
remain. Formula \eqref{path_root_factors} forces the next vertex to be
$\rho_{r+1}$ when $r+1<d$. At step $d$, the edge is either $f$, leading
to $w$, or $\varepsilon_{d-1}$, leading to $\rho_d$; this latter option exists
only when $d\le\omega_i$. This completes the induction.
In particular, after step $d$ every product with at least $d$ edge factors is at $w$ or at $\rho_d$. Unless $d\le\omega_i$ and $w=\rho_d$, the edge $f$ occurs only once in \eqref{vs_path_monom}, so exactly one of these products is at $w$; thus, if the additional row ends at $w$, no other row can use $f$.
Applied to the additional row, the induction gives
\begin{equation}\label{j_alternatives}
j\in\{w,\rho_d\}\quad(d\le\omega_i),
\qquad j=w\quad(d=\omega_i+1).
\end{equation}

\medskip\noindent\emph{3. Excluding (B).}
Suppose $(1,j)$ selects the column
$y=\overrightarrow{\rho_kx}$, $k\ge1$, with coefficient
$c^y_{vj}\ne0$. Lemma \ref{root_edge} gives
$k\le\min(\mathrm{depth}(v),\mathrm{depth}(j))\le d$.
If $d\le\omega_i$, $j=\rho_d$, and $k=d$, \eqref{path_root_factors} makes this
coefficient zero, since $\mathrm{depth}(v)\le d$.
Otherwise consider the row $(k+1,x)$ of $T$, which exists because
$\mathrm{depth}(x)>k$. Its product ends at $\rho_k$ by step 2.
For $k=d$ and $w\ne\rho_d$, this uses the fact that $j=w$ and
the additional row has already used the only factor $f$.
By \eqref{path_root_factors}, the row $(k+1,x)$ must then select
$y$ or give zero; selecting the last column also gives zero.
But $y$ is already selected by $(1,j)$. Hence (B) contributes
nothing.

\medskip\noindent\emph{4. Computing the contributions in (A).}
If $j=w$, the additional row supplies
$u_{\rho_0}P_w(U)$ in column $\overrightarrow{\rho_0w}$ with
coefficient $-1$, as in Lemma \ref{augment_path}. For a core row $(p,z)$ with $p-1\le d$, step 2 forces the product to follow the successive roots: if the edge leading to $w$ is not a root edge, its sole occurrence has already been used by the additional row. For the rows with $p-1>d$, we continue the same induction. After $r$ steps, all edge factors of levels less than $r$ have been exhausted by the count in \eqref{path_edge_count}. A product continuing from $\rho_r$ must therefore use an edge of level at least $r$; Lemma \ref{root_edge} bounds its level by $r$, and \eqref{path_root_factors} forces this edge to be $\varepsilon_r$, leading to $\rho_{r+1}$. Thus every core row $(p,z)$ follows the root product up to $\rho_{p-1}$. Since $\mathrm{depth}(z)\ge p$, its final factor in \eqref{path_root_factors} is $1$ in column $\overrightarrow{\rho_{p-1}z}$ and zero in every other column, including the last column. These choices determine a unique term of the determinant and a unique product contributing to its coefficient. Multiplying their factors gives
$\pm(\alpha_i^2-\alpha_t^2)c^s_{vw}$.

The only other possibility is $d\le\omega_i$ and
$j=\rho_d\ne w$. The first-layer row $(1,\rho_d)$ then supplies
$(\alpha_i^2-\alpha_t^2)c^s_{v\rho_d}u_vu_s$.
Delete that row and the last column. The contribution of the
remaining square matrix is the coefficient of $P_w(U)p_i(U)$ in
its determinant. The remaining matrix consists of entries of the $\Gamma_i$-block of $\mathfrak A$, whose coefficients are given by Lemma \ref{minvert}. Hence this coefficient is a polynomial with integer coefficients in the internal structure functions $c^e_{xy}$, $x,y\in V_i$, $e\in E_i$, which are the structure constants of the Tanaka symbol in the basis given by the frame; in particular, it is independent of $v$ and $s$. Denote it, with the relative determinant
sign included, by $Q_w$. These two contributions give
\eqref{path_coeff_formula}.
\end{proof}

\begin{rk}\label{path_coeff_rem}
The second term in \eqref{path_coeff_formula} need not vanish.
For example, consider a component with characteristic tuple
$(5,3,1)$ and vertices $\rho_0$, $\rho_1$, $a$ of depth $1$,
$\rho_2$, $c$ of depth $2$, and $y$ of depth $3$.
For $w=c$ and $v=a$, the coefficient in the lemma is
\[
\pm(\alpha_i^2-\alpha_t^2)
\bigl(c^s_{ac}+c^{\overrightarrow{\rho_2y}}_{cy}
c^s_{a\rho_2}\bigr).
\]
Quasi-normality does not force
$c^{\overrightarrow{\rho_2y}}_{cy}$ to vanish, and this explains why Proposition \ref{c2} first eliminates the structure functions having a root as one of their inputs. For instance, let $\mathfrak m_{-1}$ have the orthonormal basis $\rho_0,\rho_1,a,\rho_2,c,y$, let $\mathfrak m_{-2}$ have the basis formed by $e_{0x}$, $x\in\{\rho_1,a,\rho_2,c,y\}$, by $e_{1z}$, $z\in\{\rho_2,c,y\}$, and by $e_{2y}$, and set
\[
[\rho_0,x]=e_{0x},\qquad [\rho_1,z]=e_{1z},\qquad [\rho_2,y]=e_{2y},\qquad [c,y]=\kappa\,e_{2y},
\]
where $\kappa\in\mathbb R$, all other brackets of basis vectors being zero. For every $\kappa$, the characteristic tuple of this algebra is $(5,3,1)$, the given basis is a basis as in Theorem \ref{ccreduce} with the tuple of roots $(\rho_0,\rho_1,\rho_2)$, and $c^{\overrightarrow{\rho_2y}}_{cy}=\kappa$.
\end{rk}

\begin{Prop}
\label{cofterm2}
Let $\Gamma_i$ be nonabelian, let $v,w\in V_i$ with
\[
0<\mathrm{depth}(v)\leq\mathrm{depth}(w)=d,
\]
and let $l\in V_t$ with $t\neq i$. Assume that \eqref{crt_depth0} holds. Then the coefficient of the monomial
\begin{equation}\label{vl2_monom}
u_vu_lP_w(U)p(U)
\end{equation}
in $\det W^{d+1}_{\rt 0}$ is equal, up to a sign, to
\begin{equation}\label{cofterm2_coeff}
\begin{aligned}
&(\alpha_i^2-\alpha_t^2)
\begin{cases}
c^l_{vw}+Q_w c^l_{v\rho_d^i},
  &d\le\omega_i\text{ and }w\ne\rho_d^i,\\
c^l_{vw},&\text{otherwise}
\end{cases}\\
&\qquad\pmod{\mathrm{Ideal}\bigl(c^{\rho_0^t}_{xy}:x,y\in V_i\bigr)},
\end{aligned}
\end{equation}
where $Q_w$ is the polynomial in the first case of Lemma \ref{augmented_path_rigidity}. The ideal term is absent, i.e., the formula is exact, if $l=\rho_0^t$ or if the component $\Gamma_t$ is abelian.
\end{Prop}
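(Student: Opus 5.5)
The plan is to reduce the coefficient computation in the full augmented minor $\det W^{d+1}_{\rho_0^i}$ to the block computation of Lemma \ref{augmented_path_rigidity}, applied with $s=l$. By Corollary \ref{block_diag}, $T$ is block diagonal after permutation, with blocks $T_j$ of size $d_j$. The augmenting row is the $\rho_0^i$-row of $\mathfrak A^{d+1}$, whose nonzero entries lie only in the $E_i$-columns, together with the last column. Expanding $\det W^{d+1}_{\rho_0^i}$ by the generalized Laplace expansion along the columns of the blocks $j\neq i$, every term contributing to \eqref{vl2_monom} falls into one of two classes. In the first, the last column is absorbed by the $\Gamma_i$-part; this produces $\det N$ times $\prod_{j\ne i}\det T_j$, where $N$ is the $\Gamma_i$-block of $T$ bordered by the additional row and the last column. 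In the second, some row of a block $T_j$, $j\ne i$, selects the last column. I would treat these two classes separately.

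\emph{Vertex-degree bookkeeping.} The monomial \eqref{vl2_monom} has $\Gamma_j$-vertex degree $d_j$ for $j\notin\{i,t\}$, $d_i+1$ for $j=i$, and $d_t+1$ for $j=t$. Its $\Gamma_j$-edge part equals that of $p_j(U)$ for $j\neq i$. By Corollary \ref{block_diag}, each row of $T_j$ is $\Gamma_j$-vertex linear, and the additional row is $\Gamma_i$-linear. Hence in every term, the entry taken from the last column must carry exactly one $\Gamma_i$-vertex variable and one $\Gamma_t$-vertex variable. That entry must also carry the right edge degrees.

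In the first class, the factors from the blocks $j\ne i$ must produce $p_j(U)$. By Lemma \ref{pcoeff1}, this occurs with coefficient $\pm1$ when the block contributes all $d_j$ vertex factors. Here the $\Gamma_t$-vertex variable of the last-column entry must be $u_l$ or $u_{\rho_0^t}$, since these are the only $\Gamma_t$-vertex variables in \eqref{vl2_monom}.

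\begin{itemize}
\item If it is $u_l$, then $T_t$ must supply $p_t(U)$ exactly. The contribution is then the coefficient of $u_vu_lP_wp_i$ in $\det N$, which, after restricting the last column to $\Gamma_i$-edge monomials (the edge factors all lie in $E_i$ by the bookkeeping), is exactly $\det N_l$. Lemma \ref{augmented_path_rigidity} with $s=l$ gives the main term. Its hypothesis \eqref{crt_depth0_s} is precisely \eqref{crt_depth0}.
\item If it is $u_{\rho_0^t}$, then the last-column entry is $u_xu_{\rho_0^t}\cdot(\text{edges})$ with $x\in V_i$. By Lemma \ref{mixed_b_chain}, its coefficient lies in $\mathrm{Ideal}(c^{\rho_0^t}_{x_0y})$ with $x_0,y\in V_i$. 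This accounts for the ideal term.
\end{itemize}

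When $l=\rho_0^t$ the two options coincide. When $\Gamma_t$ is abelian, $p_t=1$ and $T_t$ is empty, so the second option is impossible. In both cases the formula is exact.

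\emph{Main obstacle: the second class.} I expect the hardest step to be showing that the second class contributes nothing, or only terms in the ideal. Suppose a row $(p',z')$ of a block $T_j$, $j\neq i$, selects the last column. Then $\tilde{\mathfrak b}^{p'}_{z'}$ must carry the extra $\Gamma_i$-vertex variable. Meanwhile the $\Gamma_i$-part (the $d_i$ rows of $T_i$ plus the additional row) must cover $d_i$ columns of $E_i$ using $d_i+1$ rows. One of those rows is then left over. Since it cannot be placed in a column outside $E_i$ (its entries there vanish by block diagonality), the term vanishes.

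Dually, if instead a $\Gamma_j$-column is left uncovered for $j\ne i$, the entry in that column from any $\Gamma_i$ row is zero, so the term again vanishes. In fact pure column counting should settle this: the additional row and the $T_i$ rows have support only in $E_i$-columns together with the last column, and together they number $d_i+1=|E_i|+1$. They must therefore occupy all $E_i$-columns and the last column. This kills the second class outright. I would verify this counting carefully, since it makes the whole argument essentially a restatement of Lemma \ref{augmented_path_rigidity} multiplied by $\prod_{j\ne i}\det T_j$, with the only subtlety being the $u_{\rho_0^t}$-alternative inside $\tilde{\mathfrak b}$ that generates the ideal.
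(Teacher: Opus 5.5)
Your proposal is correct and follows essentially the same route as the paper. The shared steps are: counting the $d_i+1$ rows of $\Gamma_i$ against the $d_i$ columns of $E_i$, so that the last column is forced into a $\Gamma_i$ row; the vertex and edge degree bookkeeping, which gives $s\in\{l,\rho_0^t\}$ with only $\Gamma_i$-edge factors; Lemma \ref{augmented_path_rigidity} for $s=l$; and Lemma \ref{mixed_b_chain} for the ideal term. One small imprecision is harmless: in the $u_{\rho_0^t}$ alternative the block $T_t$ supplies $u_l\,p_t(U)/u_{\rho_0^t}$ rather than $p_t(U)$, but that entire contribution lies in the ideal anyway.
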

\begin{proof}
Expand $\det W^{d+1}_{\rt 0}$ along the last column. By Remark \ref{block_diag_rem}, the matrix $\mathfrak A$ is block diagonal, and the rows of $W^{d+1}_{\rt 0}$ belonging to the component $\Gamma_i$ (the core rows of this component and the augmenting row) outnumber by one the columns indexed by $E_i$. Hence the cofactor of an entry of the last column lying in a row of a component $\Gamma_j$ with $j\neq i$ vanishes, because in the corresponding minor the $d_i+1$ rows of the component $\Gamma_i$ have nonzero entries only in the $d_i$ columns indexed by $E_i$. Therefore only the rows of the component $\Gamma_i$ contribute, and the cofactor of the entry of the last column in such a row is, up to a sign, the product of the determinant of the block of $\Gamma_i$ with this row removed and of the core determinants $\det T_j$, $j\neq i$.

Let us compare degrees. Fix a row $(p,z)$ of the component $\Gamma_i$ (with $(p,z)=(d+1,\rt 0)$ for the augmenting row) and a monomial term of $\tilde{\mathfrak b}^p_z$ which, multiplied by a monomial of the cofactor, gives \eqref{vl2_monom}. This term is a product of two vertex variables and $p-1$ edge variables. For $j\neq i$, the $\Gamma_j$-edge degree of \eqref{vl2_monom} is equal to that of $p_j(U)$, which, by Remark \ref{degree_rk}, is the $\Gamma_j$-edge degree of $\det T_j$; hence the term contains no $\Gamma_j$-edge variables, so all its $p-1$ edge variables are $\Gamma_i$-edge variables. Comparing the $\Gamma_j$-vertex degrees, we see that for $j\neq i,t$ the factor $\det T_j$ supplies a monomial of $\Gamma_j$-vertex degree $d_j$, hence exactly the monomial $p_j(U)$, whose coefficient is $\pm1$ by Lemma \ref{pcoeff1}, and that the term of the last column contains exactly one $\Gamma_i$-vertex variable and one $\Gamma_t$-vertex variable $u_s$. If $\Gamma_t$ is nonabelian, then each row of $T_t$ supplies one $\Gamma_t$-vertex variable, so that $\det T_t$ supplies a monomial of $\Gamma_t$-vertex degree $d_t$ dividing the $\Gamma_t$-vertex part $u_lu_{\rho_0^t}^{d_t}$ of \eqref{vl2_monom}; hence $s\in\{l,\rho_0^t\}$. If $\Gamma_t$ is abelian, then $p_t(U)=1$, and $s=l$.

Consequently, grouping the terms of the expansion according to the external vertex variable $u_s$ of the selected term of the last column, we obtain that the coefficient of \eqref{vl2_monom} in $\det W^{d+1}_{\rt 0}$ is equal to
\begin{equation}\label{cofterm2_decomp}
\pm\sum_{s}C_s\,D_s,
\end{equation}
where $s$ ranges over $\{l,\rho_0^t\}$ if $\Gamma_t$ is nonabelian and over $\{l\}$ otherwise, $C_s$ is the coefficient of $u_vu_sP_w(U)p_i(U)$ in $\det N_s$, and $D_s$ is the coefficient of $u_lp_t(U)/u_s$ in $\det T_t$ (with $D_l=\pm1$ by Lemma \ref{pcoeff1}, also when $\Gamma_t$ is abelian). Indeed, for a fixed $s$, the sum over the rows $(p,z)$ of the component $\Gamma_i$ and over the terms of $\tilde{\mathfrak b}^p_z$ containing $u_s$ of the products of these terms with the corresponding cofactors of the block of $\Gamma_i$ is exactly the expansion of $\det N_s$ along its last column, and the signs of the permutations of the block structure factorize accordingly.

By Lemma \ref{augmented_path_rigidity}, applied with $s=l$ (its assumption \eqref{crt_depth0_s} is \eqref{crt_depth0}), $C_l$ is given, up to a sign, by \eqref{path_coeff_formula} with $s=l$, and $D_l=\pm1$. This proves the proposition if $\Gamma_t$ is abelian or $l=\rho_0^t$. Otherwise, the term with $s=\rho_0^t$ in \eqref{cofterm2_decomp} is a multiple of $C_{\rho_0^t}$, and every coefficient of the last column of $N_{\rho_0^t}$ belongs to the ideal generated by the structure functions $c^{\rho_0^t}_{xz}$, $x,z\in V_i$, by Lemma \ref{mixed_b_chain}. Hence $C_{\rho_0^t}$ belongs to this ideal, which completes the proof.
\end{proof}

\begin{Prop}[Abelian source components]
\label{c1bis}
If $E_i=\varnothing$, i.e. the source component is abelian, and the principal augmented minors, i.e. the maximal augmented minors of the principal part of the fundamental algebraic system, vanish, then
\begin{equation}\label{cw_depth0_c1bis}
c^l_{vw}=0\qquad(v,w\in V_i,\ l\in V_t,\ t\ne i).
\end{equation}
\end{Prop}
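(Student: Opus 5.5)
We prove the statement by working with the $(1,w)$-augmentations $W^1_w$ of the core sub-matrix $T$, for $w\in V_i$. The key point is that an abelian source component contributes no columns to $\mathfrak A$. By Corollary \ref{block_diag}, every entry of the $w$-row of $\mathfrak A^1$ vanishes, since its nonzero entries could only lie in columns indexed by $E_i=\varnothing$. Also, since $E_i=\varnothing$, no core row belongs to $\Gamma_i$, so this $w$-row is indeed not a row of $T$ and the augmentation is admissible. Expanding $\det W^1_w$ along the augmenting row therefore leaves the single term $\pm\tilde{\mathfrak b}^1_w\det T$, exactly as at the start of the proof of Proposition \ref{cofterm}. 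Since these augmented minors vanish and $\det T\neq0$ by Lemma \ref{pcoeff1}, we conclude that
\[
\tilde{\mathfrak b}^1_w=0\qquad\text{identically, for every }w\in V_i.
\]

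Next we read off coefficients from \eqref{tb1}. For $w\in V_i$ we have
\[
\tilde{\mathfrak b}^1_w=\sum_{\substack{l,v\in V\\ \mathcal C(l)\neq i,\ v\neq l}}(\alpha_i^2-\alpha(l)^2)c^l_{vw}u_vu_l .
\]
Fix $v\in V_i$ and $l\in V_t$ with $t\neq i$. The monomial $u_vu_l$ arises in two ways. The first is from the summand with indices $(l,v)$, with coefficient $(\alpha_i^2-\alpha_t^2)c^l_{vw}$. The second is from the summand with indices $(v,l)$, but this would need $\mathcal C(v)\neq i$, which is false. Hence the coefficient of $u_vu_l$ is $(\alpha_i^2-\alpha_t^2)c^l_{vw}$. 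Since the eigenvalues of distinct components are distinct, this coefficient vanishing gives $c^l_{vw}=0$, which is \eqref{cw_depth0_c1bis}. The case $v=w$ needs no separate treatment, since then $c^l_{ww}=0$ trivially.

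The only delicate point is the claim that the vanishing of the principal augmented minors is exactly what we are given. Corollary \ref{frak_FAL} identifies the lowest vertex-homogeneous part of each genuine maximal minor with the corresponding principal minor, and the genuine minors vanish identically. So the principal minor $\det W^1_w$ vanishes as a polynomial, and both the division by the nonzero polynomial $\det T$ and the comparison of coefficients are legitimate. No elimination order or ideal terms are needed, in contrast to the nonabelian case of Proposition \ref{c2}.
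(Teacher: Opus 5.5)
Your proof is correct and follows essentially the same route as the paper. The paper cites rational consistency of the principal system to get $\tilde{\mathfrak b}^1_w=0$ from the zero $i$th block of $\mathfrak A$. You instead make that step explicit via $\det W^1_w=\pm\tilde{\mathfrak b}^1_w\det T$ with $\det T\neq0$, which is exactly how the paper proves rational consistency, and then you read off the coefficient $(\alpha_i^2-\alpha_t^2)c^l_{vw}$ of $u_vu_l$ from \eqref{tb1} just as the paper does.
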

\begin{proof}
Since the source component has no edges, all rows of the $i$th block of the principal coefficient matrix are zero. Therefore, the rational consistency of the principal system, which follows from the vanishing of the principal augmented minors, implies that the corresponding entries of the last column vanish; in particular, $\tilde{\mathfrak b}^1_w=0$ for $w\in V_i$. On the other hand, the coefficient of $u_vu_l$ in \eqref{tb1} is equal to $(\alpha_i^2-\alpha_t^2)c^l_{vw}$, because the term with upper index $v\in V_i$ does not contribute, the corresponding difference of eigenvalues being equal to zero. Since the eigenvalues corresponding to distinct components are distinct, this proves the assertion.
\end{proof}

\begin{Prop}
\label{c2}
Assume that all maximal augmented minors of the fundamental algebraic system vanish. Then
\begin{equation}\label{cw_depth0_c2}
c^l_{vw}=0,\qquad v,w\in V_i,\quad l\in V_t,\quad t\ne i.
\end{equation}
In particular, $D_i^2\cap D=D_i$ for every $i$.
\end{Prop}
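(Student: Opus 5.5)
We will prove \eqref{cw_depth0_c2} by splitting according to the source component $\Gamma_i$. If $\Gamma_i$ is abelian, the assertion is exactly Proposition \ref{c1bis}, because, by Corollary \ref{frak_FAL}, the vanishing of the maximal augmented minors of the fundamental algebraic system implies the vanishing of the principal augmented minors. From now on let $\Gamma_i$ be nonabelian. We first obtain the depth-zero input cases. Proposition \ref{c0} gives \eqref{crt_depth0}, i.e. $c^l_{\rho_0^iv}=0$ for all $v\in V_i$ and $l\in V_t$, $t\ne i$. Proposition \ref{c1} then gives \eqref{cw_depth0}, i.e. $c^l_{wv}=0$ whenever one of the inputs has depth zero. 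By skew-symmetry it therefore remains to treat pairs $v,w\in V_i$ with $0<\mathrm{depth}(v)\le\mathrm{depth}(w)=d$.

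For these pairs we argue by induction on $d=\mathrm{depth}(w)$, using Proposition \ref{cofterm2}. Its hypothesis \eqref{crt_depth0} is already available, and the vanishing of $\det W^{d+1}_{\rho_0^i}$ makes the coefficient \eqref{cofterm2_coeff} of the monomial \eqref{vl2_monom} vanish. Within a fixed depth $d$, we first take $l=\rho_0^t$ or $\Gamma_t$ abelian, where the formula is exact. Otherwise, once all $c^{\rho_0^t}_{xy}$ with $x,y\in V_i$ are known to vanish, the ideal term drops. So at each depth we first eliminate the functions with output $\rho_0^t$, and only then those with the other outputs $l$. Across all depths we must also kill the ideal generators $c^{\rho_0^t}_{xy}$ for every pair $x,y$. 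We do this by running the whole induction first with $l=\rho_0^t$, which is exact for all pairs, and then rerunning it for general $l$.

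At a fixed depth $d$ the order of elimination is prescribed by \eqref{cofterm2_coeff}. If $d=\omega_i+1$ or $w=\rho_d^i$, the coefficient is the single function $(\alpha_i^2-\alpha_t^2)c^l_{vw}$, and it vanishes because eigenvalues of distinct components are distinct. In particular, the cases $w=\rho_d^i$ with $\mathrm{depth}(v)\le d$ are settled first. When $d\le\omega_i$ and $w\ne\rho_d^i$, we need $c^l_{v\rho_d^i}$. Note that $\rho_d^i$ has depth $d$ and $\mathrm{depth}(v)\le d$, so this is a case already eliminated at the same depth. The correction term $Q_wc^l_{v\rho_d^i}$ therefore vanishes, and we again get $c^l_{vw}=0$. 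This is the triangular structure announced in Remark \ref{path_coeff_rem}.

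The main point to check is that this order is consistent. First, the root case $w=\rho_d^i$ genuinely lies in the "otherwise" branch. Second, pairs with $\mathrm{depth}(v)=\mathrm{depth}(w)=d$ are covered by applying Proposition \ref{cofterm2} with either vertex playing the role of $w$. Vertices of depth $\omega_i+1$, that is, those in $I_{\omega_i+1}$, fall into the exact branch with $d=\omega_i+1$. Finally, the geometric consequence $D_i^2\cap D=D_i$ follows. Modulo $D$, the bracket $[D_i,D_i]$ lies in the $\mathfrak m^i_{-2}$ part, by Theorem \ref{Tanaka_decomp_prop}. The vanishing of all $c^l_{vw}$ with $l\notin V_i$ shows that the horizontal part of $[X_v,X_w]$ lies in $D_i$. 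Since $\mathrm{pr}$ is injective on the span of the edge fields, $D_i^2\cap D=D_i$.
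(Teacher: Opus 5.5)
Your proposal is correct and follows the paper's proof of Proposition \ref{c2} essentially step for step. The steps are: abelian sources via Corollary \ref{frak_FAL} and Proposition \ref{c1bis}; depth-zero inputs via Propositions \ref{c0} and \ref{c1}; a first pass over all pairs with $l=\rho_0^t$, where the formulas are exact, to kill the ideal generators; then a second pass for general $l$, with the root case $w=\rho_d^i$ eliminated first at each depth so that the $Q_w$ term drops. Two remarks: the ``induction on $d$'' is harmless but unnecessary, since no positive depth uses results from smaller depths; and your intermediate depth-by-depth ordering of outputs would not suffice on its own, because the ideal in Proposition \ref{cofterm2} involves all pairs $x,y\in V_i$. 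The two-pass scheme you finally adopt is exactly the paper's.
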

\begin{proof}
By Corollary \ref{frak_FAL}, the vanishing of the maximal augmented minors of the fundamental algebraic system implies the vanishing of the principal augmented minors. If the source component $\Gamma_i$ is abelian, the required relations follow from Proposition \ref{c1bis}. So let $\Gamma_i$ be nonabelian. Since $\det W^1_{\rt 0}=0$, Proposition \ref{c0} gives relation \eqref{crt_depth0}, i.e. $c^l_{\rt 0v}=0$ for every $v\in V_i$ of depth $0$ and every $l\in V_t$, $t\neq i$ (for $v$ of positive depth this relation holds by \eqref{struct_quasinorm}). In particular, the hypotheses of Propositions \ref{c1} and \ref{cofterm2} are satisfied.

Now fix $t\neq i$. The relations \eqref{cw_depth0_c2} are obtained in two stages. In the first stage, which is needed only if $\Gamma_t$ is nonabelian, we prove them for $l=\rho_0^t$; in the second stage we prove them for an arbitrary $l\in V_t$, using the relations of the first stage, which annihilate all the ideal terms appearing in Propositions \ref{c1} and \ref{cofterm2}. In both stages we argue in the same way, as follows.

First, Proposition \ref{c1} gives $c^l_{wv}=0$ whenever $w\in V_i$ has depth $0$ and $v\in V_i$ is arbitrary. It remains to consider pairs of vertices of positive depth. Fix $d\in[1:\omega_i+1]$; by skew-symmetry, it suffices to prove that $c^l_{vw}=0$ whenever $0<\mathrm{depth}(v)\leq\mathrm{depth}(w)=d$. If $d\leq\omega_i$, we first take $w=\rt d$. By Proposition \ref{cofterm2}, the coefficient of the monomial $u_vu_lP_{\rt d}(U)p(U)$ in $\det W^{d+1}_{\rt 0}$ is equal to $\pm(\alpha_i^2-\alpha_t^2)c^l_{v\rt d}$. Since this minor vanishes and the eigenvalues corresponding to distinct components are distinct, we get
\begin{equation}\label{root_lower_eliminated}
c^l_{v\rt d}=0\qquad\text{whenever } 0<\mathrm{depth}(v)\leq d.
\end{equation}
Now let $w$ be an arbitrary vertex of depth $d$. By Proposition \ref{cofterm2} and by \eqref{root_lower_eliminated} (which is needed only when $d\le\omega_i$ and $w\ne\rt d$), the coefficient of the monomial $u_vu_lP_w(U)p(U)$ in $\det W^{d+1}_{\rt 0}$ is equal to $\pm(\alpha_i^2-\alpha_t^2)c^l_{vw}$. Since this minor vanishes, we obtain $c^l_{vw}=0$, which proves \eqref{cw_depth0_c2}.

To prove the last statement, recall that the classes of the edge fields of the corresponding component span the $i$th central summand of the symbol, and that the brackets of the vertex fields joined by edges are the chosen edge fields themselves. Hence, \eqref{cw_depth0_c2} gives $D_i^2=\operatorname{span}\{X_s:s\in V_i\cup E_i\}$, and the last conclusion follows.
\end{proof}

\begin{Prop}[Three distinct horizontal components]
\label{c3}
Let $i,t,t'$ be pairwise distinct, and let $w\in V_i$, $v\in V_t$, $l\in V_{t'}$. If the principal system has a rational solution, then
\[
c^l_{vw}=c^v_{lw}=c^w_{lv}=0.
\]
\end{Prop}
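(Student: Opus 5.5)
The plan follows the outline in Section \ref{proof_outline_sec}. Contract the first-layer block equations of the principal system with the vertex variables, and so obtain polynomial identities in the first layer $\tilde{\mathfrak b}^1$ alone. Then read off the coefficients of the three mixed monomials $u_vu_lu_w$, which gives a $3\times3$ linear system in the three structure functions. Its matrix is built from eigenvalue differences and is invertible.

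\emph{Step 1: contraction.} Fix a component index $j$ and multiply the $x$-row of the first layer $\mathfrak A^1\Psi=\tilde{\mathfrak b}^1$ by $u_x$, summing over $x\in V_j$. By Corollary \ref{block_diag}, the left-hand side involves only the columns $e\in E_j$, with coefficient $\sum_{x\in V_j}u_xq_{xe}$. This coefficient vanishes identically: it equals $\sum_{x,y\in V_j}c^e_{yx}u_yu_x$, because $[D_j,D_{j'}]\subset D$ modulo $D$ in the symbol gives $c^e_{yx}=0$ for $y\notin V_j$, and the remaining sum vanishes by skew-symmetry. This is the same computation as in the proof of item (2) of Proposition \ref{first divi}. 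Since the principal system has a rational solution, we obtain the polynomial identity
\[
\sum_{x\in V_j}u_x\,\tilde{\mathfrak b}^1_x=0,\qquad j=i,t,t'.
\]

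\emph{Step 2: coefficient extraction.} Substitute \eqref{tb1}. For $j=i$, the monomial $u_wu_vu_l$ arises from $x=w$ only, since $v,l\notin V_i$. The pairs $(s,y)=(l,v)$ and $(v,l)$ are admissible in \eqref{tb1} because $\mathcal C(l),\mathcal C(v)\ne i$ and $v\neq l$. The coefficient is
\[
(\lambda_i-\lambda_{t'})c^l_{vw}+(\lambda_i-\lambda_t)c^v_{lw}=0,\qquad \lambda_r:=\alpha_r^2.
\]
Doing the same with $j=t$ (row $x=v$) and $j=t'$ (row $x=l$) gives
\[
(\lambda_t-\lambda_{t'})c^l_{wv}+(\lambda_t-\lambda_i)c^w_{lv}=0,\qquad
(\lambda_{t'}-\lambda_t)c^v_{wl}+(\lambda_{t'}-\lambda_i)c^w_{vl}=0.
\]
Skew-symmetry rewrites all three equations in the unknowns $X=c^l_{vw}$, $Y=c^v_{lw}$, $Z=c^w_{lv}$. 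Note that $c^l_{wv}=-X$, $c^v_{wl}=-Y$ and $c^w_{vl}=-Z$.

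\emph{Step 3: invertibility.} The resulting system is
\[
(\lambda_i-\lambda_{t'})X+(\lambda_i-\lambda_t)Y=0,\quad
-(\lambda_t-\lambda_{t'})X+(\lambda_t-\lambda_i)Z=0,\quad
-(\lambda_{t'}-\lambda_t)Y-(\lambda_{t'}-\lambda_i)Z=0.
\]
Its determinant is $\pm2(\lambda_i-\lambda_t)(\lambda_t-\lambda_{t'})(\lambda_{t'}-\lambda_i)$; the routine expansion is left to check. The determinant is nonzero because eigenvalues of distinct components are distinct, so $X=Y=Z=0$.

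The main obstacle is the bookkeeping in Step 2. One must make sure that no other term of $\sum_x u_x\tilde{\mathfrak b}^1_x$ produces $u_wu_vu_l$, and get the signs right, since the invertibility depends on the exact signs. A wrong sign could make the determinant vanish; in that case the fallback is item (4) of Proposition \ref{first divi}, which supplies an independent relation among the same three unknowns.
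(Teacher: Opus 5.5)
Your proposal is correct and follows the paper's proof essentially verbatim. Like the paper, you contract the first-layer block equations with the vertex variables; the paper states the vanishing of the left-hand side in operator form as $\Psi_i([Z_i,Z_i])=0$, while you check it in coordinates. You then extract the coefficient of $u_wu_vu_l$ from \eqref{tb1} in each of the three blocks and invert the resulting $3\times3$ system.

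Your three equations and their signs are right. The determinant you left to check is exactly $2(\lambda_i-\lambda_t)(\lambda_t-\lambda_{t'})(\lambda_i-\lambda_{t'})$, which is nonzero, so the fallback to item (4) of Proposition \ref{first divi} is never needed.
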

\begin{proof}
Let us contract the first-layer equations of the $i$th block with $Z_i$, i.e. multiply the equation corresponding to each vertex of this block by the corresponding vertex variable and add up the results. In the operator form of the first layer, the left side of the resulting equation is the value of the solution at the bracket of this horizontal vector with itself; hence, it is zero, because $[Z_i,Z_i]=0$. Thus
\begin{equation}\label{block_first_contraction}
\sum_{w\in V_i}u_w\tilde{\mathfrak b}^1_w=0
\end{equation}
is a polynomial identity (note that its left-hand side does not involve the solution). Extracting from this identity the coefficient of the monomial $u_wu_vu_l$, we get
\[
(\alpha_i^2-\alpha_t^2)c^v_{lw}
 +(\alpha_i^2-\alpha_{t'}^2)c^l_{vw}=0.
\]
In the same way, applying the contraction separately to the blocks corresponding to $t$ and $t'$, we obtain two more relations of the same type. In order to write these three relations in a compact form, put
\[
a=\alpha_i^2,\quad b=\alpha_t^2,\quad c=\alpha_{t'}^2,
\qquad x=c^w_{vl},\quad y=c^v_{lw},\quad z=c^l_{wv}.
\]
Then, using the skew-symmetry of the brackets, we can rewrite the resulting three equations as the following homogeneous linear system:
\[
\begin{pmatrix}
0&a-b&-(a-c)\\
-(b-a)&0&b-c\\
c-a&-(c-b)&0
\end{pmatrix}
\begin{pmatrix}x\\y\\z\end{pmatrix}=0.
\]
The determinant of its matrix is equal to $-2(a-b)(a-c)(b-c)$, which is nonzero, since the eigenvalues corresponding to distinct components are pairwise distinct. Consequently, $x=y=z=0$, which proves the proposition. Note that the argument uses only the first layer of the principal system; therefore, it is valid for vertices of arbitrary depth, as well as for abelian components.
\end{proof}

\begin{Coro}
\label{b10}
    If the maximal augmented minors of the fundamental algebraic system vanish, then the entries of $\tilde{\mathfrak{b}}^1$ are identically $0$.
\end{Coro}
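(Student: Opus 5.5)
The plan is to read off the vanishing directly from the explicit first-layer formula \eqref{tb1}:
\[
\tilde{\mathfrak b}^1_w=\sum_{\substack{l,v\in V\\ \mathcal C(l)\ne\mathcal C(w),\ v\ne l}}(\alpha(w)^2-\alpha(l)^2)\,c^l_{vw}\,u_vu_l ,\qquad w\in V,
\]
and to show that every structure function occurring in it with a nonzero eigenvalue factor has already been shown to vanish.

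First, by Corollary \ref{frak_FAL}, the vanishing of the maximal augmented minors of the fundamental algebraic system implies the vanishing of the principal augmented minors. By the rational consistency argument following Lemma \ref{pcoeff1}, the principal system then has a rational solution. Hence both Proposition \ref{c2} and Proposition \ref{c3} are available.

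Next, fix $w\in V_i$ and a term $c^l_{vw}u_vu_l$ of \eqref{tb1}, so that $l\in V_t$ with $t\ne i$. I will split according to the component of $v$.
\begin{enumerate}
\item If $v\in V_i$, then $v,w$ lie in the same component and the output lies in another one, so $c^l_{vw}=0$ by Proposition \ref{c2}. Proposition \ref{c2} covers abelian source components through Proposition \ref{c1bis}.
\item If $v\in V_{t'}$ with $t'\notin\{i,t\}$, the three indices lie in pairwise distinct components, so $c^l_{vw}=0$ by Proposition \ref{c3}.
\item If $v\in V_t$, i.e. $v$ and $l$ lie in the same component $\Gamma_t$, I use item (3) of Proposition \ref{first divi}: $c^l_{vw}=-c^v_{lw}$. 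The coefficient of $u_vu_l$ in $\tilde{\mathfrak b}^1_w$ therefore collects the two terms $(\alpha_i^2-\alpha_t^2)(c^l_{vw}+c^v_{lw})=0$, exactly as in \eqref{b1tilde} in the proof of Lemma \ref{bu2}. The condition $v\ne l$ in \eqref{tb1} excludes square terms, so this covers all such monomials.
\end{enumerate}
Summing over the three cases, every coefficient of every monomial of $\tilde{\mathfrak b}^1_w$ vanishes, and the corollary follows.

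The only delicate point is bookkeeping. In case (3) one must group the terms by monomial rather than by structure function, since the individual structure functions $c^l_{vw}$ there need not vanish (cf. Remark \ref{bracket_frames_rk}); the sum vanishes only after pairing. Beyond this, I expect no real obstacle: the substantive work was already done in Propositions \ref{c2} and \ref{c3}.
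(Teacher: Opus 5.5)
Your proposal is correct and follows essentially the same argument as the paper. Both split the monomials of \eqref{tb1} by the component of the second vertex factor, and both use Proposition~\ref{c2}, Proposition~\ref{c3}, and item (3) of Proposition~\ref{first divi} (pairing the two terms of a monomial) in the three cases, with the absence of square terms exhausting all monomials.
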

\begin{proof}
We use formula \eqref{tb1} and consider the monomials of an arbitrary entry of the first layer; by the row component we mean the component containing the vertex which indexes this entry. Each such monomial is a product of two vertex variables. When one of the two vertex factors belongs to the row component, the corresponding coefficient vanishes by Proposition \ref{c2}. When the components of the two factors and the row component are pairwise distinct, Proposition \ref{c3} applies, and the coefficient vanishes again. Finally, if both factors belong to the same external component, i.e. to the same component different from the row component, then their combined coefficient vanishes by item (3) of Proposition \ref{first divi}. Since there are no square terms in the first layer, these cases exhaust all its monomials, which completes the proof.
\end{proof}

The following remark records an immediate consequence of Corollary \ref{b10}.

\begin{rk}\label{bindp}
The entries of the second-layer column in block $i$ do not depend on the \emph{internal} edge variables, that is, on the edge variables indexed by $E_i$. Indeed, by Corollary \ref{b10}, the first layer of the last column vanishes, so that the first term of the recursive formula \eqref{tbs_frak} does not contribute to the second layer, and we get, for every $v\in V_i$,
\[
\tilde{\mathfrak b}^{2}_v=
\sum_{\substack{e\in E_i\\f\in E_t,\ t\ne i}}
(\alpha_i^2-\alpha_t^2)q_{ve}q_{ef}u_f.
\]
Thus only external edge variables occur in these entries. It is precisely this independence of the internal edge variables that will be used in Lemma \ref{beta2} below.
\end{rk}

\section{Vanishing of other cross-component structure functions}
\label{other_sec}

In this section, we prove item (4) of Proposition \ref{outline_transverse_vanishing}, i.e., the vanishing of the cross-component structure functions with a horizontal input and an edge input from the same component and with an output from another component (see Definition \ref{cross_component_def}). For this purpose, we use the operator form of the fundamental algebraic system given in Appendix \ref{opp:fas}, more precisely, the operator form of its first two layers. We first note that Proposition \ref{c2} gives a smooth identification
\begin{equation}\label{mi_-2W}
W_i=\mathfrak m^i_{-2}\cong D_i^2/D_i,
\end{equation}
and that, under this identification, the Levi bracket $\Lambda^2D_i\to W_i$ is onto. We will assume that this identification is made in what follows. Moreover, it follows immediately from Corollary \ref{b10} that the first layer of the last column of the principal part is identically zero. Our next goal is to show that the same is true for the second layer. Note that the solvability of the linear systems under consideration is derived from the vanishing of their minors, which are polynomial in the fiber variables. Consequently, the solvability obtained in this way is generic: the systems admit rational solutions defined on a nonempty Zariski open set. This is exactly the setting of the following elementary lemma.
\begin{Lemma}\label{beta2}
Fix a base point and a component $i$, and let $\beta$ be a polynomial $D_i^*$-valued function of the fiber variables which is independent of the functional $\eta_i\in W_i^*$ from \eqref{eta_i}, i.e., of the internal edge variables $u_e$, $e\in E_i$. If, on a nonempty Zariski-open set, the equations
\begin{equation}\label{2layer_sys}
(\ad Z_i)^*\xi_i=0,\qquad
(\ad(J^i_{\eta_i}Z_i))^*\xi_i=\beta
\end{equation}
have a rational solution $\xi_i\in W_i^*$, then $\beta=0$ identically. The same conclusion holds when external variables are adjoined as parameters.
\end{Lemma}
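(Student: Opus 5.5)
The plan is to test the second equation of \eqref{2layer_sys} on two complementary families of vectors $Y\in D_i$. Together these families span $D_i$ for generic $Z_i$. The first equation, combined with Lemma \ref{a1}, will kill the right-hand side on one family. The independence of $\beta$ from $\eta_i$ will kill it on the other. Throughout, fix the base point and the external variables as parameters. Recall that $J^i_{\eta}$ is the $g_1$-skew operator on $D_i$ given by $g_1(J^i_{\eta}X,Y)=\eta([X,Y])$. In particular, $J^i_{\eta}Z_i=(\ad Z_i)^{*}\eta$ after $g_1$-identification of $D_i$ with $D_i^*$.

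\emph{First family: vectors of the form $J^i_{\eta_i}Z_i$.} Evaluate the second equation at $Y=J^i_{\eta_i}Z_i$. This gives
\[
\beta(J^i_{\eta_i}Z_i)=\xi_i\bigl([J^i_{\eta_i}Z_i,J^i_{\eta_i}Z_i]\bigr)=0
\]
on the Zariski-open set where the solution exists. Since $\beta$ does not depend on $\eta_i$, and both sides are polynomial (resp.\ rational) in $\eta_i$, the identity $\beta(J^i_{\eta}Z_i)=0$ holds for all $\eta\in W_i^*$ and generic $Z_i$. The vectors $J^i_{\eta}Z_i$, $\eta\in W_i^*$, fill the $g_1$-image of $(\ad Z_i)^*$, which is $(\ker\ad Z_i)^{\perp}$. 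Hence $\beta$ annihilates $(\ker \ad Z_i)^\perp$.

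\emph{Second family: $Y\in\ker\ad Z_i$.} I will restrict to $Z_i$ that are ad-maximal in $\mathfrak m^i$, which is a nonempty Zariski-open condition. By Lemma \ref{a1}, for $Y\in\ker\ad Z_i$ we have $[Y,D_i]\subset\imad Z_i$, so in particular $[J^i_{\eta_i}Z_i,Y]\in\imad Z_i$. The first equation $(\ad Z_i)^*\xi_i=0$ says exactly that $\xi_i$ annihilates $\imad Z_i$. Therefore
\[
\beta(Y)=\xi_i\bigl([J^i_{\eta_i}Z_i,Y]\bigr)=0 .
\]

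\emph{Conclusion.} Combining the two families, $\beta$ vanishes on $(\ker\ad Z_i)^{\perp}\oplus\ker\ad Z_i=D_i$. This holds for all $Z_i$ in a nonempty Zariski-open subset of $D_i$, for arbitrary $\eta_i$, and for arbitrary values of the external parameters. Since $\beta$ is polynomial, it vanishes identically. Adjoining external variables changes nothing, because every step is pointwise in them.

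The step I expect to need the most care is the first one. There one must justify extending $\beta(J^i_{\eta}Z_i)=0$ from the Zariski-open domain of the rational solution to all $\eta$. One must also correctly identify the span of $\{J^i_\eta Z_i\}$ with $(\ker\ad Z_i)^\perp$. This uses surjectivity of the Levi bracket onto $W_i$ from \eqref{mi_-2W} only implicitly, through the definition of $\imad Z_i$.
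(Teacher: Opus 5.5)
Your proof is correct and follows the paper's argument essentially step for step. The first equation together with Lemma \ref{a1} gives $\beta|_{\ker\ad Z_i}=0$ for ad-maximal $Z_i$. Contracting the second equation with $J^i_{\eta_i}Z_i$, promoting the result to a polynomial identity in $\eta_i$, and using that $\beta$ does not depend on $\eta_i$ then gives vanishing on $(\ker\ad Z_i)^{\perp_{g_1}}$. The case $W_i=0$, which the paper treats separately, is already covered by your second family, since then $\ker\ad Z_i=D_i$.
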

\begin{proof}
If $W_i=0$, then both left-hand sides of the system are zero, so that the second equation reduces to the required vanishing. Otherwise, take a generic $Z_i$; recall that such an element is ad-maximal. Put $K=\ker\ad Z_i$, $I=\operatorname{im}\ad Z_i$. Since the kernel of the adjoint of a linear map is the annihilator of its image, the first equation says that $\xi_i|_I=0$. Further, Lemma \ref{a1} gives $[K,D_i]\subset I$. Consequently, the second equation yields $\beta|_K=0$ wherever the rational solution is defined.

Let us show that the same holds on the orthogonal complement of the kernel. Contracting the second equation with $J^i_{\eta_i}Z_i$, we get
\[
\beta(J^i_{\eta_i}Z_i)=\xi_i([J^i_{\eta_i}Z_i,J^i_{\eta_i}Z_i])=0.
\]
Since the left-hand side is polynomial and the identity holds on a nonempty Zariski open set, this is a polynomial identity; hence it holds for all $\eta_i$, including any values where the solution itself is undefined. As $\beta$ is independent of $\eta_i$, it therefore annihilates the whole image of the map $\eta_i\mapsto J^i_{\eta_i}Z_i$, which, by duality, is $K^{\perp_{g_1}}$. Combining this with the vanishing on the kernel obtained above, we conclude that $\beta=0$ for generic $Z_i$ and external parameters, and, by polynomiality, this holds everywhere. Note that in this argument the ad-maximality is required only for generic $Z_i$.
\end{proof}

Let us apply this lemma to the principal part of the fundamental algebraic system. By Corollary \ref{frak_FAL} and the finite-row argument following Lemma \ref{pcoeff1}, the principal system is rationally consistent, that is, it admits a rational solution. Since the first layer of its last column vanishes, the first two layers of each of its blocks, written in the operator form, constitute a system of the type considered in the lemma. Therefore, for every component, we can apply the lemma with $\beta=\beta_i^2$; the required independence of $\eta_i$, i.e., of the internal edge variables, is precisely the content of Remark \ref{bindp}. Hence $\tilde{\mathfrak b}^2=0$.

We are now in a position to prove the vanishing of the cross-component structure functions with a horizontal input and an edge input from the same component and a central output from another component.
\begin{Lemma}
    \label{bianchi}
    If $y \in V_i$, $l \in E_i$ and $w \in E_t$ with $i \neq t$, then $c^w_{yl}=0$.
\end{Lemma}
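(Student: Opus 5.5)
The plan is to read off the vanishing of $c^w_{yl}$ directly from the identity $\tilde{\mathfrak b}^2=0$ established just before the statement. That identity came from Lemma \ref{beta2} applied with $\beta=\beta_i^2$. Combined with the explicit second-layer formula of Remark \ref{bindp}, it gives, for every $v\in V_i$,
\[
\sum_{t\ne i}\ \sum_{e\in E_i,\ f\in E_t}(\alpha_i^2-\alpha_t^2)\,q_{ve}\,q_{ef}\,u_f=0 .
\]
Substituting $q_{ve}=\sum_{y\in V}c^e_{yv}u_y$ and $q_{ef}=\sum_{x\in V}c^f_{xe}u_x$ turns each summand into a cubic monomial $u_yu_xu_f$. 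By Theorem \ref{Tanaka_decomp_prop}, $c^e_{yv}=0$ for $e\in E_i$ unless $y\in V_i$. The eigenvalue differences are nonzero, and monomials with different $f$ and different $x,y$ are distinct. I would therefore fix $f\in E_t$ and $x,y\in V_i$ and extract the coefficient of $u_xu_yu_f$. Writing
\[
T(x,y,v):=\sum_{e\in E_i}c^e_{yv}\,c^f_{xe},
\]
this coefficient yields $T(x,y,v)+T(y,x,v)=0$ when $x\ne y$, and $2T(x,x,v)=0$ when $x=y$.

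Next I would use the obvious symmetry $T(x,y,v)=-T(x,v,y)$, which holds because $c^e_{yv}$ is skew in $y,v$. Together with the sign relation just obtained, this is the standard situation of a trilinear tensor that is antisymmetric in the first two slots and antisymmetric in the last two. Alternating the two transpositions $(12)$ and $(23)$ runs through all six permutations. Since $(12)(23)$ is a $3$-cycle and $((12)(23))^3=\mathrm{id}$, the signs compose to give $T(x,y,v)=T(x,y,v)$ with no contradiction. So antisymmetry alone does not kill $T$; it only shows that $T$ is totally antisymmetric. At this point I would invoke the Jacobi identity. Its $X_f$-component for $[X_x,[X_y,X_v]]$ and the cyclic permutations contains the terms $T$, and it also contains:
\begin{enumerate}
\item derivative terms $X_x(c^f_{yv})$, which vanish because $c^f_{yv}=0$ for $y,v\in V_i$, $f\in E_t$;
\item terms $c^z_{yv}c^f_{xz}$ with $z\in V$, for which $c^z_{yv}\neq0$ forces $z\in V_i$ by Proposition \ref{c2}, and then $c^f_{xz}=0$ again by Theorem \ref{Tanaka_decomp_prop};
\item terms through edges of other components, $c^{e'}_{yv}$ with $e'\notin E_i$, which are zero by the same decomposition.
\end{enumerate}
Hence the cyclic sum gives $T(x,y,v)+T(y,v,x)+T(v,x,y)=0$. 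For a totally antisymmetric $T$ this sum equals $3T(x,y,v)$, so $T\equiv0$ on $V_i^3$.

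Finally I would pass from $T$ to individual structure functions. Given $l\in E_i$, write $l=\overrightarrow{yv}$. By \eqref{struct_quasinorm}, $c^e_{yv}=\delta_{e,l}$ for $e\in E$, so $T(x,y,v)=c^f_{xl}$, and hence $c^f_{xl}=0$ for every $x\in V_i$ and $f\in E_t$, which is the claim. The main obstacle is the second step: verifying carefully that all extra Jacobi terms vanish. In particular I must check that no $E_i$-component of $[X_y,X_v]$ other than the edge fields leaks in through horizontal outputs, and that the signs in the coefficient extraction really produce antisymmetry in $(x,y)$ rather than symmetry. If the sign turned out to be symmetric instead, the same chain of relations, symmetric in $(12)$ and antisymmetric in $(23)$, would give $T=-T$ directly and make the Jacobi step unnecessary.
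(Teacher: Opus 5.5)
Your proof is correct and follows the paper's argument. The paper uses the same tensor, which it calls $F_w$: skew-symmetry of the bracket makes it skew in its last two slots, extracting coefficients from $\tilde{\mathfrak b}^2=0$ makes it skew in its first two slots, and the cyclic Jacobi identity then kills it, with the extra Jacobi terms eliminated exactly as you do. The only difference is the last step. The paper passes through the functional $\eta^w_x$, the operator $J^i_{\eta^w_x}$ and the surjectivity of the Levi bracket onto $W_i$, whereas you read off $c^f_{xl}$ directly by writing $l=\overrightarrow{yv}$ and using \eqref{struct_quasinorm}; this is a valid shortcut in a quasi-normal frame.
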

\begin{proof}
    First, given $x, z, v \in V_i$ and $w\in E_t$ with $t\neq i$, we have
   \begin{equation}
   \label{jacobi1}
    F_w(x,z,v):=\varepsilon^w\Bigl(\bigl[X_x,[ X_z, X_v]\bigr]\Bigr)=\sum_{l\in V\cup E}c_{xl}^wc_{zv}^l
   \end{equation}
   (note that \eqref{jacobi1} does not involve directional derivatives of the structure functions, because, by the decomposition of the Tanaka symbol, $c^w_{zv}=0$ identically for $z,v\in V_i$ and $w\in E_t$, $t\ne i$). Observe that:
   \begin{enumerate}
   \item if $l\in V_i$, then $c_{xl}^w=0$ by the decomposition of the Tanaka symbol \eqref{eigen_decomp};
   \item if $l\in E_s$ with $s\neq i$, then $c_{zv}^l=0$, again by the decomposition of the Tanaka symbol \eqref{eigen_decomp};
   \item if $l\in V_s$ with $s\neq i$, then $c_{zv}^l=0$ by Propositions \ref{c0}, \ref{c1}, and \ref{c2}.
   \end{enumerate}
   Therefore, $F_w(x,z,v)$, defined in \eqref{jacobi1}, satisfies
    \begin{equation}
    \label{jacobi2}
    F_w(x,z,v)=\sum\limits_{l \in E_i}c^w_{xl}c^l_{zv}.
    \end{equation}
    Using the definition of $F_w(x,z,v)$ given in \eqref{jacobi1} together with the Jacobi identity, we obtain
    \begin{equation}
        \label{fw}
        F_w(x,z,v)+F_w(z,v,x)+F_w(v,x,z)=0
    \end{equation}
    for every $x,z,v \in V_i$.
    Note that $F_w$ is skew-symmetric in its last two arguments, by the skew-symmetry of the Lie bracket. We will show that $F_w$ is in fact totally skew-symmetric.

    Indeed, as we have shown above with the help of Lemma \ref{beta2}, $\beta^2=0$, and hence $\tilde{\mathfrak{b}}^2_v=0$ for all $v \in V$. Since $\tilde{\mathfrak{b}}^1_v=0$ as well, by Corollary \ref{b10}, it follows from equation \eqref{tbs} that
\begin{align}
&\sum_{l\in E}
\sum_{\substack{w\in E\\
                \mathcal{C}(w)\neq\mathcal{C}(l)}}
\bigl(\alpha(l)^2-\alpha(w)^2\bigr)
q_{vl}q_{lw}u_w
\\
&=
\sum_{l\in E}
\sum_{\substack{w\in E\\
                \mathcal{C}(w)\neq\mathcal{C}(l)}}
\sum_{x,y\in V}
\bigl(\alpha(l)^2-\alpha(w)^2\bigr)
c^l_{xv}c^w_{yl}\,u_xu_yu_w =0.
\end{align}
Consequently, comparing the coefficients of $u_xu_yu_w$ yields
\begin{equation}
\label{jacobi_b2=0}
\sum_{\substack{l\in E\\
                \mathcal{C}(l)\neq\mathcal{C}(w)}}
\bigl(\alpha(l)^2-\alpha(w)^2\bigr)
\left(
c^l_{xv}c^w_{yl}
+
c^l_{yv}c^w_{xl}
\right)
=0, \quad \forall x,y\in V, \,w\in E.
\end{equation}
Using \eqref{jacobi1} and item (2) above, we obtain that if $x, y\in V_i$ and $w\in E_t$ with $t\neq i$, then
\begin{equation}
\label{jacobi3}
\bigl(\alpha_i^2-\alpha_t^2\bigr)
\Bigl(
F_w(y,x,v)+F_w(x,y,v)
\Bigr)=0.
\end{equation}
Here, by item (2) above, the summation in \eqref{jacobi_b2=0} can be restricted to $l \in E_i$, so that $\alpha(l) = \alpha_i$ and $\alpha(w) = \alpha_t$.
Thus $F_w$ is skew-symmetric in its first two arguments as well, and is therefore totally skew-symmetric. Consequently, the left-hand side of \eqref{fw} equals $3F_w(x,y,v)$, and \eqref{fw} implies that
\begin{equation}
\label{f0}
    F_w(x,y,v)=0, \quad \forall x,y,v \in V_i,\, w \in E_t \text{ with } t \neq i.
\end{equation}

Now, for fixed \(x\in V_i\) and \(w\in E_t\), where
\(t\neq i\), define
\begin{equation}
\label{etawx}
\eta_x^w\in W_i^*,
\qquad
\eta_x^w(X_\ell)=c^w_{x\ell},
\quad \ell\in E_i.
\end{equation}
Then, for \(y,v\in V_i\), we have
\begin{align*}
&g_1(J^i_{\eta^w_x}X_y,X_v)
\stackrel{\eqref{J_eta}}{=}
\eta_x^w\bigl([X_y,X_v]\bigr)=
\eta_x^w\left(
    \sum_{\ell\in E_i}
    c^\ell_{yv}X_\ell
\right)
=\\
&\sum_{\ell\in E_i}
c^\ell_{yv}\,\eta_x^w(X_\ell)\stackrel{\eqref{etawx}}{=}
\sum_{\ell\in E_i}
c^\ell_{yv}c^w_{x\ell}
=F_w(x,y,v).
\end{align*}
In other words, $F_w(x,y,v)=g_1(J^i_{\eta^w_x}X_y,X_v)$, which vanishes by \eqref{f0}. Since $y,v \in V_i$ were arbitrary, we conclude that $J^i_{\eta^w_x}=0$.
Moreover, since the Levi bracket map $\bigwedge^2 D_i \to D_i^2/D_i$, $(X, Y) \mapsto [X, Y] \bmod D_i$, is surjective onto $W_i$ under the identification \eqref{mi_-2W}, we conclude that $\eta^w_x = 0$ as well.
By \eqref{etawx}, this yields that $c^w_{yl}=0$ for all $y \in V_i$ and $l \in E_i$, as required.
\end{proof}

To show that the whole principal part of the last column vanishes, we also need to treat the mixed horizontal index in $q_{ef}$, that is, the terms of its defining sum whose horizontal index belongs to a component different from that of the first edge index. Let $e\in E_i$ and $x\in V_k$, $k\ne i$, and write $X_e=[X_a,X_b]$ with $a,b\in V_i$. The splitting of the Tanaka symbol into pairwise commuting ideals, together with Proposition \ref{c3}, gives $[D_k,D_i]\subset D_k\oplus D_i$. Then, by the Jacobi identity,
\[
[X_x,X_e]=[[X_x,X_a],X_b]+[X_a,[X_x,X_b]]\in D_k+D_i^2.
\]
Expanding this bracket in the frame, we see, in particular, that $c^f_{xe}=0$ for every edge $f\notin E_i$. For $x\in V_i$, the same conclusion is precisely the statement of Lemma \ref{bianchi}. Combining both cases, we get
\[
q_{ef}=\sum_{x\in V}c^f_{xe}u_x=0\qquad(e\in E_i,\ f\in E_t,\ t\ne i).
\]
Since, moreover, $\tilde{\mathfrak b}^1=0$, the recurrence \eqref{tbs_frak} implies, by induction on the layer, the following corollary.
\begin{Coro}
\label{btfrak0}
    The principal part $\tilde{\mathfrak{b}}$ of $\tilde{b}$ is identically $0$. Consequently, the minimal vertex degree of the entries of $\tilde{b}$ is at least $3$.
\end{Coro}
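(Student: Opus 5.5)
The plan is an induction on the layer $p$, driven by the recursion \eqref{tbs_frak} of Lemma \ref{recursive_tilde_b_frak}. That recursion reads
\[
\tilde{\mathfrak b}^{p+1}_v=\delta\tilde{\mathfrak b}^p_v+\sum_{\substack{e,f\in E\\ \mathcal C(e)=\mathcal C(v),\ \mathcal C(f)\ne\mathcal C(e)}}(\alpha(e)^2-\alpha(f)^2)\mathfrak A^p_{ve}q_{ef}u_f .
\]
The base case $p=1$ is exactly Corollary \ref{b10}, which gives $\tilde{\mathfrak b}^1=0$. For the inductive step, assume $\tilde{\mathfrak b}^p=0$. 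Since $\delta$ is a derivation on the polynomial ring in the fiber variables (see \eqref{principal_delta}), the first term $\delta\tilde{\mathfrak b}^p_v$ vanishes.

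For the second term, I will use the identity $q_{ef}=0$ for $e\in E_i$, $f\in E_t$, $t\neq i$, which was established just before the statement. Its proof splits by the component of the horizontal index $x$ in $q_{ef}=\sum_x c^f_{xe}u_x$:
\begin{itemize}
\item for $x\in V_i$, the vanishing of $c^f_{xe}$ is Lemma \ref{bianchi};
\item for $x\in V_k$ with $k\neq i$, it follows from the Jacobi identity combined with $[D_k,D_i]\subset D_k\oplus D_i$, the latter coming from Theorem \ref{Tanaka_decomp_prop} and Proposition \ref{c3}.
\end{itemize}
Every summand of the second term contains a factor $q_{ef}$ with $\mathcal C(f)\neq\mathcal C(e)$, so the whole sum vanishes. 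Hence $\tilde{\mathfrak b}^{p+1}=0$, and by induction the entire principal part $\tilde{\mathfrak b}=\tilde b_{(2)}$ is identically zero.

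The second assertion follows from Lemma \ref{lin_quad_lem}. That lemma says every entry of $\tilde b$ has minimal vertex degree at least $2$, and the degree-$2$ vertex-homogeneous component of each entry is by definition the corresponding entry of $\tilde{\mathfrak b}$. Since this component has just been shown to vanish, every entry of $\tilde b$ has minimal vertex degree at least $3$ (infinite if the entry vanishes identically).

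I expect no serious obstacle here: the substantive inputs, Corollary \ref{b10}, Lemma \ref{bianchi} and the Jacobi-identity argument for mixed horizontal indices, are already in place. The only point needing care is checking that \eqref{tbs_frak} exactly captures the quadratic vertex-homogeneous part, with no extra projective correction terms. This is guaranteed because Lemma \ref{recursive_tilde_b_frak} was derived from the exact identity \eqref{exact_column_rec} using Proposition \ref{eigenconst}.
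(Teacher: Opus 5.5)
Your proposal is correct and follows the paper's own argument. The paper likewise proves $q_{ef}=0$ for edges $e\in E_i$, $f\in E_t$ with $t\neq i$, using Lemma \ref{bianchi} for horizontal indices in $V_i$ and the Jacobi identity with $[D_k,D_i]\subset D_k\oplus D_i$ otherwise; it then runs the induction on the layer through \eqref{tbs_frak}, starting from Corollary \ref{b10}, and deduces the vertex-degree bound from Lemma \ref{lin_quad_lem}, exactly as you do.
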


The vanishing of the principal part allows us to pass to the cubic vertex-homogeneous part of the last column and to prove the analogue of the previous lemma for horizontal outputs.
\begin{Prop}
\label{mixed_edge_horizontal_vanishing}
    If $y \in V_i$, $l \in E_i$, and $w \in V_t$ with $i \neq t$, then $c^w_{yl}=0$.
\end{Prop}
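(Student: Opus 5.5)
We mirror the proof of Lemma \ref{bianchi}, replacing the quadratic vertex-homogeneous part of the last column by its cubic part. By Corollary \ref{btfrak0}, every entry of $\tilde b$ has vertex degree at least $3$. Hence, by the same degree count as in Corollary \ref{frak_FAL}, the $(n-m+3)$ vertex-homogeneous component of each maximal minor of $(A|b)$ equals the corresponding minor of $(A_{(1)}|\tilde b_{(3)})$. The core minor is nonzero by Lemma \ref{pcoeff1}, so the finite-row Cramer argument gives a rational solution of the cubic associated system $\mathfrak A\Psi=\tilde b_{(3)}$.

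The first step is to compute the first two layers of $\tilde b_{(3)}$ in block $i$ from the exact recursion \eqref{exact_column_rec}. For the first layer we use \eqref{tb1} together with the additional cubic terms of $R_w$. After the vanishing results of Propositions \ref{c2}, \ref{c3}, Corollary \ref{b10} and Lemma \ref{bianchi}, the cubic part should come only from terms in which a horizontal index is followed through an edge. The aim is to show that $\tilde b^1_{(3)}=0$. Then the second layer, for $v\in V_i$, should reduce to a sum of terms of the form
\[
\sum_{l\in E_i}\sum_{w\in V_t,\,t\neq i}(\alpha_i^2-\alpha_t^2)\,q_{vl}\,c^w_{yl}\,u_y u_w,
\]
together with terms coming from $\delta$ applied to $\tilde b^1_{(3)}$, which vanish once $\tilde b^1_{(3)}=0$. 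The key point is that, exactly as in Remark \ref{bindp}, this expression contains no internal edge variables beyond the one in $q_{vl}$, which is linear in the vertex variables. More precisely, written in operator form, the right-hand side depends on the internal edge variables only through the $J^i_{\eta_i}Z_i$ structure already present on the left. The part we need to isolate is the coefficient of $u_w$ with $w$ external: it is independent of $\eta_i$.

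Next we apply Lemma \ref{beta2}, with external vertex variables adjoined as parameters, to the first two layers of block $i$ of the cubic system. This gives that the second-layer right-hand side vanishes identically. Comparing the coefficients of $u_xu_yu_w$ then gives the analogue of \eqref{jacobi_b2=0}:
\[
(\alpha_i^2-\alpha_t^2)\bigl(G_w(y,x,v)+G_w(x,y,v)\bigr)=0,\qquad G_w(x,z,v):=\sum_{l\in E_i}c^w_{xl}c^l_{zv}.
\]
The Jacobi identity for $[X_x,[X_z,X_v]]$ projected onto $X_w$, $w\in V_t$, gives the cyclic relation. Here the derivative terms $X_x(c^w_{zv})$ vanish because $c^w_{zv}=0$ by Proposition \ref{c2}. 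The other contributions also vanish: those with $l\in V_i$ vanish by Proposition \ref{c2}, and those with $l\in E_s$, $s\neq i$, vanish by the Tanaka decomposition. So $G_w$ is totally skew and therefore zero. Defining $\eta^w_x(X_l)=c^w_{xl}$ as in \eqref{etawx}, we get $J^i_{\eta^w_x}=0$, and the surjectivity of the Levi bracket onto $W_i$ gives $\eta^w_x=0$, i.e. $c^w_{yl}=0$.

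The main obstacle is the first step: an honest computation of the cubic parts of $R_w$ and of the first layers. This includes the projective correction terms involving $c=\vec h_1(a)/a$ and the derivatives $X_e\lambda_s$. We must show that these either vanish, using Proposition \ref{eigenconst} and item (2) of Proposition \ref{first divi}, or do not depend on the internal edge variables, so that Lemma \ref{beta2} applies. If $\tilde b^1_{(3)}$ turns out not to vanish outright, we would first try to absorb it by a further column operation built from $A^1$, as in \eqref{eq:1}, before invoking the lemma.
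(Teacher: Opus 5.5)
Your strategy is the paper's own: rational consistency of the cubic associated system $\mathfrak A\Psi=\tilde b_{(3)}$, Lemma \ref{beta2} on the first two layers of block $i$, extraction of the terms linear in $u_w$ and quadratic in the $\Gamma_i$-vertex variables, the Jacobi identity, and surjectivity of the Levi bracket. Everything from the application of Lemma \ref{beta2} onward is correct. In the Jacobi step you should also note that the terms with an intermediate index in $V_s$, $s\neq i$, vanish by Proposition \ref{c2} through the factor $c^l_{zv}$.

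The gap is the step you yourself call the main obstacle. You never verify the two hypotheses of Lemma \ref{beta2}: that the first layer of the cubic right-hand side vanishes, and that its second layer in block $i$ is independent of $\eta_i$. The first you leave as an ``aim'', with a speculative fallback column operation. For the second you offer the criterion that the right-hand side depends on the internal edge variables ``only through $J^i_{\eta_i}Z_i$''. That is not the hypothesis of the lemma: its proof passes from $\beta(J^i_{\eta_i}Z_i)=0$ for all $\eta_i$ to $\beta|_{K^\perp}=0$ precisely because $\beta$ does not depend on $\eta_i$. You also check this independence only for the $u_w$-part, not for the whole block right-hand side. Your diagnosis of the difficulty is inaccurate as well. $R_w$ has no cubic terms. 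The contribution of $\vec h_1\tilde b^1$ to the cubic part of $\tilde b^2$ is not $\delta$ applied to $\tilde b^1_{(3)}$; it is the degree-raising part of $\vec h_1$ (base derivatives and the purely horizontal part of $\vec h_1u_v$) applied to $\tilde b^1$.

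The missing idea is total homogeneity. By Remark \ref{homogen_total}, which survives the column operation \eqref{eq:1}, every entry of $\tilde b^p$ is homogeneous of total $U$-degree $p+1$. This gives three facts at once:
\begin{enumerate}
\item $\tilde b^1_{(3)}=0$ for trivial degree reasons.
\item Since $\tilde b^1$ has total degree two and vertex degree at least two (Lemma \ref{lin_quad_lem}), $\tilde b^1=\tilde{\mathfrak b}^1$, which is zero by Corollary \ref{b10}. So the terms $\vec h_1\tilde b^1-c\tilde b^1$ in \eqref{exact_column_rec} drop out.
\item $(\tilde b^2)_{(3)}$ has edge degree zero, so it contains no edge variables at all, and Lemma \ref{beta2} applies verbatim.
\end{enumerate}
Reading off \eqref{exact_column_rec} with $p=1$ then gives, for $v\in V_i$,
\[
\bigl(\tilde b^2_v\bigr)_{(3)}=\sum_{\ell\in E}\sum_{s\in V}\bigl(\alpha(\ell)^2-\alpha(s)^2\bigr)q_{v\ell}q_{\ell s}u_s-\frac12\sum_{\ell\in E}\sum_{s\in V}q_{v\ell}X_\ell\bigl(\alpha(s)^2\bigr)u_s^2 .
\]
The term containing $\vec h_1\lambda_e-c\lambda_e$ has edge degree one, so it does not enter. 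In this formula only $\ell\in E_i$ contribute, the terms with $s\in V_i$ vanish because the eigenvalues coincide, and the second sum has no part linear in $u_w$. So the coefficient you need is exactly the one in your displayed expression, and the rest of your argument then goes through as in the paper.
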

\begin{proof}
Fix \(i\neq t\) and \(w\in V_t\). Throughout the proof we set
\[
d:=|E|=n-m,
\qquad
\widehat{\mathfrak b}:=\tilde b_{(3)}.
\]
Here the second quantity is the vertex homogeneous component of degree three of the last column. We first show that this cubic part is the right-hand side of a consistent linear system. By Corollary \ref{btfrak0}, the quadratic vertex-homogeneous part of
\(\tilde b\) vanishes. Consequently, for every
\((d+1)\times(d+1)\) minor of the augmented matrix
\([A\mid\tilde b]\), its component of vertex degree \(d+3\) is
precisely the corresponding maximal minor of
\[
[\mathfrak A\mid\widehat{\mathfrak b}],
\qquad
\mathfrak A=A_{(1)}.
\]
Indeed, the entries of the coefficient columns have vertex degree at least one, while, by the same corollary, the entries of the last column have vertex degree at least three. Hence, in the Leibniz expansion of such a minor, the component of the indicated vertex degree comes only from the products in which the last column contributes its degree \(3\) component, while each of the other \(d\) columns contributes its
degree \(1\) component. Since the maximal minors of
\([A\mid\tilde b]\) vanish, all maximal minors of
\([\mathfrak A\mid\widehat{\mathfrak b}]\) vanish as well. On the
other hand, Lemma \ref{pcoeff1} gives \(\det T\not\equiv0\), so that
\(\mathfrak A\) has full column rank over the field of rational
functions in the fiber variables. Therefore, by the same finite-row argument as for the principal system, the cubic associated
system
\begin{equation}
\label{cubic_associated_system}
\mathfrak A\,\Xi=\widehat{\mathfrak b}
\end{equation}
has a rational solution.

Since, by Corollary \ref{block_diag}, the coefficient matrix of this system is block diagonal, the rational solution restricts to each block. We will apply Lemma \ref{beta2} to the block of the component under consideration, regarding all variables belonging to the other components as parameters. For this purpose, let us first compute the first two layers of the cubic right-hand side. Since the first layer of the last column is quadratic in the vertex variables, the first layer of the cubic right-hand side is zero:
\[
\widehat{\mathfrak b}^{\,1}=0.
\]
For the second layer, we take \(p=1\) in
\eqref{term.of.b.rec.form}, use the column operation \eqref{eq:1},
and extract the vertex-degree-\(3\) component. Since
\(\tilde b^{\,2}\) is homogeneous of total \(U\)-degree \(3\), every
term in
\[
\hat{\mathfrak b}^{\,2}
=
\bigl(\tilde b^{\,2}\bigr)_{(3)}
\]
has edge degree \(0\). In particular,
\(\hat{\mathfrak b}^{\,2}\) is independent of the edge variables. Taking into account that \(a^1_{v\ell}=q_{v\ell}\), we obtain
\begin{align}
\label{cubic_second_layer}
\hat{\mathfrak b}^{\,2}_v
={}&
\sum_{\ell\in E}
\sum_{s\in V}
\bigl(\alpha(\ell)^2-\alpha(s)^2\bigr)
q_{v\ell}q_{\ell s}u_s
\nonumber\\
&-\frac12
\sum_{\ell\in E}
\sum_{s\in V}
q_{v\ell}X_\ell\bigl(\alpha(s)^2\bigr)u_s^2.
\end{align}

Next, let \(\xi_i\) denote the \(i\)-th block of a rational solution of
\eqref{cubic_associated_system}. Then
\[
\xi_i\in W_i^*
\]
and
\[
\mathfrak A_i\xi_i=\widehat{\mathfrak b}_i
\]
on the nonempty Zariski-open subset of the fiber on which the chosen
rational solution is defined. By
Lemma \ref{A^_1 and 2_i_lemma}, the equations of the first two layers of this block read
\[
\bigl(\operatorname{ad}Z_i\bigr)^*\xi_i=0,
\qquad
\Bigl(\operatorname{ad}(J^i_{\eta_i}Z_i)\Bigr)^*\xi_i
=
\widehat\beta_i^2,
\]
where
\[
\widehat\beta_i^2
=
\sum_{v\in V_i}
\widehat{\mathfrak b}_v^2\,\theta^v.
\]
As we have already observed, since \(\tilde b^2\) is homogeneous of total \(U\)-degree \(3\) and
\[
\widehat{\mathfrak b}^{\,2}
=
\bigl(\tilde b^2\bigr)_{(3)}
\]
has vertex degree \(3\), it has edge degree \(0\). Therefore,
\(\widehat\beta_i^2\) is independent of \(\eta_i\). Applying Lemma \ref{beta2}, with the external variables as parameters, we get
\[
\widehat\beta_i^2=0
\]
on the set where the solution is defined. Since \(\widehat\beta_i^2\) is polynomial in the fiber
variables, it follows that
\[
\widehat\beta_i^2\equiv0.
\]

Now fix \(v\in V_i\) and, as above, \(w\in V_t\), where
\(t\neq i\). In the identity
\(\widehat\beta_i^2=0\), we collect the terms which are linear in \(u_w\),
quadratic in the \(\Gamma_i\)-vertex variables, and independent of
the remaining external variables. Let us find out which terms of the second layer contribute to this part.
First, by Corollary \ref{block_diag}, the factor \(q_{v\ell}\) vanishes
unless \(\ell\in E_i\). Therefore, in the first sum of
\eqref{cubic_second_layer}, the desired terms can arise
from \(s=w\). If instead \(s\in V_i\) and the factor \(u_w\)
comes from \(q_{\ell s}\), then
\[
\alpha(\ell)^2-\alpha(s)^2
=
\alpha_i^2-\alpha_i^2
=
0,
\]
so that such terms vanish. As for the second sum in
\eqref{cubic_second_layer}, it contains the square \(u_s^2\) and hence
has no term which is linear in \(u_w\).
Consequently, we obtain
\[
\bigl(\alpha_i^2-\alpha_t^2\bigr)
\sum_{\ell\in E_i}
\sum_{x,y\in V_i}
c^\ell_{xv}c^w_{y\ell}\,u_xu_y
=
0.
\]
Since \(\alpha_i^2\neq\alpha_t^2\), this gives the quadratic
polynomial identity
\[
\sum_{\ell\in E_i}
\sum_{x,y\in V_i}
c^\ell_{xv}c^w_{y\ell}\,u_xu_y
=
0.
\]
Polarizing this identity, we get
\begin{equation}
\label{cubic_skew}
\sum_{\ell\in E_i}
\left(
c^\ell_{xv}c^w_{y\ell}
+
c^\ell_{yv}c^w_{x\ell}
\right)
=0
\end{equation}
for all \(x,y,v\in V_i\).

As in the proof of the previous lemma, where the output was central, we define
\begin{equation}
\label{Gw_vertex}
F_w(x,y,v)
:=
\sum_{\ell\in E_i}
c^\ell_{yv}c^w_{x\ell}.
\end{equation}
Equation \eqref{cubic_skew} says precisely that \(F_w\) is skew-symmetric in
its first two arguments. Moreover, it is skew-symmetric in its last two
arguments, because
\[
c^\ell_{yv}=-c^\ell_{vy}.
\]
Hence \(F_w\) is an alternating three-form on \(D_i\).

It remains to use the Jacobi identity. Namely, for
\(x,y,v\in V_i\), we take the \(X_w\)-component of
\[
[X_x,[X_y,X_v]]
+
[X_y,[X_v,X_x]]
+
[X_v,[X_x,X_y]]
=0.
\]
The \(X_w\)-component of the first summand is equal to
\[
X_x(c^w_{yv})
+
\sum_{r\in V\cup E}
c^r_{yv}c^w_{xr}.
\]
By Propositions \ref{c0}, \ref{c1}, and \ref{c2},
\(c^w_{yv}=0\) whenever \(y,v\in V_i\) and
\(w\in V_t\), \(t\neq i\), so the derivative term vanishes.
The same propositions eliminate the terms with an intermediate
vertex index outside \(V_i\), as well as, through the factor
\(c^w_{xr}\), the terms with an intermediate vertex index in \(V_i\). Finally, the decomposition \eqref{eigen_decomp}
eliminates the terms with \(r\in E_s\), \(s\neq i\).
Thus only the terms with \(r=\ell\in E_i\) remain, and, treating the other two summands in the same way, we see that the Jacobi identity
becomes
\begin{equation}
\label{Gw_vertex_cyclic}
F_w(x,y,v)+F_w(y,v,x)+F_w(v,x,y)=0.
\end{equation}
Since \(F_w\) is alternating and cyclic permutations are even, the three terms in
\eqref{Gw_vertex_cyclic} are equal. Therefore,
\[
3F_w(x,y,v)=0,
\]
and hence
\[
F_w=0.
\]

Let us now deduce from this the vanishing of the required structure functions. For fixed \(x\in V_i\), define
\[
\eta_x^w\in W_i^*,
\qquad
\eta_x^w(X_\ell):=c^w_{x\ell},
\quad
\ell\in E_i.
\]
Further, denote by
\[
L_i:\bigwedge^2D_i\longrightarrow W_i
\]
the Levi bracket. Then \eqref{Gw_vertex} gives
\[
\eta_x^w\bigl(L_i(X_y,X_v)\bigr)
=
F_w(x,y,v)
=
0
\]
for all \(y,v\in V_i\). Since the Levi bracket is surjective onto
\(W_i\) under the identification \eqref{mi_-2W}, we conclude that
\(\eta_x^w=0\). In other words,
\[
c^w_{x\ell}=0
\qquad
\text{for all }
x\in V_i,\quad
\ell\in E_i,
\]
which proves the proposition.
\end{proof}

The vanishing results obtained so far can be summarized in terms of the brackets of the vertex fields with the squares of the components, as follows.
\begin{Prop}\label{ady}
For every $i,t\in[1:k]$, the following inclusions hold:
\begin{enumerate}
    \item $\operatorname{ad}_{X_y}(D_i^2)\subset D_i^2+D_t^2$ whenever
    $y\in V_t$ and $t\neq i$;
    \item $\operatorname{ad}_{X_y}(D_i^2)\subset D_i^2$ whenever
    $y\in V_i$.
\end{enumerate}
\end{Prop}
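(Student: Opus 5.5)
The plan is to work on the regular locus with a quasi-normal frame, where $D_i^2=\operatorname{span}\{X_s:s\in V_i\cup E_i\}$ by Proposition \ref{c2}. It then suffices to check, for each basis field $X_s$ with $s\in V_i\cup E_i$, where the bracket $[X_y,X_s]$ can have nonzero components. I would expand this bracket in the frame via the structure functions $c^z_{ys}$ and show that the components $z$ lying outside the permitted index sets vanish, using the vanishing results already proved. Both items are pointwise inclusions of brackets of frame fields; since smooth multiples of $X_s$ only add terms in $X_s$ itself, the inclusions for the spans follow.

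For item (2), take $y\in V_i$. If $s\in V_i$, then $[X_y,X_s]$ has central components only in $E_i$, by the decomposition \eqref{eigen_decomp}, and horizontal components only in $V_i$, by Proposition \ref{c2}. Hence it lies in $D_i^2$. If $s\in E_i$, Lemma \ref{bianchi} removes every central output in $E_t$ with $t\ne i$, and Proposition \ref{mixed_edge_horizontal_vanishing} removes every horizontal output in $V_t$ with $t\ne i$. So again only indices from $V_i\cup E_i$ survive.

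For item (1), take $y\in V_t$ with $t\ne i$. If $s\in V_i$, item (1) of Proposition \ref{outline_transverse_vanishing} gives no central output at all. Proposition \ref{c3} kills horizontal outputs in a third component, so $[X_y,X_s]\in D_i\oplus D_t$. If $s=e\in E_i$, write $X_e=[X_a,X_b]$ with $a,b\in V_i$, which is possible by quasi-normality, and use the Jacobi identity:
\[
[X_y,X_e]=[[X_y,X_a],X_b]+[X_a,[X_y,X_b]].
\]
By the previous case, $[X_y,X_a]$ and $[X_y,X_b]$ lie in $D_i\oplus D_t$ with smooth coefficients. I would then expand each resulting bracket by the Leibniz rule into four kinds of terms. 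Terms coming from derivatives of the coefficients lie in $D_i\oplus D_t$. Brackets of two $D_i$-fields lie in $D_i^2$, by the case $s\in V_i$ of item (2). Brackets of a $D_t$-field with a $D_i$-field lie in $D_i\oplus D_t$. This gives $[X_y,X_e]\in D_i^2+D_t\subset D_i^2+D_t^2$, which is the same computation already sketched just before Corollary \ref{btfrak0}.

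The only mildly delicate point is bookkeeping. The coefficient fields in $D_i\oplus D_t$ are not frame fields, so the Jacobi step has to be done with general sections and the Leibniz rule rather than with structure constants. Once it is phrased as an inclusion of module spans, each term is covered by a case already established, so I do not expect a genuine obstacle.
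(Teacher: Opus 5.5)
Your proposal is correct and follows the paper's proof essentially step by step. The ingredients match: the frame $\{X_s\}_{s\in V_i\cup E_i}$ of $D_i^2$ obtained from Proposition \ref{c2}; Lemma \ref{bianchi} and Proposition \ref{mixed_edge_horizontal_vanishing} for item (2); and, for item (1), the mixed inclusion $[D_t,D_i]\subset D_i\oplus D_t$ from the symbol splitting and Proposition \ref{c3}, combined with the Jacobi identity applied to $X_e=[X_a,X_b]$. The only cosmetic difference is the case $y,s\in V_i$, where the paper simply uses $[D_i,D_i]\subset D_i^2$ by definition, so your structure-function check there is unnecessary, though harmless.
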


\begin{proof}
First note that
Propositions \ref{c1bis} and \ref{c2} imply
\[
D_i^2\cap D=D_i.
\]
Moreover, if \(e\in E_i\), then \(e=\overrightarrow{vw}\) for
some \(v,w\in V_i\), and the quasi-normality relation
\eqref{quasinorm_basis} gives
\[
X_e=[X_v,X_w].
\]
In particular, \(X_e\) is a section of \(D_i^2\). Since the classes
of the \(X_e\), \(e\in E_i\), form a basis of
\(\mathfrak m^i_{-2}=D_i^2/D_i\), it follows that
\[
\{X_v\}_{v\in V_i}
\cup
\{X_e\}_{e\in E_i}
\]
is a local frame of \(D_i^2\). Thus, in both parts of the
proposition, it is enough to compute the brackets of \(X_y\) with
these frame fields.

We first establish the following mixed-bracket inclusion. Let
\(i\neq t\), \(v\in V_i\), and \(y\in V_t\).
Since \(\mathfrak m^i\) and \(\mathfrak m^t\) are distinct summands
of the direct-sum decomposition \eqref{eigen_decomp}, their mutual
bracket in the Tanaka symbol is zero. Hence
\[
[X_y,X_v]\bmod D=0,
\]
so that \([X_y,X_v]\) is a section of \(D\). On the other hand,
Proposition \ref{c3} shows that, for every
\(s\notin\{i,t\}\) and every \(w\in V_s\),
\[
c^w_{yv}=0.
\]
Consequently,
\begin{equation*}
[X_y,X_v]\in D_i\oplus D_t,
\qquad
v\in V_i,\quad y\in V_t,\quad i\neq t.
\end{equation*}
Since the corresponding vertex fields form local frames of
\(D_i\) and \(D_t\), the same statement holds for arbitrary local
sections, which gives
\begin{equation}
\label{mixed_Di_Dt_bracket}
[D_t,D_i]\subset D_i\oplus D_t,
\qquad i\neq t.
\end{equation}

To prove Item (1), fix \(y\in V_t\) with \(t\neq i\).
For \(v\in V_i\), the inclusion just obtained gives
\[
[X_y,X_v]\in D_i\oplus D_t
\subset D_i^2+D_t^2.
\]
It remains to consider \(e\in E_i\). Write
\(e=\overrightarrow{vw}\), where \(v,w\in V_i\). By
\eqref{quasinorm_basis}, we have
\[
X_e=[X_v,X_w].
\]
Therefore, the Jacobi identity gives
\[
[X_y,X_e]
=
[X_y,[X_v,X_w]]
=
[[X_y,X_v],X_w]+[X_v,[X_y,X_w]].
\]
Both inner brackets on the right-hand side are sections of
\(D_i\oplus D_t\) by \eqref{mixed_Di_Dt_bracket}. Since
\[
[D_i,D_i]\subset D_i^2
\]
by definition and
\[
[D_t,D_i]\subset D_i\oplus D_t
\]
by \eqref{mixed_Di_Dt_bracket}, each of the two terms on the
right-hand side belongs to
\[
D_i^2+D_t\subset D_i^2+D_t^2.
\]
Thus
\[
[X_y,X_e]\in D_i^2+D_t^2
\qquad
\text{for every }e\in E_i.
\]
Since the fields \(X_v\), \(v\in V_i\), and
\(X_e\), \(e\in E_i\), form a local frame of \(D_i^2\),
the Leibniz rule now yields
\[
\operatorname{ad}_{X_y}(D_i^2)\subset D_i^2+D_t^2,
\]
which proves Item (1).

We turn to Item (2). Let \(y\in V_i\). If
\(v\in V_i\), then
\[
[X_y,X_v]\in [D_i,D_i]\subset D_i^2.
\]
Now let \(e\in E_i\). Lemma \ref{bianchi} gives
\[
c^w_{ye}=0
\qquad
\text{for every }w\in E_t,\quad t\neq i,
\]
while the proposition immediately preceding the present one gives
\[
c^w_{ye}=0
\qquad
\text{for every }w\in V_t,\quad t\neq i.
\]
Hence, in the structure-function expansion
\[
[X_y,X_e]
=
\sum_{w\in V\cup E}
c^w_{ye}X_w,
\]
all terms whose upper index belongs to a component different from
\(\Gamma_i\) vanish. Therefore,
\[
[X_y,X_e]\in
\operatorname{span}
\{X_w:w\in V_i\cup E_i\}
=
D_i^2.
\]
Using again that these vertex and edge fields form a local frame of
\(D_i^2\), we obtain from the Leibniz rule that
\[
\operatorname{ad}_{X_y}(D_i^2)\subset D_i^2.
\]
This proves Item (2) and completes the proof.
\end{proof}

These inclusions imply the following involutivity statement.
\begin{Coro}\label{involutive}
Every partial sum $\bigoplus_{i\in I}D_i^2$ is involutive. In particular, each of the distributions $D_i^2$, as well as each pairwise sum of them, is involutive.
\end{Coro}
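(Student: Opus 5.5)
The plan is to reduce everything to the two inclusions of Proposition \ref{ady} and to check involutivity on local frames. By Propositions \ref{c1bis} and \ref{c2}, the fields $\{X_v\}_{v\in V_i}\cup\{X_e\}_{e\in E_i}$ form a local frame of $D_i^2$. For a partial sum $\bigoplus_{i\in I}D_i^2$, the union of these frames over $i\in I$ is therefore a frame (the sum is direct because $D_i^2\cap D=D_i$ and the central parts $W_i$ are independent by Theorem \ref{Tanaka_decomp_prop}). By the Leibniz rule, it suffices to show that the bracket of any two frame fields lies in $\bigoplus_{i\in I}D_i^2$. There are four types of pairs: vertex–vertex, vertex–edge, edge–edge, and each of these either within one component or across two components of $I$.

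The vertex–vertex and vertex–edge cases follow directly. Within one component, $[X_y,X_v]\in D_i^2$ by the definition of $D_i^2$, and item (2) of Proposition \ref{ady} gives $[X_y,X_e]\in D_i^2$ for $y\in V_i$, $e\in E_i$. Across two components, with $y\in V_t$ and $e$ or $v$ in component $i$, item (1) of Proposition \ref{ady} gives $[X_y,\cdot]\in D_i^2+D_t^2$. Since $i,t\in I$, this lies in the partial sum.

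The edge–edge case needs the Jacobi identity. Take $e\in E_i$ with $X_e=[X_a,X_b]$, $a,b\in V_i$, and any frame field $Z$ of $D_t^2$, where $t=i$ is allowed. Then
\[
[X_e,Z]=[X_a,[X_b,Z]]-[X_b,[X_a,Z]].
\]
By the previous step, $[X_b,Z]\in D_i^2+D_t^2$. Applying $\mathrm{ad}_{X_a}$ and using Proposition \ref{ady} again, $\mathrm{ad}_{X_a}$ preserves $D_i^2$ by item (2) and maps $D_t^2$ into $D_i^2+D_t^2$ by item (1) if $t\ne i$. Hence $[X_e,Z]\in D_i^2+D_t^2$. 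Here one has to be careful: Proposition \ref{ady} is stated for frame vector fields $X_y$ acting on the module of sections, so applying it to $[X_b,Z]$ written as a $C^\infty$-combination of frame fields again uses the Leibniz rule. The derivative terms $X_a(f)$ multiply frame fields already in the sum, so they cause no trouble.

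The main point to check is exactly this bookkeeping: that Proposition \ref{ady} is an inclusion of modules of sections rather than a pointwise statement, so that it can be iterated. Once this is in place, the four cases cover all brackets of frame fields in $\bigoplus_{i\in I}D_i^2$, which proves involutivity. The special cases $I=\{i\}$ and $I=\{i,t\}$ give the last sentence of the statement.
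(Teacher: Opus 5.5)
Your proposal is correct and follows the paper's argument. The paper also reduces everything to the two inclusions of Proposition \ref{ady}, handles brackets with horizontal fields directly, treats $[[X_a,X_b],Y]$ by the Jacobi identity, and extends from frame fields to arbitrary sections by the Leibniz rule. Your remark that Proposition \ref{ady} is an inclusion of modules of sections, and so can be applied twice, just spells out what the paper leaves implicit.
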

\begin{proof}
For brevity, we write $F_i=D_i^2$. By Proposition \ref{ady}, we have $[D_i,F_i]\subset F_i$ and $[D_j,F_i]\subset F_i+F_j$ for $i\ne j$. Recall that $F_i$ is spanned by $D_i$ together with the brackets of the frame fields of the latter. The brackets of the fields of the first type with sections of the squares are controlled by the above inclusions, so it suffices to consider the brackets of the form $[[X_a,X_b],Y]$. First, if the last field here is a section of the same square, then, expanding such a bracket by the Jacobi identity and using the first of the above inclusions twice, we get $[F_i,F_i]\subset F_i$. If, instead, $Y\in F_j$, then the same identity together with the mixed inclusions gives $[F_i,F_j]\subset F_i+F_j$. The Leibniz rule extends these assertions from frame fields to arbitrary sections. Consequently, the bracket of two sections of any partial sum stays in that sum, that is, every partial sum is involutive.
\end{proof}

\section{Completing the proof of Theorems \ref{mainthm} and \ref{mainthm_eisen} by rotating the frames on the \texorpdfstring{$D_i$}{Di}}
\label{rotation_sec}

We now complete the proof by adapting the frame-rotation argument of \cite[Section~4.7]{Zaifeng}.  The affine argument in that paper produces an orthonormal rotated frame.  In the projective case the same construction produces, in general, only a conformal copy of the metric on each eigenspace $D_i$.  The first-divisibility identities of Proposition \ref{first divi} determine the corresponding conformal factor and give precisely the coefficients in Definition \ref{Gen_Levi-Civita}.

The vanishing results of Sections \ref{transverse_vanishing_sec} and \ref{other_sec} were obtained in quasi-normal frames, and hence only near the points of the regular locus $\Omega$, that is, of the open dense subset of the stable neighborhood on which the characteristic tuple is locally constant. Their geometric content, however, consists of inclusions between the brackets of the eigenspace distributions $D_i$ and of their squares $D_i^2$, and each of these inclusions expresses the vanishing of a smooth tensor defined on the whole stable neighborhood. In Subsection \ref{tuple_continuation_subsec} we use this observation to extend these inclusions from $\Omega$ to the whole stable neighborhood by continuity. After that, in Subsection \ref{rotation_argument_subsec}, the rotation argument is carried out near an arbitrary stable point, with an arbitrary weakly adapted frame in place of a quasi-normal frame. Apart from these vanishing results, which are used only on $\Omega$, this section does not use quasi-normal frames; such frames need not exist near the points where the characteristic tuple jumps. The other results used below, namely Theorem \ref{Tanaka_decomp_prop} and Propositions \ref{eigenconst} and \ref{first divi}, hold near every stable point and do not require quasi-normal frames.

\subsection{Extension of the bracket relations across jumps of the characteristic tuple}
\label{tuple_continuation_subsec}

Throughout this section, $\mathcal U$ denotes a connected stable neighborhood on which $g_1$ and $g_2$ are projectively equivalent, and $\Omega\subset\mathcal U$ denotes the open dense subset on which the characteristic tuple of $D$ is locally constant. We write $m_i=\rank D_i$, and $d_i$ for the rank of the summand $W_i$ of the decomposition
\[
T\mathcal U/D=\bigoplus_{i=1}^kW_i,\qquad W_i=\operatorname{im}\bigl(\Lambda^2D_i\longrightarrow T\mathcal U/D\bigr),
\]
given by Theorem \ref{Tanaka_decomp_prop}, where the map is induced by the Levi bracket. By this theorem, every $d_i$ is constant on $\mathcal U$ and every $W_i$ is a smooth subbundle. Under the assumed projective equivalence, this decomposition uses only the step $2$ condition and the stability of the points of $\mathcal U$; the characteristic tuple plays no role in it.

\begin{Prop}[Extension of the bracket relations]
\label{tuple_continuation_prop}
The following statements hold on the whole neighborhood $\mathcal U$.
\begin{enumerate}
\item For every $i$, we have $D_i^2\cap D=D_i$. Consequently, $F_i:=D_i^2$ is a smooth subbundle of rank $m_i+d_i$, and
\begin{equation}
\label{rotation_tangent_decomp}
T\mathcal U=\bigoplus_{i=1}^kF_i.
\end{equation}
\item For all $i\neq t$,
\begin{equation}
\label{extension_mixed}
[D_t,D_i]\subset D_t\oplus D_i.
\end{equation}
\item For every $i$ and every $t\neq i$,
\begin{equation}
\label{extension_adjoint}
[D_i,F_i]\subset F_i,\qquad [D_t,F_i]\subset F_i\oplus F_t.
\end{equation}
\item For every nonempty subset $J\subset[1:k]$, the distribution
\begin{equation}
\label{rotation_delta}
\Delta_J:=\bigoplus_{i\in J}F_i
\end{equation}
is involutive.
\end{enumerate}
In particular, the inclusions of Proposition \ref{ady} and the involutivity statements of Corollary \ref{involutive} hold on the whole of $\mathcal U$.
\end{Prop}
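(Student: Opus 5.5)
The plan is to express each relation as the vanishing of a smooth tensor on $\mathcal U$, check that it vanishes on the dense set $\Omega$ using the results of Sections \ref{transverse_vanishing_sec} and \ref{other_sec}, and conclude by continuity. The underlying bundles $D_i$ are smooth on all of $\mathcal U$, since $\mathcal U$ is stable. By Theorem \ref{Tanaka_decomp_prop}, the bundles $W_i$ are smooth of constant rank $d_i$ and $T\mathcal U/D=\bigoplus_i W_i$. Therefore the projections $\mathrm{pr}_{D_s}\colon D\to D_s$ along the other eigenspaces, and the projections of $T\mathcal U/D$ onto the summands $W_s$, are smooth bundle maps.

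\textbf{Item (1).} I would consider the tensor
\[
\tau_i\colon\Lambda^2D_i\to D/D_i\cong\bigoplus_{s\ne i}D_s ,
\]
defined as follows. Take a lift $\sigma_i\colon W_i\to T\mathcal U$ of the summand $W_i$; its choice is irrelevant once we show that $[X,Y]-\sigma_i([X,Y]\bmod D)$ lies in $D$. Then $\tau_i(X,Y)$ is the $\bigoplus_{s\ne i}D_s$-component of this element. This is $C^\infty$-bilinear: a Leibniz term $f\,X(g)Y$ lies in $D_i$ and is killed by the projection. On $\Omega$, Propositions \ref{c1bis} and \ref{c2} give $\tau_i=0$. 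By density, $\tau_i=0$ on $\mathcal U$. Hence $D_i^2=D_i+\sigma_i(W_i)$, a smooth bundle of rank $m_i+d_i$ with $D_i^2\cap D=D_i$. The decomposition \eqref{rotation_tangent_decomp} then follows by counting ranks together with $T\mathcal U/D=\bigoplus W_i$.

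\textbf{Item (2).} The tensor $D_t\times D_i\to T\mathcal U/D$ vanishes everywhere by Theorem \ref{Tanaka_decomp_prop}. So $[X,Y]\in D$, and the map
\[
(X,Y)\mapsto \mathrm{pr}_{\bigoplus_{s\ne i,t}D_s}[X,Y]
\]
is tensorial. It vanishes on $\Omega$ by Proposition \ref{c3}, hence everywhere.

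\textbf{Item (3).} Using (1) and the direct sum decomposition, I would define
\[
(X,Z)\mapsto \mathrm{pr}_{\bigoplus_{s\ne i}F_s}[X,Z],\qquad X\in D_i,\ Z\in F_i,
\]
and
\[
(X,Z)\mapsto \mathrm{pr}_{\bigoplus_{s\ne i,t}F_s}[X,Z],\qquad X\in D_t,\ Z\in F_i .
\]
Both maps are tensorial, because Leibniz terms land in $F_i$ and are projected away. On $\Omega$ they vanish by Proposition \ref{ady}, with the frame fields of $D_i^2$ there spanning $F_i$. By density they vanish on $\mathcal U$.

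\textbf{Item (4).} This is then purely formal. I would repeat the Jacobi-identity argument of Corollary \ref{involutive} using only (3), or alternatively apply the same continuity trick to the tensor $\Delta_J\times\Delta_J\to T\mathcal U/\Delta_J$, the Frobenius tensor.

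The main obstacle is item (1). One must be careful that the "projection away from $D_i$" is defined globally, which needs smoothness of $W_i$ and hence of a complement to $D$. One must also check that rank constancy of $D_i^2$ is not assumed in advance but is deduced from the vanishing of $\tau_i$. The remaining items depend on (1) only to make the projections well defined.
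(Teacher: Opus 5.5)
There is a genuine gap in item (1): your tensor $\tau_i$ depends on the lift $\sigma_i$, and for a general global lift it does not vanish on $\Omega$. Replacing $\sigma_i$ by $\sigma_i+\phi$, with a bundle map $\phi\colon W_i\to\bigoplus_{s\ne i}D_s$, changes $\tau_i$ by $-\phi\circ L_i$, where $L_i\colon\Lambda^2D_i\to W_i$ is the Levi bracket. So your remark that the choice is irrelevant only covers the fact that $[X,Y]-\sigma_i([X,Y]\bmod D)$ lies in $D$. Propositions \ref{c1bis} and \ref{c2} give $\tau_i=0$ only for a lift with $\sigma_i(W_i)\subset D_i^2$, such as the one given by the edge fields of a quasi-normal frame. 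For instance, take a point where $D_1=\langle X_1,X_2\rangle$, $D_2=\langle X_4\rangle$ and $[X_1,X_2]=X_3\notin D$. The lift $\sigma_1(X_3\bmod D)=X_3+X_4$ gives $\tau_1(X_1,X_2)=-X_4\neq0$, although $D_1^2\cap D=D_1$ there. An adapted lift exists only locally on $\Omega$, and extending it smoothly across $\mathcal U\setminus\Omega$ is essentially what you are trying to prove. With an arbitrary global lift, the density argument therefore has nothing to propagate.

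The repair is to restrict to $K_i:=\ker\bigl(\Lambda^2D_i\to T\mathcal U/D\bigr)$. This is a smooth subbundle because the Levi bracket of $D_i$ has constant rank $d_i$ by Theorem \ref{Tanaka_decomp_prop}. On $K_i$ the lift drops out: $\tau_i|_{K_i}$ is the lift-independent tensor $\sum_rX_r\wedge Y_r\mapsto\sum_r[X_r,Y_r]\bmod D_i\in D/D_i$. Its vanishing at $q$ is equivalent to $D_i^2(q)\cap D(q)=D_i(q)$. Indeed, an element $x+\sum_r[X_r,Y_r](q)$ of $D_i^2(q)$, with $x\in D_i(q)$, lies in $D(q)$ exactly when $\sum_rX_r\wedge Y_r(q)\in K_i(q)$. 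This tensor vanishes on $\Omega$ by Propositions \ref{c1bis} and \ref{c2}, hence on all of $\mathcal U$. Then $F_i/D_i\to T\mathcal U/D$ is injective with image $W_i$, which gives the constant rank $m_i+d_i$, the smoothness of $F_i$, and the direct sum decomposition; this is precisely the paper's argument.

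With (1) repaired, your items (2)--(4) are fine. In the second map of (3), one Leibniz term lies in $D_t\subset F_t$ rather than $F_i$, but it is projected away as well. The only difference from the paper is that you get the second inclusion in (3) by continuity from item (1) of Proposition \ref{ady}, whereas the paper derives it from (2) by the Jacobi identity; both routes work.
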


\begin{proof}
Each of the statements (1)--(3) is equivalent to the vanishing of a smooth tensor on $\mathcal U$. Near every point of $\Omega$, quasi-normal frames are available, and the results of the preceding sections show that these tensors vanish on $\Omega$. Since $\Omega$ is dense, they vanish on the whole of $\mathcal U$ by continuity. Let us make this precise.

(1) The map $\Lambda^2D_i\to T\mathcal U/D$ induced by the Levi bracket has constant rank $d_i$, so its kernel $K_i$ is a smooth subbundle of $\Lambda^2D_i$. For sections $X_r,Y_r$ of $D_i$ such that $\sum_rX_r\wedge Y_r$ is a section of $K_i$, set
\[
\kappa_i\Bigl(\sum_rX_r\wedge Y_r\Bigr)=\sum_r[X_r,Y_r]\bmod D_i .
\]
By the definition of $K_i$, the right-hand side is a section of $D/D_i$. Since $[fX,Y]=f[X,Y]-(Yf)X$ and $X$ is a section of $D_i$, the map $\kappa_i\colon K_i\to D/D_i$ is $C^\infty(\mathcal U)$-linear, that is, it is a smooth tensor. Now fix $q\in\mathcal U$. Every element of $D_i^2(q)$ has the form $v=x+\sum_r[X_r,Y_r](q)$ with $x\in D_i(q)$. If $v\in D(q)$, then $\sum_rX_r\wedge Y_r(q)\in K_i(q)$, and $\kappa_i$ maps this element to the class of $v$ modulo $D_i(q)$; moreover, every element of $K_i(q)$ arises in this way. Therefore $D_i^2(q)\cap D(q)=D_i(q)$ if and only if $\kappa_i$ vanishes at $q$. On $\Omega$, this equality is given by Propositions \ref{c1bis} and \ref{c2}. Hence $\kappa_i=0$ on $\Omega$, and therefore on $\mathcal U$.

Consequently, $F_i\cap D=D_i$ on $\mathcal U$. Thus the map $F_i/D_i\to T\mathcal U/D$ is injective, its image is $W_i$, and $\dim F_i(q)=m_i+d_i$ at every point. Since $F_i$ is spanned near every point by the sections of $D_i$ and their brackets, it is a smooth subbundle. To prove \eqref{rotation_tangent_decomp}, let $Y_i\in F_i(q)$ satisfy $\sum_iY_i=0$. Reducing modulo $D$ and using the directness of the sum $\bigoplus_iW_i$, we get $Y_i\in F_i\cap D=D_i$ for every $i$, and then all $Y_i$ vanish because $D=\bigoplus_iD_i$. Thus the sum is direct. Moreover, since $D$ has step $2$,
\[
\sum_{i=1}^k\rank F_i=\sum_{i=1}^k(m_i+d_i)=\rank D+\rank\bigl(T\mathcal U/D\bigr)=\dim\mathcal U,
\]
which proves \eqref{rotation_tangent_decomp}.

(2) By Theorem \ref{Tanaka_decomp_prop}, the summands $\mathfrak m^t$ and $\mathfrak m^i$ commute, so that $[D_t,D_i]\subset D$. Hence the map
\[
D_t\otimes D_i\longrightarrow D/(D_t\oplus D_i),\qquad (Y,X)\longmapsto[Y,X]\bmod(D_t\oplus D_i),
\]
is well defined, and it is $C^\infty(\mathcal U)$-bilinear, because $[fY,X]=f[Y,X]-(Xf)Y$ and $[Y,fX]=f[Y,X]+(Yf)X$. On $\Omega$ this tensor vanishes by \eqref{mixed_Di_Dt_bracket}, which was derived from Proposition \ref{c3} in the proof of Proposition \ref{ady}. Hence it vanishes on $\mathcal U$, which proves \eqref{extension_mixed}.

(3) By (1), $F_i$ is a smooth subbundle, and the map
\[
D_i\otimes F_i\longrightarrow T\mathcal U/F_i,\qquad (X,V)\longmapsto[X,V]\bmod F_i,
\]
is $C^\infty(\mathcal U)$-bilinear by the same Leibniz rule, because $D_i\subset F_i$. On $\Omega$ it vanishes by Item (2) of Proposition \ref{ady}, hence everywhere, and this proves the first inclusion in \eqref{extension_adjoint}. For the second one, let $Y$ be a section of $D_t$, $t\neq i$. By \eqref{extension_mixed}, $[Y,X]$ is a section of $D_t\oplus D_i$ for every section $X$ of $D_i$. If $X$ and $X'$ are sections of $D_i$, then
\[
[Y,[X,X']]=[[Y,X],X']+[X,[Y,X']]
\]
by the Jacobi identity. Since $[Y,X]$ and $[Y,X']$ are sections of $D_t\oplus D_i$, another application of \eqref{extension_mixed}, together with $[D_i,D_i]\subset F_i$, shows that both terms on the right-hand side are sections of $F_t\oplus F_i$. As $F_i$ is spanned by the sections of $D_i$ and their brackets, the Leibniz rule gives $[D_t,F_i]\subset F_i\oplus F_t$.

(4) By \eqref{extension_adjoint}, the brackets of the sections of $D_i$ with the sections of $F_i$ and of $F_j$, $j\neq i$, are sections of $F_i$ and of $F_i\oplus F_j$, respectively. Since $F_i$ is spanned by the sections of $D_i$ and their brackets, it remains to consider the brackets $[[X,X'],V]$, where $X$ and $X'$ are sections of $D_i$ and $V$ is a section of $F_j$, with $j$ arbitrary. By the Jacobi identity,
\[
[[X,X'],V]=[X,[X',V]]-[X',[X,V]],
\]
and two applications of \eqref{extension_adjoint} show that this bracket is a section of $F_i$ if $j=i$, and of $F_i\oplus F_j$ if $j\neq i$. By the Leibniz rule, $[F_i,F_i]\subset F_i$ and $[F_i,F_j]\subset F_i\oplus F_j$ for $i\neq j$, and expanding the brackets of sums proves the involutivity of every $\Delta_J$.
\end{proof}

\begin{rk}
\label{bracket_frames_rk}
In terms of structure functions, Proposition \ref{tuple_continuation_prop} implies that the families of structure functions listed in Proposition \ref{outline_transverse_vanishing} vanish near every stable point, in every weakly adapted frame $\{X_s\}_{s\in V\cup E}$ adapted to the splitting \eqref{rotation_tangent_decomp} in the following sense: the index set $E$ of the fields $X_e$ that complete the horizontal frame $\{X_v\}_{v\in V}$ of $D$ to a frame of $T\mathcal U$ splits as $E=\bigsqcup_iE_i$, and $X_e$ is a section of $D_i^2$ for $e\in E_i$. Then the classes of the fields $X_e$, $e\in E_i$, modulo $D$ form a basis of $W_i$; for instance, one can take for these fields brackets of pairs of horizontal fields $X_v,X_w$ with $v,w\in V_i$, as for the edge fields of a quasi-normal frame. For other choices of the fields $X_e$ this is no longer true in general: for $v,w\in V_i$, the components of $[X_v,X_w]$ along $D_t$, $t\neq i$, change when the fields $X_e$, $e\in E_i$, are modified by sections of $D_t$. On the other hand, the structure functions $c^s_{lr}$ with $l\in V_t$ and $r,s\in V_i$, $t\neq i$, which are also indexed by two distinct components, do not vanish in general. They are not the components of a tensor, and they are precisely the quantities that are integrated in the rotation argument below.
\end{rk}

\medskip\noindent\textbf{An example with a jumping tuple.}
Let us show by an example that the characteristic tuple may indeed jump at stable points, so that the extension from $\Omega$ to $\mathcal U$ in Proposition \ref{tuple_continuation_prop} is not vacuous. On $\mathbb R^7$ with coordinates $(x_1,x_2,x_3,x_4,z_1,z_2,z_3)$, let $D_0$ be the distribution spanned by the vector fields
\[
 X_1=\partial_{x_1},\quad
 X_2=\partial_{x_2}+x_1\partial_{z_1},\quad
 X_3=\partial_{x_3}+x_1\partial_{z_2}+x_2\partial_{z_3},\quad
 X_4=\partial_{x_4}+\tfrac12x_1^2\partial_{z_3}.
\]
The only nonzero brackets between these fields are
\[
 [X_1,X_2]=\partial_{z_1},\quad
 [X_1,X_3]=\partial_{z_2},\quad
 [X_2,X_3]=\partial_{z_3},\quad
 [X_1,X_4]=x_1\partial_{z_3}.
\]
Thus $D_0$ has rank $4$ and step $2$ everywhere. At the points where $x_1\ne0$, the operator $\operatorname{ad}X_1$ is onto the three-dimensional center, and the characteristic tuple is $(3)$. At the points where $x_1=0$, the symbol is the direct sum of the free step-two nilpotent Lie algebra on three generators and an abelian horizontal line, so its characteristic tuple is $(2,1)$. Let $h$ be the sub-Riemannian metric on this distribution for which these four fields are orthonormal. On $\mathbb R^7\times\mathbb R_s$, consider the distribution $D=D_0\oplus\langle\partial_s\rangle$. On this distribution, the metrics
\[
 g_1=h+ds^2,\qquad g_2=c\,h+d\,ds^2,
 \qquad c,d>0,\quad c\ne d,
\]
form an affinely equivalent pair of product type. As for every affinely equivalent pair, every point is stable, including the points of the hypersurface $x_1=0$, where the characteristic tuple jumps. These points do not belong to the regular locus of the characteristic tuple, so the graph calculations of the preceding sections do not apply near them, and at these points the conclusions of the main theorems rely on Proposition \ref{tuple_continuation_prop}.

\subsection{The rotation argument}
\label{rotation_argument_subsec}

Let now $q_0$ be an arbitrary point of $\mathcal U$, and let $\{X_s\}_{s\in V\cup E}$ be a weakly adapted frame near $q_0$; such frames exist near every stable point. Only the horizontal fields $X_v$, $v\in V$, will be used: they form a $g_1$-orthonormal frame of $D$, and $\{X_v\}_{v\in V_i}$ is a frame of $D_i$ for every $i$. As before, $c^s_{lr}$ denotes the coefficient of $X_s$ in $[X_l,X_r]$. If $l\in V_t$ and $r\in V_i$ with $t\neq i$, then $[X_l,X_r]$ is a section of $D_t\oplus D_i$ by \eqref{extension_mixed}, so these coefficients do not depend on the choice of the fields $X_e$, $e\in E$, completing the horizontal frame. Every quasi-normal frame is weakly adapted, but quasi-normal frames are not needed here.

By Proposition \ref{tuple_continuation_prop}, we have the direct sum decomposition \eqref{rotation_tangent_decomp}, and every distribution $\Delta_J$ defined in \eqref{rotation_delta} is involutive. We denote by $\mathcal F_J(q)$ the local leaf of $\Delta_J$ through $q$.

Fix $i$ and put $S_i=[1:k]\setminus\{i\}$.  By \eqref{rotation_tangent_decomp}, the foliations tangent to $D_i^2$ and $\Delta_{S_i}$ are transverse.  After shrinking $\mathcal U$ around $q_0$, their leaves form a local product: every $\mathcal F_{\{i\}}(q_0)$ and $\mathcal F_{S_i}(q)$ meet in exactly one point.  Hence the map
\begin{equation}
\label{rotation_projection}
\pi_i:=\pi_i^{q_0}:\mathcal U\longrightarrow \mathcal F_{\{i\}}(q_0),
\qquad
\pi_i(q):=\mathcal F_{\{i\}}(q_0)\cap \mathcal F_{S_i}(q),
\end{equation}
is a smooth submersion satisfying
\begin{equation}
\label{rotation_projection_kernel}
\ker d\pi_i=\Delta_{S_i},
\end{equation}
and its restriction to every $\mathcal F_{\{i\}}(q)$ is a local diffeomorphism onto $\mathcal F_{\{i\}}(q_0)$.

If $k=1$, then $D_1=D$, $D_1^2=TM$, $\pi_1=\mathrm{id}$, and no rotation is needed.  Writing $\lambda_1=\alpha_1^2$, set $\mu_1=\lambda_1^{-1/2}$, $\gamma_1=1$ (the empty product), and $b_1=g_1$.  Then $\lambda_1=\mu_1^{-2}$ and $g_2=\lambda_1g_1$, exactly as in Definition \ref{Gen_Levi-Civita}.  If $\dim D_1>1$, Proposition \ref{eigenconst} and bracket generation of $D_1=D$ imply that $\lambda_1$, hence $\mu_1$, is constant.  Thus both the projective and affine statements follow in this case.  In the remainder of this subsection we assume $k\geq2$. In particular, we will assume the pair $(g_1,g_2)$ is non-conformal so that Proposition \ref{first divi} applies.

For every $v\in V_i$, define $\widetilde X_v$ to be the unique section of $D_i^2$ satisfying
\begin{equation}
\label{rotated_lift}
 d\pi_i\bigl(\widetilde X_v(q)\bigr)=X_v\bigl(\pi_i(q)\bigr).
\end{equation}
The uniqueness follows from \eqref{rotation_tangent_decomp}--\eqref{rotation_projection_kernel}, because
$d\pi_i|_{D_i^2(q)}:D_i^2(q)\to D_i^2(\pi_i(q))$ is an isomorphism.

\begin{Lemma}
\label{rotation_section_lemma}
For every $i$ and every $v\in V_i$, the vector field $\widetilde X_v$ is a section of $D_i$.  Consequently,
\begin{equation}
\label{rotation_projection_Di}
d\pi_i\bigl(D_i(q)\bigr)=D_i\bigl(\pi_i(q)\bigr).
\end{equation}
\end{Lemma}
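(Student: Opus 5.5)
The plan is to reduce the statement to the projection identity \eqref{rotation_projection_Di}, and to prove that identity by transporting $D_i$ along the leaves of $\Delta_{S_i}$. Once \eqref{rotation_projection_Di} is known, the first assertion is immediate. Indeed, $\widetilde X_v(q)\in D_i^2(q)$, and
\[
d\pi_i\bigl(\widetilde X_v(q)\bigr)=X_v(\pi_i(q))\in D_i(\pi_i(q))=d\pi_i\bigl(D_i(q)\bigr).
\]
Since $D_i(q)\subset D_i^2(q)$ and $d\pi_i|_{D_i^2(q)}$ is injective by \eqref{rotation_tangent_decomp}--\eqref{rotation_projection_kernel}, we get $\widetilde X_v(q)\in D_i(q)$.

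\textbf{Step 1: a bracket inclusion.} The key algebraic input is
\[
[\Delta_{S_i},D_i]\subset D_i\oplus\Delta_{S_i}.
\]
By the Leibniz rule it suffices to check this on generators of each $F_t=D_t^2$, $t\ne i$, namely sections $Y$ of $D_t$ and brackets $[Y,Y']$ of such sections.
\begin{itemize}
\item For $Y$ a section of $D_t$, \eqref{extension_mixed} gives $[Y,X]\in D_t\oplus D_i$ for every section $X$ of $D_i$.
\item For a bracket, the Jacobi identity gives $[[Y,Y'],X]=[Y,[Y',X]]-[Y',[Y,X]]$. Here $[Y',X]\in D_t\oplus D_i$. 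Bracketing with $Y$, we have $[D_t,D_t]\subset F_t$, and \eqref{extension_mixed} again gives $[D_t,D_i]\subset D_t\oplus D_i$. Hence the result lies in $F_t+D_i\subset \Delta_{S_i}\oplus D_i$.
\end{itemize}
Proposition \ref{tuple_continuation_prop} guarantees these relations on the whole of $\mathcal U$, so no quasi-normal frames are needed. The subtle point is that \eqref{extension_adjoint} alone would only give the weaker inclusion $[D_i,F_t]\subset F_i\oplus F_t$. We must use \eqref{extension_mixed} twice, through the Jacobi identity, to see that the $F_i$-component actually lies in $D_i$.

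\textbf{Step 2: flows preserve the relevant distributions.} Let $Y$ be a section of $\Delta_{S_i}$ with local flow $\varphi^\tau$. Since $\Delta_{S_i}$ is involutive, $\varphi^\tau$ preserves $\Delta_{S_i}$ and maps each leaf $\mathcal F_{S_i}(q)$ into itself, so $\pi_i\circ\varphi^\tau=\pi_i$. By Step 1, the subbundle $D_i\oplus\Delta_{S_i}$ is preserved by brackets with $Y$. The standard lemma (a vector field whose brackets preserve the sections of a subbundle has a flow preserving that subbundle) then gives
\[
d\varphi^\tau\bigl(D_i(q)\bigr)\subset D_i(\varphi^\tau q)\oplus\Delta_{S_i}(\varphi^\tau q).
\]
Applying $d\pi_i$, and using $\ker d\pi_i=\Delta_{S_i}$ together with $\pi_i\circ\varphi^\tau=\pi_i$, we obtain
\[
d\pi_i\bigl(D_i(q)\bigr)=d\pi_i\bigl(D_i(\varphi^\tau q)\bigr).
\]
Equality rather than mere inclusion holds because $d\pi_i$ is injective on $D_i$ (as $D_i\cap\Delta_{S_i}=0$), so both sides have dimension $\rank D_i$.

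\textbf{Step 3: connecting to the base leaf.} On the leaf $\mathcal F_{\{i\}}(q_0)$ the map $\pi_i$ is the identity, so $d\pi_i(D_i(p))=D_i(p)$ for $p$ on this leaf. After shrinking $\mathcal U$, every point $q$ is joined to $p=\pi_i(q)$ inside the leaf $\mathcal F_{S_i}(q)$ by a composition of flows of local sections of $\Delta_{S_i}$, for instance the coordinate fields of the product chart. Step 2 then gives \eqref{rotation_projection_Di}, and the first assertion follows as explained at the start.

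I expect the main obstacle to be Step 1, specifically obtaining the refined inclusion $[F_t,D_i]\subset D_i\oplus F_t$ rather than the cruder one available from \eqref{extension_adjoint}. Making the flow-invariance and connectivity argument of Steps 2 and 3 rigorous on a suitably shrunken product neighborhood should be routine.
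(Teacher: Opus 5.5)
Your proof is correct. It differs from the paper's in how the transport along the fibers of $\pi_i$ is organized.

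The paper takes as infinitesimal symmetries of $D_i\oplus\Delta_{S_i}$ only the sections of $\bigoplus_{t\neq i}D_t$. For these, the required inclusion follows at once from \eqref{extension_mixed} and the involutivity of $\Delta_{S_i}$. It then joins $q$ to $\pi_i(q)$ inside the leaf of $\Delta_{S_i}$ by concatenating integral curves of such horizontal fields. This step needs the Rashevskii--Chow theorem, which applies because $F_t=D_t+[D_t,D_t]$ makes $\bigoplus_{t\neq i}D_t$ bracket generating on these leaves.

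You instead first show that every section of $\Delta_{S_i}$ is such a symmetry, using the Jacobi identity and a second application of \eqref{extension_mixed}. In effect you use that the sections of $\Delta_{S_i}$ preserving $D_i\oplus\Delta_{S_i}$ form a bracket-closed $C^\infty$-module, since the Leibniz error terms fall into $\Delta_{S_i}$ or $D_i$. Hence the property propagates from $D_t$ to $D_t^2$.

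What each route buys:
\begin{itemize}
\item Your route makes the connectivity step elementary: the coordinate fields of a product chart suffice, and no appeal to Chow's theorem is needed. The cost is a slightly longer algebraic step.
\item The paper's route keeps the algebra minimal. It also parallels the horizontal bracket-generation argument used later in the section for the Gram-matrix ratios and for $B_i$.
\end{itemize}
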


\begin{proof}
Set $H_{\widehat i}:=\bigoplus_{t\neq i}D_t$.  We first show that every section $V$ of $H_{\widehat i}$ is an infinitesimal symmetry of $D_i\oplus\Delta_{S_i}$, namely
\begin{equation}
\label{rotation_infsymm}
[V,D_i\oplus\Delta_{S_i}]\subset D_i\oplus\Delta_{S_i}.
\end{equation}
It is enough to take $V\in D_t$ with $t\neq i$.  If $Y\in D_i$, then $[V,Y]\in D_t\oplus D_i\subset D_i\oplus\Delta_{S_i}$ by \eqref{extension_mixed}.  If instead $Y$ is a section of $\Delta_{S_i}$, then $[V,Y]\in\Delta_{S_i}$, because $V$ is also a section of $\Delta_{S_i}$, which is involutive by Proposition \ref{tuple_continuation_prop}.  This proves \eqref{rotation_infsymm}.

Let $p\in\mathcal F_{\{i\}}(q_0)$ and let $q$ lie on the local leaf $\mathcal F_{S_i}(p)$.  The distribution $H_{\widehat i}$ is bracket generating on this leaf: its Lie closure contains each $D_t^2=D_t+[D_t,D_t]$, $t\neq i$, and is contained in the involutive distribution $\Delta_{S_i}$.  Hence, after shrinking $\mathcal U$ if necessary, the Rashevskii--Chow theorem connects $p$ to $q$ by a finite concatenation of integral curves of local sections of $H_{\widehat i}$.

For such a section $V$, relation \eqref{rotation_infsymm} implies that the local flow of $V$ preserves $D_i\oplus\Delta_{S_i}$.  Since $V$ is tangent to a fiber of $\pi_i$, we also have $\pi_i\circ e^{tV}=\pi_i$.  Writing $\phi_t=e^{tV}$, these two facts and \eqref{rotation_projection_kernel} give
\begin{align*}
d\pi_i\bigl(D_i(\phi_t(x))\bigr)
&=d\pi_i\bigl((D_i\oplus\Delta_{S_i})(\phi_t(x))\bigr)\\
&=d\pi_i\bigl((\phi_t)_*(D_i\oplus\Delta_{S_i})(x)\bigr)\\
&=d\pi_i\bigl((D_i\oplus\Delta_{S_i})(x)\bigr)
=d\pi_i\bigl(D_i(x)\bigr),
\end{align*}
where in the third equality we used $\pi_i\circ\phi_t=\pi_i$.  Starting from $p$, where $\pi_i$ is the identity on $\mathcal F_{\{i\}}(q_0)$, and iterating along the above concatenation gives
\[
d\pi_i(D_i(q))=D_i(p),\qquad p=\pi_i(q).
\]
The unique $D_i^2$-lift in \eqref{rotated_lift} therefore belongs to $D_i$, which proves the first assertion. The displayed equality proves \eqref{rotation_projection_Di}.
\end{proof}

\begin{Lemma}
\label{rotation_commute_lemma}
If $i\neq j$, $v\in V_i$, and $w\in V_j$, then
\begin{equation}
\label{rotation_commute}
[\widetilde X_v,\widetilde X_w]=0.
\end{equation}
\end{Lemma}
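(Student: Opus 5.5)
Since $i\neq j$, the fields $\widetilde X_v$ (a section of $D_i\subset F_i$, by Lemma \ref{rotation_section_lemma}) and $\widetilde X_w$ (a section of $D_j\subset F_j$) are sections of the two distinct summands $F_i$ and $F_j$ of the splitting \eqref{rotation_tangent_decomp}. By item (4) of Proposition \ref{tuple_continuation_prop}, $F_i\oplus F_j$ is involutive, so $[\widetilde X_v,\widetilde X_w]$ is a section of $F_i\oplus F_j$. The plan is to show separately that its $F_i$-component and its $F_j$-component vanish.

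For the $F_i$-component we apply $d\pi_i$, which is injective on $F_i$ and annihilates $\Delta_{S_i}\supset F_j$ by \eqref{rotation_projection_kernel}. It suffices to show that $\widetilde X_w$ is $\pi_i$-related to the zero field and that $\widetilde X_v$ is $\pi_i$-related to $X_v$ (the latter holds by \eqref{rotated_lift}). The former is immediate, because $\widetilde X_w$ is tangent to $F_j\subset\ker d\pi_i$. Since brackets of $\pi_i$-related fields are $\pi_i$-related, we get $d\pi_i[\widetilde X_v,\widetilde X_w]=[X_v,0]\circ\pi_i=0$. Hence $[\widetilde X_v,\widetilde X_w]\in\ker d\pi_i=\Delta_{S_i}$. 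Combined with the membership in $F_i\oplus F_j$, this kills the $F_i$-component.

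By the symmetric argument with $\pi_j$, the bracket lies in $\Delta_{S_j}$, so its $F_j$-component vanishes too. Therefore $[\widetilde X_v,\widetilde X_w]\in(F_i\oplus F_j)\cap\Delta_{S_i}\cap\Delta_{S_j}=0$ by directness of \eqref{rotation_tangent_decomp}.

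The only point requiring care, and the expected main obstacle, is the $\pi$-relatedness claim. We must check that the $\pi_i$-projection of $\widetilde X_v$ is a genuine vector field on the leaf $\mathcal F_{\{i\}}(q_0)$, independent of the point in the fiber. This is exactly the content of the defining relation \eqref{rotated_lift}, in which $X_v$ is evaluated at $\pi_i(q)$, restricted to that leaf. A minor technicality is that $X_v$ must be tangent to $\mathcal F_{\{i\}}(q_0)$, which holds because $D_i\subset F_i=T\mathcal F_{\{i\}}$. Everything else is the standard naturality of the Lie bracket under smooth maps.
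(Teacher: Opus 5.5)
Your proof is correct and follows the paper's argument essentially verbatim: since $\widetilde X_w$ is $\pi_i$-related to zero and $\widetilde X_v$ to $X_v$, the bracket lies in $\Delta_{S_i}$, symmetrically in $\Delta_{S_j}$, and the involutivity of $F_i\oplus F_j$ together with the directness of \eqref{rotation_tangent_decomp} forces it to vanish. The appeal to Lemma \ref{rotation_section_lemma} is harmless but unnecessary, since $\widetilde X_v$ is a section of $F_i=D_i^2$ by its very definition \eqref{rotated_lift}.
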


\begin{proof}
Let $V$ be any section of $\Delta_{S_i}$.  The vector field $V$ is $\pi_i$-related to the zero field, while $\widetilde X_v$ is $\pi_i$-related to $X_v$. Thus, pushforward of the bracket by $\pi_i$ gives
\[
d\pi_i[V,\widetilde X_v]=0,
\]
so, by \eqref{rotation_projection_kernel},
\begin{equation}
\label{rotation_vertical_bracket}
[V,\widetilde X_v]\in\Delta_{S_i}.
\end{equation}
Taking $V=\widetilde X_w$ and then interchanging $i$ and $j$ yields
\[
[\widetilde X_v,\widetilde X_w]
\in\Delta_{S_i}\cap\Delta_{S_j}.
\]
At the same time, $\widetilde X_v\in D_i^2$ and $\widetilde X_w\in D_j^2$, and the involutivity of $D_i^2\oplus D_j^2$ gives
\[
[\widetilde X_v,\widetilde X_w]\in D_i^2\oplus D_j^2.
\]
By the direct sum \eqref{rotation_tangent_decomp},
\[
\Delta_{S_i}\cap\Delta_{S_j}\cap(D_i^2\oplus D_j^2)=\{0\},
\]
which proves \eqref{rotation_commute}.
\end{proof}

By Lemma \ref{rotation_section_lemma}, $\{\widetilde X_v\}_{v\in V_i}$ is a frame of $D_i$.  Thus there is a smooth matrix-valued function
$T_i=(t^i_{vr})_{v,r\in V_i}\in\mathrm{GL}(|V_i|)$ such that
\begin{equation}
\label{rotation_transition}
\widetilde X_v=\sum_{r\in V_i}t^i_{vr}X_r,
\qquad v\in V_i.
\end{equation}
The commutativity in Lemma \ref{rotation_commute_lemma} gives the transverse differential equations satisfied by $T_i$.

\begin{Lemma}
\label{rotation_transition_pde}
If $t\neq i$, $l\in V_t$, and $v,s\in V_i$, then
\begin{equation}
\label{rotation_commutativity}
X_l(t^i_{vs})=-\sum_{r\in V_i}c^s_{lr}t^i_{vr}.
\end{equation}
\end{Lemma}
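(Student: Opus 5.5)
We will derive \eqref{rotation_commutativity} by substituting the expansion \eqref{rotation_transition} into the commutation relation of Lemma \ref{rotation_commute_lemma} and extracting one component. For $l\in V_t$ with $t\neq i$, the relevant identity is $[\widetilde X_v,\widetilde X_l]=0$. This requires expressing $X_l$ through the lifted fields $\widetilde X_w$, $w\in V_t$, using the inverse of $T_t$. A cleaner route is to avoid inverting $T_t$ altogether, as follows.

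First, fix $v\in V_i$ and let $Y$ be an arbitrary section of $D_t$. Every such $Y$ is a section of $\Delta_{S_i}$, so \eqref{rotation_vertical_bracket} gives $[Y,\widetilde X_v]\in\Delta_{S_i}$. We apply this with $Y=X_l$ and expand:
\[
[X_l,\widetilde X_v]=\sum_{r\in V_i}X_l(t^i_{vr})X_r+\sum_{r\in V_i}t^i_{vr}[X_l,X_r].
\]
By \eqref{extension_mixed}, $[X_l,X_r]\in D_t\oplus D_i$. Hence
\[
[X_l,X_r]=\sum_{s\in V_i}c^s_{lr}X_s+\sum_{s\in V_t}c^s_{lr}X_s,
\]
and, as remarked at the start of this subsection, these coefficients do not depend on the edge fields.

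The second step is to project the whole expression onto $D_i^2$ along $\Delta_{S_i}$, using the decomposition \eqref{rotation_tangent_decomp}. The $D_t$-part lies in $\Delta_{S_i}$. The requirement that the total bracket lie in $\Delta_{S_i}$ then forces the $D_i$-component to vanish:
\[
\sum_{s\in V_i}\Bigl(X_l(t^i_{vs})+\sum_{r\in V_i}t^i_{vr}c^s_{lr}\Bigr)X_s=0.
\]
Since the $X_s$, $s\in V_i$, are linearly independent, this is exactly \eqref{rotation_commutativity}.

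The main point requiring care is the justification that $[X_l,\widetilde X_v]\in\Delta_{S_i}$. We apply \eqref{rotation_vertical_bracket} to a section $V=X_l$ that is not a lifted field. This is legitimate because that relation was derived for an arbitrary section of $\Delta_{S_i}$: such a section is $\pi_i$-related to zero, and $\widetilde X_v$ is $\pi_i$-related to $X_v$. A second point is that the decomposition $D_i\oplus D_t\subset D_i^2\oplus\Delta_{S_i}$ is compatible with \eqref{rotation_tangent_decomp}, so the $D_i$-component is well defined. Both points follow directly from the preceding lemmas, so the argument is essentially a short computation.
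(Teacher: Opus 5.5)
Your proof is correct, but it takes a slightly different route from the paper's.

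The paper starts from the full commutation relation $[\widetilde X_a,\widetilde X_v]=0$, $a\in V_t$, of Lemma \ref{rotation_commute_lemma}. It expands both lifted fields via \eqref{rotation_transition} and takes the $D_i$-component, which gives $\sum_{l\in V_t}t^t_{al}\bigl(X_l(t^i_{vs})+\sum_{r\in V_i}c^s_{lr}t^i_{vr}\bigr)=0$. It then strips off the factor $t^t_{al}$ using the invertibility of $T_t$.

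You instead apply the intermediate relation \eqref{rotation_vertical_bracket} directly to the unlifted field $X_l$. This is legitimate: that relation was proved for an arbitrary section of $\Delta_{S_i}$, and $D_t\subset F_t\subset\Delta_{S_i}$ for $t\neq i$. The $D_i$-part of $[X_l,\widetilde X_v]$ then lies in $D_i^2\cap\Delta_{S_i}=\{0\}$.

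Your version needs only three inputs:
\begin{itemize}
\item the $\pi_i$-relatedness argument;
\item the mixed-bracket inclusion \eqref{extension_mixed};
\item the directness of \eqref{rotation_tangent_decomp}.
\end{itemize}
It bypasses Lemma \ref{rotation_commute_lemma}, and with it the involutivity of $D_i^2\oplus D_t^2$ and the symmetric use of $\pi_t$. It also avoids Lemma \ref{rotation_section_lemma} for the component $t$ and the invertibility of $T_t$. This fits the fact that the identity involves only $T_i$.

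The paper's route is a little longer, but it derives the identity from the commutation of the two rotated frames, which is the fact the rotation argument is built on. For the identity itself, your derivation is the more economical one.
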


\begin{proof}
Fix $a\in V_t$ and expand $0=[\widetilde X_a,\widetilde X_v]$ using \eqref{rotation_transition} for both components.  Taking the $D_i$-component and using \eqref{extension_mixed} gives, for each $s\in V_i$,
\[
0=\sum_{l\in V_t}t^t_{al}
\left(
X_l(t^i_{vs})+\sum_{r\in V_i}c^s_{lr}t^i_{vr}
\right).
\]
Since $T_t$ is invertible, the expression in parentheses vanishes for every $l\in V_t$, which is \eqref{rotation_commutativity}.
\end{proof}

Define the Gram matrix of the rotated frame on $D_i$ by
\begin{equation}
\label{rotation_gram}
G^i_{vw}:=g_1(\widetilde X_v,\widetilde X_w)
=\sum_{r\in V_i}t^i_{vr}t^i_{wr},
\qquad v,w\in V_i.
\end{equation}
For $l\in V_t$, $t\neq i$, Item (3) of Proposition \ref{first divi} gives
\begin{equation}
\label{rotation_offdiag_skew}
c^s_{lr}+c^r_{ls}=0,
\qquad r,s\in V_i,\quad r\neq s.
\end{equation}
Moreover, Item (1) of Proposition \ref{first divi}, with $r\in V_i$, gives
\begin{equation}
\label{rotation_kappa}
X_l\left(\frac{\lambda_i}{\lambda_t}\right)
=2c^r_{lr}\left(1-\frac{\lambda_i}{\lambda_t}\right),
\qquad \lambda_s:=\alpha_s^2.
\end{equation}
Because the left-hand side does not depend on $r$ and $\lambda_i\neq\lambda_t$, the quantity $c^r_{lr}$ is independent of $r\in V_i$.  Denote its common value by $\kappa_{li}$.  Differentiating \eqref{rotation_gram} and using \eqref{rotation_commutativity}, \eqref{rotation_offdiag_skew}, and \eqref{rotation_kappa}, we obtain
\begin{align}
X_l(G^i_{vw})
&=-\sum_{r,s\in V_i}(c^s_{lr}+c^r_{ls})t^i_{vr}t^i_{ws}\notag\\
&=-2\kappa_{li}G^i_{vw}.
\label{rotation_gram_ode}
\end{align}
On the base leaf $\mathcal F_{\{i\}}(q_0)$, the projection $\pi_i$ is the identity, so uniqueness in \eqref{rotated_lift} gives $\widetilde X_v=X_v$ and hence $G^i=I$.  Fix $v_0\in V_i$.  Since $G^i_{v_0v_0}>0$, equation \eqref{rotation_gram_ode} implies
\[
X_l\left(\frac{G^i_{vw}}{G^i_{v_0v_0}}\right)=0
\qquad(l\in V_t,\ t\neq i).
\]
The fields $\{X_l:l\in V_t,\ t\neq i\}$ bracket generate $\Delta_{S_i}$ on each fiber of $\pi_i$, as in the proof of Lemma \ref{rotation_section_lemma}.  Therefore the above ratios are constant on every such fiber and equal to their values on the base leaf.  Consequently there is a smooth positive function $\sigma_i$ such that
\begin{equation}
\label{rotation_conformal_frame}
G^i_{vw}=\sigma_i\delta_{vw},
\qquad
\sigma_i=1\quad\text{on }\mathcal F_{\{i\}}(q_0),
\end{equation}
and \eqref{rotation_gram_ode} gives
\begin{equation}
\label{rotation_sigma_derivative}
X_l(\log\sigma_i)=-2\kappa_{li},
\qquad l\in V_t,
\quad t\neq i.
\end{equation}
Thus the rotated frame is orthogonal in the projective case, with the same squared length in every direction of a fixed $D_i$.  In the affine case all $\lambda_i$ are constant, so \eqref{rotation_kappa} gives $\kappa_{li}=0$ and \eqref{rotation_conformal_frame} reduces to $G^i=I$, recovering the orthonormal rotation of \cite[Section~4.7]{Zaifeng}.

It remains to identify the scalar $\sigma_i$ with the Levi-Civita weights.  Set
\begin{equation}
\label{rotation_P_B}
P:=\left(\prod_{r=1}^k\lambda_r\right)^{1/(k+1)},
\qquad
B_i:=\frac{P}{\lambda_i}.
\end{equation}
All these functions are positive and smooth.  Item (2) of Proposition \ref{first divi} says that, for $l\in V_t$ and $j\neq t$,
\[
0=X_l\left(\frac{\alpha_j^2}{\alpha_t}\right),
\]
and therefore
\begin{equation}
\label{rotation_eigen_derivatives}
X_l(\log\lambda_j)=\frac12X_l(\log\lambda_t),
\qquad j\neq t.
\end{equation}
Summing \eqref{rotation_eigen_derivatives} over the $k-1$ indices $j\neq t$ gives
\begin{equation}
\label{rotation_P_derivative}
X_l(\log P)=\frac12X_l(\log\lambda_t).
\end{equation}
Hence, whenever $i\neq t$,
\begin{equation}
\label{rotation_B_basic}
X_l(B_i)=0.
\end{equation}
Thus $B_i$ is constant along the bracket-generating distribution $H_{\widehat i}$ on every fiber of $\pi_i$, and there is a unique positive smooth function $\mu_i$ on $\mathcal F_{\{i\}}(q_0)$ such that
\begin{equation}
\label{rotation_beta_pullback}
B_i=\pi_i^*\mu_i.
\end{equation}
Moreover,
\begin{equation}
\label{rotation_B_ratio}
\frac{B_i}{B_j}=\frac{\lambda_j}{\lambda_i},
\end{equation}
so $\mu_i(q_0)\neq\mu_j(q_0)$ for $i\neq j$.

Suppose now that $\dim D_i>1$.  Proposition \ref{eigenconst} gives $X_v(\lambda_i)=0$ for every $v\in V_i$.  Applying \eqref{rotation_eigen_derivatives} with $t=i$ then gives $X_v(\lambda_j)=0$ for every $j\neq i$ as well.  Consequently $X_v(B_i)=0$ for all $v\in V_i$.  Since $D_i$ is bracket generating on every leaf of $\mathcal F_{\{i\}}$ (indeed $T\mathcal F_{\{i\}}=D_i^2=D_i+[D_i,D_i]$), the function $\mu_i$ in \eqref{rotation_beta_pullback} is constant on $\mathcal F_{\{i\}}(q_0)$.  This is exactly the constancy requirement in Definition \ref{Gen_Levi-Civita} (no such requirement is imposed if $\dim D_i=1$).

Define
\begin{equation}
\label{rotation_gamma}
\gamma_i:=\prod_{j\neq i}|B_j-B_i|.
\end{equation}
The factors never vanish by \eqref{rotation_B_ratio} and the distinctness of the eigenvalues.  Fix $l\in V_t$ with $t\neq i$.  By \eqref{rotation_B_basic}, every $B_j$ with $j\neq t$ is annihilated by $X_l$, while \eqref{rotation_P_derivative} gives
\[
X_l(\log B_t)=-\frac12X_l(\log\lambda_t).
\]
Therefore
\begin{align}
X_l(\log\gamma_i)
&=X_l\log|B_t-B_i|\notag\\
&=-\frac12\frac{B_t}{B_t-B_i}X_l(\log\lambda_t)\notag\\
&=-\frac12\frac{\lambda_i}{\lambda_i-\lambda_t}X_l(\log\lambda_t).
\label{rotation_gamma_derivative}
\end{align}
On the other hand, dividing \eqref{rotation_kappa} by $\lambda_i/\lambda_t$ and then using \eqref{rotation_eigen_derivatives} with $j=i$ gives
\[
-\frac12X_l(\log\lambda_t)
=X_l\log\left(\frac{\lambda_i}{\lambda_t}\right)
=-2\kappa_{li}\frac{\lambda_i-\lambda_t}{\lambda_i}.
\]
Hence
\begin{equation}
\label{rotation_gamma_sigma_match}
X_l(\log\gamma_i)=-2\kappa_{li}=X_l(\log\sigma_i).
\end{equation}
It follows that
\begin{equation}
\label{rotation_rho_basic}
\rho_i:=\frac{\sigma_i}{\gamma_i}
\end{equation}
is constant on every fiber of $\pi_i$.  Thus there is a positive smooth function $\overline\rho_i$ on $\mathcal F_{\{i\}}(q_0)$ such that
$\rho_i=\pi_i^*\overline\rho_i$.

Let $h_i$ be the restriction of $g_1$ to $D_i$ along the base leaf $\mathcal F_{\{i\}}(q_0)$.  By \eqref{rotated_lift} and \eqref{rotation_conformal_frame}, as metrics on $D_i$ we have
\begin{equation}
\label{rotation_metric_sigma}
g_1|_{D_i}=\sigma_i\,\pi_i^*h_i.
\end{equation}
Set
\begin{equation}
\label{rotation_bi}
b_i:=\overline\rho_i\,h_i.
\end{equation}
Then \eqref{rotation_rho_basic}--\eqref{rotation_metric_sigma} yield
\begin{equation}
\label{rotation_metric_g1_block}
g_1|_{D_i}=\gamma_i\,\pi_i^*b_i.
\end{equation}
Since the $D_i$'s are mutually $g_1$-orthogonal eigenspaces of the $g_1$-self-adjoint transition operator $S$, summing \eqref{rotation_metric_g1_block} gives
\begin{equation}
\label{rotation_metric_g1}
g_1=\sum_{i=1}^k\gamma_i\,\pi_i^*b_i.
\end{equation}
Furthermore $S|_{D_i}=\lambda_i\operatorname{Id}$, so
\begin{equation}
\label{rotation_metric_g2}
g_2=\sum_{i=1}^k\lambda_i\gamma_i\,\pi_i^*b_i.
\end{equation}
Finally, from \eqref{rotation_P_B},
\[
\prod_{r=1}^kB_r^{-1}
=\frac{\prod_r\lambda_r}{P^k}=P,
\]
and hence
\begin{equation}
\label{rotation_lambda_formula}
\lambda_i
=B_i^{-1}\prod_{r=1}^kB_r^{-1}
=\frac{1}{\pi_i^*\mu_i}
\prod_{r=1}^k\frac{1}{\pi_r^*\mu_r}.
\end{equation}
Together with \eqref{rotation_gamma} and \eqref{rotation_beta_pullback}, equations \eqref{rotation_metric_g1}, \eqref{rotation_metric_g2}, and \eqref{rotation_lambda_formula} are exactly the metric identities in Definition \ref{Gen_Levi-Civita}.

We have also verified the two distributional requirements in that definition: the leaves $\mathcal F_i:=\mathcal F_{\{i\}}$ have tangent bundle $D_i^2$, the distribution $D_i$ is bracket generating on each such leaf, and \eqref{rotation_projection_Di} is the required projection property.  Consequently, the forward implication of Theorem \ref{mainthm} holds near the arbitrary stable point $q_0$. The converse implication was established in \cite[Proposition 5.5]{jmz2019}. This completes the proof of Theorem \ref{mainthm}.

For Theorem \ref{mainthm_eisen}, affine equivalence implies that all $\lambda_i$ are constant \cite[Proposition 4.7]{jmz2019}.  Hence $P$ and every $B_i=\pi_i^*\mu_i$ in \eqref{rotation_P_B} are constant, so the functions $\mu_i$ are constant.  As noted after \eqref{rotation_conformal_frame}, the same constancy also forces $\kappa_{li}=0$, and the rotated frame is actually $g_1$-orthonormal, exactly as in the affine argument mentioned at the beginning of this section.  Since every point is stable for an affinely equivalent pair, this proves the forward implication of Theorem \ref{mainthm_eisen} near every point, and the converse implication is again given by \cite[Proposition 5.5]{jmz2019}. This completes the proof of Theorem \ref{mainthm_eisen}.

\section{Comparison with the Weyl type theorems of~\texorpdfstring{\cite{JMZ21}}{[JMZ21]}}\label{weyl_comparison_sec}

In~\cite{JMZ21} the sub-Riemannian Weyl theorem was proved under additional assumptions of two kinds. Theorems~1.11 and~1.12 of~\cite{JMZ21} require that the set of \emph{complex non-strictly normal geodesics} through a point (i.e.\ of complex normal geodesics which are simultaneously abnormal) is not too large, while Theorems~1.14 and~1.15 of~\cite{JMZ21} require that all \emph{complex abnormal extremals} are \emph{of minimal order}. In Theorems~1.11 and~1.14 of~\cite{JMZ21}, which are formulated in the smooth category, these assumptions are imposed on the nilpotent approximations of the sub-Riemannian structure at the points of an open dense set; in Theorems~1.12 and~1.15 of~\cite{JMZ21} they are imposed on the (real analytic) structure itself. In addition, a generic real analytic sub-Riemannian metric on a given distribution and an arbitrary sub-Riemannian metric on a generic real analytic distribution are Weyl rigid~\cite[Corollaries~1.13 and~1.19]{JMZ21}. Corollary~\ref{Weyl_thm} requires none of these assumptions, but it concerns step $2$ distributions only.

In this section we discuss for which step $2$ distributions Corollary~\ref{Weyl_thm} is actually new. For this purpose, we first express the fundamental algebraic system of a nilpotent approximation, together with its complex normal and abnormal extremals, in terms of the bracket of the Tanaka symbol and of the inner product given by the metric. This allows us to analyze the condition of minimal order and the condition on non-strictly normal geodesics separately, and to decide, for several natural families of symbols and metrics, whether these conditions hold. We then show that the octonionic H-type metrics with centers of dimension six and seven satisfy the condition on non-strictly normal geodesics, although for them one of the cases in the proof of the corresponding theorem requires an additional argument. Next, we construct structures, including structures with indecomposable symbols, for which the hypotheses of the theorems recalled above are not satisfied. Finally, we explain why the two-layer argument on which our proof is based does not extend to distributions of higher step. Our conclusions are summarized in Table~\ref{weyl_comparison_table}, which records, for each of the families considered below, which of the hypotheses hold.

Before turning to these questions, we record a consequence of the step $2$ Weyl theorem which will be used in several of our examples. It follows directly from the splitting of the Tanaka symbol for projectively equivalent metrics, and its proof does not require the classification argument of the preceding sections.
\begin{Coro}[Projective rigidity]\label{wc:projective_rigidity}
If a connected step $2$ distribution has indecomposable Tanaka symbol at every point of an open dense set, then every sub-Riemannian metric on it is projectively rigid, i.e., every sub-Riemannian metric projectively equivalent to it is a constant multiple of it.
\end{Coro}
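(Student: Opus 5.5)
The plan is to show that the transition operator of any projectively equivalent pair $(g,\bar g)$ has a single eigenvalue, and then finish with Corollary \ref{Weyl_thm}. Let $\Omega'$ denote the open dense set on which the Tanaka symbol is indecomposable.

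\emph{Step 1: stable points are dense.} The number of distinct eigenvalues of the transition operator $S$ is a lower semicontinuous integer-valued function. It is bounded above by $\rank D$. Hence the set of points where this number is locally constant is open and dense, and all its points are stable. Intersecting it with $\Omega'$ still gives an open dense set. Call it $\Omega''$.

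\emph{Step 2: one eigenvalue on $\Omega''$.} Fix $q\in\Omega''$ and work in a connected stable neighborhood. By Theorem \ref{Tanaka_decomp_prop}, the symbol splits as $\mathfrak m(q)=\bigoplus_{i=1}^k\mathfrak m^i(q)$ into pairwise commuting graded ideals. Each summand has a nonzero horizontal part $D_i(q)$. If $k\ge2$, this is a nontrivial decomposition of the symbol, which contradicts indecomposability at $q$. There is a subtle point: an abelian summand with one-dimensional $D_i$ still counts as a nontrivial direct factor, and I take indecomposability to exclude exactly this kind of splitting. So $k=1$ near $q$, i.e. $S=\lambda\,\mathrm{id}_D$ there, and $\bar g=\lambda g$ on $\Omega''$.

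\emph{Step 3: extend and conclude.} The two smooth tensor fields $\bar g$ and $\lambda g$ agree on the dense set $\Omega''$. More directly, $S-\tfrac{1}{\rank D}\operatorname{tr}(S)\,\mathrm{id}_D$ is a smooth tensor vanishing on $\Omega''$, so it vanishes everywhere. Therefore $S$ is scalar on all of $M$, the two metrics are conformal, and every point is stable. Corollary \ref{Weyl_thm} then gives that $\bar g$ is a constant multiple of $g$, since $M$ is connected. Alternatively, Remark \ref{Weyl_proof_rem} applies via Proposition \ref{eigenconst}.

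The main obstacle I expect is Step 2. It requires using Theorem \ref{Tanaka_decomp_prop} only at stable points, and checking that the notion of indecomposability matches the splitting it produces. That splitting may include abelian summands; indecomposability in the sense of direct sums of fundamental graded Lie algebras (with nonzero horizontal parts) rules these out as well.
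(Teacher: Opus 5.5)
Your argument is correct. It has the same two-step structure as the paper's proof: first show that the two metrics are conformal, then apply Corollary~\ref{Weyl_thm}. You obtain conformality by a different route, however.

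The paper simply cites the conformal-projective rigidity theorem of \cite{jmz2019} (recalled as \cite[Theorem~1.7]{JMZ21}). That theorem says directly that projectively equivalent metrics on a distribution whose symbol is indecomposable on an open dense set are conformal.

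You instead rederive this from the paper's own Theorem~\ref{Tanaka_decomp_prop}. For that you supply two things the cited theorem leaves implicit: the density of stable points, via lower semicontinuity of the number of distinct eigenvalues of the self-adjoint operator $S$; and the extension of ``$S$ is scalar'' from the dense set $\Omega''$ to all of $M$, via continuity of the trace-free part of $S$.

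Both routes ultimately rest on the same input, \cite[Theorem~7.1]{jmz2019}. Yours is more self-contained within the paper. It also makes explicit which notion of decomposability is needed: a splitting into commuting graded ideals with nonzero horizontal parts, abelian factors included. This is indeed how the paper uses the term, e.g.\ in the proof of Proposition~\ref{wc:amalgam_prop}, where abelian factors are ruled out separately via the vanishing of the horizontal center. The paper's version is shorter but delegates these points to the cited result.
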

\begin{proof}
Let two sub-Riemannian metrics on such a distribution be projectively equivalent. Since the Tanaka symbol is indecomposable on an open dense set, the conformal-projective rigidity result of \cite{jmz2019}, recalled in \cite[Theorem 1.7]{JMZ21}, shows that the two metrics are conformal. Being both conformal and projectively equivalent, they are constantly proportional by Corollary \ref{Weyl_thm}, i.e., the conformal factor is constant.
\end{proof}
\subsection{Nilpotent approximations of step 2 distributions and their fundamental algebraic system}\label{wc:nilp_subsec}
Fix a point $q\in M$ and set $\m:=\m(q)=\m_{-1}\oplus\m_{-2}$, $m=\dim\m_{-1}$, $r=\dim\m_{-2}=n-m$, and $g:=g_1(q)$, an inner product on $\m_{-1}=D(q)$. Let $(G,\widehat D,\widehat g)$ be the nilpotent approximation of $(M,D,g_1)$ at $q$, i.e., the left-invariant sub-Riemannian structure on the simply connected Lie group $G$ with Lie algebra $\m$, defined by the subspace $\m_{-1}$ and the inner product $g$. Since the hypotheses of~\cite{JMZ21} are formulated in terms of complex extremals, from now on all vector spaces are complexified and $g$ is extended to a complex bilinear form; we keep the same notation for the complexified objects.

We identify $T^*G$ with $G\times\m^*$ by left translations. Given $\lambda\in\m^*$, let $Z=Z(\lambda)\in\m_{-1}$ be defined by $g(Z,\cdot)=\lambda|_{\m_{-1}}$, and let $\eta=\eta(\lambda):=\lambda|_{\m_{-2}}\in\m_{-2}^*$. Given $\mu\in\m_{-2}^*$, define the $2$-form $\omega_\mu$ on $\m_{-1}$ and the $g$-skew-symmetric operator $J_\mu$ on $\m_{-1}$ by
\begin{equation}\label{wc:J}
\omega_\mu(X,Y):=\mu\big([X,Y]\big)=g(J_\mu X,Y),\qquad X,Y\in\m_{-1}
\end{equation}
(cf.~\eqref{J_eta}). Note that $\omega_\mu\neq0$ for $\mu\neq0$, because $[\m_{-1},\m_{-1}]=\m_{-2}$; hence $\rank\omega_\mu\geq2$ for $\mu\neq0$. The sub-Riemannian Hamiltonian of $\widehat g$ is $\hh(\lambda)=\frac12g\big(Z(\lambda),Z(\lambda)\big)$, and the hypersurface $\{\hh=0\}$ is called the \emph{complex null cone}. Note that it is strictly larger than $\widehat D^\perp=\{Z=0\}$ (cf.~\cite[Remark~1.9]{JMZ21}).

Since $[\m_{-1},\m_{-2}]=0$, the component $\eta$ is preserved along the normal extremals of $\widehat g$, and the normal extremal with initial point $(e,\lambda)$, where $e$ is the identity of $G$, is $t\mapsto\big(\gamma(t),\lambda(t)\big)$ with
\begin{equation}\label{wc:nilp_extremal}
Z\big(\lambda(t)\big)=e^{tJ_\eta}Z,\qquad \eta\big(\lambda(t)\big)=\eta,\qquad \gamma(t)^{-1}\dot\gamma(t)=e^{tJ_\eta}Z,
\end{equation}
where $\gamma(t)^{-1}\dot\gamma(t)\in\m_{-1}$ denotes the left-trivialized velocity. For $p\geq1$ and $\lambda\in\m^*$ define the linear map $\cA^p(\lambda)\colon\m_{-2}^*\to\m_{-1}^*$ by
\begin{equation}\label{wc:Ap}
\cA^p(\lambda):=\Big(\ad\big(J_\eta^{\,p-1}Z\big)\Big)^*,\qquad\text{i.e.,}\qquad \cA^p(\lambda)\mu=\omega_\mu\big(J_\eta^{\,p-1}Z,\cdot\big)=g\big(J_\mu J_\eta^{\,p-1}Z,\cdot\big).
\end{equation}
These maps describe the layers of the fundamental algebraic system of the nilpotent approximation. More precisely, let $\{X_s\}_{s\in V\cup E}$ be a left-invariant frame on $G$ such that $\{X_v\}_{v\in V}$ is a $g$-orthonormal basis of $\m_{-1}$ and $\{X_e\}_{e\in E}$ is a basis of $\m_{-2}$. Then the $p$-th layer of the matrix of the fundamental algebraic system of $(G,\widehat D,\widehat g)$ is the matrix of $\cA^p(\lambda)$ with respect to the bases of $\m_{-2}^*$ and $\m_{-1}^*$ dual to $\{X_e\}_{e\in E}$ and $\{X_v\}_{v\in V}$. This follows from the nilpotent calculation in Appendix \ref{app:eigen} and from \eqref{all_layer_operator}, applied with $k=1$; here we use that the matrix $A$ of the fundamental algebraic system depends only on $g_1$ and on the frame.

\begin{Def}\label{wc:ample_def}
A covector $\lambda\in\m^*$ is called \emph{ample} if the linear map $\cA(\lambda):=\big(\cA^1(\lambda),\cA^2(\lambda),\ldots\big)$ from $\m_{-2}^*$ to $\prod_{p\geq1}\m_{-1}^*$ is injective (cf.~\cite[Proposition~3.11]{jmz2019}). The set of non-ample covectors is denoted by $\cN\subset\m^*$.
\end{Def}

By the Cayley--Hamilton theorem, $J_\eta^{\,p}Z$ is a linear combination of $Z,J_\eta Z,\ldots,J_\eta^{\,m-1}Z$ with coefficients polynomial in $\eta$, so $\lambda$ is ample if and only if the matrix
\begin{equation}\label{wc:Am}
\cA_{\leq m}(\lambda):=\begin{pmatrix}\cA^1(\lambda)\\ \vdots\\ \cA^m(\lambda)\end{pmatrix}
\end{equation}
has rank $r$. Thus $\cN$ is the common zero set of the $r\times r$ minors of $\cA_{\leq m}(\lambda)$, which are polynomials in $\lambda$. By~\eqref{wc:Ap}, a covector $\lambda=(Z,\eta)$ belongs to $\cN$ if and only if there exists $\mu\neq0$ in $\m_{-2}^*$ such that
\begin{equation}\label{wc:nonample_cond}
J_\mu J_\eta^{\,j} Z=0\qquad\text{for all }j\geq0 .
\end{equation}
In particular $\cN$ is invariant under the separate rescalings $Z\mapsto sZ$, $\eta\mapsto s'\eta$, and, by~\eqref{wc:nilp_extremal}, under the normal extremal flow.

\begin{Lemma}\label{wc:nonstrict_lemma}
Let $\lambda=(Z,\eta)\in\m^*$ with $Z\neq0$, and let $\gamma$ be the complex normal geodesic of $\widehat g$ with initial covector $\lambda$. Then the space of abnormal lifts of $\gamma$, with the zero lift adjoined, is naturally identified with
\begin{equation}\label{wc:kerA}
\ker\cA(\lambda)=\big\{\mu\in\m_{-2}^*:\ J_\mu J_\eta^{\,j}Z=0\ \text{ for all }j\geq0\big\}.
\end{equation}
In particular, $\gamma$ is non-strictly normal if and only if $\lambda\in\cN$, and in this case its corank in the sense of~\cite[\S1]{JMZ21} is equal to $\dim\ker\cA(\lambda)$.
\end{Lemma}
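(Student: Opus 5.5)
I would compute the abnormal lifts of $\gamma$ directly in the left trivialization $T^*G\cong G\times\m^*$. An abnormal lift of $\gamma$ is a nonzero curve $\nu(t)\in\m^*$ over $\gamma$ that annihilates $\widehat D$, i.e.\ $\nu(t)|_{\m_{-1}}=0$, and satisfies the Pontryagin equation with zero multiplier for the control $u(t)=\gamma(t)^{-1}\dot\gamma(t)=e^{tJ_\eta}Z$ from \eqref{wc:nilp_extremal}. In the trivialization this equation reads $\dot\nu=(\ad u(t))^*\nu$, up to a sign convention. Write $\nu=(\nu_{-1},\nu_{-2})$. Since $[\m_{-1},\m_{-2}]=0$ and $\m$ has step $2$, the $\m_{-2}^*$-component is constant, $\nu_{-2}(t)\equiv\mu$. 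The $\m_{-1}^*$-component evolves by $\dot\nu_{-1}(t)=\pm\mu([u(t),\cdot])=\pm\omega_\mu(u(t),\cdot)$.

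The condition $\nu_{-1}\equiv0$ therefore holds if and only if
\[
\omega_\mu\big(e^{tJ_\eta}Z,\cdot\big)=0
\]
for all $t$ in the interval. The left-hand side is real-analytic (indeed entire) in $t$, so this is equivalent to the vanishing of all $t$-derivatives at $t=0$, namely $\omega_\mu(J_\eta^{\,j}Z,\cdot)=0$ for every $j\ge0$. By \eqref{wc:J}, this is $g(J_\mu J_\eta^{\,j}Z,\cdot)=0$, which is $J_\mu J_\eta^{\,j}Z=0$ because $g$ is nondegenerate on $\m_{-1}$ (in the complexified setting it is still a nondegenerate bilinear form). By \eqref{wc:Ap}, this is exactly $\cA^{j+1}(\lambda)\mu=0$ for all $j$, i.e.\ $\mu\in\ker\cA(\lambda)$.

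Conversely, each $\mu\in\ker\cA(\lambda)$ gives the constant curve $\nu\equiv(0,\mu)$, which is an abnormal lift. The assignment $\nu\mapsto\mu=\nu(0)$ is therefore a linear bijection from the space of abnormal lifts with $0$ adjoined onto \eqref{wc:kerA}. Here $Z\neq0$ is needed only so that $\gamma$ is a genuine (non-constant) geodesic. Non-strict normality means that a nonzero abnormal lift exists, i.e.\ $\ker\cA(\lambda)\neq0$, which by Definition \ref{wc:ample_def} is $\lambda\in\cN$. The corank in \cite[\S1]{JMZ21} is the dimension of the space of abnormal covector lifts, so it equals $\dim\ker\cA(\lambda)$.

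The main point requiring care is matching conventions: the sign in the trivialized adjoint equation, and the precise definition of corank in \cite{JMZ21}. In particular, I must check whether that definition counts lifts modulo scaling or as a linear space, and adjust the stated identification accordingly. Neither issue affects the kernel itself.
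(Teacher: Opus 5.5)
Your argument is correct and follows the paper's proof essentially verbatim. It passes to the left trivialization, gets constancy of the $\m_{-2}^*$-component from $[\m_{-1},\m_{-2}]=0$, reduces the abnormal condition to $J_\mu e^{tJ_\eta}Z\equiv0$, and expands in $t$ (a step the paper leaves implicit) to obtain $J_\mu J_\eta^{\,j}Z=0$ for all $j$. On the convention you flag, the paper simply appeals to \cite[Remark~1.10]{JMZ21}, and its later use of a $\kappa$-dimensional affine space of normal lifts (proof of Proposition~\ref{wc:criterion_prop}) confirms your assumption: the corank counts the linear space of abnormal lifts, not its projectivization.
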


\begin{proof}
In the left trivialization, the annihilator $\widehat D^\perp$ is $G\times\m_{-2}^*$, where $\m_{-2}^*$ is identified with the covectors on $\m$ vanishing on $\m_{-1}$. By the Pontryagin maximum principle, a curve $t\mapsto(\gamma(t),\mu(t))$ in $\widehat D^\perp$ with $\mu(t)\neq0$ is an abnormal extremal if and only if $\dot\mu(t)=\big(\ad\xi(t)\big)^*\mu(t)$ and the right-hand side of the latter identity vanishes on $\m_{-1}$, where $\xi(t)=\gamma(t)^{-1}\dot\gamma(t)\in\m_{-1}$. Since $[\m_{-1},\m_{-2}]=0$, the restriction of $\big(\ad\xi\big)^*\mu$ to $\m_{-2}$ vanishes, so that $\mu(t)\equiv\mu$ is constant, and the remaining condition reads $\omega_\mu\big(\xi(t),\cdot\big)\equiv0$, i.e.\ $J_\mu\xi(t)\equiv0$. For the geodesic $\gamma$ we have $\xi(t)=e^{tJ_\eta}Z$ by~\eqref{wc:nilp_extremal}, and $J_\mu e^{tJ_\eta}Z\equiv0$ if and only if~\eqref{wc:nonample_cond} holds. This proves the identification~\eqref{wc:kerA}, the second equality in~\eqref{wc:kerA} being~\eqref{wc:Ap}. The last statement follows from the definition of the corank (see~\cite[Remark~1.10]{JMZ21}).
\end{proof}

Note also that if $\mu\in\ker\cA(\lambda)$, then $(J_\eta+J_\mu)^jZ=J_\eta^jZ$ for all $j$, so the covectors $(Z,\eta+\mu)$, $\mu\in\ker\cA(\lambda)$, are exactly the normal lifts of $\gamma$, in accordance with~\cite[Remark~1.10]{JMZ21}.

\subsection{The condition of minimal order}\label{wc:min_order_subsec}
Recall (\cite[\S5.1]{JMZ21}) that for a distribution $D$ one sets
\begin{align*}
\widetilde W_D&:=\{\lambda\in D^\perp\setminus\{\text{zero section}\}:\ \ker\sigma|_{D^\perp}(\lambda)\neq0\},\\
W_D&:=\{\lambda\in\widetilde W_D:\ \dim\ker\sigma|_{\widetilde W_D}(\lambda)=1\},
\end{align*}
where $\sigma$ is the (complexified) canonical symplectic form and $W_D$ is defined on the smooth part of $\widetilde W_D$. Recall also that a complex abnormal extremal $\Gamma$ is called \emph{of minimal order} if $\Gamma\cap W_D$ has full measure in $\Gamma$ \cite[Definition~5.2]{JMZ21}.

\begin{Prop}\label{wc:min_order_prop}
Let $(G,\widehat D)$ be a step $2$ Carnot group as above, $m=\dim\m_{-1}$, $r=\dim\m_{-2}$.
\begin{enumerate}
\item If $m$ is even and $\omega_\mu$ is nondegenerate for every $\mu\neq0$ (over $\C$ this happens if and only if $r=1$ and $\widehat D$ is contact), then $\widehat D$ has no complex abnormal extremals.
\item If $m$ is even and $\omega_\mu$ is degenerate for some $\mu\neq0$ (this is always the case if $r\geq2$), then $W_{\widehat D}=\varnothing$ while $\widehat D$ has complex abnormal extremals; in particular, no complex abnormal extremal of $\widehat D$ is of minimal order.
\item If $m$ is odd, then all complex abnormal extremals of $\widehat D$ are of minimal order if and only if $\dim\ker\omega_\mu=1$ for every $\mu\in\m_{-2}^*\setminus\{0\}$.
\end{enumerate}
Consequently, the hypothesis of~\cite[Theorem~1.14]{JMZ21} holds for a step $2$ distribution $D$ if and only if at the points of an open dense subset of $M$ the Tanaka symbol of $D$ either is contact or has odd rank and satisfies $\dim\ker\omega_\mu=1$ for all nonzero complex $\mu\in\m_{-2}^*$.
\end{Prop}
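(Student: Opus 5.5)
The plan is to compute $\widetilde W_{\widehat D}$ and the kernel of $\sigma$ restricted to it explicitly in the left trivialization $T^*G=G\times\m^*$, reducing everything to linear algebra of the forms $\omega_\mu$. On $\widehat D^\perp=G\times\m_{-2}^*$, a standard computation (as in the proof of Lemma \ref{wc:nonstrict_lemma}) shows that the characteristic directions of $\sigma|_{\widehat D^\perp}$ at $(x,\mu)$ are the left-invariant horizontal vectors $\xi\in\m_{-1}$ with $J_\mu\xi=0$, lifted with $\dot\mu=0$. Hence
\[
\widetilde W_{\widehat D}=G\times\{\mu\ne0:\ \ker\omega_\mu\ne0\},\qquad \ker\sigma|_{\widehat D^\perp}(x,\mu)\cong\ker\omega_\mu ,
\]
and abnormal extremals are the curves with constant $\mu$ and velocity in $\ker J_\mu$.

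For item (1): if every $\omega_\mu$ with $\mu\ne0$ is nondegenerate then $\widetilde W_{\widehat D}=\varnothing$, so there are no abnormal extremals. Over $\C$, $\mu\mapsto\det J_\mu$ is a homogeneous polynomial on $\m_{-2}^*$ of degree $m$; if $r\ge2$ it has a nontrivial zero, so nondegeneracy forces $r=1$, which is the contact case. For item (2): $\widetilde W_{\widehat D}$ is the cone $\Sigma=\{\det J_\mu=0\}$ (times $G$). For even $m$ the skew form $\omega_\mu$ has even rank, so $\dim\ker\omega_\mu\ge2$ on $\Sigma$. I will show that $\ker\sigma|_{\widetilde W}$ contains $\ker\omega_\mu$, i.e. it has dimension at least $2$ at smooth points. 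The reason is that the tangent space to $\widetilde W$ is $\m_{-1}\oplus T_\mu\Sigma$ (left-trivialized), and $\sigma((\xi,0),(\zeta,\nu))=\nu([\xi,\zeta])\pm\ldots$ reduces to $\mu([\xi,\zeta])$ plus a term pairing $\xi$ with $T_\mu\Sigma$, which vanishes for $\xi\in\ker J_\mu$. Hence $W_{\widehat D}=\varnothing$, abnormal extremals exist (constant $\mu\in\Sigma$, velocity in the kernel), and none is of minimal order.

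For item (3): with $m$ odd, every $\omega_\mu$ is degenerate, so $\widetilde W_{\widehat D}=G\times(\m_{-2}^*\setminus0)$ is smooth and all of $\widehat D^\perp$ minus the zero section. There $\ker\sigma|_{\widetilde W}=\ker\sigma|_{\widehat D^\perp}\cong\ker\omega_\mu$. So $W_{\widehat D}$ consists of the $\mu$ with $\dim\ker\omega_\mu=1$. An abnormal extremal lies over a constant $\mu$, hence it meets $W_{\widehat D}$ in full measure iff $\dim\ker\omega_\mu=1$. Thus all abnormal extremals are of minimal order iff this holds for every nonzero $\mu$; for the converse direction, any $\mu$ with larger kernel supports an abnormal extremal entirely outside $W$. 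The final consequence follows by applying (1)--(3) to the nilpotent approximations at the points of an open dense set, as in the formulation of \cite[Theorem~1.14]{JMZ21}.

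The main obstacle is item (2). There one must compute $\ker\sigma|_{\widetilde W}$ correctly, including at singular points of the determinantal cone $\Sigma$, where $W_{\widehat D}$ is not even defined. It must also be justified that $W_{\widehat D}=\varnothing$ means no extremal can be of minimal order, even though extremals exist. I would handle this by restricting to the smooth locus and checking that $\ker\omega_\mu$ injects into $\ker\sigma|_{T\widetilde W}$ there.
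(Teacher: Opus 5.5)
Your proposal is correct and follows essentially the same route as the paper: the left-trivialized computation $\ker\sigma|_{\widehat D^\perp}(x,\mu)=\{(Y,0):Y\in\ker\omega_\mu\}$, the parity of $\dim\ker\omega_\mu$, and the constancy of $\mu$ along abnormal extremals. In item (2), tidy the tangency computation in two places. First, $T\widetilde W_{\widehat D}$ at a smooth point is $\m\oplus T_\mu\Sigma$, not $\m_{-1}\oplus T_\mu\Sigma$. Second, in $\sigma\bigl((\xi,0),(\zeta,\nu)\bigr)=-\nu(\xi)-\mu([\xi,\zeta])$ (up to sign), the first term vanishes because $\nu\in\m_{-2}^*$ annihilates $\m_{-1}$, not because $\xi\in\ker J_\mu$. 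Alternatively, argue as the paper does: $(Y,0)\in\ker\sigma|_{\widehat D^\perp}$ is tangent to the left-invariant set $\widetilde W_{\widehat D}$, hence lies in $\ker\sigma|_{\widetilde W_{\widehat D}}$.
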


\begin{proof}
In the left trivialization, the tangent space to $\widehat D^\perp=G\times\m_{-2}^*$ at $(g',\mu)$ consists of pairs $(X,\xi)$, $X\in\m$, $\xi\in\m_{-2}^*$, and the canonical symplectic form is given, up to sign, by $\sigma\big((X_1,\xi_1),(X_2,\xi_2)\big)=\xi_1(X_2)-\xi_2(X_1)-\mu\big([X_1,X_2]\big)$. Testing $(X_1,\xi_1)\in\ker\sigma|_{\widehat D^\perp}$ against $(0,\xi_2)$ shows that $X_1\in\m_{-1}$; testing it against $(X_2,0)$ with $X_2\in\m_{-2}$ gives $\xi_1=0$, and with $X_2\in\m_{-1}$ gives $X_1\in\ker\omega_\mu$. Hence
\begin{equation}\label{wc:ker_sigma}
\ker\sigma|_{\widehat D^\perp}(g',\mu)=\{(Y,0):\ Y\in\ker\omega_\mu\},
\end{equation}
so that $\widetilde W_{\widehat D}=G\times\{\mu\neq0:\ \omega_\mu\ \text{is degenerate}\}$, and the complex abnormal extremals are the curves $t\mapsto(\gamma(t),\mu)$ with $\mu$ constant and $\gamma(t)^{-1}\dot\gamma(t)\in\ker\omega_\mu$ (cf.\ the proof of Lemma~\ref{wc:nonstrict_lemma}).

Item (1) is immediate from~\eqref{wc:ker_sigma}. Note that if $r\geq2$, then the Pfaffian of $\omega_\mu$, a homogeneous polynomial of degree $m/2$ in $\mu$ on the complex space $\m_{-2}^*$ of dimension at least two, has nonzero zeros, so $\omega_\mu$ is degenerate for some $\mu\neq0$. Under the assumption of item (2) we have $\widetilde W_{\widehat D}\neq\varnothing$, and for every $\mu\neq0$ with $\omega_\mu$ degenerate and every nonzero $Y\in\ker\omega_\mu$ the curve $t\mapsto(\exp tY,\mu)$ is a complex abnormal extremal. Since $\widetilde W_{\widehat D}$ is invariant under left translations, the vectors $(Y,0)$, $Y\in\ker\omega_\mu$, are tangent to it; by~\eqref{wc:ker_sigma} they belong to $\ker\sigma|_{\widetilde W_{\widehat D}}$. Since $m$ is even, $\dim\ker\omega_\mu\geq2$ whenever $\omega_\mu$ is degenerate, so $\ker\sigma|_{\widetilde W_{\widehat D}}$ is never one-dimensional and $W_{\widehat D}=\varnothing$. If $m$ is odd, then $\widetilde W_{\widehat D}=\widehat D^\perp$ minus the zero section, so by~\eqref{wc:ker_sigma} we get $W_{\widehat D}=G\times\{\mu:\ \dim\ker\omega_\mu=1\}$. Since $\mu$ is constant along abnormal extremals, an abnormal extremal with $\dim\ker\omega_\mu=1$ lies in $W_{\widehat D}$, while the abnormal extremal $t\mapsto(\exp tY,\mu)$ with $\dim\ker\omega_\mu\geq3$ and $0\ne Y\in\ker\omega_\mu$ does not meet $W_{\widehat D}$ at all. The last statement of the proposition follows, because the hypothesis of~\cite[Theorem~1.14]{JMZ21} concerns the nilpotent approximations at the points of an open dense subset.
\end{proof}

For example, the Tanaka symbols of rank $3$ distributions in dimensions $5$ and $6$ and of quasi-contact distributions satisfy the condition of item (3) (cf.\ the list after~\cite[Corollary~1.16]{JMZ21}), whereas, for instance, the Tanaka symbols of rank $4$ distributions in dimension $6$, of free step $2$ distributions of rank at least $4$, and of all step $2$ distributions of even rank with $\dim\m_{-2}\geq2$ do not. A refinement of the condition of minimal order is given in~\cite[\S5.5]{JMZ21}; we shall examine it for the central amalgams constructed below, see Remark~\ref{wc:weak_order_rem}.

\subsection{The condition on non-strictly normal geodesics}\label{wc:nonstrict_subsec}
The hypothesis of~\cite[Theorem~1.11]{JMZ21} is that, for every positive integer $\kappa$, there is no $(n-2-\kappa)$-parametric family of corank $\kappa$ complex non-strictly normal geodesics of the nilpotent approximation through a point. Here, following~\cite[\S1]{JMZ21}, we say that a family of corank $\kappa$ geodesics through a point $q'$ is $s$-parametric if the corresponding set of initial covectors in $T^*_{q'}G\cap\hh^{-1}(c)$, for some $c\neq0$, has dimension $s+\kappa$. Note that the definition in~\cite{JMZ21} involves only the nonzero levels of the Hamiltonian; this point will play an important role below. By left-invariance, it is enough to consider $q'=e$.

\begin{Prop}\label{wc:criterion_prop}
The nilpotent approximation $(G,\widehat D,\widehat g)$ satisfies the hypothesis of~\cite[Theorem~1.11]{JMZ21} if and only if every irreducible component of codimension one of the non-ample set $\cN$ is contained in the complex null cone $\{\hh=0\}$. If $m\geq3$, this is equivalent to the property that the greatest common divisor of the $r\times r$ minors of the matrix $\cA_{\leq m}(\lambda)$ from~\eqref{wc:Am} is, up to a nonzero constant factor, a power of $\hh$, including the zeroth power.
\end{Prop}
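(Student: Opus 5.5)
Since $\widehat g$ is left-invariant, it suffices to work at $q'=e$ and to translate the hypothesis of \cite[Theorem~1.11]{JMZ21} into a dimension count for $\cN$. By Lemma~\ref{wc:nonstrict_lemma}, the initial covectors of corank $\kappa$ complex non-strictly normal geodesics are exactly the points $\lambda=(Z,\eta)\in\cN$ with $Z\neq0$, $\hh(\lambda)\neq0$ and $\dim\ker\cA(\lambda)=\kappa$. Denote this stratum of $\cN$ by $\cN_\kappa$. An $(n-2-\kappa)$-parametric family of such geodesics then means a subset of $\cN_\kappa\cap\hh^{-1}(c)$, $c\neq0$, of dimension $n-2$. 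Since $\hh$ is homogeneous of degree $2$ under the rescaling $Z\mapsto sZ$, which preserves $\cN$, every nonzero level is equivalent to every other. So such a family exists if and only if $\cN_\kappa\setminus\{\hh=0\}$ has dimension $n-1$, that is, if and only if it contains an open piece of a hypersurface in $\m^*$, which has dimension $n$.

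First I would observe that $\cN\neq\m^*$. This holds because the core minor is a nonzero polynomial: Lemma~\ref{pcoeff1} applied to the symbol with its compatible graph, or directly because generic covectors are ample by Proposition~\ref{th:orb.to.proj}. Hence $\dim\cN\leq n-1$.

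\emph{Forward direction.} Assume some irreducible codimension-one component $C$ of $\cN$ is not contained in $\{\hh=0\}$. On $C$ the function $\dim\ker\cA$ takes a generic value $\kappa\geq1$ on a dense Zariski-open subset. Removing $\{\hh=0\}$ and $\{Z=0\}$ leaves an $(n-1)$-dimensional set in $\cN_\kappa$. Since $\{Z=0\}$ has dimension $r<n-1$, it is never a component. Intersecting with $\hh=c$ gives the forbidden family.

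\emph{Converse.} Suppose every codimension-one component lies in the null cone. Then $\cN\setminus\{\hh=0\}$ has dimension at most $n-2$, and each level set meets it in dimension at most $n-3$. This is less than $(n-2-\kappa)+\kappa$, so no forbidden family exists. The one point to check is that the required dimension is $n-2$ regardless of $\kappa$, which is exactly why the parametrization is normalized as $s+\kappa$.

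For the gcd reformulation, $\cN$ is the common zero set of the $r\times r$ minors $M_\alpha$ of $\cA_{\leq m}(\lambda)$. These are polynomials in $\lambda$ and not all zero. The union of the codimension-one components of a common zero set is exactly the zero set of the gcd $\Delta$ of the $M_\alpha$. Indeed, any hypersurface in $\cN$ has an irreducible defining polynomial dividing every $M_\alpha$, and conversely. So the condition says that every irreducible factor of $\Delta$ vanishes only on $\{\hh=0\}$. For $m\geq3$, the quadratic form $\hh=\tfrac12 g(Z,Z)$ on $\m^*$ has rank $m\geq3$, hence is irreducible. By the Nullstellensatz, an irreducible factor whose zero set lies in $\{\hh=0\}$ must then be a constant multiple of $\hh$, so $\Delta=c\,\hh^j$.

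The main obstacle I expect is the dimension bookkeeping with the nonzero level and the corank stratification. In the forward direction one must make sure that the generic corank on $C$ yields a family of exactly the required parameter count, and not merely a subfamily of a larger corank. The secondary subtlety is irreducibility of $\hh$: it fails for $m=2$, where $\hh$ factors into two linear forms, and this explains the restriction $m\geq3$ in the statement.
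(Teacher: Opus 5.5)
Your proposal is correct and follows essentially the same route as the paper's proof. The common steps are properness of $\cN$, the generic corank $\kappa$ on a codimension-one component not contained in $\{\hh=0\}$ giving an $(n-2)$-dimensional, i.e.\ $(n-2-\kappa)$-parametric, set of initial covectors on a nonzero level, the dimension count for the converse, and the UFD argument with the irreducibility of $\hh$ for $m\geq3$. The differences are minor. The paper gets properness of $\cN$ from the proof of Lemma~\ref{Jacobi_lem} and passes to level sets using that the components of $\cN$ are cones, whereas you use genericity of ample covectors (or Lemma~\ref{pcoeff1}) and the invariance of $\cN$ and its corank strata under $Z\mapsto sZ$. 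Incidentally, $\{Z=0\}$ can be a component of $\cN$, as in the contact case, just never a codimension-one one, and removing it is redundant since it lies in $\{\hh=0\}$.
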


\begin{proof}
We first note two general properties of $\cN$. First, $\cN$ is a proper subset of $\m^*$, i.e., not all $r\times r$ minors of $\cA_{\leq m}(\lambda)$ vanish identically. Indeed, real ample covectors exist: by the proof of item (1) of Lemma~\ref{Jacobi_lem}, applied to the real symbol $\m$ with the inner product $g$, for a generic real covector $\lambda=(Z,\eta)$ the dimensions of the successive increments of the subspaces $\sum_{j=0}^{s}[J_\eta^jZ,\m_{-1}]$, $s\geq0$, are equal to $r_0,\ldots,r_\omega$, whose sum is $r$ by~\eqref{sum_r}. Hence these subspaces exhaust $\m_{-2}$, and~\eqref{wc:nonample_cond} has no solution $\mu\neq0$. Second, every irreducible component $C$ of $\cN$ is a cone. Indeed, $\cN$ is invariant under the rescalings $\lambda\mapsto s\lambda$, $s\in\C^*$. Since the group $\C^*$ is connected, the closure of the image of $\C^*\times C$ under this action is an irreducible closed subset of $\cN$ containing $C$; by the maximality of $C$, it coincides with $C$.

Assume that $S$ is an irreducible component of $\cN$ of dimension $n-1$ which is not contained in $\{\hh=0\}$. Since $S$ is a cone and $\hh$ is homogeneous, $\hh|_S$ takes all values, and for every $c\neq0$ the set $S\cap\hh^{-1}(c)$ has dimension $n-2$. The function $\lambda\mapsto\dim\ker\cA(\lambda)$ is constant, say equal to $\kappa\geq1$, on a Zariski open dense subset $S^\circ$ of $S$. By Lemma~\ref{wc:nonstrict_lemma}, the $(n-2)$-dimensional set $S^\circ\cap\hh^{-1}(c)$ consists of initial covectors of corank $\kappa$ non-strictly normal geodesics. In the definition of an $s$-parametric family recalled above, the dimension of the set of initial covectors already includes the $\kappa$-dimensional affine space of normal lifts of each geodesic of the family (see the remark after Lemma~\ref{wc:nonstrict_lemma}). Hence no quotient construction is needed: since $n-2=(n-2-\kappa)+\kappa$, these geodesics form an $(n-2-\kappa)$-parametric family through $e$. Therefore, the hypothesis of~\cite[Theorem~1.11]{JMZ21} fails.

Conversely, assume that all components of $\cN$ of dimension $n-1$ lie in $\{\hh=0\}$. Every other component $C$ of $\cN$ is a cone of dimension at most $n-2$, so, for $c\neq0$, the set $C\cap\hh^{-1}(c)$ is either empty or of dimension at most $n-3$. Hence, for every $c\neq0$, the set $\cN\cap\hh^{-1}(c)$ has dimension at most $n-3$, and, by Lemma~\ref{wc:nonstrict_lemma}, a family of corank $\kappa$ complex non-strictly normal geodesics through $e$ has at most $n-3-\kappa$ parameters. This proves the first statement. The second one follows from the fact that the irreducible components of codimension one of the common zero set of a collection of polynomials, not all identically zero, are exactly the zero sets of the irreducible common factors of these polynomials, and that $\hh$ is irreducible if $m\geq3$.
\end{proof}

The following simple observation gives a sufficient condition which does not depend on the metric. Let
\[
\mathcal S_{\m}:=\big\{Z\in\m_{-1}:\ \ad Z\colon\m_{-1}\to\m_{-2}\ \text{is not onto}\big\}.
\]

\begin{Coro}\label{wc:ad_surj_cor}
For every inner product $g$ on $\m_{-1}$ we have $\cN\subset\{\lambda:\ Z(\lambda)\in\mathcal S_{\m}\}$. Consequently, if the (complex) codimension of $\mathcal S_{\m}$ in $\m_{-1}$ is at least two, then the nilpotent approximation satisfies the hypothesis of~\cite[Theorem~1.11]{JMZ21} for every sub-Riemannian metric. This is the case, in particular, if
\begin{enumerate}
\item $\m$ is a direct sum of contact (i.e.\ Heisenberg) symbols, $\m=\bigoplus_{i=1}^s\h_{2k_i+1}$; this includes the Tanaka symbols of generic rank $4$ distributions on $6$-dimensional manifolds, both in the hyperbolic and in the elliptic case, because their complexifications are isomorphic to $\h_3\oplus\h_3$;
\item $\m=\h_{2k+1}\oplus\C^a$, $k\geq1$, $a\geq0$, with the second summand in degree $-1$, i.e.\ an arbitrary step $2$ symbol of corank one; the case $a=1$ corresponds to quasi-contact distributions.
\end{enumerate}
\end{Coro}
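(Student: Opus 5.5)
The plan is to use the criterion \eqref{wc:nonample_cond} together with the formula \eqref{wc:Ap} for the first layer. Let $\lambda=(Z,\eta)\in\cN$. Then there is $\mu\neq0$ with $J_\mu J_\eta^{\,j}Z=0$ for all $j\geq0$. Taking $j=0$ gives $J_\mu Z=0$, that is, $\omega_\mu(Z,\cdot)=\mu([Z,\cdot])=0$ on $\m_{-1}$. Hence $\mu$ annihilates $\operatorname{im}\ad Z$. Since $\mu\neq0$, the map $\ad Z\colon\m_{-1}\to\m_{-2}$ cannot be onto, so $Z\in\mathcal S_{\m}$. This proves the inclusion. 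Only the first layer $\cA^1(\lambda)=(\ad Z)^*$ is used, so the inclusion holds for every inner product $g$.

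For the consequence, I would show that $\{\lambda:Z(\lambda)\in\mathcal S_{\m}\}$ has codimension at least two in $\m^*$. Indeed, $\lambda\mapsto(Z(\lambda),\eta(\lambda))$ identifies $\m^*$ with $\m_{-1}\times\m_{-2}^*$. Under this identification the set is $\mathcal S_{\m}\times\m_{-2}^*$, whose codimension equals the codimension of $\mathcal S_{\m}$ in $\m_{-1}$. Every irreducible component of $\cN$ therefore has codimension at least two, so $\cN$ has no codimension-one components. The condition of Proposition~\ref{wc:criterion_prop} then holds vacuously, and the hypothesis of~\cite[Theorem~1.11]{JMZ21} follows.

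The examples reduce to computing $\mathcal S_{\m}$.
\begin{enumerate}
\item For $\m=\bigoplus_i\h_{2k_i+1}$, write $Z=\sum_iZ_i$. Then $\ad Z$ is onto the $s$-dimensional center if and only if every $Z_i\neq0$, because $[Z_i,\cdot]$ maps onto the center of the $i$th summand as soon as $Z_i\neq0$. Thus $\mathcal S_{\m}=\bigcup_i\{Z_i=0\}$, and each $Z_i$ ranges over a space of dimension $2k_i\geq2$, so the codimension is at least two. For the generic rank $4$ distributions in dimension $6$, I will invoke the complex isomorphism with $\h_3\oplus\h_3$; the definitions are all complexified, so this suffices.
\item For $\m=\h_{2k+1}\oplus\C^a$ with one-dimensional center, $\ad Z$ fails to be onto exactly when the $\h_{2k+1}$-horizontal part of $Z$ vanishes. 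This is a linear subspace of codimension $2k\geq2$.
\end{enumerate}
I would also check that every corank-one step $2$ symbol has this form. A nonzero skew form on $\m_{-1}$ has even rank $2k\geq2$, so the symbol splits as $\h_{2k+1}\oplus\C^a$, which covers the quasi-contact case $a=1$.

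The only mildly delicate point is the dimension bookkeeping: I must make sure that codimension is measured in $\m^*$ after the product identification, and that Proposition~\ref{wc:criterion_prop} is applied in its first form, about irreducible components. The gcd form needs $m\geq3$, whereas the component form does not.
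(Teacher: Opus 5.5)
Your proposal is correct and follows the paper's proof essentially step by step. The $j=0$ case of \eqref{wc:nonample_cond} gives the inclusion, and the identification $\m^*\cong\m_{-1}\times\m_{-2}^*$ turns $\codim\mathcal S_{\m}\geq2$ into the absence of codimension-one components of $\cN$, so Proposition~\ref{wc:criterion_prop} applies. The two examples come down to the same computation of $\mathcal S_{\m}$ as in the paper: a union of the subspaces $\{Z_i=0\}$ in case (1), and the radical $\C^a$ of the Levi form in case (2).
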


\begin{proof}
Taking $j=0$ in~\eqref{wc:nonample_cond}, we get $J_\mu Z=0$ for some $\mu\neq0$, i.e.\ $\mu$ annihilates $[Z,\m_{-1}]$, so that $Z\in\mathcal S_{\m}$. If $\codim\mathcal S_{\m}\geq2$, then $\cN$ is contained in a subset of codimension at least two of $\m^*$, and Proposition~\ref{wc:criterion_prop} applies. In case (1), writing $Z=(Z_1,\ldots,Z_s)$, the map $\ad Z$ is onto if and only if $Z_i\neq0$ for all $i$, so $\mathcal S_{\m}$ is the union of the subspaces $\{Z_i=0\}$, which have codimension $2k_i\geq2$. In case (2), $\mathcal S_{\m}$ is the radical $\C^a$ of the Levi form, which has codimension $2k\geq2$.
\end{proof}

Note that the condition $\codim\mathcal S_{\m}\geq2$ implies that $\m$ is $\ad$-surjective in the sense of~\cite{Zaifeng}, i.e.\ $\ad Z\colon\m_{-1}\to\m_{-2}$ is onto for some $Z\in\m_{-1}$. For instance, for the symbol of rank $3$ distributions on $5$-dimensional manifolds, $\mathcal S_{\m}$ is a hyperplane; this case is covered by~\cite[Theorem~1.14]{JMZ21} instead (see Proposition~\ref{wc:min_order_prop}).

The next proposition explains why codimension one components of $\cN$ are rare.

\begin{Prop}\label{wc:cyclic_prop}
Assume that, for a generic $\eta\in\m_{-2}^*$, the operator $J_\eta$ is cyclic (i.e.\ its minimal polynomial coincides with its characteristic polynomial). Then every irreducible component of codimension one of $\cN$ is of the form $\{\lambda\in\m^*:\ \eta(\lambda)\in H\}$, where $H\subset\m_{-2}^*$ is an irreducible hypersurface; in particular, all covectors $\lambda$ with $\eta(\lambda)\in H$ are non-ample.
\end{Prop}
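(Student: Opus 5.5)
We have to show that, when $J_\eta$ is cyclic for generic $\eta$, every codimension one irreducible component $S$ of $\cN$ is the preimage of a hypersurface $H\subset\m_{-2}^*$ under $\lambda\mapsto\eta(\lambda)$.

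\textbf{Step 1: the fiber over a generic $\eta$ is small.} Consider the projection $\lambda=(Z,\eta)\mapsto\eta$ restricted to $\cN$. Fix a generic $\eta$, so that $J_\eta$ is cyclic. By \eqref{wc:nonample_cond}, $(Z,\eta)\in\cN$ if and only if the $J_\eta$-invariant subspace
\[
\mathcal K_\eta(Z)=\spn\{J_\eta^jZ:\ j\ge0\}
\]
is annihilated by $J_\mu$ for some $\mu\neq0$. Equivalently, $\mu$ vanishes on $[\mathcal K_\eta(Z),\m_{-1}]$, so we need $[\mathcal K_\eta(Z),\m_{-1}]\ne\m_{-2}$.

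Because $J_\eta$ is cyclic, its invariant subspaces are exactly the kernels $\ker p(J_\eta)$ for monic divisors $p$ of the characteristic polynomial $\chi_\eta$. There are finitely many of them, and they are parametrized algebraically. Moreover, $\mathcal K_\eta(Z)=\m_{-1}$ for $Z$ outside the proper subvariety $\bigcup_{p\ne\chi_\eta}\ker p(J_\eta)$, which is a finite union of proper subspaces, so codimension at least one. Since $\cN$ is a proper subset, ample covectors exist for generic data. In particular $[\m_{-1},\m_{-1}]=\m_{-2}$ gives $(Z,\eta)\notin\cN$ whenever $\mathcal K_\eta(Z)=\m_{-1}$.

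\textbf{Step 2: codimension count.} The set of $Z$ with $(Z,\eta)\in\cN$ is therefore contained in the union of the finitely many proper invariant subspaces $\ker p(J_\eta)$. These have dimension at most $m-1$, and the hyperplane ones occur only for $p$ of degree $m-1$.

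So the fiber $\cN_\eta$ has dimension at most $m-1$. If $S$ had codimension one and dominated $\m_{-2}^*$, its generic fiber would have dimension $m-1$. That would force $S_\eta$ to contain, for generic $\eta$, a hyperplane $\ker p(J_\eta)$ with $\deg p=m-1$, i.e.\ a hyperplane $\mathrm{im}(J_\eta-c)$ for an eigenvalue $c$ (an invariant hyperplane corresponds to a linear factor $x-c$ of $\chi_\eta$). I plan to exclude this by showing that, for such a hyperplane $P$, we still have $[P,\m_{-1}]=\m_{-2}$ generically. The contrary would say that a nonzero $\mu$ satisfies $J_\mu P=0$, i.e.\ $\rank J_\mu\le1$. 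This is impossible, since a skew operator has even rank and $J_\mu\ne0$. Hence $\rank J_\mu\ge2$ and $J_\mu$ cannot kill a hyperplane.

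Thus $\cN_\eta$ has dimension at most $m-2$ for generic $\eta$. Consequently $S$ cannot dominate $\m_{-2}^*$, and its image under the projection lies in a proper closed subset.

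\textbf{Step 3: conclude the cylinder form.} Let $\overline{\eta(S)}$ be the closure of the image. Its dimension is at least $n-1-m=r-1$, since fibers have dimension at most $m$, and it is a proper irreducible subset of $\m_{-2}^*$. Hence it is an irreducible hypersurface $H$, and $S\subset\{\eta\in H\}$. Both sets are irreducible of dimension $n-1$, so they coincide. In particular, every $\lambda$ with $\eta(\lambda)\in H$ lies in $S\subset\cN$ and is non-ample.

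The main obstacle is Step 2: making the generic fiber dimension bound rigorous. This requires care that cyclicity and the enumeration of invariant subspaces hold on a Zariski open set of $\eta$ which meets the image of any dominating component. The rank-parity argument $\rank J_\mu\ge2$ is the key input excluding hyperplane fibers.
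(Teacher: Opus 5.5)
Your proof is correct and follows essentially the same route as the paper. For cyclic $J_\eta$, the fiber $\cN_\eta$ is covered by the finitely many $J_\eta$-invariant subspaces that are annihilated by some nonzero $J_\mu$. Each of these has dimension at most $m-\rank J_\mu\le m-2$, which the paper states directly, without your detour through first bounding by $m-1$ and then excluding hyperplanes. Hence a codimension-one component cannot dominate $\m_{-2}^*$, and the dimension count gives $S=\{\lambda:\eta(\lambda)\in H\}$.

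The step you flag as the main obstacle is routine. The set of $\eta$ with $J_\eta$ cyclic is dense open in $\m_{-2}^*$. If $S$ dominated $\m_{-2}^*$, the set of $\eta$ over which its fiber has dimension $m-1$ would contain a dense open subset. Two dense open sets intersect, which gives the contradiction.
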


\begin{proof}
Fix $\eta$ such that $J_\eta$ is cyclic. A cyclic operator has only finitely many invariant subspaces $U_1,\ldots,U_N$. Indeed, $\m_{-1}$, regarded as a $\C[t]$-module on which $t$ acts by $J_\eta$, is isomorphic to $\C[t]/(p)$, where $p$ is the characteristic polynomial of $J_\eta$; hence the $J_\eta$-invariant subspaces, i.e., the submodules, correspond to the ideals of $\C[t]$ containing $p$, that is, to the finitely many monic divisors of $p$. If $(Z,\eta)\in\cN$ and $\mu\neq0$ satisfies~\eqref{wc:nonample_cond}, then the $J_\eta$-invariant subspace $\spn\{J_\eta^jZ\}_{j\geq0}$ is one of the $U_l$ and is contained in $\ker\omega_\mu$; conversely, if $U_l\subset\ker\omega_\mu$ for some $\mu\neq0$, then $(Z,\eta)\in\cN$ for all $Z\in U_l$. Hence the fiber $\cN_\eta:=\{Z:\ (Z,\eta)\in\cN\}$ is a finite union of subspaces contained in kernels of nonzero forms $\omega_\mu$, and, since $\rank\omega_\mu\geq2$, $\dim\cN_\eta\leq m-2$. Now let $S$ be an irreducible component of $\cN$ of dimension $n-1=m+r-1$. If the projection of $S$ to $\m_{-2}^*$ were dominant, then its generic fiber, which is contained in $\cN_\eta$ for a generic $\eta$, would have dimension $m-1$, a contradiction. Hence the closure $H$ of this projection has dimension at most $r-1$, and $S\subset\{\lambda:\ \eta(\lambda)\in H\}$; comparing dimensions, we get that $H$ is an irreducible hypersurface and $S=\{\lambda:\ \eta(\lambda)\in H\}$.
\end{proof}

In particular, codimension one components of $\cN$ can appear only if either $J_\eta$ is derogatory for a generic $\eta$, or there is a hypersurface of $\eta$'s over which all covectors are non-ample. For H-type metrics (see below) $J_\eta^2$ is a multiple of the identity, so the first possibility is realized.

For symbols of small rank, the dimension of the non-ample set can be estimated directly, for every inner product and without computing any minors.

\begin{Prop}[Small horizontal ranks]\label{wc:small_rank_prop}
For every inner product, the non-ample set has complex codimension at least two for each of the rank $3$ step $2$ symbols in dimensions $5$ and $6$, as well as for the free step $2$ symbol of rank $4$. Consequently, the corresponding nilpotent approximations satisfy the hypothesis of~\cite[Theorem~1.11]{JMZ21} for all these metrics.
\end{Prop}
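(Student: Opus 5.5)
The plan is to bound $\dim\cN$ fiberwise over $\eta$ and then apply Proposition~\ref{wc:criterion_prop}. If the non-ample set has codimension at least two, it has no codimension-one components, so the criterion holds trivially. By~\eqref{wc:nonample_cond}, $(Z,\eta)\in\cN$ exactly when the $J_\eta$-cyclic subspace $U(Z,\eta)=\spn\{J_\eta^jZ\}_{j\ge0}$ is contained in $\ker\omega_\mu$ for some $\mu\neq0$. So I will study the fiber $\cN_\eta$ for generic $\eta$ and, separately, for $\eta$ in proper subvarieties of $\m_{-2}^*$.

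\textbf{Rank three, $m=3$.} For $m=3$, every nonzero $\omega_\mu$ has rank $2$, so $\ker\omega_\mu$ is a line $\ell_\mu$. The condition then says that $U(Z,\eta)$ is a $J_\eta$-invariant line $\ell_\mu$; in particular $Z\in\ell_\mu$ and $J_\eta\ell_\mu\subset\ell_\mu$.

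Let $\kappa\colon\mathbb P(\m_{-2}^*)\to\mathbb P(\m_{-1})$ be the map $\mu\mapsto\ell_\mu$. Then
\[
\cN\subset\{(Z,\eta):\ Z\in\ell_\mu \text{ and } J_\eta\ell_\mu\subset\ell_\mu \text{ for some }\mu\neq0\}.
\]
Since $J_\eta$ is skew for the complexified $g$, an eigenline $\ell$ with nonzero eigenvalue is isotropic. An eigenline in the kernel is $\ker J_\eta=\ell_\eta$.

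I will split the count by $\mu$:
\begin{itemize}
\item[(a)] $\mu\propto\eta$. Then $Z\in\ell_\eta$, which gives dimension $r+1$.
\item[(b)] $\mu\not\propto\eta$. Then $\ell_\mu$ must be an eigenline of $J_\eta$. For fixed $\eta$ there are at most $3$ eigenlines, so this also gives at most $r+1$ dimensions, as long as generic $J_\eta$ has finitely many eigenlines.
\end{itemize}
Since $n=3+r$, both pieces have codimension $\ge2$.

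The exceptional $\eta$, those for which $J_\eta$ has infinitely many eigenlines, require $J_\eta=0$. That forces $\eta=0$ and contributes only $Z$-dimension $3<n-1$ when $r\ge2$. I will double-check the degenerate case in which the kernel line and the eigenvalue-zero line coincide with an isotropic line; such coincidences only lower the dimension.

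\textbf{Free rank four, $m=4$, $r=6$.} Here $\omega_\mu$ ranges over all of $\Lambda^2\m_{-1}^*$, so the condition becomes: some nonzero $2$-form vanishes on $U(Z,\eta)\times\m_{-1}$. Since $\dim\m_{-1}=4$, such a form exists if and only if $\dim U(Z,\eta)\le2$ (a form killing a $k$-dimensional subspace against everything lives in $\Lambda^2$ of a $(4-k)$-dimensional quotient).

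So $\cN=\{(Z,\eta):\ Z \text{ lies in a } J_\eta\text{-invariant plane}\}$. For generic $\eta$, $J_\eta$ is a semisimple skew operator with eigenvalues $\pm a,\pm b$, $a\neq\pm b$. It has six invariant coordinate planes, so $\cN_\eta$ has dimension $2$, and the generic part of $\cN$ has dimension $6+2=8=n-2$.

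On the hypersurface where $J_\eta$ is derogatory ($a=\pm b$, or $\mathrm{Pf}(\omega_\eta)=0$), there are infinitely many invariant planes. I must check that $\cN_\eta$ is still a proper subset there, of dimension $\le3$. This gives $\dim\le5+3=8$. I expect the cases $a=b$, where $J_\eta$ is a multiple of a complex structure, and the degenerate case to have $\cN_\eta$ equal to a union of a quadric-type family of planes of dimension $3$. The nilpotent strata are similar.

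\textbf{Rank three, $m=3$, $r=3$ (free).} This case is already included in the $m=3$ argument above.

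\textbf{Main obstacle.} I expect the hardest step to be the careful stratification in the free rank four case. On the locus where $J_\eta$ is derogatory or nilpotent, one must verify that the union of all invariant planes never fills $\m_{-1}$, for every complex inner product. The first thing I would try is the explicit normal form of complex skew operators on $(\C^4,g)$.
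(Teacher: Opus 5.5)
Your rank-three argument is correct, and it differs from the paper's cross-product computation while being somewhat more uniform. For $\eta\neq0$ the kernel of $J_\eta$ is a line, since its rank is $2$. Each eigenspace for a nonzero eigenvalue is totally isotropic, hence a line in $\C^3$. So $\cN_\eta$ is a finite union of lines, and $\dim\cN\le r+1=n-2$ for both $r=2$ and $r=3$.

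The gap is in the free rank-four case. Your generic count $6+2=8$ is right, but the bound over the discriminant is only announced, and the statement you intend to prove there is false. Suppose $J_\eta^2$ is a scalar, for instance $J_\eta$ a multiple of a complex structure, which is exactly the case you expect to give dimension $3$. Then $\spn\{Z,J_\eta Z\}$ is $J_\eta$-invariant for \emph{every} $Z$, so $\cN_\eta=\m_{-1}$ has dimension $4$. Hence neither ``$\dim\cN_\eta\le3$ on the whole hypersurface'' nor ``the invariant planes never fill $\m_{-1}$'' holds, and the estimate $5+3$ cannot be obtained pointwise.

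Your description of the hypersurface is also inaccurate. Write $\mathfrak{so}(4,\C)\cong\mathfrak{sl}_2\oplus\mathfrak{sl}_2$ and $J_\eta=(A,B)$. The locus $a^2=b^2$ is $\{\det A\,\det B=0\}$, and at its generic points $J_\eta$ is cyclic: it has two eigenvalues, each with a single $2\times2$ Jordan block. So it has only three invariant planes. Only on the Pfaffian component $\{\det A=\det B\}$ is the generic $J_\eta$ derogatory.

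The repair is a codimension count rather than a pointwise statement. Any subset $S\subset\m_{-2}^*$ with $\dim S\le4$ contributes at most $\dim S+\dim\m_{-1}\le8=n-2$ automatically. So the complex-structure locus ($A=0$ or $B=0$, of dimension $3$) and the nilpotent strata need no analysis. You only need $\dim\cN_\eta\le3$ at generic points of the five-dimensional components of the discriminant:
\begin{itemize}
\item in the cyclic case, $\cN_\eta$ is a union of three planes;
\item at a generic point of the Pfaffian component, $J_\eta$ is semisimple with eigenvalues $\pm c,0,0$, and with $E_{\pm c}$ the corresponding eigenlines, $\cN_\eta=(\ker J_\eta\oplus E_c)\cup(\ker J_\eta\oplus E_{-c})\cup(E_c\oplus E_{-c})$, which has dimension $3$.
\end{itemize}

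The paper avoids this stratification by Jordan type altogether by fibering over the invariant plane rather than over $\eta$. It bounds the incidence set $\{(U,J,Z):JU\subset U,\ Z\in U\}$. The group $O(4,\C)$ has finitely many orbits on $\mathrm{Gr}(2,4)$, and the isotropy subalgebra of a plane in an orbit of dimension $d$ has dimension $6-d$. So each piece has dimension $d+(6-d)+2=8$, and its projection contains $\cN$.
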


\begin{proof}
We begin with the symbols of rank $3$. In this case we identify skew operators with vectors by $J_\eta X=\eta\times X$, using an oriented orthonormal basis. For the free symbol the parameter $\eta$ ranges over $\C^3$. Since a nonzero skew operator has a one-dimensional kernel, condition~\eqref{wc:nonample_cond}, for $Z\neq0$, is equivalent to $J_\eta Z\in\C Z$, or, equivalently, to
\[
g(Z,Z)\eta-g(Z,\eta)Z=0.
\]
For nonnull $Z$ this requires $\eta\in\C Z$, whereas for nonzero null $Z$ it requires $\eta\in Z^\perp$. Each of these loci has dimension $4$ in the $6$-dimensional space of $(Z,\eta)$, and the locus $Z=0$ has dimension $3$, which proves the claim for the free symbol. In dimension $5$, the parameters $\eta,\mu$ range over the complexification $P$ of a real two-plane, which is nondegenerate. In this case non-ampleness requires $Z\in P$ and $\eta\times Z\in\C Z$. Since $\eta\times Z\in P^\perp$, this forces $\eta\times Z=0$, and hence $\eta\in\C Z$. The resulting locus has dimension $3$ in the $5$-dimensional space of covectors, and the locus $Z=0$ has dimension $2$.

We now turn to the free symbol of rank $4$, for which we identify the dual of the center with $\mathfrak{so}(4,\C)$. A nonzero skew operator can annihilate a subspace $K$ exactly when $\dim K\leq2$, because such operators correspond to alternating forms on $\C^4/K$. Therefore $(Z,J)$ is non-ample exactly when its Krylov space $K=\spn\{J^jZ:j\geq0\}$ has dimension at most two. Such a space is always contained in a $J$-invariant two-plane $U$: if $\dim K<2$, it suffices to extend it using an eigenvector in the quotient. Consider now the incidence set of triples $(U,J,Z)$ with $JU\subset U$ and $Z\in U$. To estimate its dimension, recall that the orthogonal group has finitely many orbits on $\mathrm{Gr}(2,4)$, distinguished by the dimension $0,1,2$ of $U\cap U^\perp$ (in the maximal isotropic case there are two orbits of the connected group). On an orbit of dimension $d$, the stabilizer in the Lie algebra has dimension $6-d$, so the part of the incidence set lying over this orbit has dimension
\[
d+(6-d)+2=8.
\]
Since the image of the incidence set under the natural projection contains the non-ample set, the latter has dimension at most $8$ in the $10$-dimensional space $\C^4\times\mathfrak{so}(4,\C)$.
\end{proof}

\begin{rk}[Computer experiments]\label{wc:computations_rem}
Proposition~\ref{wc:criterion_prop} gives an effective algebraic test in terms of the maximal minors of $\cA_{\leq m}$. In practice, one can restrict these minors to a random affine line, reduce them modulo a prime, and compute their greatest common divisor. Such a computation is a convenient way to explore examples, but by itself it only provides evidence: in order to turn it into a proof, one has to record the explicit matrices, the chosen prime and line, and to give an argument excluding a bad specialization. Note also that the degree of this greatest common divisor counts the multiplicities of the common factors, so that, to obtain the degree of the reduced union of the divisorial components, one has to pass to its squarefree part. Similarly, computations with randomly sampled metrics may suggest statements about all metrics, or about generic metrics, but they cannot establish such statements. For this reason, we use computer experiments only as a guide: all cases listed in Table~\ref{weyl_comparison_table} are settled by the arguments given in this section, and none of these arguments relies on computer calculations.
\end{rk}

\subsection{Step 2 structures for which Corollary \ref{Weyl_thm} is new}\label{wc:new_subsec}
All the examples considered in this subsection are built from the octonions. Let $J_1,\ldots,J_7$ be the operators on $\R^8=\mathbb O$ given by left multiplication by the seven imaginary units of the octonions. These operators satisfy the Clifford relations $J_iJ_j+J_jJ_i=-2\delta_{ij}\,\Id$ (see~\cite{LawsonMichelsohn}), and for $1\leq r\leq7$ we use the first $r$ of them. Let $\n_r=\m_{-1}\oplus\m_{-2}$ with $\m_{-1}=\R^8$, $\m_{-2}=\R^r$ with basis $W_1,\ldots, W_r$, and
\[
[X,Y]=\sum_{i=1}^r\langle J_iX,Y\rangle\, W_i ,
\]
where $\langle\cdot,\cdot\rangle$ is the standard inner product on $\R^8$, and let $g=\langle\cdot,\cdot\rangle$. The corresponding Carnot groups are the H-type groups of Kaplan~\cite{kaplan}; $\n_7$ is the octonionic Heisenberg algebra, and the corresponding group has dimension $15$. In this case $J_\mu=\sum_i\mu_iJ_i$, where $\mu_i=\mu(W_i)$, and
\begin{equation}\label{wc:clifford}
J_\mu J_\nu+J_\nu J_\mu=-2\,(\mu,\nu)\,\Id,\qquad (\mu,\nu):=\sum_{i=1}^r\mu_i\nu_i .
\end{equation}

\begin{Prop}\label{wc:octonion_prop}
For the H-type algebras $\n_6$ and $\n_7$ with the metric $g$ above, the non-ample set is exactly the complex null cone:
\[
\cN=\{\hh=0\}=\{\lambda:\ g(Z(\lambda),Z(\lambda))=0\}.
\]
Moreover, $\dim\ker\cA(\lambda)\geq r-5$ for every $\lambda\in\cN$, so the corresponding null geodesics (if $Z(\lambda)\neq0$) have corank at least $r-5$. Consequently,
\begin{enumerate}
\item all complex normal geodesics with $\hh\neq0$ are strictly normal, so the hypothesis of~\cite[Theorem~1.11]{JMZ21} is satisfied;
\item the smooth part $\{\hh=0,\ Z\neq0\}$ of the zero level set is foliated by complex (null) normal extremals projecting to non-strictly normal geodesics, so the hypothesis of~\cite[Theorem~3.2]{JMZ21} is not satisfied;
\item no complex abnormal extremal is of minimal order (Proposition~\ref{wc:min_order_prop}(2)).
\end{enumerate}
\end{Prop}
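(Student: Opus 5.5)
The plan is to make the non-ampleness criterion \eqref{wc:nonample_cond} completely explicit, using the Clifford relations \eqref{wc:clifford}. Those relations give $J_\eta^2=-(\eta,\eta)\Id$. Hence the Krylov space $\spn\{J_\eta^jZ\}_{j\ge0}$ is $\spn\{Z,J_\eta Z\}$, and $\lambda=(Z,\eta)\in\cN$ if and only if there is $\mu\neq0$ with $J_\mu Z=0$ and $J_\mu J_\eta Z=0$. The case $Z=0$ is trivial: every $\mu$ works and $\hh(\lambda)=0$. So the plan assumes $Z\neq0$ throughout.

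The first step is the Gram identity
\[
g(J_\mu Z,J_\nu Z)=(\mu,\nu)\,g(Z,Z),\qquad \mu,\nu\in\m_{-2}^*.
\]
To prove it, use the skew-symmetry of $J_\nu$ to write $g(J_\mu Z,J_\nu Z)=-g(J_\nu J_\mu Z,Z)$. The left-hand side is symmetric in $\mu,\nu$, so we may symmetrize and apply \eqref{wc:clifford}. Two consequences follow.
\begin{itemize}
\item If $g(Z,Z)\neq0$, the map $\mu\mapsto J_\mu Z$ is injective, because its Gram form is nondegenerate. Then no $\mu\neq0$ satisfies $J_\mu Z=0$, and $\lambda$ is ample. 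This gives $\cN\subset\{\hh=0\}$.
\item If $g(Z,Z)=0$, the image of $\mu\mapsto J_\mu Z$ is a totally isotropic subspace of $(\C^8,g)$, so its dimension is at most $4$. Hence the kernel $K_Z:=\{\mu: J_\mu Z=0\}$ has dimension at least $r-4$.
\end{itemize}

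The second step reduces the second condition to a single linear equation. By \eqref{wc:clifford}, $J_\mu J_\eta Z=-J_\eta J_\mu Z-2(\mu,\eta)Z$. So for $\mu\in K_Z$ the condition $J_\mu J_\eta Z=0$ becomes $(\mu,\eta)=0$, since $Z\neq0$. Therefore
\[
\ker\cA(\lambda)=K_Z\cap\eta^{\perp},
\]
whose dimension is at least $r-5$. This is at least $1$ for $r=6,7$. Consequently every null covector lies in $\cN$, which proves $\cN=\{\hh=0\}$ together with the corank bound, via Lemma \ref{wc:nonstrict_lemma}.

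The third step derives the three consequences.
\begin{itemize}
\item Item (1): by Lemma \ref{wc:nonstrict_lemma}, a geodesic with $\hh\neq0$ is non-strictly normal exactly when its initial covector lies in $\cN$, which is impossible since $\cN=\{\hh=0\}$. The hypothesis of \cite[Theorem~1.11]{JMZ21} then holds directly; alternatively, it follows from Proposition \ref{wc:criterion_prop}, because the only codimension-one component of $\cN$ lies in the null cone.
\item Item (2): $\cN$ is invariant under the normal extremal flow. So the $(n-1)$-dimensional smooth locus $\{\hh=0,\ Z\neq0\}$ is a union of null normal extremals, each projecting to a non-strictly normal geodesic of corank at least $r-5\ge1$. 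This large zero-level family is what violates the hypothesis of \cite[Theorem~3.2]{JMZ21}, which, unlike \cite[Theorem~1.11]{JMZ21}, also counts the zero level. This point must be checked against the exact formulation in \cite{JMZ21}.
\item Item (3): here $m=8$ is even and $r\ge2$, so this is Proposition \ref{wc:min_order_prop}(2).
\end{itemize}

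The only nonroutine ingredient I expect is the isotropy bound. Over $\C$, a totally isotropic subspace of a nondegenerate $8$-dimensional quadratic space has dimension at most $4$, and this is exactly what produces the threshold $r-5$ and singles out $r=6,7$. The Clifford identity itself is a short computation.
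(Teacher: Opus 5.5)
Your proposal is correct and follows essentially the same route as the paper. The Clifford Gram identity gives ampleness off the null cone, the bound of $4$ on totally isotropic subspaces gives $\dim K_Z\ge r-4$, and the anticommutation relation reduces the remaining condition to $(\mu,\eta)=0$; this yields $\ker\cA(\lambda)=K_Z\cap\eta^\perp$, which the paper records right after its proof, and the three consequences are then drawn from Lemma~\ref{wc:nonstrict_lemma}, Proposition~\ref{wc:criterion_prop}, and Proposition~\ref{wc:min_order_prop}(2), exactly as you do.
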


\begin{proof}
By~\eqref{wc:clifford}, $g(J_iZ,J_jZ)=-\frac12g\big(Z,(J_iJ_j+J_jJ_i)Z\big)=\delta_{ij}\,g(Z,Z)$. If $g(Z,Z)\neq0$, the vectors $J_1Z,\ldots,J_rZ$ are therefore linearly independent, so $\mu\mapsto J_\mu Z$ is injective; hence already the first layer $\cA^1(\lambda)$ is injective and $\lambda$ is ample.

Let $g(Z,Z)=0$. Then $J_1Z,\ldots,J_rZ$ span a totally isotropic subspace of $(\C^8,g)$, whose dimension is at most $4$. Hence $K_Z:=\{\mu:\ J_\mu Z=0\}$ has dimension at least $r-4\geq2$, and $K_Z\cap\{\mu:\ (\mu,\eta)=0\}$ has dimension at least $r-5\geq1$. Let $\mu\ne0$ belong to the latter space. By~\eqref{wc:clifford}, $J_\mu J_\eta Z=-J_\eta J_\mu Z-2(\mu,\eta)Z=0$, and $J_\eta^2=-(\eta,\eta)\,\Id$, so that $\spn\{J_\eta^jZ\}_{j\geq0}=\spn\{Z,J_\eta Z\}\subset\ker J_\mu$. Thus~\eqref{wc:nonample_cond} holds, $\lambda\in\cN$, and $\dim\ker\cA(\lambda)\geq r-5$. Items (1)--(3) follow from Lemma~\ref{wc:nonstrict_lemma}, Proposition~\ref{wc:criterion_prop}, and Proposition~\ref{wc:min_order_prop}.
\end{proof}

For $Z\neq0$, the preceding argument in fact yields the following exact formula, which will be useful below:
\begin{equation}\label{wc:exact_kernel}
\ker\cA(Z,\eta)=K_Z\cap\eta^\perp,
\qquad K_Z=\{\mu:J_\mu Z=0\}.
\end{equation}
Indeed, once $J_\mu Z=0$, the second of the defining equations of the kernel takes the form $-2(\mu,\eta)Z=0$ by the Clifford relation, and, since $J_\eta^2$ is scalar, all further equations are consequences of the first two.

Using this formula and explicit covectors, we can show that the lower bound on the corank obtained above is attained at generic points of the null cone, and that for the octonionic algebras with smaller centers the non-ample set has codimension at least two.

\begin{Prop}[Explicit witnesses]\label{wc:witness_prop}
For $r=6,7$ the generic corank on the null cone is exactly $r-5$. For the octonionic restrictions $\n_r$ with $1\leq r\leq5$, the non-ample set has codimension at least two, so the hypothesis of~\cite[Theorem~1.11]{JMZ21} holds.
\end{Prop}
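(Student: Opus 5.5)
The statement has two parts, and I would treat each through explicit witnesses combined with the formula \eqref{wc:exact_kernel} and the dimension count of Proposition~\ref{wc:criterion_prop}.

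\emph{Generic corank on the null cone for $r=6,7$.} Proposition~\ref{wc:octonion_prop} already gives $\dim\ker\cA(\lambda)\ge r-5$ on $\cN$. Since $\dim\ker\cA$ is upper semicontinuous on the irreducible cone $\{g(Z,Z)=0\}$ (irreducible for $m=8$), it suffices to exhibit one covector $(Z,\eta)$ with $g(Z,Z)=0$, $Z\ne0$, and $\dim(K_Z\cap\eta^\perp)=r-5$. I would choose the null vector $Z=e_0+ie_1$ in $\C^8=\mathbb O\otimes\C$, where $e_0=1$ and $e_1$ is an imaginary unit among those defining $J_1,\dots,J_r$. Then $J_jZ=e_j+i\,e_je_1$, and $J_1Z=e_1-ie_0=-iZ$ is proportional to $Z$. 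Computing the span of $J_1Z,\dots,J_rZ$ explicitly in the octonion multiplication table, I expect it to be totally isotropic of dimension exactly $4$. This would give $\dim K_Z=r-4$. Next I would choose $\eta$ so that the linear form $(\cdot,\eta)$ does not vanish identically on $K_Z$, giving $\dim(K_Z\cap\eta^\perp)=r-5$. Since $K_Z$ is written down explicitly, finding such an $\eta$ is immediate. The main obstacle is this first count: I must check in the octonion table that the rank of $\mu\mapsto J_\mu Z$ is exactly $4$ and not smaller. If it is smaller for this $Z$, I would perturb $Z$ within the null cone, using $Z=e_0+ie_j$ for a unit $e_j$ not among the first $r$ where $r=6$.

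\emph{Codimension bound for $1\le r\le5$.} By \eqref{wc:exact_kernel}, $(Z,\eta)\in\cN$ with $Z\ne0$ forces $Z$ to be null, since otherwise $K_Z=0$. It also forces $K_Z\cap\eta^\perp\ne0$. The null cone is a hypersurface in $\m^*$, so I need a further independent condition on a dense part of it. For generic null $Z$, I would show $K_Z=0$ when $r\le4$; the rank of $\mu\mapsto J_\mu Z$ is then $r$, which is consistent because $r\le 4$ fits inside a maximal isotropic subspace. Then $\cN\cap\{Z\ \text{generic null}\}=\varnothing$, and $\cN$ lies in the union of $\{Z=0\}$ and the proper subvariety of the null cone where $\operatorname{rank}(\mu\mapsto J_\mu Z)<r$. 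That union has codimension at least two. For $r=5$, I would show that the generic null $Z$ has $\dim K_Z=1$, spanned by some $\mu_Z$. In that case, non-ampleness requires $(\mu_Z,\eta)=0$, which is a nontrivial linear condition on $\eta$, giving codimension two over the generic stratum. On the degenerate strata, where the rank drops, the base itself has codimension at least one inside the cone. Witnesses from the explicit $Z=e_0+ie_1$ computation above, restricted to the first $r$ units, would certify the generic ranks.

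Finally, the conclusion that the hypothesis of \cite[Theorem~1.11]{JMZ21} holds follows from Proposition~\ref{wc:criterion_prop}, since $\cN$ then has no codimension one component.
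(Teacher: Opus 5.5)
Your treatment of $r=6,7$ is the paper's argument, and your expectation about the witness is correct. With the paper's multiplication triples one gets $J_3Z_0=\mathrm iJ_2Z_0$, $J_5Z_0=\mathrm iJ_4Z_0$ and $J_7Z_0=-\mathrm iJ_6Z_0$ for $Z_0=e_0+\mathrm i e_1$, while $J_1Z_0,J_2Z_0,J_4Z_0,J_6Z_0$ are independent. So the rank is exactly $4$, and $\eta=\varepsilon_2$ cuts $K_{Z_0}$ down to dimension $r-5$. Your fallback $Z=e_0+\mathrm i e_j$ with $j>r$ would actually be worse: it loses the vector $J_jZ=-\mathrm iZ$ and the rank drops to $3$. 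It is not needed.

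The gap is in the case $1\le r\le5$, where you say that $Z_0$, restricted to the first $r$ units, certifies the generic ranks. The relations above show that it does not. For $r=3,4$ one has $K_{Z_0}=\C(-\mathrm i\varepsilon_2+\varepsilon_3)\neq0$. For $r=5$ one has $\dim K_{Z_0}=2$, since only $J_1Z_0,J_2Z_0,J_4Z_0$ are independent.

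For $r=3,4$ this is repairable without changing $Z_0$. You do not need $K_Z=0$ for generic null $Z$. It suffices to find one ample covector on the irreducible quadric $\{g(Z,Z)=0\}\subset\m^*$, which contains $\cN$. The covector $(Z_0,\varepsilon_2)$ is ample, because $(-\mathrm i\varepsilon_2+\varepsilon_3,\varepsilon_2)=-\mathrm i\neq0$; for $r=1,2$ one already has $K_{Z_0}=0$.

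For $r=5$ the failure is genuine. Since $\dim K_{Z_0}=2$, we get $K_{Z_0}\cap\eta^\perp\neq0$ for every $\eta$, so the whole fiber over $Z_0$ lies in $\cN$. No argument based on $Z_0$ can therefore exclude that the null quadric is a component of $\cN$. This is exactly the case where that alternative must be ruled out, because $\dim K_Z\ge r-4=1$ on the entire null cone. You need a different null vector with $\dim K_Z=1$. The paper takes $Z_*=e_0+\mathrm ie_1+e_2+\mathrm ie_3$. For it, $J_3Z_*=\mathrm iJ_2Z_*$ but $J_1Z_*,J_2Z_*,J_4Z_*,J_5Z_*$ are independent, so $K_{Z_*}=\C(-\mathrm i\varepsilon_2+\varepsilon_3)$ and $(Z_*,\varepsilon_2)$ is ample.

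With such a witness your stratification goes through: generic $\dim K_Z=1$, then the single linear condition $(\mu_Z,\eta)=0$ on $\eta$. It is, however, more than needed. The paper concludes from three facts only: $\cN\subset\{g(Z,Z)=0\}$, this quadric is irreducible, and it contains one ample point. Together these already give $\codim\cN\ge2$.
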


\begin{proof}
We use the octonion basis $e_0=1,e_1,\ldots,e_7$ with oriented multiplication triples
\[
(123),\ (145),\ (176),\ (246),\ (257),\ (347),\ (365),
\]
and we denote by $\varepsilon_j$ the standard central covectors. At $Z_0=e_0+\mathrm i e_1$ a direct computation gives
\[
J_3Z_0=\mathrm iJ_2Z_0,\qquad J_5Z_0=\mathrm iJ_4Z_0,
\qquad J_7Z_0=-\mathrm iJ_6Z_0,
\]
while the vectors $J_1Z_0,J_2Z_0,J_4Z_0,J_6Z_0$ are linearly independent. Hence, for $r=7$,
\[
K_{Z_0}=\spn\{-\mathrm i\varepsilon_2+\varepsilon_3,
-\mathrm i\varepsilon_4+\varepsilon_5,
\mathrm i\varepsilon_6+\varepsilon_7\},
\]
and for $r=6$ the last generator is absent. Taking $\eta=\varepsilon_2$ in~\eqref{wc:exact_kernel}, we obtain a covector whose corank is exactly $r-5$. Together with the lower bound in Proposition~\ref{wc:octonion_prop}, this proves the assertion about the generic corank, because the maximality of the rank is an open condition.

For $r\leq4$, the same vector $Z_0$, taken with $\eta=\varepsilon_2$ for $r\geq2$ and with $\eta=\varepsilon_1$ for $r=1$, gives a zero kernel. For $r=5$ we use instead
\[
Z_*=e_0+\mathrm i e_1+e_2+\mathrm i e_3,
\qquad \eta=\varepsilon_2.
\]
In this case $J_3Z_*=\mathrm iJ_2Z_*$, while $J_1Z_*,J_2Z_*,J_4Z_*,J_5Z_*$ are linearly independent. Thus $K_{Z_*}=\C(-\mathrm i\varepsilon_2+\varepsilon_3)$, and~\eqref{wc:exact_kernel} again gives a zero kernel. Note that all the vectors $Z$ used here are null. On the other hand, for every $r$ the Gram-matrix argument in the proof of Proposition~\ref{wc:octonion_prop} shows that $\cN\subset\{g(Z,Z)=0\}$. Since the null quadric is irreducible and, for $r\leq5$, contains the explicit ample covectors found above, it is not contained in $\cN$. Therefore $\codim\cN\geq2$.
\end{proof}

\begin{rk}[The zero energy level]\label{wc:c0_rem}
By Proposition~\ref{wc:octonion_prop}(1), the H-type metrics on $\n_6$ and $\n_7$ satisfy the hypothesis of~\cite[Theorem~1.11]{JMZ21}. Nevertheless, for these metrics one of the cases in the proof of this theorem requires an additional argument. Namely, the argument in the proof of~\cite[Theorem~3.2]{JMZ21}, which is based on the analysis of the denominators of the solution of the fundamental algebraic system, produces a hypersurface foliated by complex normal extremals projecting to non-strictly normal geodesics. In Case~2 of~\cite[\S4]{JMZ21}, this hypersurface is contained in a level set $\{\hh=c\}$ with $c\in\C$, and the contradiction is obtained in the same way as in the preceding case, by an appeal to the hypothesis on families of geodesics. This appeal does not address the case $c=0$, because the definition of a parametric family of geodesics used in the hypotheses of \cite[Theorems~1.11 and~1.12]{JMZ21} concerns only nonzero levels of the Hamiltonian. Thus the case $c=0$ requires an additional argument.

For $\n_6$ and $\n_7$, the null cone is a hypersurface of this kind: by Proposition~\ref{wc:octonion_prop}, it is the whole non-ample set, and its smooth part $\{\hh=0,\ Z\neq0\}$ is foliated by complex normal extremals projecting to non-strictly normal geodesics. Moreover, all maximal minors of $\cA_{\leq m}$ are divisible by the irreducible polynomial $\hh$, because their common zero set is its zero set. Consequently, considerations based on these minors do not exclude $\hh$ as a factor of the denominator, and an additional argument is needed to rule out this possibility. Note, however, that these examples do not establish that an actual reduced rational solution has a pole along the null cone, and they do not contradict the conclusion of~\cite[Theorem~1.11]{JMZ21}: in fact, Corollary~\ref{Weyl_thm} provides an independent proof of this conclusion in step $2$.
\end{rk}

\begin{Prop}\label{wc:products_prop}
Let $\m=\n_r\oplus\m''$ with $r\in\{6,7\}$, where $\m''$ is an arbitrary nonzero fundamental graded Lie algebra of step at most $2$, and let the metric on $\m_{-1}=\R^8\oplus\m''_{-1}$ be the orthogonal sum $g\oplus g''$ of the H-type metric $g$ on $\R^8$ and an arbitrary inner product $g''$ on $\m''_{-1}$. Then $\cN$ contains the irreducible hypersurface $\{\lambda:\ g(Z',Z')=0\}$, where $Z'$ denotes the $\R^8$-component of $Z(\lambda)$, and this hypersurface is not contained in the null cone. Consequently, neither the hypothesis of~\cite[Theorem~1.11]{JMZ21} nor the hypothesis of~\cite[Theorem~1.14]{JMZ21} is satisfied by the corresponding nilpotent approximation.
\end{Prop}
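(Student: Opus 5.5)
We work directly with the characterization of non-ample covectors in \eqref{wc:nonample_cond}. For a direct sum $\m=\n_r\oplus\m''$ with orthogonal metric $g\oplus g''$, a covector splits as $\lambda=(Z',Z'',\eta',\eta'')$, and $J_\eta=J'_{\eta'}\oplus J''_{\eta''}$. Consequently $J_\eta^jZ=\bigl((J'_{\eta'})^jZ',(J''_{\eta''})^jZ''\bigr)$. For $\mu=(\mu',0)$ we have $J_\mu=J'_{\mu'}\oplus 0$, so $J_\mu J_\eta^jZ=\bigl(J'_{\mu'}(J'_{\eta'})^jZ',0\bigr)$.

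\textbf{The hypersurface lies in $\cN$.} Suppose $g(Z',Z')=0$. The proof of Proposition \ref{wc:octonion_prop} gives a nonzero $\mu'\in K_{Z'}\cap\eta'^\perp$. This uses $r-5\ge1$, and it also covers $Z'=0$, where $K_{Z'}$ is the whole space. Then $\mu=(\mu',0)\neq0$ satisfies \eqref{wc:nonample_cond}, so $\lambda\in\cN$. This is the step where the restriction $r\in\{6,7\}$ is needed.

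\textbf{The hypersurface is irreducible and not in the null cone.} The set $\{g(Z',Z')=0\}$ is the zero set of a nondegenerate quadratic form in $8\ge3$ variables. That form is irreducible, so the set is an irreducible hypersurface of $\m^*$. Since $\m''_{-1}\neq0$, pick $Z''$ with $g''(Z'',Z'')\neq0$ and take $Z'$ null. Then $\hh=\tfrac12\bigl(g(Z',Z')+g''(Z'',Z'')\bigr)\neq0$, so the hypersurface is not contained in $\{\hh=0\}$.

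\textbf{Hypothesis of \cite[Theorem~1.11]{JMZ21} fails.} The hypersurface is an irreducible closed set of dimension $n-1$ contained in $\cN$. By the first part of the proof of Proposition \ref{wc:criterion_prop}, $\cN$ is a proper subset, so the hypersurface is an irreducible component of codimension one of $\cN$. It is not in the null cone, so Proposition \ref{wc:criterion_prop} shows the hypothesis fails.

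\textbf{Hypothesis of \cite[Theorem~1.14]{JMZ21} fails.} We use Proposition \ref{wc:min_order_prop} and choose $\mu=(\mu',0)$ with $\mu'\neq0$.
\begin{itemize}
\item The kernel of $\omega_\mu$ contains $\m''_{-1}\oplus\ker\omega'_{\mu'}$.
\item If $m=8+\dim\m''_{-1}$ is even, then $\omega_\mu$ is degenerate with $\mu\neq0$, so item (2) applies and no abnormal extremal has minimal order.
\item If $m$ is odd, we need $\dim\ker\omega_\mu\ge2$ to violate item (3). The form $\omega'_{\mu'}$ is even-dimensional, so its kernel has even dimension. Choosing $\mu'$ complex with $(\mu',\mu')=0$ makes $J_{\mu'}^2=0$, so $J_{\mu'}$ has a nontrivial kernel of dimension $\ge2$. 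Hence $\dim\ker\omega_\mu\ge2+\dim\m''_{-1}\ge3$, and item (3) fails.
\end{itemize}
In either case the symbol is neither contact nor odd-rank with one-dimensional kernels, so the hypothesis of \cite[Theorem~1.14]{JMZ21} fails.

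The expected main obstacle is checking that the hypersurface is genuinely a component of $\cN$, i.e. that $\cN\neq\m^*$. This is settled by the ampleness argument already in the proof of Proposition \ref{wc:criterion_prop}.
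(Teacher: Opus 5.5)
Your proof is correct and follows the paper's argument essentially step by step. You use the same choice $\mu=(\mu',0)$ with $\mu'\in K_{Z'}\cap\eta'^{\perp}$ taken from the proof of Proposition~\ref{wc:octonion_prop}, the same irreducibility and null-cone observations, and the same appeals to Propositions~\ref{wc:criterion_prop} and~\ref{wc:min_order_prop}. The only small differences are these: you make explicit, via $\cN\neq\m^*$, why the quadric is a genuine codimension-one component of $\cN$; and in the odd-rank case you obtain $\dim\ker J_{\mu'}\geq2$ from nilpotency plus parity, instead of the paper's $\dim\ker J_{\mu'}\geq4$ from $\operatorname{im}J_{\mu'}\subset\ker J_{\mu'}$, which is enough.
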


\begin{proof}
Write $\lambda=(Z',Z'',\eta',\eta'')$ according to the decomposition of $\m$; then $J_\eta=J_{\eta'}\oplus J''_{\eta''}$. If $g(Z',Z')=0$, choose $\mu'\neq0$ as in the proof of Proposition~\ref{wc:octonion_prop} (with $Z$ and $\eta$ replaced by $Z'$ and $\eta'$) and set $\mu=(\mu',0)$. Then $J_\mu J_\eta^jZ=\big(J_{\mu'}J_{\eta'}^jZ',0\big)=0$ for all $j$, so $\lambda\in\cN$. The quadric $\{g(Z',Z')=0\}$ is irreducible because $g$ has rank $8\geq3$, and it is not contained in $\{\hh=0\}$, because $\hh=\frac12\big(g(Z',Z')+g''(Z'',Z'')\big)$. By Proposition~\ref{wc:criterion_prop}, the hypothesis of~\cite[Theorem~1.11]{JMZ21} fails. As for minimal order, if $\dim\m_{-1}=8+\dim\m''_{-1}$ is even, we use Proposition~\ref{wc:min_order_prop}(2). If it is odd, take $\mu=(\mu',0)$ with $\mu'\neq0$ and $(\mu',\mu')=0$; then $J_{\mu'}^2=0$ by~\eqref{wc:clifford}, so $\operatorname{im}J_{\mu'}\subset\ker J_{\mu'}$ and $\dim\ker J_{\mu'}\geq4$. Hence $\ker\omega_\mu\supset\ker J_{\mu'}\oplus\m''_{-1}$ has dimension at least $5$, and Proposition~\ref{wc:min_order_prop}(3) applies.
\end{proof}

For example, the products $\n_6\oplus\R$ and $\n_7\oplus\R$, with $\R$ in degree $-1$, have dimensions $15$ and $16$ and horizontal rank $9$. Another family of examples is formed by the products $\n_7\oplus\h_{2s+1}$, $s\geq1$, of dimension $16+2s$. In each of these cases the null cone of the octonionic factor is a hypersurface of non-ample covectors which is not contained in the null cone of the total metric. For the corresponding left-invariant real analytic structures, the same computation shows that the hypotheses of~\cite[Theorems~1.12 and~1.15]{JMZ21} are not satisfied either.

\begin{example}\label{wc:spheres_ex}
Consider the octonionic Hopf fibration $S^7\to S^{15}\to S^8$, let $\mathcal H$ be the distribution on $S^{15}$ orthogonal to its fibers with respect to the round metric, and let $g_{S^{15}}$ be the restriction of the round metric to $\mathcal H$. This sub-Riemannian structure is invariant under the transitive action of $\mathrm{Spin}(9)$ on $S^{15}=\mathrm{Spin}(9)/\mathrm{Spin}(7)$. The isotropy group $\mathrm{Spin}(7)$ acts on $\mathcal H_p\cong\R^8$ by the spin representation, which is irreducible, and on $T_pS^{15}/\mathcal H_p\cong\R^7$ by the vector representation. This horizontal structure is fat and has H-type tangent groups; see~\cite{BaudoinCho}. In particular its Levi bracket is nonzero. Since $\Lambda^2\R^8\cong\mathfrak{spin}(7)\oplus\R^7$ as $\mathrm{Spin}(7)$-modules, the Levi bracket $\Lambda^2\mathcal H_p\to T_pS^{15}/\mathcal H_p$ is, up to a constant factor, the Clifford multiplication, and $g_{S^{15}}(p)$ is, up to a constant factor, the standard inner product. Hence the nilpotent approximation of $(S^{15},\mathcal H,g_{S^{15}})$ at every point is the octonionic H-type group $\n_7$ with its H-type metric, and Remark~\ref{wc:c0_rem} applies. Similarly, the product of $(S^{15},\mathcal H,g_{S^{15}})$ with any step $2$ sub-Riemannian structure, for example with $S^3$ endowed with the standard contact sub-Riemannian structure of the Hopf fibration $S^1\to S^3\to S^2$, has nilpotent approximations as in Proposition~\ref{wc:products_prop}, so neither~\cite[Theorem~1.11]{JMZ21} nor~\cite[Theorem~1.14]{JMZ21} applies. The same observation applies to $S^{15}\times S^1$, where the whole tangent direction of $S^1$ is included in the distribution. Note that these conclusions concern Theorems~1.11 and~1.14, whose hypotheses are formulated in terms of nilpotent approximations. Whether the analytic criteria of Theorems~1.12 and~1.15 hold for these curved products depends on their actual complex geodesics, and deciding this would require a separate analysis. On the other hand, by Corollary~\ref{Weyl_thm}, all these structures are Weyl rigid.

Moreover, every sub-Riemannian metric on $(S^{15},\mathcal H)$ is projectively rigid. Indeed, $\n_7$ does not admit a product structure. To see this, suppose that $\R^8=A\oplus B$ with $[A,B]=0$ and $A,B\neq0$. Then for every $0\neq X\in A$ the space $B$ is orthogonal to $J_1X,\dots,J_7X$, which span the orthogonal complement of $X$, so that $B\subset\R X$. But $\R X\subset A$ and $A\cap B=0$, hence $B=0$, a contradiction. Therefore Corollary~\ref{wc:projective_rigidity} applies. Note that this conclusion does not follow from~\cite[Corollary~1.16]{JMZ21}, since the abnormal extremals of $\n_7$ are not of minimal order (Proposition~\ref{wc:octonion_prop}(3)).
\end{example}

\medskip\noindent\textbf{Indecomposable central amalgams.}\label{wc:amalgam_subsec}
The symbols of the product examples above are, by construction, decomposable. We now show that the same failure of the hypotheses occurs for indecomposable symbols as well: in every odd dimension at least $17$ there is a nonproduct analogue of these examples, obtained by identifying the center of a Heisenberg algebra with a line in the center of the octonionic H-type algebra.

\begin{Prop}\label{wc:amalgam_prop}
Let $s\geq1$, let $(U,\Omega)$ be a real symplectic vector space of dimension $2s$, and define the step $2$ algebra $\mathfrak c_s$ by
\[
(\mathfrak c_s)_{-1}=\mathbb O\oplus U,\qquad
(\mathfrak c_s)_{-2}=\R^7=\spn\{W_1,\ldots,W_7\}.
\]
On $\mathbb O$ use the bracket of $\n_7$; set $[\mathbb O,U]=0$ and
\[
[Y,Y']=\Omega(Y,Y')W_1,\qquad Y,Y'\in U.
\]
Equip the horizontal space with the orthogonal sum of the octonionic H-type metric and any positive definite inner product on $U$. Then:
\begin{enumerate}
\item $\dim\mathfrak c_s=15+2s$, and its non-ample set contains the irreducible hypersurface
\[
\Sigma=\{(X,Y,\eta):g(X,X)=0\},
\]
which is not contained in the total null cone. Thus the hypothesis of~\cite[Theorem~1.11]{JMZ21} fails.
\item The complexified graded algebra $\mathfrak c_s$ is indecomposable. Consequently every sub-Riemannian metric on a connected step $2$ distribution with symbol $\mathfrak c_s$ on an open dense set is projectively rigid.
\item The minimal-order hypothesis of~\cite[Theorem~1.14]{JMZ21} fails. For the corresponding left-invariant analytic structure, the hypotheses of Theorems~1.12 and~1.15 of that paper also fail.
\end{enumerate}
\end{Prop}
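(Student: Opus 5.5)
The three items will be handled separately, each modelled on the earlier octonionic arguments.

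For item (1), the plan is to repeat the computation in the proof of Proposition~\ref{wc:products_prop}, with one extra constraint. Write $\lambda=(X,Y,\eta)$ with $X\in\mathbb O\otimes\C$ and $Y\in U\otimes\C$. The operator is $J_\eta=J_\eta^{\mathbb O}\oplus\eta_1 J_\Omega$, where $J_\Omega$ is the $g_U$-skew operator of $\Omega$. If $g(X,X)=0$, Proposition~\ref{wc:octonion_prop} shows that $K_X\cap\eta^\perp$ has dimension at least $7-5=2$. Any nonzero $\mu=(\mu_1,\dots,\mu_7)$ in this space already satisfies $J^{\mathbb O}_\mu(J^{\mathbb O}_\eta)^jX=0$. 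To kill the $U$-component of $J_\mu J_\eta^jZ$, which is $\mu_1 J_\Omega(\eta_1J_\Omega)^jY$, it is enough to also impose $\mu_1=0$. Since the space has dimension at least $2$, the extra linear condition still leaves a nonzero $\mu$. So \eqref{wc:nonample_cond} holds and $\Sigma\subset\cN$.

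The set $\Sigma$ is an irreducible quadric hypersurface, because $g$ has rank $8\ge3$. It is not contained in $\{\hh=0\}$, since $\hh=\tfrac12(g(X,X)+g_U(Y,Y))$. Proposition~\ref{wc:criterion_prop} then gives the failure of the hypothesis. The dimension count $8+2s+7$ is immediate.

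Item (2) is the step I expect to be the main obstacle. Suppose, over $\C$, that $\mathfrak c_s=\mathfrak a\oplus\mathfrak b$ as graded ideals with both summands nonzero. Then $\mathfrak c_{-1}=A\oplus B$ with $[A,B]=0$, and $\mathfrak c_{-2}=[A,A]\oplus[B,B]$. The plan is to look at the projection of $A$ and $B$ to $\mathbb O$ modulo $U$, and to use fatness. For a nonnull $X\in\mathbb O$, the vectors $J_iX$ together with $X$ span $\mathbb O$. So if $a=a_{\mathbb O}+a_U$ with $a_{\mathbb O}$ nonnull, then $[a,\cdot]$ restricted to $\mathbb O$ has image equal to all of $\C^7$. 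That forces $[B,B]$, which must avoid $[A,A]$, to vanish on its $\mathbb O$-part, and the Clifford argument from Example~\ref{wc:spheres_ex} should push $B\subset U$. Then $[A,B]=0$ would require $\Omega(a_U,B)=0$ for all $a\in A$; combined with $A+B=\mathfrak c_{-1}$, this gives $B\subset\ker\Omega=0$, a contradiction. Care will be needed because null vectors are not fat, so I would argue with generic elements of each summand. Either $A$ or $B$ contains an element with nonnull $\mathbb O$-part, since the null cone is a proper subvariety and its intersection with the union of two subspaces cannot cover their sum. Once indecomposability is established, projective rigidity follows directly from Corollary~\ref{wc:projective_rigidity}.

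For item (3), note that $\dim\mathfrak c_{-1}=8+2s$ is even and the center has dimension $7\ge2$. Proposition~\ref{wc:min_order_prop}(2) therefore gives failure of minimal order. For the left-invariant analytic structure, the hypotheses of Theorems~1.12 and~1.15 are formulated on the structure itself. Since it coincides with its nilpotent approximation, the same family from item (1), the hypersurface $\Sigma$ translated by left-invariance, and the same $W_{\widehat D}=\varnothing$ statement apply verbatim.
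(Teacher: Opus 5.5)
Items (1) and (3) are fine and follow the paper. In (1) you should add that $\cN$ is a proper subset of $\m^*$, as shown in the proof of Proposition~\ref{wc:criterion_prop}. With that, the irreducible hypersurface $\Sigma\subset\cN$ is automatically a codimension-one component, which is what that proposition requires. The genuine gap is in item (2), at the step ``either $A$ or $B$ contains an element with nonnull $\mathbb O$-part.''

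Your justification for that step does not work, because the null cone is not closed under addition. Over $\C$, the space $\mathbb O\otimes\C$ is the direct sum of two totally isotropic $4$-planes. For example, take the $\pm\mathrm i$-eigenspaces $L,L'$ of the skew operator $J_1$: if $J_1x=\mathrm ix$ and $J_1y=\mathrm iy$, then $\mathrm i\,g(x,y)=g(J_1x,y)=-g(x,J_1y)=-\mathrm i\,g(x,y)$, so $g(x,y)=0$. Let $\pi$ be the projection to $\mathbb O\otimes\C$. Nothing in your argument excludes $\pi(A)=L$ and $\pi(B)=L'$: both projections lie entirely in the null cone while their sum is everything. In this case fatness gives you nothing, and this case is the whole difficulty of complex indecomposability. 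Over $\R$ your argument is complete, since $g$ is definite there. That already suffices for the projective rigidity consequence, but not for the asserted indecomposability of the complexification.

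Closing the gap needs an extra idea. Here is one way.
\begin{enumerate}
\item If neither $\pi(A)$ nor $\pi(B)$ contains a nonnull vector, both are totally isotropic, so $\dim\pi(A)=\dim\pi(B)=4$ and $\mathbb O\otimes\C=L\oplus L'$ with $L=\pi(A)$, $L'=\pi(B)$ complementary maximal isotropic $4$-planes.
\item A dimension count then gives $U\otimes\C=(A\cap U)\oplus(B\cap U)$, and the two summands are $\Omega$-orthogonal.
\item For $a\in A$, the condition $[a,B\cap U]=0$ forces $a_U\in A\cap U$. Hence $L\subset A$ and $L'\subset B$.
\item Now $[L,L']=0$ gives $J_iL\subset L'^{\perp}=L'$ and $J_iL'\subset L$ for all $i$.
\item So $L$ is invariant under all products $J_iJ_j$, that is, under $\mathfrak{spin}(7)$ acting on $\C^8$ by its irreducible spin representation. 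This is a contradiction.
\end{enumerate}

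The paper avoids this case analysis altogether.
\begin{enumerate}
\item It computes the Pfaffian of the Levi pencil, $P(\mu)=cQ(\mu)^2\mu_1^s$ with $Q=\mu_1^2+\cdots+\mu_7^2$.
\item It notes that the horizontal center is zero, so both summands of a splitting have nonzero central part.
\item A splitting would factor $P$ as $P_A(\mu|_{W_A})P_B(\mu|_{W_B})$.
\item The irreducible, nondegenerate rank-$7$ form $Q$ would then divide a factor depending only on a proper subspace of central variables, which is impossible.
\end{enumerate}
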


\begin{proof}
Let $B$ be the invertible skew operator on $U$ defined by $\Omega(Y,Y')=g_U(BY,Y')$. Then
\[
J_\mu=J_\mu^{\mathbb O}\oplus\mu_1B.
\]
For a null octonionic vector $X$, the space $K_X=\{\mu:J_\mu^{\mathbb O}X=0\}$ has dimension at least three, since, as in the proof of Proposition~\ref{wc:octonion_prop}, the images of a null vector under the Clifford operators span a totally isotropic subspace. Consequently, the space
\[
K_X\cap\eta^\perp\cap\{\mu_1=0\}
\]
has dimension at least one, and we can choose a nonzero $\mu$ in it. The operator $J_\mu$ vanishes on $U$, while on $\mathbb O$ the Clifford identity gives, exactly as before, $J_\mu^{\mathbb O}(J_\eta^{\mathbb O})^jX=0$ for every $j$. Therefore $J_\mu J_\eta^j(X,Y)=0$ for every $j$, which proves that $\Sigma\subset\cN$. The quadric defining this hypersurface has rank eight in $X$ and is therefore irreducible. Moreover, on $\Sigma$ the total Hamiltonian equals $g_U(Y,Y)/2$, which does not vanish identically, so the hypersurface is not contained in the total null cone. Since $\cN$ is a proper algebraic subset, $\Sigma$ is a divisorial component of it, and Proposition~\ref{wc:criterion_prop} proves (1).

We now turn to (2). The Pfaffian of the Levi pencil is, up to a nonzero constant,
\begin{equation}\label{wc:amalgam_pfaffian}
P(\mu)=Q(\mu)^2\mu_1^s,
\qquad Q(\mu)=\mu_1^2+\cdots+\mu_7^2.
\end{equation}
Indeed, the Clifford relation gives $\det J_\mu^{\mathbb O}=Q(\mu)^4$, so that the Pfaffian of the octonionic block is a nonzero constant times $Q^2$, while the Pfaffian of the block on $U$ is a nonzero constant times $\mu_1^s$. In particular, the pencil contains a nondegenerate form, and therefore the horizontal center is zero.

Assume now that $\mathfrak c_s$ splits into two nonzero graded ideals, with horizontal parts $H_A,H_B$ and central parts $W_A,W_B$. Then both central parts must be nonzero, since a factor with zero central part would be contained in the horizontal center, which we have just shown to be zero. Moreover, a generic Levi form is block diagonal with respect to this splitting and nondegenerate, so both horizontal dimensions are even, and its Pfaffian factors as $P_A(\mu_A)P_B(\mu_B)$, where the two factors depend on disjoint sets of central variables. The quadratic polynomial $Q$ in~\eqref{wc:amalgam_pfaffian} is irreducible over $\C$, so it must divide one of these factors. It would then be independent of the complementary central variables, which span a nonzero space. This is impossible, because $Q$ is a nondegenerate quadratic form of rank seven. Thus no such splitting exists. Since indecomposability is a property of the symbol alone, Corollary~\ref{wc:projective_rigidity} now yields the projective rigidity of every sub-Riemannian metric on such a distribution, and not only of the metric used in (1).

Finally, the horizontal rank $8+2s$ is even and the center has dimension seven, so the failure of the minimal-order hypothesis stated in (3) follows from Proposition~\ref{wc:min_order_prop}. For the left-invariant structure, the same computations describe the actual normal and abnormal extremals, and therefore the analogous analytic hypotheses fail as well.
\end{proof}

\begin{rk}[The refinement of minimal order]\label{wc:weak_order_rem}
The refinement of the condition of minimal order given in~\cite[\S5.5]{JMZ21}, which is still sufficient for the conclusion of the corresponding theorem, does not apply to the family $\mathfrak c_s$ either. To see this, consider the open part of the complex degeneracy locus
\[
Q(\mu)=0,\qquad \mu_1\neq0.
\]
At its points $\ker J_\mu^{\mathbb O}$ has dimension four: choosing $\nu$ with $(\mu,\nu)\neq0$ in the Clifford relation, we obtain $\ker J_\mu^{\mathbb O}=\operatorname{im}J_\mu^{\mathbb O}$. On the other hand, the block on $U$ is invertible there. As a consequence, on the smooth conical stratum $G\times\{Q=0,\ \mu_1\neq0\}$ the kernel of the restricted symplectic form has dimension five, consisting of four horizontal directions in $\ker\omega_\mu$ and one central direction normal to the quadric in the dual central space. To see this directly, one uses the formula for the canonical symplectic form preceding~\eqref{wc:ker_sigma}: testing against central base directions gives $\xi=0$, testing against horizontal directions gives $X_{-1}\in\ker\omega_\mu$, and testing against tangent covectors gives $X_{-2}\in(T_\mu\{Q=0\})^\perp$.

Now, any finite regular stratification must retain an open part of this smooth component in a stratum of full dimension, and the characteristic kernel of such a stratum cannot have the dimension required in~\cite[\S5.5]{JMZ21}; nor can a stratum of lower dimension absorb this open part. In the projectivized description, the same obstruction is already provided by the four independent horizontal characteristic directions. Thus the refinement based on the weak minimal order condition does not make the criterion applicable to this family.
\end{rk}

\begin{example}[Dimensions and compact examples]\label{wc:compact_amalgam_ex}
For $s=1$, the algebra $\mathfrak c_1$ is the quotient of $\n_7\oplus\h_3$ obtained by identifying the center of the Heisenberg algebra with $\R W_1$; its horizontal rank is $10$, its corank is $7$, and its dimension is $17$. For $s=2,3,\ldots$ we obtain ranks $12,14,\ldots$ and dimensions $19,21,\ldots$, respectively. Note that these algebras are not of H-type, since for a nonzero real $\mu$ with $\mu_1=0$ the operator $J_\mu$ has a nonzero kernel.

These examples also admit compact realizations. To construct them, we choose the integral octonion multiplication table used in the proof of Proposition~\ref{wc:witness_prop} and an integral standard symplectic form on $U$. In exponential coordinates the group law is then
\[
(v,z)(v',z')=(v+v',z+z'+\tfrac12[v,v']).
\]
The subgroup $\Gamma=2\mathbb Z^{8+2s}\times\mathbb Z^7$ is discrete and cocompact; it is closed under multiplication because the brackets are integral and its horizontal coordinates contain the factor two. Being left-invariant, the distribution and the indicated metric descend to $\Gamma\backslash G$. Locally the resulting structures coincide with those on the group, so they fail the criteria discussed above in the same way, and every metric on the descended distribution is projectively rigid by Corollary~\ref{wc:projective_rigidity}. Taking further direct products, one obtains additional Weyl rigid examples which still possess a non-ample hypersurface not contained in the null cone; their symbols, however, are decomposable.
\end{example}

\begin{rk}[Higher step]\label{wc:higher_step_rem}
The relation between Corollary~\ref{Weyl_thm} and~\cite{JMZ21} is different in higher step. Once the consistency of the principal fundamental algebraic system is established, the proof of Proposition~\ref{eigenconst} in Appendix~\ref{app:eigen} uses only its first two principal layers, and in step $2$ these two layers already force $\dd\alpha|_D=0$ in the conformal case. In step $3$ this is no longer the case. For example, let $\m$ be the Engel algebra, $[X_1,X_2]=X_3$, $[X_1,X_3]=X_4$, with $X_1,X_2$ orthonormal, and let $a_i=X_i(\alpha)(q)$ and $\varsigma=a_1u_1+a_2u_2$. A direct computation shows that, in the notation of~\cite[Lemma~2.3]{JMZ21}, the first two layers of the principal part of the fundamental algebraic system admit, for all values of $a_1,a_2$, the solution
\[
\Psi_3=a_1u_2-a_2u_1,\qquad \Psi_4=\frac{2\varsigma u_3}{u_1},
\]
whereas the third layer gives, on clearing the denominator,
\[
3u_1^2(a_1u_1+a_2u_2)u_4+2a_2(u_1^2+u_2^2)u_3^2=0.
\]
The coefficients of this polynomial identity force $a_1=a_2=0$. Thus no argument based on the first two layers alone can give $\dd\alpha|_D=0$ for Engel distributions, while Engel distributions are covered by~\cite[Corollary~1.16]{JMZ21}. This is why Proposition~\ref{eigenconst} and Corollary~\ref{Weyl_thm} are formulated for step $2$ distributions.
\end{rk}

\begin{table}[htbp]
\centering
\small
\begin{tabularx}{\textwidth}{@{}Xcc@{}}
\toprule
Symbol and metric & \cite[Thm.~1.14]{JMZ21} & \cite[Thm.~1.11]{JMZ21} \\
\midrule
Contact or quasi-contact, any metric & yes & yes \\
$\h_{2k+1}\oplus\R^a$, $a\geq2$, any metric & no & yes \\
Rank $3$, dimension $5$ or $6$, any metric & yes & yes \\
Sum of at least two contact symbols, any metric & no & yes \\
Free step $2$ of rank $4$, any metric & no & yes \\
Octonionic restriction $\n_r$, $2\leq r\leq5$, H-type metric & no & yes \\
$\n_6$, $\n_7$, H-type metric & no & yes$^{\dagger}$ \\
$\n_6\oplus\m''$, $\n_7\oplus\m''$, indicated product metric & no & no \\
Central amalgam $\mathfrak c_s$, $s\geq1$, indicated metric & no & no \\
\bottomrule
\end{tabularx}
\caption{Whether the \emph{hypotheses} of the theorems indicated in the header hold for the nilpotent approximations with the given symbols and metrics. The table summarizes the results of this section; in the product row, $\m''$ is any nonzero fundamental graded algebra of step at most $2$. $^{\dagger}$The hypothesis, which concerns only the nonzero energy levels, holds; however, the case of zero energy in the proof of this theorem requires an additional argument (Remark~\ref{wc:c0_rem}). Corollary~\ref{Weyl_thm} applies in all cases.}
\label{weyl_comparison_table}
\end{table}

Summarizing, for step $2$ distributions Corollary~\ref{Weyl_thm} removes the additional hypotheses imposed in~\cite{JMZ21}. The products and central amalgams constructed above show that this matters: they give explicit metrics for which these hypotheses are not satisfied. Moreover, the central amalgams have indecomposable symbols, which yields projective rigidity for every metric on distributions with these symbols. The proof is also of a different nature: it establishes the consistency of the principal fundamental algebraic system and then uses only the first two layers of this system, without complexification and without any analysis of denominators. In particular, the zero energy level, which requires an additional argument in the approach of that paper for the octonionic H-type metrics, plays no special role in it. Finally, the same argument gives Proposition~\ref{eigenconst} for arbitrary, not necessarily conformal, pairs of projectively equivalent metrics.

\appendix

\section{The Operator Form of the fundamental algebraic system}
\label{opp:fas}

Fix a particular $\Gamma_i$ and let
\[
\widehat W_i=\operatorname{span}\{X_e:e\in E_i\}\subset TM.
\]
Recall that, by Theorem \ref{Tanaka_decomp_prop},
\[
\mathfrak m^i_{-2}=W_i\cong D_i^2/(D_i^2\cap D),
\]
where $W_i\subset TM/D$ is the image of the Levi bracket of $D_i$.
Since the classes $\{X_e\bmod D:e\in E_i\}$ form the basis of $\mathfrak m^i_{-2}$ determined by the adapted frame, the canonical projection modulo $D$ restricts to an isomorphism
\[
\varpi_i:\widehat W_i\longrightarrow W_i=\mathfrak m^i_{-2},
\qquad
\varpi_i(X_e)=X_e\bmod D.
\]
Its transpose identifies the dual spaces: to a functional $\eta\in W_i^*$ there corresponds the functional
\[
\widehat\eta:=\eta\circ\varpi_i\in\widehat W_i^*,
\qquad\text{so that}\qquad
\widehat\eta\bigl(\varpi_i^{-1}([X,Y]\bmod D)\bigr)=\eta([X,Y]\bmod D)
\]
for all sections $X,Y$ of $D_i$. Below we work with functionals $\eta\in W_i^*$, so that all pairings with brackets of sections of $D_i$ are taken in $\mathfrak m^i_{-2}=D_i^2/(D_i^2\cap D)$; the space $\widehat W_i$ is used only to relate these functionals to the frame and to the fiber coordinates.
For $X\in D_i$, define
\[
\operatorname{ad}_i X:D_i\longrightarrow W_i,
\qquad
\operatorname{ad}_i X(V):=[X,V]\bmod D.
\]
Its adjoint $(\operatorname{ad}_i X)^*:W_i^*\to D_i^*$ is defined by
\begin{equation}
\label{Ldual}
\bigl((\operatorname{ad}_i X)^*\eta\bigr)(V)
:=
\eta([X,V]\bmod D),
\qquad
V\in D_i,\quad \eta\in W_i^*.
\end{equation}
Further, given $\eta\in W_i^*$, define $J^i_\eta\in \mathrm{End}(D_i)$ by
\begin{equation}
\label{J_eta}
g_1(J^i_\eta X,V)
=
\eta([X,V]\bmod D),
\qquad X,V\in D_i,
\end{equation}
or equivalently $\iota_{J^i_\eta X}g_1=(\operatorname{ad}_i X)^*\eta$, where $\iota$ denotes the interior product.

Now, let $\{\varepsilon^e:e\in E_i\}$ be the basis of $W_i^*$ dual
to the basis $\{X_e(q)\bmod D(q):e\in E_i\}$ of $W_i(q)$, and let $\{\theta^v: v \in V_i\}$ be the basis of $D^*_i$ dual to $\{X_v: v \in V_i\}$. Then the forms $\theta^v$, $v\in V_i$, and $\widehat\varepsilon^{\,e}=\varepsilon^e\circ\varpi_i$, $e\in E_i$, constitute the coframe of $D_i\oplus\widehat W_i$ dual to the frame $\{X_s\}_{s\in V_i\cup E_i}$.
Recall that, by Corollary \ref{block_diag}, the matrices
$\mathfrak{A}^p$ are block diagonal, with one diagonal block for each component $\Gamma_i$ of the graph $\Gamma$; denote the block corresponding to $\Gamma_i$ by $\mathfrak{A}^p_i$.
Given $\lambda\in T^*M\setminus D^\perp$, denote by $\mathcal{A}^p_i(\lambda)$ the linear map from $W^*_i$ to $D^*_i$ whose matrix with respect to the bases $\{\varepsilon^e:e\in E_i\}$ of $W_i^*$ and $\{\theta^v: v \in V_i\}$ of $D^*_i$ is $\mathfrak{A}^p_i(\lambda)$.

Further, given $\lambda \in T^*M$, let $Z(\lambda)\in D\bigl(\pi(\lambda)\bigr)$ be defined by
$$g_1\bigl(Z(\lambda), \cdot\bigr)=\lambda|_D(\cdot)\quad \Longleftrightarrow\quad \iota_{Z(\lambda)} g_1=\lambda|_D.$$
Let $Z_i(\lambda)\in D_i$ be the $D_i$-component of $Z(\lambda)$ with respect to the eigenspace decomposition of the transition operator $S$.
Finally, let $\eta_i(\lambda)\in W_i^*$ be the functional such that $\widehat{\eta_i(\lambda)}$ is the restriction of $\lambda$ to $\widehat W_i\bigl(\pi(\lambda)\bigr)$; in other words, $\widehat{\eta_i(\lambda)}$ is the $\widehat W_i^*$-component of $\lambda$ with respect to the dual of the pointwise splitting
\[
TM=D\oplus\bigoplus_{i=1}^k \widehat W_i
=\bigoplus_{i=1}^kD_i\oplus\bigoplus_{i=1}^k\widehat W_i
\]
determined by the adapted frame.
By construction,
\begin{align}
~&Z_i(\lambda)=\sum_{x\in V_i}u_xX_x\bigl(\pi(\lambda)\bigr), \label{z_i}\\
~&\eta_i(\lambda)=\sum_{e\in E_i}u_e\varepsilon^e\bigl(\pi(\lambda)\bigr).\label{eta_i}
\end{align}

\begin{rk}
\label{biject_rem}
Note that the map from $T_q^* M$ to $D(q)\times\displaystyle{\bigoplus_{i=1}^k W_i^*(q)}$ defined by $\lambda \to \bigl(Z(\lambda),\sum_{i=1}^k\eta_i(\lambda)\bigr)$ is a bijection.
\end{rk}
\begin{Lemma}
\label{A^_1 and 2_i_lemma}
The following identities hold:
\begin{align}
~& \mathcal{A}^1_i(\lambda)=
\bigl(\operatorname{ad}_i Z_i(\lambda)\bigr)^*\label{A1_interp}\\
~&\mathcal{A}^2_i(\lambda)=
\Bigl(\operatorname{ad}_i\bigl(J^i_{\eta_i(\lambda)}Z_i(\lambda)\bigr)\Bigr)^*.
\label{A2_interp}
\end{align}
\end{Lemma}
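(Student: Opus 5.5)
Both identities are proved by comparing matrix entries in the bases $\{\varepsilon^e\}_{e\in E_i}$ of $W_i^*$ and $\{\theta^v\}_{v\in V_i}$ of $D_i^*$, using the explicit formulas \eqref{elem.A.frak} and the derivation $\delta$ from \eqref{principal_delta}. The $(v,e)$ entry of the right-hand side of \eqref{A1_interp} is
\[
\bigl((\operatorname{ad}_iZ_i)^*\varepsilon^e\bigr)(X_v)=\varepsilon^e([Z_i,X_v]\bmod D).
\]
Writing $Z_i=\sum_{x\in V_i}u_xX_x$ by \eqref{z_i}, the bracket $[Z_i,X_v]$ reduces modulo $D$ to $\sum_x u_x[X_x,X_v]$. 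The derivative terms $X_v(u_x)$ do not appear, because at a fixed base point $u_x$ are fiber coordinates, and the Levi bracket is tensorial. Hence the entry equals $\sum_{x\in V_i}c^e_{xv}u_x$. On the other side, $\mathfrak A^1_{ve}=q_{ve}=\sum_{x\in V}c^e_{xv}u_x$. By Theorem \ref{Tanaka_decomp_prop}, or equivalently Corollary \ref{block_diag}, the terms with $x\notin V_i$ vanish, so the two entries agree.

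For \eqref{A2_interp}, apply $\delta$ to $\mathfrak A^1_{ve}$. Since $\delta$ kills base functions and edge variables and sends $u_x\mapsto\sum_{f\in E}q_{xf}u_f$, we get
\[
\mathfrak A^2_{ve}=\sum_{x\in V_i}c^e_{xv}\sum_{f}q_{xf}u_f=\sum_{x,y\in V_i}\sum_{f\in E_i}c^e_{xv}c^f_{yx}u_yu_f,
\]
where the block structure again restricts $y$ and $f$ to the $i$th component. Next compute $J^i_{\eta_i}Z_i$ from \eqref{J_eta}. Its $X_x$-coefficient is
\[
g_1(J^i_{\eta_i}Z_i,X_x)=\eta_i([Z_i,X_x]\bmod D)=\sum_{y\in V_i}\sum_{f\in E_i}u_yu_fc^f_{yx},
\]
using \eqref{eta_i} and orthonormality of $\{X_x\}$. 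Substituting this vector into the first-layer formula $\varepsilon^e([W,X_v]\bmod D)=\sum_x w_xc^e_{xv}$, where $w_x$ are the coefficients of $W$, reproduces exactly the double sum above.

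The only delicate point is the identification of pairings: $\eta_i$ lives on $W_i$ through $\varpi_i$, and $c^f_{yx}$ must be read as the coefficient of $X_f\bmod D$ in the Levi bracket. By the definition of $\widehat\eta$ and of the dual basis $\varepsilon^e$, this coefficient equals the frame structure function. One must also note that horizontal components of $[X_y,X_x]$ drop out modulo $D$, and this is automatic. Sign conventions, namely $c^e_{xv}$ with $X_x$ first, must be tracked consistently in both identities. I expect this sign bookkeeping to be the only real obstacle.
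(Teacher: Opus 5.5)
Your proposal is correct and follows the paper's proof. Both identities are verified entry by entry from $\mathfrak A^1_{ve}=q_{ve}$, $\mathfrak A^2=\delta\mathfrak A^1$, the block structure given by the splitting of the symbol, and the definitions \eqref{z_i}, \eqref{eta_i}, \eqref{J_eta}.

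The only difference is in the second identity, and it is minor. You expand $J^i_{\eta_i}Z_i$ in the orthonormal frame and substitute it directly into the first-layer formula. The paper instead rewrites $\sum_{x}c^e_{xv}X_x=-J^i_{\varepsilon^e}X_v$ and uses the skew-symmetry of $J^i$. Your route avoids even that sign step, and the sign bookkeeping you were worried about is not an obstacle, because the conventions of \eqref{qjk} and \eqref{struct_function} agree.
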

\begin{proof}
First, using \eqref{qjk} and \eqref{z_i}, we can write the $(v,e)$-entry $\mathfrak{A}^1_i(\lambda)_{ve}$ of $\mathfrak{A}^1_i(\lambda)$, where $v\in V_i$ and $e\in E_i$, by the following chain of equalities (to improve readability, and without risk of ambiguity, we omit $\lambda$ in the formulas below):
\begin{equation}
\label{A1_proof}
(\mathfrak{A}^1_i)_{v,e}=
q_{ve}\stackrel{\eqref{qjk}}{=}\sum\limits_{x\in V_i}c^e_{xv}u_x\stackrel{\eqref{z_i}}{=}\varepsilon^e\bigl([Z_i,X_v]\bmod D\bigr)\stackrel{\eqref{Ldual}}{=}\bigl((\operatorname{ad}_i Z_i)^*
\varepsilon^e
\bigr)(X_v),
\end{equation}
which implies \eqref{A1_interp}.

Second, the $(v,e)$-entry $\mathfrak{A}^2_i(\lambda)_{ve}$ of $\mathfrak{A}^2_i(\lambda)$ satisfies the following:
\begin{align}
    (\mathfrak{A}^2_i)_{v,e}
    &\stackrel{\eqref{elem.A.frak}}{=} (\vec h_1(q_{ve}))_{(1)}
    \stackrel{\eqref{principal_delta}}{=} \sum_{\substack{x,y \in V_i \\ s \in E_i}} c^e_{xv}c^{s}_{yx} u_yu_s
    \stackrel{\eqref{A1_proof}}{=} \sum_{\substack{x\in V_i \\ s \in E_i}} c^e_{xv} u_s \varepsilon^s\bigl([Z_i, X_x]\bmod D\bigr) \notag \\
    &\stackrel{\eqref{eta_i}}{=} \eta_i\Bigl(\bigl[Z_i, \sum_{x\in V_i} c^e_{xv} X_x\bigr]\bmod D\Bigr)
    \stackrel{\eqref{J_eta}}{=} g_1\Bigl(J^i_{\eta_i} Z_i, \sum_{x\in V_i} c^e_{xv} X_x\Bigr) \label{cxveXx}
\end{align}
Now, using the skew-symmetry of the endomorphisms $J_\eta$ and the orthonormality of the local basis $\{X_x\}_{x\in V_i}$ with respect to the inner product $g_1$, we have
\begin{align}
  ~&\sum\limits_{x\in V_i}c^e_{xv}X_x=\sum\limits_{x\in V_i}\varepsilon^e\bigl([X_x, X_v]\bmod D\bigr)X_x\stackrel{\eqref{J_eta}}{=}\sum\limits_{x\in V_i}g_1(J_{\varepsilon^e} X_x, X_v)X_x=\\ ~&-\sum\limits_{x\in V_i}g_1(J_{\varepsilon^e} X_v, X_x)X_x=-J_{\varepsilon^e} X_v.
\end{align}
Substituting this into \eqref{cxveXx}, we get
\begin{align}
(\mathfrak{A}^2_i)_{v,e}=g_1\bigl(J^i_{\eta_i} Z_i, -J^i_{\varepsilon^e} X_v\bigr)\stackrel{\eqref{J_eta}}{=}\varepsilon^e\bigl([J^i_{\eta_i} Z_i,X_v]\bmod D\bigr),
\end{align}
which completes the proof of \eqref{A2_interp}.
\end{proof}

More generally, the same argument gives the operator form of all the layers of the coefficient matrix:
\begin{equation}\label{all_layer_operator}
\mathcal A_i^p=(\ad_i((J^i_{\eta_i})^{p-1}Z_i))^*,\qquad p\ge1.
\end{equation}
Indeed, by \eqref{principal_delta}, we have $\delta Z_i=J^i_{\eta_i}Z_i$ and $\delta\eta_i=0$. Since each layer of the coefficient matrix of the principal part is obtained from the previous one by applying $\delta$, the formula for layer $p$ implies the formula for layer $p+1$, and the required identity follows by induction. Note that this argument concerns the coefficient matrix only; in particular, it does not use any of the statements on the vanishing of the last column of the principal part.

Finally, for every $\lambda\in T^*M$, define an element $\beta^{p}_i(\lambda)$ of $D_i^*\bigl(\pi(\lambda)\bigr)$ by
\begin{equation}
\label{defbeta}
\beta^{p}_i(\lambda) :=\sum\limits_{v \in V_i}\tilde {\mathfrak{b}}^p_v (\lambda)\theta^v.
\end{equation}

\section{The formula for \texorpdfstring{$R_w$}{Rj} in the projective case}
\label{app:rj}

For $w\in V_i$ set
\begin{equation}
\label{projective_residual}
\mathcal E_w:=\frac12\left(
u_w\sum_{v\in V_i}X_v\bigl(\alpha(w)^2\bigr)u_v
-X_w\bigl(\alpha(w)^2\bigr)\sum_{v\in V_i}u_v^2
\right).
\end{equation}
\renewcommand{\theequation}{\thesection.\arabic{equation}}

\begin{Lemma}
\label{rjlem}
For $w\in V_i$, one has
\begin{equation}
\label{rj_final}
R_w=
\sum_{\substack{s,v\in V\\s\neq v}}
\bigl(\alpha(w)^2-\alpha(s)^2\bigr)c^s_{vw}u_vu_s
+\alpha(w)^2
\sum_{s\in E}\sum_{v\in V}
c^s_{vw}u_vu_s
+\mathcal E_w .
\end{equation}
\end{Lemma}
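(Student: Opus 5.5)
The identity \eqref{rj_final} will be proved by direct computation from the unreduced formula \eqref{rj_unreduced}, treating its five terms one by one in a weakly adapted frame and using the first divisibility condition to eliminate the projective term $\vec h_1(a^2)/a^2$. Throughout write $\lambda_v=\alpha(v)^2$.

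First, expand $\vec h_1(\lambda_w)u_w=u_w\sum_{v\in V}X_v(\lambda_w)u_v$ by \eqref{eq:vech}. Next, expand $\lambda_w\vec h_1(u_w)$ by \eqref{Hamiltonian_vf_action}, which gives $\lambda_w\sum_{v\in V}\sum_{s\in V\cup E}c^s_{vw}u_vu_s$. This splits into a vertex part $s\in V$ and the edge part $s\in E$; the edge part is already the second sum in \eqref{rj_final}. The vertex part combines with the last term $-\sum_{v,s\in V}\lambda_sc^s_{vw}u_vu_s$ of \eqref{rj_unreduced} to give $\sum_{s,v\in V}(\lambda_w-\lambda_s)c^s_{vw}u_vu_s$. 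In the diagonal terms $s=v$ we have $c^v_{vw}u_v^2$, which are not in \eqref{rj_final}. We therefore keep them aside and write the sum as the $s\ne v$ part plus $\sum_{v}(\lambda_w-\lambda_v)c^v_{vw}u_v^2$.

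Second, compute $\vec h_1(a^2)/a^2$ from \eqref{eq:2a}. It equals $\vec h_1(\sum\lambda_vu_v^2)/\sum\lambda_vu_v^2-\vec h_1(\sum u_v^2)/\sum u_v^2$, and $\vec h_1(\sum u_v^2)=0$. By the first divisibility condition it is the linear form \eqref{first_div_linear}, $\sum_v X_v(\lambda_v)u_v/\lambda_v$. So the third term of \eqref{rj_unreduced} is $-\tfrac12\lambda_wu_w\sum_vX_v(\lambda_v)u_v/\lambda_v$.

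Third, convert the leftover pieces into $\mathcal E_w$. The leftover terms are
\begin{itemize}
\item the diagonal part $\sum_v(\lambda_w-\lambda_v)c^v_{vw}u_v^2$;
\item the term $u_w\sum_vX_v(\lambda_w)u_v$;
\item the projective term computed in the second step;
\item the term $-\tfrac12\sum_vX_w(\lambda_v)u_v^2$.
\end{itemize}
Use item (1) of Proposition \ref{first divi} with the pair $(w,v)$: $(\lambda_w-\lambda_v)c^v_{vw}=-\tfrac12\lambda_wX_w(\lambda_v/\lambda_w)$. Using skew-symmetry $c^v_{vw}=-c^v_{wv}$, this equals $-\tfrac12X_w(\lambda_v)+\tfrac{\lambda_v}{2\lambda_w}X_w(\lambda_w)$.

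Then split the sums over $v$ according to whether $v\in V_i$ or $v\notin V_i$. For $v\notin V_i$, item (2) of Proposition \ref{first divi} gives $X_v(\lambda_w)=\tfrac{\lambda_w}{2\lambda_v}X_v(\lambda_v)$ and $X_w(\lambda_v)=\tfrac{\lambda_v}{2\lambda_w}X_w(\lambda_w)$. These relations should make the cross-component contributions cancel between the $u_w$-terms and the $u_v^2$-terms. For $v\in V_i$ we have $\lambda_v=\lambda_w$ and the diagonal coefficient vanishes. The remaining $V_i$-terms should collapse to $\tfrac12u_w\sum_{v\in V_i}X_v(\lambda_w)u_v-\tfrac12X_w(\lambda_w)\sum_{v\in V_i}u_v^2$, which is $\mathcal E_w$.

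The main obstacle is this last bookkeeping. One must check that the $u_w^2$ term and the mixed $u_wu_v$ terms from the projective correction exactly match half of $u_w\sum_vX_v(\lambda_w)u_v$. One must also check that the $u_v^2$ terms with $v\notin V_i$ cancel completely, using both the item (1) identity and item (2). I would verify this coefficient by coefficient: first $u_wu_v$ with $v\notin V_i$, then $u_v^2$ with $v\notin V_i$, then the $V_i$ terms.
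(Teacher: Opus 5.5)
Your proposal is correct and follows the paper's proof essentially step for step. In both, the vertex part of $\lambda_w\vec h_1(u_w)$ is combined with $-\sum_{v,s\in V}\lambda_s c^s_{vw}u_vu_s$, the diagonal terms are rewritten via item (1) of Proposition \ref{first divi}, the first divisibility formula for $\vec h_1(a^2)/a^2$ is inserted, and the cross-component coefficients are cancelled via item (2).

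The bookkeeping you defer does close. The coefficients of $u_wu_v$ and of $u_v^2$ in the leftover are $X_v(\lambda_w)-\frac{\lambda_w}{2\lambda_v}X_v(\lambda_v)$ and $-X_w(\lambda_v)+\frac{\lambda_v}{2\lambda_w}X_w(\lambda_w)$, respectively. Both vanish for $v\notin V_i$, while for $v\in V_i$ they reduce to $\frac12X_v(\lambda_w)$ and $-\frac12X_w(\lambda_w)$, which gives exactly $\mathcal E_w$.
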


\begin{proof}
We will simplify the expression for $R_w$ from \eqref{rj_unreduced}.

By \eqref{Hamiltonian_vf_action} and Item (1) of Proposition \ref{first divi},
\begin{equation}
\label{rj_structure_split}
\begin{split}
&\alpha(w)^2\vec h_1(u_w)
-\sum_{v,s\in V}\alpha(s)^2c^s_{vw}u_vu_s\\
={}&
\sum_{\substack{s,v\in V\\s\neq v}}
\bigl(\alpha(w)^2-\alpha(s)^2\bigr)c^s_{vw}u_vu_s
+\alpha(w)^2
\sum_{s\in E}\sum_{v\in V}
c^s_{vw}u_vu_s\\
&+\sum_{v\in V}\bigl(\alpha(w)^2-\alpha(v)^2\bigr)c^v_{vw}u_v^2\\
\stackrel{\mathrm{Prop.}\,\ref{first divi}\,(1)}{=}{}&
\sum_{\substack{s,v\in V\\s\neq v}}
\bigl(\alpha(w)^2-\alpha(s)^2\bigr)c^s_{vw}u_vu_s
+\alpha(w)^2
\sum_{s\in E}\sum_{v\in V}
c^s_{vw}u_vu_s\\
&-\frac12\alpha(w)^2
\sum_{v\in V}
X_w\left(\frac{\alpha(v)^2}{\alpha(w)^2}\right)u_v^2 .
\end{split}
\end{equation}

Note that by \cite[Lemma 4.4]{jmz2019}, the first
divisibility condition gives
\begin{equation}
\label{first_div_a}
\cfrac{\vec h_1(a^2)}{a^2}=\sum_{v\in V}
\frac{X_v(\alpha(v)^2)}{\alpha(v)^2}u_v.
\end{equation}
Thus it remains to analyze the sum of the first, third (after plugging in \eqref{first_div_a}), and fourth terms of \eqref{rj_unreduced}, along with the final terms of the expression in \eqref{rj_structure_split}, namely
\begin{equation}
\label{rj_derivative_part}
\begin{split}
&\sum_{v\in V}
X_v\bigl(\alpha(w)^2\bigr)u_vu_w
-\frac12\alpha(w)^2u_w
\sum_{v\in V}
\frac{X_v(\alpha(v)^2)}{\alpha(v)^2}u_v\\
&\qquad
-\frac12\sum_{v\in V}
X_w\bigl(\alpha(v)^2\bigr)u_v^2
-\frac12\alpha(w)^2
\sum_{v\in V}
X_w\left(\frac{\alpha(v)^2}{\alpha(w)^2}\right)u_v^2 .
\end{split}
\end{equation}

Assume that \(w\in V_i\). If \(v\notin V_i\), then by Item (2) of
Proposition \ref{first divi}
$$\alpha(v)^2X_v\bigl(\alpha(w)^2\bigr)-\frac12
\alpha(w)^2X_v\bigl(\alpha(v)^2\bigr)=0,$$
which implies that the coefficient of $u_v u_w$ in \eqref{rj_derivative_part} is equal to zero.
Similarly, the coefficient of the square of this external vertex variable, which comes from the last two terms of the same expression, vanishes as well, since
\[
-\tfrac12X_w(\alpha(v)^2)-\tfrac12\alpha(w)^2X_w\!\left(\frac{\alpha(v)^2}{\alpha(w)^2}\right)
=-X_w(\alpha(v)^2)+\frac{\alpha(v)^2}{2\alpha(w)^2}X_w(\alpha(w)^2)=0
\]
by the same identity with the roles of the two indices interchanged, i.e., with the derivative taken in the direction $X_w$.
Now, assume that $v\in V_i$. Then $\alpha(v)=\alpha(w)$,
and the contribution of the index \(v\) to the sum (over $v\in V$) in
\eqref{rj_derivative_part} is equal to
\begin{equation}
\label{extra1}
\frac12X_v\bigl(\alpha(w)^2\bigr)u_vu_w
-\frac12X_w\bigl(\alpha(w)^2\bigr)u_v^2,
\end{equation}
because the contribution of the fourth term in \eqref{rj_derivative_part} is zero, the sum of the contributions of the first and second terms of \eqref{rj_derivative_part} simplifies to the first term of \eqref{extra1}, and the third term contributes its second term. Note that \eqref{extra1} vanishes if \(v=w\).
Consequently, combining \eqref{rj_structure_split} with \eqref{rj_derivative_part} and using \eqref{extra1}, summed over all the indices of the component, together with \eqref{projective_residual}, we get
\eqref{rj_final}.
\end{proof}

\begin{rk}
\label{rjrem}
By \eqref{Hamiltonian_vf_action} (equivalently, by \eqref{rj_structure_split}), the sum of the first two terms of the right-hand side of \eqref{rj_final} is exactly the right-hand side of \eqref{RJ}. Hence Lemma \ref{rjlem} states that the expression for $R_j$ coming from \cite{jmz2019} differs from \eqref{RJ} by the residual term $\mathcal E_j$, and formula \eqref{RJ} will be established once we prove that $\mathcal E_j=0$; this is done in Lemma \ref{vanishlem} below. Without using this vanishing, one can repeat the derivation of \eqref{tb1} in the proof of Lemma \ref{lin_quad_lem}, taking the residual term into account; in this way we obtain that the quadratic vertex-homogeneous part of the first layer takes the form
\begin{equation}
\label{tb1_full}
\tilde{\mathfrak{b}}^{1}_w=\sum_{{\begin{array}{c} l,v \in V\\\mathcal C(l)\neq \mathcal C(w),\ v\ne l \end{array}}}(\alpha(w)^2-\alpha(l)^2)c_{vw}^lu_vu_l+\mathcal E_w ,
\qquad w\in V_i.
\end{equation}
\end{rk}

\section{The correction term in the column operation for the projective recursion}
\label{app:column}

In this appendix, we compute the term of the exact identity \eqref{exact_column_rec} that involves the derivatives of the coefficients of the column operation; in what follows, we call it the coefficient-derivative term. Note that this calculation uses only the first divisibility condition. The vanishing of the coefficient-derivative term is then deduced from Proposition \ref{eigenconst}, which is proved independently in the next appendix.

\begin{Lemma}\label{columnlem}
For every $e\in E_i$, the following identity holds:
\begin{equation}\label{column_correction_reduced}
\vec h_1(\alpha(e)^2)-\frac{\vec h_1(a)}a\alpha(e)^2
=\frac12\sum_{y\in V_i}X_y(\alpha_i^2)u_y.
\end{equation}
Consequently, the coefficient-derivative term in the recursive formula for the layers of the last column obtained by the column operation is equal to
\begin{equation}\label{omitted_contribution_full}
\frac12\sum_i\sum_{e\in E_i}\sum_{y\in V_i}
X_y(\alpha_i^2)u_yu_e a^p_{ve}.
\end{equation}
In particular, without using Proposition \ref{eigenconst}, the quadratic vertex-homogeneous parts in the $i$th block satisfy the following recursive formula, in which the contribution of this term is the last summand:
\begin{equation}\label{tbs_frak_full}
\begin{split}
\tilde{\mathfrak b}^{p+1}_v={}&\delta\tilde{\mathfrak b}^p_v+
\sum_{\substack{e\in E_i\\f\in E_t,\ t\ne i}}
(\alpha_i^2-\alpha_t^2)\mathfrak A^p_{ve}q_{ef}u_f\\
&+\frac12\left(\sum_{y\in V_i}X_y(\alpha_i^2)u_y\right)
\left(\sum_{e\in E_i}u_e\mathfrak A^p_{ve}\right),\qquad v\in V_i.
\end{split}
\end{equation}
\end{Lemma}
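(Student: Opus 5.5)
The plan is to compute the left-hand side of \eqref{column_correction_reduced} directly from the explicit formula for $\vec h_1/a$ given by the first divisibility condition, and then to substitute the result into the exact recursion \eqref{exact_column_rec}. By definition $\alpha(e)^2=\alpha_i^2=\lambda_i$ for $e\in E_i$ (see \eqref{alpha_edge_extension}), and $\lambda_i$ is a function on the base. Hence, by \eqref{eq:vech},
\[
\vec h_1(\lambda_i)=\sum_{v\in V}X_v(\lambda_i)u_v .
\]
By \eqref{first_div_linear} (equivalently \eqref{first_div_a} divided by two), $\vec h_1(a)/a=\frac12\sum_{v\in V}\frac{X_v\lambda_v}{\lambda_v}u_v$. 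The left-hand side of \eqref{column_correction_reduced} is therefore
\[
\sum_{v\in V}\Bigl(X_v(\lambda_i)-\frac{\lambda_i}{2\lambda_v}X_v(\lambda_v)\Bigr)u_v .
\]

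Next I split this sum according to the component of $v$. For $v\in V_t$ with $t\ne i$, item (2) of Proposition \ref{first divi} gives $X_v(\lambda_i/\alpha_t)=0$. This means $X_v(\lambda_i)=\frac{\lambda_i}{2\lambda_t}X_v(\lambda_t)$, so the coefficient of $u_v$ vanishes. For $v\in V_i$ we have $\lambda_v=\lambda_i$, and the coefficient becomes $X_v(\lambda_i)-\frac12X_v(\lambda_i)=\frac12X_v(\lambda_i)$. Together these give \eqref{column_correction_reduced}. I expect the only delicate point to be using item (2) with the correct normalization, namely $\alpha_t$ rather than $\alpha_t^2$ in the denominator, exactly as in its proof.

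For the consequence, recall how \eqref{exact_column_rec} arises. Substituting $b^p_w=\tilde b^p_w+\sum_e\lambda_eu_ea^p_{we}$ into \eqref{term.of.b.rec.form} produces the coefficient-derivative term $\sum_e(\vec h_1\lambda_e-c\lambda_e)u_ea^p_{ve}$. Inserting \eqref{column_correction_reduced} and grouping the edges by component yields \eqref{omitted_contribution_full}.

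Finally, I take the quadratic vertex-homogeneous part of \eqref{exact_column_rec}. The first two terms and the difference term are handled exactly as in the proof of Lemma \ref{recursive_tilde_b_frak}, which gives the first line of \eqref{tbs_frak_full}. The last term of \eqref{exact_column_rec} has vertex degree at least three, because $a^p$ has vertex degree at least one. In \eqref{omitted_contribution_full}, the factor $u_y$ times the linear part $\mathfrak A^p_{ve}$ gives vertex degree exactly two. By Corollary \ref{block_diag}, only $e\in E_i$ survives for $v\in V_i$. This gives the last summand of \eqref{tbs_frak_full}.
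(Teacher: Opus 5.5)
You prove \eqref{column_correction_reduced} and \eqref{omitted_contribution_full} exactly as the paper does: expand $\vec h_1(\lambda_i)$ and $c$ via the first divisibility condition, kill the external coefficients with item (2) of Proposition \ref{first divi}, and collect $\tfrac12X_y(\lambda_i)$ for $y\in V_i$. The gap is in your last step, and it concerns the clause ``without using Proposition \ref{eigenconst}''. You replace $(\vec h_1\tilde b^p)_{(2)}$ by $\delta\tilde{\mathfrak b}^p$ and discard $-c\tilde b^p$. This is legitimate only if every layer $\tilde b^p$ already has vertex degree at least two. Otherwise components of vertex degree $0$ or $1$ would be raised into degree $2$: by the parts of $\vec h_1$ other than $\delta$ (derivatives of base coefficients, the action on edge variables, and the $s\in V$ part of $\vec h_1u_v$), and by the linear factor $c$. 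You get this bound by saying these terms are handled ``as in the proof of Lemma \ref{recursive_tilde_b_frak}''. But that proof works from \eqref{tbs} and from the lower bound of Lemma \ref{lin_quad_lem}, and both use Proposition \ref{eigenconst}. The base case there is \eqref{tb1}, which comes from Corollary \ref{rj_reduction} and hence from Lemma \ref{vanishlem}. The induction step, and the derivation of \eqref{tbs}, discard the third term of \eqref{exact_column_rec} precisely because Proposition \ref{eigenconst} makes it vanish. So, as written, your derivation of \eqref{tbs_frak_full} depends on the very proposition the statement says it avoids.

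The repair is short, and it is what the paper does in the first paragraph of its proof. Base case: every term of the unreduced \eqref{rj_unreduced} is quadratic in the vertex variables except the edge part $\alpha(w)^2\sum_{s\in E}\sum_{v\in V}c^s_{vw}u_vu_s$ of $\alpha(w)^2\vec h_1(u_w)$. That part cancels against the column correction $\sum_{l\in E}\alpha(l)^2u_lq_{wl}$, because $c^l_{vw}$ with $l\in E$ vanishes unless $v,w,l$ lie in one component, and there $\alpha(l)=\alpha(w)$. Hence $\tilde b^1\in\mathfrak V^2$ without Proposition \ref{eigenconst}. Induction step: you must not assume the coefficient-derivative term vanishes. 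By your own identity it equals $\tfrac12\sum_{y\in V_i}X_y(\alpha_i^2)u_yu_ea^p_{ve}$, which lies in $\mathfrak V^2$. The other terms of \eqref{exact_column_rec} also lie in $\mathfrak V^2$, and $-c\tilde b^p$ lies in $\mathfrak V^3$. With this lower bound established independently, the rest of your final step (block-diagonality of $\mathfrak A$ and the $\mathfrak A^p$-linear part of the correction term) goes through and gives \eqref{tbs_frak_full}.
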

\begin{proof}
First, note that the first layer of the last column obtained by the column operation, computed directly from the unreduced formula for the first layer of the fundamental algebraic system, is quadratic in the horizontal variables because of the splitting of the symbol. Moreover, the exact recursive formula for this column, obtained directly by the product rule, contains no terms of vertex degree less than two. Hence, by induction, every layer of this column has vertex degree at least two, and this lower bound is established before the constancy of the eigenvalues is used. Therefore, taking the quadratic vertex-homogeneous components in the argument below is legitimate independently of Proposition \ref{eigenconst}.

Let us compute the coefficient-derivative term. When the Hamiltonian vector field is applied to the column operation, the full product rule has to be used, i.e., all three factors of each summand $\alpha(e)^2u_ea^p_{ve}$ are differentiated. The terms in which the first factor is differentiated combine with the part of $-(\vec h_1a/a)b^p_v$ coming from the column operation to give
\[
\sum_e\left(\vec h_1(\alpha(e)^2)-\frac{\vec h_1a}a\alpha(e)^2\right)u_ea^p_{ve}.
\]
By \eqref{first_div_a}, the coefficient of $u_y$ in the expression in parentheses is equal to
\[
X_y(\alpha_i^2)-\frac{\alpha_i^2}{2\alpha(y)^2}X_y(\alpha(y)^2).
\]
For $y\notin V_i$ this coefficient is zero by item (2) of Proposition \ref{first divi}, while for $y\in V_i$ the two terms combine, since the eigenvalues coincide, into $\tfrac12X_y(\alpha_i^2)$. This proves \eqref{column_correction_reduced}, and substituting this identity into the first sum displayed in this proof, we obtain \eqref{omitted_contribution_full}. Further, taking the quadratic vertex-homogeneous components and using the block-diagonal structure of $\mathfrak A$, we get \eqref{tbs_frak_full}. The remaining terms of the recursion have exactly the vertex degrees that can be read off from \eqref{exact_column_rec}. In particular, it is the term $-c\tilde b^p$, and not $-cb^p$, that has vertex degree at least three and therefore does not contribute to the quadratic components.
\end{proof}

\section{Proof of Proposition \ref{eigenconst}}
\label{app:eigen}

Note that the proof given below uses only the step $2$ assumption, the stability of the base point, the first divisibility condition, and the results of \cite{jmz2019} on the decomposition of the Tanaka symbol and on ample covectors. In particular, it uses neither the constancy of the characteristic tuples nor quasi-normal frames, and it does not rely on any of the vanishing statements proved in Sections \ref{principal_part_sec}--\ref{other_sec}. The proof consists of four steps. First, we rewrite the fundamental algebraic system in terms of shifted unknowns, keeping track of all the derivatives of the eigenvalues. Second, we introduce a weighted grading of the fiber variables and show that the top-weight part of the resulting system has a unique rational solution. Third, we compute the internal parts of the first two layers of the right-hand side of this top-weight system. Finally, we contract the second layer with a suitable vector, which yields the required vanishing.

{\bf Step 1: the shifted system.} Recall that the decomposition of the Tanaka symbol and the local constancy of the ranks of its central summands in Theorem \ref{Tanaka_decomp_prop} were obtained directly from \cite[Theorem 7.1]{jmz2019}, independently of Proposition \ref{eigenconst}. Consequently, we can choose a smooth weakly adapted frame whose central classes form bases of the summands $W_i$; note that this choice requires no assumption on the characteristic tuples. Put $\lambda_i=\alpha_i^2$ and $\lambda_s=\lambda_i$ for $s\in V_i\cup E_i$, and set
\[
c=\frac{\vec h_1(a)}a=\frac12\sum_{v\in V}\frac{X_v\lambda_v}{\lambda_v}u_v,
\qquad \psi_e=a\Phi_e-\lambda_eu_e\quad(e\in E).
\]
For every $s\in V\cup E$ define the polynomial
\begin{equation}\label{weyl_raw_tau}
\tau_s=\sum_{t\in V\cup E}(\lambda_s-\lambda_t)q_{st}u_t
+(\vec h_1\lambda_s-c\lambda_s)u_s
-\frac12\sum_{v\in V}X_s\lambda_v\,u_v^2.
\end{equation}
Using the unreduced expression \eqref{rj_unreduced}, we can write the first layer of the fundamental algebraic system in the form $A^1\psi=\tau_V$. Differentiating this identity along the Hamiltonian vector field and using the vertical orbital equations, or, equivalently, substituting directly into \eqref{term.of.b.rec.form}, we obtain
\begin{equation}\label{weyl_raw_rec}
A^p\psi=d^p,\qquad d^1=\tau_V,\qquad
d^{p+1}=\vec h_1d^p-cd^p+A^p\tau_E.
\end{equation}
Here $d^p=b^p-\sum_e\lambda_eu_eA^p_e$. Note that the derivatives of the eigenvalues enter all these formulas explicitly; none of them is assumed to vanish. Moreover, since $q_{ve}=0$ unless $v,e$ belong to the same component, and in the latter case the corresponding difference of eigenvalues vanishes, $\tau_V$ depends only on horizontal variables and is quadratic in them.

{\bf Step 2: weights, rank, and rational consistency.} Let us give the horizontal variables weight $1$ and the central variables weight $2$, and let us freeze the base coefficients when taking the leading derivation
\[
\delta u_v=\sum_e q_{ve}u_e,\qquad \delta u_e=0,\qquad \delta f(q)=0.
\]
Then the Hamiltonian vector field raises the weight by at most $2$, and its part raising the weight by exactly $2$ is $\delta$. Indeed, differentiation of a base coefficient raises the weight by $1$, and a term $u_vu_t\partial_{u_s}$ raises it by $1+\mathrm{wt}(u_t)-\mathrm{wt}(u_s)$, which equals $2$ precisely for $s\in V,t\in E$.
Consequently, it follows by induction from \eqref{elem.A} and \eqref{weyl_raw_rec} that $A^p$ has weight at most $2p-1$ and $d^p$ has weight at most $2p$. Denoting by square brackets the homogeneous components of a given weighted degree, put
\[
\widehat A^p=(A^p)_{[2p-1]},\qquad
\widehat d^p=(d^p)_{[2p]},\qquad
\widehat\tau_e=(\tau_e)_{[3]}.
\]
Then these top-weight parts satisfy the relations
\begin{equation}\label{weyl_top_rec}
\begin{split}
&\widehat A^1=A^1,\qquad \widehat A^{p+1}=\delta\widehat A^p,\\
&\widehat d^1=\tau_V,\qquad
\widehat d^{p+1}=\delta\widehat d^p+\widehat A^p\widehat\tau_E,\\
&\widehat\tau_e=\sum_{f\in E}(\lambda_e-\lambda_f)q_{ef}u_f
+(\vec h_1\lambda_e-c\lambda_e)u_e.
\end{split}
\end{equation}
Note that the terms $-cd^p$ are of lower weight and therefore do not contribute here. We emphasize that the last term in \eqref{weyl_top_rec} contains the derivatives of the eigenvalues. Its vanishing is a consequence of the very statement we are proving, so this term cannot be neglected here without making the argument circular.

Let us fix a base point $q$. The matrix $\widehat A(q)$ is exactly the coefficient matrix of the fundamental algebraic system of the nilpotent approximation at $q$. Indeed, in the left-invariant frame of the nilpotent approximation all $q_{ef}$ with $e\in E$ vanish, and the Hamiltonian vector field acts on fiber polynomials with constant coefficients as $\delta$. By \cite[Theorem 2.10 and Proposition 3.11]{jmz2019}, ample covectors exist in an open dense subset of the fiber, and this matrix is injective there. Thus $\widehat A(q)$ has full column rank over $\mathbb R(u)$, the field of rational functions in the fiber variables. Note that this rank argument does not use any of the lemmas on graphs.

Further, by Proposition \ref{th:orb.to.proj}, an orbital diffeomorphism exists over a nonempty open subset of every base fiber. Hence all augmented maximal minors of $[A\mid d]$ vanish on this open subset and, being polynomials in the fiber variables, they vanish identically on that fiber. For a choice of rows in layers $p_1,\ldots,p_{|E|+1}$, the top possible weight of the corresponding minor is $\sum_j(2p_j-1)+1$, and its homogeneous component of this weight is the corresponding minor of $[\widehat A\mid\widehat d]$. Therefore, these minors vanish as well. Next, choose a nonzero square minor of $\widehat A$ formed by all of its columns and some of its rows, solve the corresponding equations by Cramer's rule, and append any further row. The vanishing of the resulting augmented minor shows that the same rational solution satisfies the equation corresponding to that row. Consequently, the system
\begin{equation}\label{weyl_top_system}
\widehat A^p\Psi=\widehat d^p\qquad(p\ge1)
\end{equation}
has a unique rational solution. Note that this argument is valid for the infinite matrix, because each additional row is tested by a finite minor.

{\bf Step 3: the first two internal layers.} Write $Z_i=\sum_{v\in V_i}u_vX_v$ and $\eta_i=\sum_{e\in E_i}u_e\varepsilon^e$. At the fixed point, define $J^i_{\eta_i}$ by means of the metric and the Levi bracket as in \eqref{J_eta}. Then, directly from $\delta Z_i=J^i_{\eta_i}Z_i$ and $\delta\eta_i=0$, we obtain by induction
\begin{equation}\label{weyl_operator}
\widehat A_i^p=(\ad_i((J^i_{\eta_i})^{p-1}Z_i))^*.
\end{equation}
In particular, this matrix is block diagonal. Since this calculation uses only the splitting of the symbol, it is valid in the present weakly adapted frame as well.

Call the fiber variables indexed by $V_i\cup E_i$ internal and all the other fiber variables external, and let $\iota_i$ be the operation of setting all the external variables equal to zero; the result of this operation applied to a polynomial is called its internal part. This operation acts on polynomials and commutes with $\delta$, because the derivation replaces each horizontal variable by a polynomial in the variables of its own component and annihilates the central variables. Put
\[
\beta_i=\tfrac12d(\alpha_i^2)|_{D_i(q)},\qquad
\sigma_i=\beta_i(Z_i),\qquad Y_i=J^i_{\eta_i}Z_i.
\]
Taking the internal part of \eqref{weyl_raw_tau} and using the identity $\iota_i(\vec h_1\lambda_i-c\lambda_i)=\sigma_i$, which follows by a direct computation from the formula of Step 1 given by the first divisibility condition, we obtain the following expressions for the internal parts of the first two layers:
\begin{equation}\label{weyl_internal_layers}
\iota_i(\widehat d_i^1)=\sigma_iZ_i^\flat-\|Z_i\|^2\beta_i,
\qquad
\iota_i(\widehat d_i^2)=2\sigma_iY_i^\flat+\beta_i(Y_i)Z_i^\flat.
\end{equation}
Here $\flat$ denotes metric duality. The first formula follows directly. To verify the second one, note first that the last line of \eqref{weyl_top_rec} gives $\iota_i(\widehat\tau_e)=\sigma_i u_e$ for $e\in E_i$. By the recursion for the top-weight parts, it then remains to differentiate the first formula by means of $\delta$, using
\[
\delta\sigma_i=\beta_i(Y_i),\qquad \delta Z_i=Y_i,
\qquad \delta\|Z_i\|^2=2\langle Z_i,Y_i\rangle=0,
\]
and to add $\sum_{e\in E_i}\widehat A^1_{ve}\sigma_i u_e=\sigma_i\langle Y_i,X_v\rangle$. This gives both coefficients in the second formula, including the factor $2$: one of the two equal contributions to its first term comes from the differentiation, and the other one comes from the added sum.

{\bf Step 4: contraction and conclusion.} Assume first that $W_i\ne0$, and contract the $i$th block of the second equation of \eqref{weyl_top_system} with $Y_i$. By the operator form of the coefficient matrix obtained in Step 3, on the domain of the rational solution the left-hand side of the resulting identity is equal to $\Psi_i([Y_i,Y_i])=0$. Thus the polynomial $\widehat d_i^2(Y_i)$ vanishes on a nonempty open set and, consequently, identically. Applying $\iota_i$ to this polynomial identity (and not to the rational solution, which may be singular where all the external variables vanish), and using \eqref{weyl_internal_layers} together with the orthogonality relation obtained in Step 3, we arrive at
\begin{equation}\label{weyl_key_identity}
d(\alpha_i^2)(Z_i)\,\|J^i_{\eta_i}Z_i\|^2=0.
\end{equation}
Note that the second factor here is a nonzero real polynomial whenever the bracket $\Lambda^2D_i\to W_i$ is nonzero. Indeed, in this case we can choose real $Z_i,Y$ with nonzero bracket and a real $\eta_i$ detecting it, i.e., not annihilating this bracket; then, by the definition of the corresponding endomorphism, its value on the first of these vectors has a nonzero inner product with the second one, and the positive definiteness of the metric gives a positive squared norm. Since the ring of real polynomials is an integral domain, we conclude that $d(\alpha_i^2)|_{D_i(q)}=0$.

It remains to consider the case $W_i=0$. Then all the first-layer rows of the corresponding block are zero. Hence the consistency of the top-weight system gives $\widehat d_i^1=0$, so that the first identity of \eqref{weyl_internal_layers} takes the form
\[
\beta_i(Z_i)Z_i^\flat=\|Z_i\|^2\beta_i\qquad\text{for all }Z_i\in D_i(q).
\]
When $\dim D_i\ge2$, as in the proposition, choose orthogonal unit vectors $Z_i,X_w$ and evaluate both sides on $X_w$. This gives $\beta_i(X_w)=0$ for every unit vector $X_w$, hence $\beta_i=0$. Since the point $q$ was arbitrary, this completes the proof of Proposition \ref{eigenconst}. Note that no equiregularity assumption on the eigendistributions has been used. \qed

\section{Vanishing of the correction terms}
\label{app:vanish}

\begin{Lemma}
\label{vanishlem}
Under the assumptions of Proposition \ref{eigenconst}, for every component $\Gamma_i$ of $\Gamma$ and every $w\in V_i$ the following statements hold:
\begin{enumerate}
\item $\mathcal E_w=0$; consequently, formula \eqref{RJ} holds and the first layer of the principal part is given by \eqref{tb1};
\item the additional term of \eqref{tbs_frak_full} vanishes identically; consequently, the recursive formula \eqref{tbs_frak} of Lemma \ref{recursive_tilde_b_frak} holds in the projective case as well.
\end{enumerate}
\end{Lemma}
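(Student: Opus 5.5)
Both statements follow from the vanishing of the derivatives of the eigenvalue of a component along that component, which is exactly the content of Proposition \ref{eigenconst}; the plan is to substitute this vanishing into the two explicit correction formulas, \eqref{projective_residual} and \eqref{tbs_frak_full}, and to handle the one-dimensional components separately.

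For item (1), fix $w\in V_i$ and examine the residual $\mathcal E_w$ of \eqref{projective_residual}. Every derivative occurring in it is of the form $X_v(\alpha(w)^2)=X_v(\alpha_i^2)$ with $v\in V_i$, or $X_w(\alpha_i^2)$.
\begin{itemize}
\item If $|V_i|>1$, Proposition \ref{eigenconst} (proved independently in Appendix \ref{app:eigen}) gives $X_y(\alpha_i^2)=0$ for all $y\in V_i$. Hence both summands of $\mathcal E_w$ vanish.
\item If $|V_i|=1$, then $V_i=\{w\}$, and the two summands reduce to $\tfrac12 X_w(\alpha_i^2)u_w^2$ and $-\tfrac12X_w(\alpha_i^2)u_w^2$. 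These cancel directly, with no appeal to the proposition.
\end{itemize}
Substituting $\mathcal E_w=0$ into \eqref{rj_final} yields \eqref{RJ}. Repeating the derivation of \eqref{tb1_full} with this residual removed then gives \eqref{tb1}.

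For item (2), the additional term of \eqref{tbs_frak_full} in block $i$ is the product of $\tfrac12\sum_{y\in V_i}X_y(\alpha_i^2)u_y$ with $\sum_{e\in E_i}u_e\mathfrak A^p_{ve}$.
\begin{itemize}
\item If $E_i=\varnothing$, the second factor is an empty sum, so the term vanishes.
\item If $E_i\neq\varnothing$, then $\dim D_i\ge2$, because a rank-one distribution has trivial Levi bracket. Proposition \ref{eigenconst} then kills the first factor.
\end{itemize}
In either case \eqref{tbs_frak_full} reduces to \eqref{tbs_frak}.

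The main point to check is that this argument is not circular. Appendix \ref{app:eigen} works with the raw shifted system and never uses \eqref{RJ}, \eqref{tb1}, or \eqref{tbs_frak}, so invoking Proposition \ref{eigenconst} here is legitimate.
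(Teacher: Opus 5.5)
Your proposal is correct and follows the paper's proof essentially verbatim: in item (1) you use Proposition \ref{eigenconst} when $|V_i|>1$ and direct cancellation in \eqref{projective_residual} when $V_i=\{w\}$, and in item (2) either the eigenvalue-derivative factor or the edge sum vanishes. The only difference is cosmetic: in item (2) you split the cases by whether $E_i$ is empty rather than by whether $|V_i|>1$, and the two splits are interchangeable because a rank-one $D_i$ has $W_i=0$.
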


\begin{proof}
We start with item (1). Suppose first that $|V_i|>1$. Then Proposition \ref{eigenconst} gives
\[
X_x\bigl(\alpha(w)^2\bigr)=X_x(\alpha_i^2)=0 ,
\qquad
x\in V_i,
\]
and substitution into \eqref{projective_residual} yields $\mathcal E_w=0$. If, on the other hand, $V_i=\{w\}$, then \eqref{projective_residual} reduces to
\[
\mathcal E_w=\frac12\left(u_wX_w\bigl(\alpha(w)^2\bigr)u_w-X_w\bigl(\alpha(w)^2\bigr)u_w^2\right)=0 .
\]
Thus $\mathcal E_w=0$ for every component of $\Gamma$. By Remark \ref{rjrem}, the expression for $R_w$ coming from \cite{jmz2019} then coincides with the right-hand side of \eqref{RJ}, and \eqref{tb1_full} reduces to \eqref{tb1}. This proves item (1).

Let us prove item (2). If $|V_i|>1$, then the first factor of the additional term of \eqref{tbs_frak_full} vanishes by Proposition \ref{eigenconst}; if $|V_i|=1$, then $E_i=\varnothing$ and the second factor is an empty sum. Hence the additional term vanishes for every component of $\Gamma$ and every $p\geq1$, so \eqref{tbs_frak_full} reduces to \eqref{tbs_frak}.
\end{proof}

\section{The characteristic tuple and the growth of Jacobi curves}
\label{app:jacobi}

In this appendix, we explain the geometric meaning of the characteristic tuple of Definition \ref{char_tuple_def}. This material is not needed for the proofs of the main results of the paper; it is used only in the proof of Proposition \ref{wc:criterion_prop}, in order to show that the non-ample set of a nilpotent approximation is a proper subset.

Let us first recall the relevant notions. Let $g_1$ be a sub-Riemannian metric on a distribution $D$ of rank $m$ on an $n$-dimensional manifold $M$, let $h_1$ be its Hamiltonian, and let $\lambda\in T^*M$ with $h_1(\lambda)\neq0$. Jacobi curves of extremals are defined in \cite{jac1}. As in \cite[Definition 2.5]{jmz2019}, we use the Jacobi curve of the normal extremal $\lambda(t)=e^{t\vec h_1}\lambda$ in the following form, without the reduction used in \cite{jac1}:
\[
J_\lambda(t)=e^{-t\vec h_1}_*\,\mathcal V_{\lambda(t)},
\]
where $\mathcal V_\mu$ denotes the tangent space at $\mu$ to the fiber of $T^*M$ through $\mu$; this is a curve of Lagrangian subspaces of the $2n$-dimensional symplectic space $T_\lambda(T^*M)$. The \emph{$i$th extension} $J^{(i)}_\lambda(t)$ of the Jacobi curve is the linear span of the derivatives of order at most $i$ at $t$ of all smooth curves $\tau\mapsto\ell(\tau)\in J_\lambda(\tau)$, and the subspaces
\[
J_\lambda(t)\subset J^{(1)}_\lambda(t)\subset J^{(2)}_\lambda(t)\subset\cdots
\]
form the \emph{osculating flag} of the Jacobi curve. The Jacobi curve is called \emph{ample} at $t$ if $J^{(k)}_\lambda(t)=T_\lambda(T^*M)$ for some $k$ (see \cite[Definition 2.9]{jmz2019}), and \emph{equiregular} at $t$ if the dimensions of all the subspaces $J^{(i)}_\lambda$ are constant near $t$. In the latter case, the numbers $\dim J^{(i)}_\lambda(t)-\dim J^{(i-1)}_\lambda(t)$, $i\geq1$, where $J^{(0)}_\lambda=J_\lambda$, do not increase with $i$, and they are the lengths of the columns of the \emph{Young diagram} of the Jacobi curve \cite{li2}. The length of the first column is the rank of the Jacobi curve, which is equal to $m$ for $J_\lambda$. By the \emph{growth} of the Jacobi curve, or of its osculating flag, we mean the sequence of the lengths of the columns of its Young diagram starting from the second column.

The Jacobi curve in the sense of \cite{jac1} is obtained from $J_\lambda$ by the reduction by the direction of $\vec h_1$ on the level set $\mathcal H$ of $h_1$ through $\lambda$: it is the curve $t\mapsto\bigl((J_\lambda(t)\cap T_\lambda\mathcal H)+\mathbb R\vec h_1(\lambda)\bigr)/\mathbb R\vec h_1(\lambda)$ in the Lagrangian Grassmannian of the $(2n-2)$-dimensional symplectic space $T_\lambda\mathcal H/\mathbb R\vec h_1(\lambda)$. This reduction decreases the length of the first column of the Young diagram by one, from $m$ to $m-1$, which is the rank of the Jacobi curves of sub-Riemannian normal extremals in \cite{li2}, and it does not change the other columns, i.e., it does not change the growth. Indeed, let $e$ be the Euler vector field of the fibers of $T^*M$, and set $\epsilon(\tau):=e^{-\tau\vec h_1}_*e\bigl(\lambda(\tau)\bigr)$. Then $\epsilon(\tau)\in J_\lambda(\tau)$, and $\epsilon(\tau)\notin T_\lambda\mathcal H$, because $dh_1(e)=2h_1\neq0$. Moreover, since $h_1$ is quadratic on the fibers, we have $[e,\vec h_1]=\vec h_1$, so that $\epsilon'(\tau)=-\vec h_1(\lambda)$ for all $\tau$. Consequently, $J^{(i)}_\lambda(0)=(J_\lambda\cap T_\lambda\mathcal H)^{(i)}(0)+\operatorname{span}\{\epsilon(0),\vec h_1(\lambda)\}$ for $i\geq1$, which shows that the reduction decreases $\dim J_\lambda(0)$ by one and $\dim J^{(i)}_\lambda(0)$, $i\geq1$, by two.

Now let $\mathfrak m=H\oplus W$ be a step $2$ fundamental graded Lie algebra with $m=\dim H$ and characteristic tuple $(r_0,\ldots,r_\omega)$, so that $\dim W=\sum_{j=0}^{\omega}r_j$ by \eqref{sum_r}, and let $\langle\cdot,\cdot\rangle$ be an inner product on $H$. The \emph{flat model} is the Carnot group of $\mathfrak m$ endowed with the left-invariant sub-Riemannian metric defined by $\langle\cdot,\cdot\rangle$. Given $\eta\in W^*$, let $J_\eta$ be the skew-symmetric operator on $H$ defined by $\langle J_\eta X,Y\rangle=\eta([X,Y])$, as in \eqref{wc:J}. Below, a property of covectors $\lambda\in T^*_qM$ is said to hold for a \emph{generic} $\lambda$ if it holds on a nonempty Zariski open subset of the vector space $T^*_qM$.

\begin{Lemma}
\label{Jacobi_lem}
Let $\mathfrak m$, $\langle\cdot,\cdot\rangle$, and $(r_0,\ldots,r_\omega)$ be as above, and let $n=\dim\mathfrak m$.
\begin{enumerate}
\item For the flat model and a generic covector $\lambda$, the Jacobi curve $J_\lambda$ is ample and equiregular at $t=0$, and its growth is $(r_0,r_1,\ldots,r_\omega)$. In other words, the Young diagram of $J_\lambda$ has columns of lengths $m,r_0,r_1,\ldots,r_\omega$, and the Young diagram of the Jacobi curve of \cite{jac1} has columns of lengths $m-1,r_0,r_1,\ldots,r_\omega$.
\item Let $g_1$ be a sub-Riemannian metric on a step $2$ distribution $D$ on an $n$-dimensional manifold $M$, and let $q\in M$ be a point such that the Tanaka symbol $\mathfrak m(q)$, together with the inner product $g_1(q)$ on $\mathfrak m_{-1}(q)=D(q)$, is isomorphic to $\mathfrak m$ together with $\langle\cdot,\cdot\rangle$. Then, for a generic covector $\lambda\in T^*_qM$, the Jacobi curve $J_\lambda$ is ample at $t=0$, and the increments $a_j:=\dim J^{(j+2)}_\lambda(0)-\dim J^{(j+1)}_\lambda(0)$, $j\geq0$, of its osculating flag satisfy
\begin{equation}
\label{jac_ineq}
a_0=r_0,\qquad \sum_{j=0}^{s}a_j\geq\sum_{j=0}^{s}r_j\quad\text{for all }s\geq0,\qquad \sum_{j\geq0}a_j=\sum_{j=0}^{\omega}r_j=n-m,
\end{equation}
where the characteristic tuple is padded by zeros. If, in addition, $J_\lambda$ is equiregular at $t=0$, which is the case for a generic $\lambda\in T^*_qM$ whenever $q$ belongs to a certain open dense subset of $M$, then $(a_0,a_1,\ldots)$ is the growth of $J_\lambda$, and \eqref{jac_ineq} means that the partition $(a_0,a_1,\ldots)$ of $n-m$ dominates the partition $(r_0,\ldots,r_\omega)$. In particular, the growth of the Jacobi curves of a nonflat metric can be faster than that of the flat model with the same symbol, but it is never slower: if $(a_j)\neq(r_j)$, then $a_j>r_j$ for the first index $j$ at which these sequences differ.
\end{enumerate}
\end{Lemma}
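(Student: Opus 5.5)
For item (1) I would work entirely in the flat model and compute the osculating flag of $J_\lambda$ at $t=0$ explicitly. Identify $T_\lambda(T^*G)$ with $\m\oplus\m^*$ by left trivialization. The vertical space $J_\lambda(0)$ is $\m^*$. Along the normal extremal, $Z(t)=e^{tJ_\eta}Z$ and $\eta$ is constant, by \eqref{wc:nilp_extremal}. The first extension adds the horizontal directions $\m_{-1}$, since $\vec h_1$ projects onto $D$ and $dh_1$ varies over all of $H$ as the covector varies vertically; this gives an increment of $m$.

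The next extensions are generated by differentiating the horizontal velocity field along the fiber variations. A variation in the $W^*$-direction $\mu$ produces, at order $p+1$, the component $\mathcal A^p(\lambda)\mu$ in the base. By duality, the increment of $J^{(p+1)}$ over $J^{(p)}$ should equal the dimension of the increment of the subspaces
\[
\Sigma_s:=\sum_{j=0}^{s}\bigl[J_\eta^{\,j}Z,\m_{-1}\bigr]\subset\m_{-2}.
\]
In other words, it is the rank increment of the stacked matrix $\cA_{\le s+1}(\lambda)$ from \eqref{wc:Am}, using the operator form \eqref{all_layer_operator}. The precise bookkeeping can be taken from the standard computation of Jacobi curves of Carnot groups: a triangular structure shows that the $(s+2)$-nd extension adds exactly $\dim\Sigma_s-\dim\Sigma_{s-1}$ new central directions.

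The key algebraic step is then to show that, for generic $(Z,\eta)$, these increments are exactly $r_0,r_1,\ldots,r_\omega$. The first increment is $\rank\ad Z=r_0$ for ad-maximal $Z$. Passing to the quotient $\m^{(1)}=\m/(\ker\ad Z\oplus\imad Z)$, the next increment is the rank of $\ad$ of the image of $J_\eta Z$ in $\m^{(1)}_{-1}$. Here one uses Lemma~\ref{a1}: modulo $\imad Z$, the bracket with $J_\eta Z$ factors through $\m^{(1)}$.

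The main obstacle is showing that the images of $J_\eta Z, J_\eta^2Z,\dots$ in the successive quotients are generic, hence ad-maximal there. I would attack this by arguing that $\eta\mapsto J_\eta Z \bmod \ker\ad Z$ is onto $(\ker\ad Z)^{\perp}\cong\m^{(1)}_{-1}$. This holds because $\langle J_\eta Z,Y\rangle=\eta([Z,Y])$ and $[Z,\cdot]$ is injective on $(\ker \ad Z)^\perp$. The same argument applies inductively with $\eta$ restricted to the annihilator of earlier images. Since ad-maximality is a Zariski-open condition, a generic $\eta$ then attains the lexicographically maximal tuple at every stage.

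Ampleness follows because the sum of the increments is $d$, by \eqref{sum_r}. The statement about the Jacobi curve of \cite{jac1} follows from the reduction remark preceding the lemma.

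For item (2) I would use the weighted grading from Step 2 of Appendix~\ref{app:eigen}. For the curved metric, the matrix of the $(s+1)$-st derivative data has top-weight part equal to the flat matrix $\widehat A$. Ranks of the truncated stacks can therefore only increase under this perturbation, by lower semicontinuity of rank under degeneration to the leading part. This gives the partial-sum inequalities $\sum_{j\le s}a_j\ge\sum_{j\le s}r_j$. The equality $a_0=r_0$ holds because the first layer has no lower-order terms. The total sum equals $n-m$ by ampleness, which itself follows from the flat ampleness and openness.

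Equiregularity on an open dense set is standard from lower semicontinuity of the dimensions $\dim J^{(i)}$. The dominance statement and the consequence "first differing index has $a_j>r_j$" then follow formally from the partial-sum inequalities.
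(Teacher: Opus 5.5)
Your proposal is correct and follows essentially the same route as the paper. The paper also (i) uses the rank formula for the osculating flag, which it takes from \cite[Lemmas 2.8 and 3.12]{jmz2019} rather than from a Carnot-group computation; (ii) realizes the characteristic tuple by the perturbation $\eta\mapsto\eta+\nu$, with $\nu$ annihilating the earlier images; and (iii) for item (2), compares each minor with its top-weight part, i.e., its top edge-degree part, which is the corresponding flat minor.

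The one thing you omit is equiregularity in item (1). The paper gets it from left-invariance: the locus where all ranks are maximal is open, and the maximal values do not depend on the fiber, so this locus contains $\lambda(t)$ for small $t$. In item (2), the specific open dense set of base points comes from the squeeze $\rho_s(\lambda)\le\rho_s(\lambda(t))\le\hat\rho_s(\pi(\lambda(t)))=\hat\rho_s(q)$. Here $\hat\rho_s$ is the fiberwise maximal rank, which is locally constant on an open dense subset of $M$.
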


\begin{proof}
We proceed in three steps.

\emph{Step 1: the extension matrix.} Let $g_1$ be an arbitrary sub-Riemannian metric on a step $2$ distribution $D$. Fix a point $q$ and a local frame $\{X_s\}_{s\in V\cup E}$ such that $\{X_v\}_{v\in V}$ is $g_1$-orthonormal, and let $A=(A^p)_{p\geq1}$ be the matrix defined by the recursive formulas \eqref{elem.A}; this matrix depends only on $g_1$ and on the frame. By \cite[Lemmas 2.8 and 3.12]{jmz2019}, applied with $g_2=g_1$, we have $\dim J^{(1)}_\lambda(0)=n+m$ and
\begin{equation}
\label{jac_ext_matrix}
\dim J^{(s+2)}_\lambda(0)=n+m+\rank\begin{pmatrix}A^1(\lambda)\\ \vdots\\ A^{s+1}(\lambda)\end{pmatrix},\qquad s\geq0.
\end{equation}
Since $J_\lambda(t+\tau)=e^{-t\vec h_1}_*J_{\lambda(t)}(\tau)$, the osculating flag of $J_\lambda$ at time $t$ is the image of the osculating flag of $J_{\lambda(t)}$ at time $0$ under the linear isomorphism $e^{-t\vec h_1}_*$. Hence \eqref{jac_ext_matrix} holds at every $t$, with $\lambda$ replaced by $\lambda(t)$, and $J_\lambda$ is equiregular at $t=0$ if and only if all the ranks in \eqref{jac_ext_matrix} are constant along $\lambda(t)$ for $t$ near $0$.

Given $\lambda\in T_q^*M$, set $Z:=\sum_{v\in V}u_v(\lambda)X_v(q)$, and let $\eta$ be the functional on $T_qM/D(q)$ such that $\eta\bigl(X_e(q)\bmod D(q)\bigr)=u_e(\lambda)$ for $e\in E$. The entries of $A^p$ are homogeneous polynomials of degree $p$ in the quasi-impulses and have vertex degree at least one, i.e., degree at most $p-1$ in the edge variables $u_e$, $e\in E$ (see Remark \ref{homogen_total} and Lemma \ref{lin_quad_lem}). Their part of degree $p-1$ in the edge variables is the layer $\mathfrak A^p$ of the principal part, which, by \eqref{all_layer_operator} applied with $k=1$, is the matrix of $\bigl(\ad(J_\eta^{p-1}Z)\bigr)^*$, i.e., of the corresponding layer for the flat model of the symbol $\mathfrak m(q)$ (cf.\ \eqref{wc:Ap}). In other words, each step of the recursion \eqref{elem.A} raises the degree in the edge variables by at most one, and it does so only through the term $\delta u_v=\sum_{e\in E}q_{ve}u_e$ from \eqref{principal_delta}, which corresponds to $Z\mapsto J_\eta Z$; the differentiation of the structure functions or of the edge variables does not raise this degree. Note that all these statements use only the recursive formulas \eqref{elem.A}, so they hold for an arbitrary metric $g_1$.

\emph{Step 2: the flat model.} For the flat model, take a left-invariant frame such that $\{X_v\}_{v\in V}$ is an orthonormal basis of $H$ and $\{X_e\}_{e\in E}$ is a basis of $W$. Then the only nonzero structure constants are the $c^e_{xv}$ with $x,v\in V$ and $e\in E$, so the matrix $A$ coincides with its principal part. Since the rows of $A^p(\lambda)$ are the coordinates of the vectors $[J_\eta^{p-1}Z,X_v]$, $v\in V$, with respect to the basis $\{X_e\}_{e\in E}$ of $W$, we get
\[
\rank\begin{pmatrix}A^1(\lambda)\\ \vdots\\ A^{s+1}(\lambda)\end{pmatrix}=\dim\sum_{j=0}^s[J_\eta^jZ,H],\qquad s\geq0
\]
(compare with \cite[Remark 6.9]{NGZ}). We claim that, for generic $(Z,\eta)$, the dimensions of the successive increments of the subspaces $\sum_{j=0}^s[J_\eta^jZ,H]$, $s\geq0$, are equal to $r_0,r_1,\ldots,r_\omega$. To prove this, let us describe the successive generic choices in more detail. Set $Z_j=J_\eta^jZ$, $I_{-1}=0$, $K_{-1}=0$, and, whenever $Z_j$ is ad-maximal in $H/K_{j-1}\oplus W/I_{j-1}$, define
\[
I_j=I_{j-1}+[Z_j,H],\qquad
K_j=\{Y\in H:[Z_j,Y]\in I_{j-1}\}.
\]
Here all the quotients are realized as quotients of the original algebra $H\oplus W$: by induction on $j$, the subspaces $K_j$ and $I_j$ are the preimages in $H$ and in $W$ of the kernel and of the image of the operator $\ad Z_j\colon H/K_{j-1}\to W/I_{j-1}$ of the quotient algebra $H/K_{j-1}\oplus W/I_{j-1}$. By Lemma \ref{a1}, applied to this quotient algebra, $K_j\oplus I_j$ is an ideal of $H\oplus W$, so these subspaces define precisely the successive quotients that appear in the definition of the characteristic tuple. In particular, $[H,K_j]\subset I_j$; hence $\ad Z_{j+1}$ is well defined on $H/K_j$ modulo $I_j$, and $K_j\subset K_{j+1}$. We start with a generic ad-maximal $Z$. If $\nu\in I_{j-1}^{\perp}\subset W^*$, then replacing $\eta$ by $\eta+\nu$ does not change $Z_0,\ldots,Z_j$, since $J_\nu Z_a=0$ for $a<j$, whereas the next vector of the sequence changes exactly by $J_\nu Z_j$. The adjoint of the map $\nu\mapsto J_\nu Z_j$ is the map $Y\mapsto[Z_j,Y]\bmod I_{j-1}$, whose kernel is $K_j$. Hence, these changes cover the orthogonal complement $K_j^\perp$ of $K_j$ in $H$. Since the inner product is positive definite, we have $H=K_j\oplus K_j^\perp$, so $K_j^\perp$ projects isomorphically onto the next horizontal quotient $H/K_j$, i.e., the classes of these changes cover the entire quotient $H/K_j$. Consequently, they permit a generic choice at the next step that maximizes the rank, while all the preceding choices remain unchanged. Iterating this argument, we realize the generic characteristic tuple by a single pair $(Z,\eta)$. Moreover, all the rank conditions involved are expressed by the nonvanishing of polynomial minors, so they hold simultaneously on a nonempty Zariski-open set. On this set, a larger increment at the next step is impossible, because such an increment is bounded by the maximal rank of the adjoint operators in the corresponding quotient. This proves that the successive increments are equal to the entries of the characteristic tuple.

Since $\sum_{j=0}^{\omega}r_j=\dim W=n-m$, it follows from \eqref{jac_ext_matrix} that $J^{(\omega+2)}_\lambda(0)=T_\lambda(T^*G)$, i.e., the Jacobi curve is ample at $t=0$. Finally, the flat model is homogeneous, so the maximal values of the ranks in \eqref{jac_ext_matrix} on the fibers of $T^*G$ do not depend on the fiber. The set of covectors at which all these ranks attain their maximal values is therefore open in $T^*G$, so it contains $\lambda(t)$ for $t$ near $0$ whenever it contains $\lambda$. By Step 1, the Jacobi curve is equiregular at $t=0$ for a generic $\lambda$. Together with the discussion of the reduction above, this proves item (1).

\emph{Step 3: an arbitrary metric.} Let $g_1$ and $q$ be as in item (2), and let $\rho_s$ denote the rank of the matrix in \eqref{jac_ext_matrix}. By Step 1, under the rescaling $\eta\mapsto\sigma\eta$, $\sigma\in\mathbb R$, a nonzero minor of the corresponding matrix of the flat model is the leading coefficient in $\sigma$ of the corresponding minor of this matrix. Since the entries of $A$ are polynomials on $T^*_qM$, it follows from Step 2 that, for each $s$, the inequality $\rho_s(\lambda)\geq\sum_{j=0}^{s}r_j$ holds for a generic $\lambda\in T^*_qM$. By \eqref{jac_ext_matrix}, this proves the inequalities in \eqref{jac_ineq}. Since $\rho_s\leq n-m$ and $\sum_{j=0}^{\omega}r_j=n-m$, we also get $J^{(\omega+2)}_\lambda(0)=T_\lambda(T^*M)$, i.e., the Jacobi curve is ample at $t=0$, and $\sum_{j\geq0}a_j=n-m$. Moreover, the first layer $A^1$ coincides with $\mathfrak A^1$, so $a_0=\rank A^1(\lambda)$ is equal to the rank of $\ad Z$ in $\mathfrak m(q)$, which is $r_0$ for a generic $\lambda$. Finally, let $\hat\rho_s(q')$ denote the maximal value of $\rho_s$ on $T^*_{q'}M$. Since the rank of a continuous matrix-valued function is lower semicontinuous, so is $\hat\rho_s$, and, since $\hat\rho_s$ takes only finitely many values, it is locally constant on an open dense subset of $M$. If $q$ belongs to this subset and $\rho_s(\lambda)=\hat\rho_s(q)$ for all $s$, then, for $t$ near $0$, we have $\hat\rho_s(q)=\rho_s(\lambda)\leq\rho_s(\lambda(t))\leq\hat\rho_s\bigl(\pi(\lambda(t))\bigr)=\hat\rho_s(q)$, so, by Step 1, the Jacobi curve is equiregular at $t=0$. In this case, the increments $a_j$ are the lengths of the columns of the Young diagram of $J_\lambda$ starting from the second column. This proves item (2).
\end{proof}

\begin{rk}
\label{Jacobi_strict_rem}
The inequalities in item (2) of Lemma \ref{Jacobi_lem} can be strict. For example, let $E_{ij}=e_ie_j^{\mathrm T}-e_je_i^{\mathrm T}$, $1\leq i<j\leq5$, be the standard basis of $\mathfrak{so}(5)$, and consider the left-invariant sub-Riemannian metric on $SO(5)$ for which the left-invariant vector fields corresponding to $E_{15}$, $E_{25}$, $E_{35}$, and $E_{45}+E_{12}$ form an orthonormal frame of the distribution. These four elements and their six pairwise brackets form a basis of $\mathfrak{so}(5)$, so the Tanaka symbol at every point is the free step $2$ nilpotent Lie algebra with four generators, whose characteristic tuple is $(3,2,1)$. A direct computation based on \eqref{jac_ext_matrix} shows that, for a generic covector, the matrices formed by the first layer and by the first two layers of $A$ have ranks $3$ and $6$, i.e., $(a_0,a_1)=(3,3)$, whereas for the flat model the matrices formed by the first one, two, and three layers have ranks $3$, $5$, and $6$. Since this structure is homogeneous, the generic Jacobi curves are equiregular, and the Young diagram of a generic Jacobi curve in the sense of \cite{jac1} has columns of lengths $3,3,3$ instead of $3,3,2,1$. Note that $a_2=0<r_2=1$ in this example, so \eqref{jac_ineq} does not imply entrywise inequalities between the columns. On the other hand, for the horizontal subspace spanned by $E_{15},\ldots,E_{45}$, which corresponds to the symmetric pair $\bigl(\mathfrak{so}(5),\mathfrak{so}(4)\bigr)$, the Tanaka symbol is the same, and the growth is $(3,2,1)$, as for the flat model.
\end{rk}


\begin{thebibliography}{99}
\bibitem{ABB}
A.~Agrachev, D.~Barilari, and U.~Boscain,
\emph{A comprehensive introduction to sub-Riemannian geometry. From the Hamiltonian viewpoint}  (with an appendix by I. Zelenko), Cambridge Studies in Advanced Mathematics, 181. Cambridge University Press, Cambridge, 2020. xviii+745 pp. ISBN: 978-1-108-47635-5
\bibitem{AgrSach}
A.A. Agrachev, Yu. L.  Sachkov,
\emph{Control theory from the geometric viewpoint},
Encyclopaedia of Mathematical Sciences, 87. Control Theory and Optimization, II. Springer-Verlag, Berlin, 2004.
 \bibitem{jac1}
	A. Agrachev, I. Zelenko, Geometry of Jacobi curves - I., \emph{J. Dynamical and Control systems}, {\bf 8}, No.\ 1, 93--140, 2002.
\bibitem{Alekseevsky2020}
D. Alekseevsky,
\emph{Shortest and straightest geodesics in sub-Riemannian geometry},
J. Geom. Phys. 155 (2020), 103713, 22 pp.

\bibitem{BaudoinCho}
F.~Baudoin and G.~Cho,
\emph{The subelliptic heat kernel of the octonionic Hopf fibration},
Potential Anal. \textbf{55} (2021), 211--228.
\url{https://arxiv.org/abs/1904.08568}
\bibitem{deRham1952}
 G. de Rham, \emph{Sur la r\'{e}ductibilit\'{e} d'un espace de Riemann}, Commentarii Mathematici Helvetici, \textbf{26}, No.\ 1, 328--344, 1952.
\bibitem{Dini1870}
U.~Dini, \emph{Sopra un problema che si presenta nella teoria generale delle rappresentazioni geografiche di una superficie su di un'altra}, Ann. Mat. Ser.~2\textordfeminine{} \textbf{3} (1870), 269--287.
\bibitem{Eisenhart1923}
L.P. Eisenhart.
\emph{ Symmetric tensors of the second order whose first covariant
  derivatives are zero},
Trans. Amer. Math. Soc., \textbf{25} (2), 297--306, 1923.
\bibitem{jmz2019}
F. Jean, S. Maslovskaya and I. Zelenko, \emph{On projective and affine equivalence of sub-Riemannian metrics}, Geom. Dedicata 203 (2019), 279--319. \url{https://doi.org/10.1007/s10711-019-00437-1}
\bibitem{JMZ21}
F.~Jean, S.~Maslovskaya, and I.~Zelenko,
\emph{On Weyl's type theorems and genericity of projective rigidity in sub-Riemannian Geometry},
Geometriae Dedicata, \textbf{213} (2021), 295--314.
\url{https://doi.org/10.1007/s10711-020-00581-z}
\bibitem{kaplan}
A. Kaplan, \emph{ Fundamental solutions for a class of hypoelliptic PDE generated by composition of quadratic forms}, Trans. Amer. Math. Soc., 258(1):147--153, 1980.
\bibitem{LawsonMichelsohn}
H.~B.~Lawson and M.-L.~Michelsohn, \emph{Spin geometry}, Princeton Mathematical Series, 38. Princeton University Press, Princeton, NJ, 1989.
\bibitem{LeviCivita1896}
T.~Levi-Civita, \emph{Sulle trasformazioni delle equazioni dinamiche}, Ann. Mat. Ser.~2\textordfeminine{} \textbf{24} (1896), 255--300.
\bibitem{Zaifeng} Z. Lin, I. Zelenko. \emph {On Eisenhart's type theorem for sub-Riemannian metrics on step 2 distributions with ad-surjective Tanaka symbols},
Regul. Chaotic Dyn. 29 (2024), no. 2, 304–343.
\bibitem{Morimoto2008}
T. Morimoto, \emph{ Cartan connection associated with a subriemannian structure}, Differential Geom. Appl. 26 (2008), no. 1, 75--78.

\bibitem{NGZ}
S.~Nicolussi Golo and Y.~Zhang,
\emph{Curvature exponent and geodesic dimension on Sard-regular Carnot groups},
Analysis and Geometry in Metric Spaces \textbf{12} (2024), article 20240004.
\url{https://arxiv.org/abs/2308.15811}
\bibitem{Pont}
 L. S. Pontryagin,  V. G. Boltyanskii, R. V. Gamkrelidze, E. F. Mishchenko,  \emph{The mathematical theory of optimal processes}, Translated from the Russian by K. N. Trirogoff; edited by L. W. Neustadt Interscience Publishers John Wiley \& Sons, Inc., New York-London 1962 viii+360 pp
\bibitem{tan1} N.~Tanaka, \emph{On differential systems, graded Lie
  algebras and pseudo-groups}, J.\ Math.\ Kyoto.\ Univ., \textbf{10}
  (1970), pp.~1--82.
	 \bibitem{Z}
	I. Zelenko, \emph{ On the geodesic equivalence of Riemannian metrics and sub-Riemannian metrics on distributions of corank 1}, Sovrem. Mat. Prilozh. No.\ 21, Geom. Zadachi Teor. Upr., 79--105, 2004; Engl.transl.: J. Math. Sci. (N. Y.) {\bf 135} (4), 3168--3194, 2006.
\bibitem{zeltan} I. Zelenko, \emph{ On Tanaka's prolongation procedure for filtered structures of constant type}, Symmetry, Integrability, and Geometry: Methods and Applications (SIGMA), special issue ``\'{E}lie Cartan and Differential Geometry'', v. 5, 094,
2009, 21 pages.
\bibitem{li2}
	I. Zelenko, C. Li, \emph{Differential geometry of
		 curves in Lagrange Grassmannians with given Young diagram}, Differential Geometry and its Applications, {\bf 27}, 723--742, 2009.
\end{thebibliography}
\end{document}